\newcommand\sbullet[1][.5]{\mathbin{\vcenter{\hbox{\scalebox{#1}{$\bullet$}}}}}
\DeclareMathOperator{\diam}{diam}
\DeclareMathOperator{\Leb}{Leb}
\DeclareMathOperator{\Cor}{Cor}
\DeclareMathOperator{\interior}{int}
\newcommand{\Lip}{\mbox{\rm Lip}}
\numberwithin{equation}{section}
\newtheorem{theorem}{Theorem}[section]
\newtheorem{corollary}[theorem]{Corollary}
\newtheorem{proposition}[theorem]{Proposition}
\newtheorem{lemma}[theorem]{Lemma}
\newtheorem{definition}[theorem]{Definition}
\newtheorem{remark}[theorem]{Remark}
\newtheorem{assumption}[theorem]{Assumption}
\newcommand{\qed}{\hfill \mbox{\raggedright \rule{.07in}{.1in}}}
\newenvironment{proof}{\vspace{1ex}\noindent{\bf
Proof}\hspace{0.5em}}{\hfill\qed\vspace{1ex}}
\begin{document}

\title
{Poisson Approximations and Convergence Rates for Hyperbolic Dynamical Systems}


\author{Leonid Bunimovich\thanks{School of Mathematics, Georgia Institute of Technology, Atlanta, USA. \texttt{leonid.bunimovich@math.gatech.edu}}, Yaofeng Su\thanks{School of Mathematics, Georgia Institute of Technology, Atlanta, USA. \texttt{yaofeng.su@math.gatech.edu}}}

\date{\today}

\maketitle

\begin{abstract}
We prove the asymptotic functional Poisson laws in the total variation norm and obtain estimates of the corresponding convergence rates for a large class of hyperbolic dynamical systems. These results generalize the ones obtained before in this area. Applications to intermittent solenoids, Axiom A systems, H\'enon attractors and to billiards, are also considered. 
\end{abstract}

\tableofcontents

%
\section{Introduction}\ \par

The studies of Poisson approximations of the process of recurrences to small subsets in the phase spaces of chaotic dynamical systems, started in \cite{Pitskel}, are developed now into a large active area of the  dynamical systems theory. Another view at this type of problems is a subject of the theory of open dynamical systems \cite{yorke}, where some positive measure subset $A$ of the phase space is named a hole, and hitting and escape the hole processes are studied. The third view at this type of problems concerns statistics of extreme events (``record values") in the theory of random processes \cite{Freitas}. In this paper we present new advances in this area.

In a general set up, one picks a small measure subset $A$ in the phase space $\mathcal{M}$ of hyperbolic (chaotic) ergodic dynamical system and attempts to prove that in the limit, when the measure of $A$ approaches  zero, the corresponding process of recurrences to $A$ converges to the Poisson process. 

This area received an essential boost in L-S.Young papers \cite{Y,Y1}, where a new general framework for analysis of statistical properties of hyperbolic dynamical systems was introduced. This approach employs representation of the phase space of a dynamical system as a tower (later called Young tower, Gibbs-Markov-Young tower, etc), which allow to study dynamics by analysing  recurrences to the base of this tower. Several developments of this method were proposed later, essentially all focused on the dynamical systems with weak hyperbolicity (slow decay of correlations). For such systems the method of inducing was employed, when the base of the tower is chosen as such subset of the phase space where the induced dynamics, generated by the recurrences to the base, is strongly hyperbolic \cite{chernovhighdim, markarian,chernov-dolgopyat}. 

Our approach to the Poisson approximations is slightly different. It employs pulling back a hole $A$ to a nice (strongly hyperbolic) reference set in the phase space, e.g., the base of the Young tower. This pull back method allows to improve various results previously obtained in this area.

The main results (Theorems \ref{thm} and \ref{thm2}) of the paper are dealing with convergence of a random process, generated by the measure preserving dynamics, to the functional Poisson law in the total variation (TV) norm. We also obtain estimates of the corresponding convergence rates in the form

\begin{equation}\label{equaintro}
   d_{TV}(N^{r,z,T}, P) \precsim_{T,z} r^{a} \text{ for almost every } z \in \mathcal{M} , 
\end{equation}
where $P$ is a Poisson point process and $N^{r,z,T}$ is a dynamical point process which counts a number of entrances by an orbit to a metric ball $B_{r}(z)$ with radius $r$ and the center $z$ in the phase space of a dynamical system during the time interval $[0,T]$.  The notation $\precsim_{T,z}$ means that a constant in (\ref{equaintro}) depends only on $z$ and $T$ (see the Definition \ref{dynamicptprocess} for more details).

    These results on convergence to the Poisson distribution are stronger than the ones obtained previously  \cite{ ptrf, nicol,pene,collet,haydndcds,haydncmp}. 
    
    Namely 
    \begin{enumerate}
        
        \item In \cite{ptrf,haydncmp,nicol} the following forms of convergence were obtained
        \[\lim_{r \to 0}\mathbb{P}\{N^{r,z,T}([0,T])=k\}=\mathbb{P}\{P([0,T]=k)\}\]
        
        and/or 
        
        \[(N^{r,z,T}(I_1), \cdots , N^{r,z,T}(I_m))  \xrightarrow[r \to 0]{distribution} (P(I_1), \cdots ,P(I_m)),\]
        where $m, k \in \mathbb{N}$ and intervals $I_1, \cdots I_m \subseteq [0,T]$. Clearly,  (\ref{equaintro}) implies these two forms. 
        \item  In \cite{ptrf,pene,nicol,haydncmp} only convergence to the Poisson law was considered, while the estimations of the convergence rates were not studied because the approaches used there did not allow for such estimates.
        \item In \cite{collet, haydndcds} the convergence rates were obtained in a weaker form. Namely,  $ \forall r \in (0,1)$ there exist positive constants $a$, $b$ and a set $M_r \subseteq \mathcal{M}$ with $\mathbb{P}(M_r)\le r^b$ such that $\forall z \notin M_r$
    \begin{equation}\label{weak}
        \sum_{k}|\mathbb{P}\{N^{r,z,T}([0,T])=k\}-\mathbb{P}\{P([0,T])=k\}|\precsim r^a.
    \end{equation}
     
    \end{enumerate}
    
    Besides, the following generalizations of the previous results are obtained
    \begin{enumerate}
        \item In \cite{collet, haydndcds} a relatively high regularity (at least bounded derivatives) of the dynamics was required, while we just need it to be a local $C^1$-diffeomorphism. Particularly, the derivatives can be unbounded.
        \item Unlike \cite{collet}, we do not assume that the unstable manifolds are one-dimensional.
        \item In \cite{ haydndcds, collet, pene,haydncmp} sufficiently fast decay rates of return times on hyperbolic towers were required. Our proofs of the existence of the Poisson limit laws use only contraction rates $\alpha$ of the (un)stable manifolds. Particularly, a simple, easy to verify, following criterion for the existence of the Poisson limit law is obtained
    \[\alpha>  \frac{2}{\dim \gamma^u}-\frac{1}{\dim_H \mu},\]
    where $\dim \gamma^u$ is the dimension of unstable manifolds and $\dim_H \mu$ is the Hausdorff dimension of the SRB measure on the Gibbs-Markov-Young tower (see the  details in the Theorems \ref{thm} and \ref{thm2}).
    
    This criterion allows to skip verification of the so called corona conditions (see the Definition \ref{defshortcoro} or \cite{pene}), which is usually rather cumbersome even for uniformly hyperbolic dynamical systems. Such verification becomes even more involved in case of non-uniformly hyperbolic systems.
    \end{enumerate}

    Also, some applications of our approach are considered to obtain the asymptotic functional Poisson laws and the corresponding convergence rates  for Axiom A attractors, H\'enon attractors, intermittent solenoids and billiards, which improve various previously known results for these systems.
    
   The structure of the paper is the following one. In section \ref{def} we introduce notations, which are used throughout the paper, give the necessary definitions and formulate the main results. The section \ref{fpl} presents a proof of the functional Poisson law (with the error term) for systems admitting Young towers of general type. The section \ref{provethm1} contains a proof of the Theorem \ref{thm}. A proof of the Theorem \ref{thm2} is in the section \ref{provethm2}.  In the section \ref{app}  applications to Axiom A attractors, intermittent solenoids, billiards and Henon attractors are considered.  


\section{Definitions and Main Results }\label{def}

We start by introducing some notations
\begin{enumerate}

    \item $C_z$ denotes a constant depending on $z$.
    \item The notation $a\precsim_z b$ means that there is a constant  $C_z>0$, depending on $z$, such that (s.t.) $a \le C_z \cdot b$. 
    \item The relation $a=C_z^{\pm 1} \cdot b$ means that there is a constant $C_z\ge 1$ s.t. $C_z^{-1} \cdot b\le a \le C_z \cdot b$.
    \item The notation $\mathbb{P}$ refers to a probability distribution on the probability space, where a random variable lives, and $\mathbb{E}$ denotes the expectation of a random variable.
\end{enumerate}

\begin{definition}[Dynamical point processes]\label{dynamicptprocess}\ \par
Let $(\mathcal{M},d)$ be a Riemannian manifold (with or without boundaries, connected or non-connected, compact or non-compact), $d$ is the Riemannian metric on $\mathcal{M}$ and $B_{r}(z)$ is a geodesic ball in $\mathcal{M}$ with radius $r$ and center $z \in \mathcal{M}$. We assume that dynamics $f: (\mathcal{M},\mu) \to (\mathcal{M}, \mu)$ is ergodic with respect to (w.r.t.) some invariant probability measure $\mu$.

Let $T>0$. Consider a dynamical point process on $[0,T]$, so that for any $t \in [0,T]$ 

\[N^{r,T, z}_{t}:=\sum_{i=0}^{ \frac{t}{\mu(B_r(z))}} 1_{B_r(z)} \circ f^i.\]

Thus the dynamical point process $N^{r,T, z}$ is a random counting measure on $[0,T]$.

\end{definition}

\begin{definition}[Poisson point processes]\label{poisson}\ \par
For any $T>0$, we say that $P$ is a Poisson point process on $[0,T]$ if
\begin{enumerate}
    \item $P$ is a random counting measure on $[0,T]$.
    \item $P(A)$ is a Poisson-distributed random variable for any Borel set $A \subseteq [0,T]$.
    \item If $A_1, A_2, \cdots, A_n \subseteq [0,T]$ are pairwise disjoint, then  $P(A_1), \cdots, P(A_n)$ are independent.
    \item $\mathbb{E}P(A)=\Leb(A)$ for any Borel set $A\subseteq [0,T]$.
\end{enumerate}

\end{definition}

\begin{definition}[Total variation norms of point processes]\label{totalnorm}\ \par
For any $T>0$ consider the $\sigma$-algebra $\mathcal{C}$ on the space of point processes on $[0,T]$, defined as 

\begin{equation}\label{sigmaalg}
\sigma \{\pi^{-1}_AB: A \subseteq [0,T],
\end{equation}
where $B \subseteq \mathbb{N}$ is a Borel set and
$\pi_A$ is an evaluation map defined on the space of counting measures, so that for any counting measure $N$

\[\pi_AN:=N(A).\]

Now we can define the total variation norm for the Poisson approximation of a dynamical point process as

\[d_{TV}(N^{r,T,z},P):=\sup_{C\in \mathcal{C}}|\mu(N^{r,T,z} \in C)-\mathbb{P}(P \in C)|\]

\end{definition}

\begin{definition}[Convergence rates of the Poisson approximations]\label{convergencerate}\ \par
Suppose that for any $T>0$ there exists a constant $a>0$ s.t. for almost every  $z\in \mathcal{M}$
\[d_{TV}(N^{r,T,z},P) \precsim_{T,z} r^a \to 0.\]

Then $a$ is called a rate of a Poisson approximation.

\end{definition}

We now turn to the definition of the Gibbs-Markov-Young structures \cite{Alves, Y, Y1}:

\begin{definition}[Gibbs-Markov-Young structures]\label{gibbs}\ \par

Introduce at first several notions concerning hyperbolic dynamics $f$ on the Riemannian manifolds  $(\mathcal{M},d)$.

\begin{enumerate}
    \item An embedded disk $\gamma^u$ is called an unstable manifold if for every $x,y \in \gamma^u$ 
    \[\lim_{n \to \infty}d(f^{-n}(x), f^{-n}(y))=0\]
    \item An embedded disk $\gamma^s$ is called a stable manifold if for every $x,y \in \gamma^s$ 
    \[\lim_{n \to \infty}d(f^{n}(x), f^{n}(y))=0\]
    \item $\Gamma^u:=\{\gamma^u\}$ is called a continuous family of $C^1$-unstable manifolds if there is a compact
set $K^s$, a unit disk $D^u$ in some $\mathbb{R}^n$ and a map $\phi^u: K^s \times D^u \to \mathcal{M}$ such that 
\begin{enumerate}
    \item $\gamma^u=\phi^u(\{x\}\times D^u)$ is an unstable manifold,
    \item $\phi^u$ maps $K^s \times D^u$ homeomorphically onto its image,
    \item $x \to \phi^u|_{\{x\} \times D^u}$ defines a continuous map from $K^s$ to $Emb^1(D^u, \mathcal{M})$, where $Emb^1(D^u, \mathcal{M})$ is the space of $C^1$-embeddings of $D^u$ into $\mathcal{M}$.
\end{enumerate}
\end{enumerate}

A continuous family of $C^1$-stable manifolds $\Gamma^s:=\{\gamma^s\}$ is defined similarly.

\item We say that a compact set $ \Lambda \subseteq \mathcal{M}$ has a
hyperbolic product structure if there exist  continuous families of stable manifolds $\Gamma^s:=\{\gamma^s\}$ and of unstable manifolds $\Gamma^u:=\{\gamma^u\}$ such that

\begin{enumerate}

    \item $\Lambda=(\cup \gamma^s) \cap (\cup \gamma^u)$,
    
    \item $\dim \gamma^s+\dim \gamma^u=\dim \mathcal{M}$,
    
    \item each $\gamma^s$ intersects each $\gamma^u$ at
    exactly one point,
    
    \item stable and unstable manifolds are transversal, and the angles between them are uniformly bounded away from 0.
\end{enumerate}

A subset $\Lambda_1 \subseteq \Lambda$ is called a s-subset if $\Lambda_1$ has a hyperbolic product structure and, moreover, the corresponding families of stable and unstable manifolds  $\Gamma^s_1$ 
and $\Gamma^u_1$ can be chosen so that  $\Gamma^s_1 \subseteq \Gamma^s$ and $\Gamma^u_1 = \Gamma^u$. 

Analogously, a subset $\Lambda_2 \subseteq \Lambda$
is called an $u$-subset if $\Lambda_2$ has a hyperbolic product structure and the families $\Gamma^s_2$
and $\Gamma^u_2$ can be chosen so that  $\Gamma^u_2 \subseteq \Gamma^u$ and $\Gamma^s_2 = \Gamma^s$.

\item For $x \in \Lambda$, denote by $\gamma^u(x)$ (resp. $\gamma^s(x)$) the element of $\Gamma^u$ (resp. $\Gamma^s$)
which contains $x$. Also, for each $n \ge 1$, 
denote by $(f^n)^u$ the restriction of the map $f^n$ to $\gamma^u$-disks, and by $\det D(f^n)^u$ denote the Jacobian of $(f^n)^u$.

We say that the set $\Lambda$ with  hyperbolic product structure has also a \textbf{Gibbs-Markov-Young structure} if  the following  properties are satisfied 

\begin{enumerate}
    \item  Lebesgue detectability:  there exists $\gamma \in \Gamma^u$ such that $\Leb_{\gamma}(\Lambda \bigcap \gamma) > 0$.
    \item Markovian property: there exist pairwise disjoint $s$-subsets $\Lambda_1,\Lambda_2, \cdots \subseteq \Lambda$ such that
    \begin{enumerate}
        \item $\Leb_{\gamma}(\Lambda \setminus (\bigcup_{i \ge 1}\Lambda_i))=0$ on each $\gamma \in \Gamma^u$,
        \item for each $i \in \mathbb{N}$ there exists such $R_i \in \mathbb{N}$ that $f^{R_i} (\Lambda_i)$ is an $u$-subset, and for all $x \in \Lambda_i$
        \[f^{R_i}(\gamma^s(x)) \subseteq \gamma^s(f^{R_i}(x))\]
        and
    \[f^{R_i}(\gamma^u(x)) \supseteq \gamma^u(f^{R_i}(x)).\]
    \end{enumerate}
   
Define now a return time function $R : \Lambda \to \mathbb{N}$ and a return function $f^R: \Lambda \to \Lambda$, so that for each $i \in \mathbb{N}$ 
\[R|_{\Lambda_i}=R_i
\text{ and }
f^R|_{\Lambda_i}=f^{R_i}|_{\Lambda_i}\]

The separation time $s(x, y)$ for $x, y \in \Lambda$ is defined as
\[s(x,y):=\min \{n \ge 0: (f^R)^n(x) \text{ and } (f^R)^n(y) \text{belong to the different sets } \Lambda_i\}.\]

We also assume that there are constants $C > 1, \alpha > 0$ and $0 < \beta < 1$, which depend only on $f$ and $\Lambda$, such that the following conditions hold

\item Polynomial contraction on stable leaves: 
\[\forall \gamma^s \in \Gamma^s, \forall x,y \in \gamma^s, \forall n \in \mathbb{N}, d(f^n(x), f^n(y)) \le \frac{C}{n^{\alpha}}.\]

\item Backward polynomial contraction on unstable leaves: 
\[\forall \gamma^u \in \Gamma^u, \forall x,y \in \gamma^u, \forall n \in \mathbb{N}, d(f^{-n}(x), f^{-n}(y)) \le \frac{C}{n^{\alpha}}.\]
\item Bounded distortion: $\forall \gamma \in \Gamma^u \text{ and } x, y \in \gamma \bigcap \Lambda_i$ for some $\Lambda_i$, 
\[\log \frac{\det D(f^R)^u(x)}{\det D(f^R)^u(y)} \le C \cdot \beta^{s(f^R(x), f^R(y))}.\]
\item Regularity of the stable foliations: for each $\gamma, \gamma'\in \Gamma^u$ denote 
\[\Theta_{\gamma, \gamma'}: \gamma' \bigcap \Lambda \to \gamma \bigcap \Lambda: x \to \gamma^s(x)\bigcap \gamma.\]

Then the following properties hold
\begin{enumerate}
    \item $\Theta_{\gamma, \gamma'}$ is absolutely continuous and $\forall x \in \gamma \bigcap \Lambda$
    \[\frac{d(\Theta_{\gamma, \gamma'})_{*}\Leb_{\gamma'}}{d\Leb_{\gamma}}(x)=\prod_{n \ge 0} \frac{\det Df^u(f^n(x))}{\det Df^u(f^n(\Theta_{\gamma, \gamma'}^{-1}x))},\]
    \[ \frac{d(\Theta_{\gamma, \gamma'})_{*}\Leb_{\gamma'}}{d\Leb_{\gamma}}(x)= C^{\pm 1},\]
    \item for any $x,y \in \gamma \bigcap \Lambda$ 
    \[\log \frac{\frac{d(\Theta_{\gamma, \gamma'})_{*}\Leb_{\gamma'}}{d\Leb_{\gamma}}(x)}{\frac{d(\Theta_{\gamma, \gamma'})_{*}\Leb_{\gamma'}}{d\Leb_{\gamma}}(y)} \le C \cdot \beta^{s(x,y)}.\]
    
\end{enumerate}
\item Aperiodicity: $\gcd(R_i, i \ge 1)= 1$. 
   \item A decay rate of the return times $R$: there exist $\xi>1$ and $\gamma\in \Gamma^u$ such that 
   \[\Leb_{\gamma}(R>n) \le \frac{C}{n^{\xi}}.\]
\end{enumerate}

\textbf{SRB measures}: 
Let the dynamics $f: (\mathcal{M}, \mu) \to (\mathcal{M}, \mu)$ has a Gibbs-Markov-Young structure. It was proved in \cite{Alves, Y, Y1} that there exists  an ergodic probability measure $\mu$  such that for any unstable manifold $\gamma^u$ (including $\Gamma^u$) $\mu_{\gamma^u} \ll \Leb_{\gamma^u}$, where $\mu_{\gamma^u}$ is the conditional measure of $\mu$ on the unstable manifold $\gamma^u$. Such $\mu$ is called a Sinai-Ruelle-Bowen measure (SRB measure).

\end{definition}

\begin{assumption}[Geometric regularities]\label{geoassumption}\ \par
Assume that $f: \mathcal{M}\to \mathcal{M}$ has the Gibbs-Markov-Young structure, as described in the Definition \ref{gibbs}, and
\begin{enumerate}
    
   \item $f$ is bijective and a local $C^1$-diffeomorphism on $\bigcup_{i \ge 1} \bigcup_{0 \le j < R_i} f^j(\Lambda_i)$.
   \item the following limit exists

  \[ \dim_H\mu:=\lim_{r \to 0} \frac{\log \mu(B_r(z))}{\log r}\]
   
 for almost every  $z\in \mathcal{M}$. Then  $dim_H\mu$ is  called a Hausdorff dimension of the measure $\mu$.
 
 \item $\alpha \cdot \dim_H\mu>1$, where $\alpha$ is the contraction rate of the (un)stable manifolds in the Definition \ref{gibbs}.
\end{enumerate}
\end{assumption}

\begin{assumption}[The first returns \& interior assumptions on $\Lambda$]\label{assumption}\ \par

    Let $f: \mathcal{M}\to \mathcal{M}$ has the Gibbs-Markov-Young structure, and there are constants $C>1$ and $\beta\in (0,1)$ (the same as in the Definition \ref{gibbs}) such that
    
\begin{enumerate}   
    \item $R: \Lambda \to \mathbb{N}$ is the first return time and $f^R: \Lambda \to \Lambda$ is the first return map for $\Lambda$.
    
    This implies  that $f^R$ is actually bijective (see the Lemma \ref{inducemapbi} below).
    \item  $\forall \gamma \in \Gamma^s, \gamma_1 \in \Gamma^u, \forall x, y \in \gamma \bigcap \Lambda, \forall x_1, y_1 \in \gamma_1 \bigcap \Lambda $,
    \[d((f^R)^n(x), (f^R)^n(y)) \le C \cdot \beta^n,\]
    and
    \[d((f^R)^{-n}(x_1), (f^R)^{-n}(y_1)) \le C \cdot \beta^n \cdot d(x_1,y_1).\]

    \item $\mu\{\interior{(\Lambda)}\}>0$ and $\mu(\partial \Lambda)=0$, where 
    \[\interior{\Lambda}:=\{x \in \Lambda: \exists r_x>0 \text{ s.t. } \mu(B_{r_x}(x)\setminus \Lambda)=0 \},\]
    \[\partial \Lambda:=\Lambda \setminus \interior{\Lambda}.\]
    In other words, $x \in \interior{\Lambda}$ if and only if $x \in \Lambda$ and there is a small ball $B_{r_x}(x)$ s.t. $B_{r_x}(x) \subseteq \Lambda$ $\mu$-almost surely. 
\end{enumerate}

\end{assumption}

Now we can  formulate the first main result of the paper.

\begin{theorem}[Convergence rates for functional Poisson laws \Romannum{1}]\label{thm}\ \par
Assume that the dynamics $f: (\mathcal{M}, \mu) \to (\mathcal{M}, \mu)$ has a Gibbs-Markov-Young structure (see the Definition \ref{gibbs}) and satisfies the Assumptions \ref{geoassumption} and \ref{assumption}. Then for any $T>0$ the following results hold 

\begin{enumerate}
    \item $\dim_H\mu \ge \dim {\gamma}^u$.
    \item If $\alpha>  \frac{2}{\dim \gamma^u}-\frac{1}{\dim_H \mu}$, then for almost every $z \in \mathcal{M}$
    \[d_{TV}(N^{r,z,T}, P) \precsim_{T,\xi,z} r^{a},\]
    where the constant $a>0$ depends on $\xi>1$, $\dim_H \mu, \dim \gamma^u $ and $\alpha$, but it does not depend upon $z\in \mathcal{M}$. The expression for $a$ can be found in the Lemma \ref{lastproofs}. 
    \item If $\mu \ll \Leb_{\mathcal{M}}$ and $\frac{d\mu}{d\Leb_{\mathcal{M}}} \in L^{\infty}_{loc}(\mathcal{M})$, then \[d_{TV}(N^{r,z,T}, P) \precsim_{T,\xi,z} r^{a},\]
    where $a>0$ depends on $\xi>1$, and $\dim_H \mu, \dim \gamma^u, \alpha$, but does not depend upon $z\in \mathcal{M}$. The expression for $a$ can be found in the Lemma \ref{lastproofs}. 
    
\end{enumerate}

\end{theorem}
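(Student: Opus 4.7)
My plan is to deduce Theorem \ref{thm} from the general functional Poisson law with explicit error term established in Section \ref{fpl} for systems admitting a Gibbs-Markov-Young structure. That abstract bound expresses $d_{TV}(N^{r,z,T}, P)$ as a sum of three terms: a \emph{short return} term, a \emph{corona} (boundary) term in the sense of Definition \ref{defshortcoro}, and a \emph{tail} term involving the return-time decay rate $\xi>1$. The remaining work is then to pull the metric ball $B_r(z) \subseteq \mathcal{M}$ back to a set $A_r$ inside the base $\Lambda$ of the Young tower, to bound each of the three terms for $A_r$ using only the polynomial contraction rate $\alpha$, $\dim \gamma^u$, and $\dim_H \mu$, and finally to optimize the resulting cutoffs. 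The inequality $\alpha > 2/\dim \gamma^u - 1/\dim_H \mu$ should be precisely what guarantees that after the optimization the exponent $a$ in Lemma \ref{lastproofs} is strictly positive.

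\textbf{Part (1).} Being an SRB measure, $\mu$ has conditional measures $\mu_{\gamma^u}$ on unstable manifolds which are absolutely continuous with respect to $\Leb_{\gamma^u}$ with locally bounded densities. Hence there is a constant $C(z)$ with $\mu_{\gamma^u}(B_r(z)\cap \gamma^u) \leq C(z) r^{\dim \gamma^u}$ for $\mu$-a.e.\ $z$ and all small $r$. Disintegrating $\mu$ along the continuous family $\Gamma^u$ and using the regularity of the stable holonomy (item (f) of Definition \ref{gibbs}) to bound the transverse factor uniformly, I obtain $\mu(B_r(z)) \leq C'(z) r^{\dim \gamma^u}$. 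Taking $\log/\log r$ and using the existence of $\dim_H \mu$ from Assumption \ref{geoassumption} yields $\dim_H \mu \geq \dim \gamma^u$.

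\textbf{Part (2).} For $\mu$-a.e.\ $z$, Poincar\'e recurrence combined with $\mu(\interior(\Lambda))>0$ (Assumption \ref{assumption}) produces an integer $n_0=n_0(z)$ for which $f^{-n_0}(z) \in \interior(\Lambda)$. For small $r$ the set $A_r := f^{-n_0}(B_r(z)) \cap \Lambda$ is a ``rectangle'' in $\Lambda$ carrying essentially the same mass, $\mu(A_r) = C_z^{\pm 1}\mu(B_r(z)) \approx r^{\dim_H \mu}$, and the backward polynomial contraction on $\Gamma^u$ controls its unstable diameter on each leaf. I then feed $A_r$ into the abstract bound of Section \ref{fpl}: (i) the short-return sum $\sum_{1\le k\le K}\mu(A_r \cap f^{-k}A_r)$ is estimated by reducing to a single unstable leaf via the hyperbolic product structure and applying the polynomial contraction, yielding a power of $r$ whose exponent improves with $\alpha$ and $\dim \gamma^u$; (ii) the corona/boundary term, namely the $\mu$-measure of an $\eps$-neighborhood of $\p A_r$, is controlled through the absolute continuity of $\mu_{\gamma^u}$ and the backward contraction, with an exponent proportional to $\alpha \cdot \dim \gamma^u$; (iii) the tail term is handled directly by $\Leb_{\gamma}(R>n) \precsim n^{-\xi}$. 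Optimizing $K$ and $\eps$ against $\mu(B_r(z)) \approx r^{\dim_H \mu}$ gives the bound $d_{TV}(N^{r,z,T}, P) \precsim_{T,\xi,z} r^a$, and the arithmetic shows that $a>0$ is equivalent to $\alpha > 2/\dim \gamma^u - 1/\dim_H \mu$.

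\textbf{Part (3) and main obstacle.} When $\mu \ll \Leb_{\mathcal{M}}$ with locally bounded density one has $\mu(B_r(z)) \approx r^{\dim \mathcal{M}}$, so the disintegration arguments in (i)--(ii) are replaced by direct volume estimates; the overall structure of the proof is the same but the computations are shorter and do not rely on fine dimension theory for $\mu$. The main obstacle common to (2) and (3) is the corona estimate: the $\eps$-neighborhood of $\p A_r$ must be bounded in $\mu$ \emph{using only} the contraction rate $\alpha$, with no further appeal to $\xi$---this is precisely the improvement over \cite{collet, haydndcds, pene} advertised in the introduction. Making that bound quantitative enough that the exponent extracted from Lemma \ref{lastproofs} remains positive under the hypothesis $\alpha > 2/\dim \gamma^u - 1/\dim_H \mu$, and tracking how pieces of $\p B_r(z)$ distort through $f^{-n_0}$ into $\p A_r$, is the technical heart I expect to occupy most of the proof.
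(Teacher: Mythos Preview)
Your overall architecture matches the paper's: reduce to the abstract error bound of Proposition \ref{rate}, pull $B_r(z)$ back into $\interior(\Lambda)$ via some $f^{-j_z}$, and then control the short-return term, the corona term, and the tail term separately. Part (1) and the corona discussion are essentially what the paper does (Lemma \ref{roughdim} and Proposition \ref{coronaestimate}).

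There is, however, a genuine gap in your treatment of the short-return term. You propose to bound $\sum_{1\le k\le K}\mu(A_r\cap f^{-k}A_r)$ by ``reducing to a single unstable leaf \dots and applying the polynomial contraction'', and you say the resulting exponent ``improves with $\alpha$''. This is not how the argument goes, and the polynomial contraction of $f$ alone is not enough. The paper's key maneuver (Lemma \ref{pullbackhorseshoe}) is to replace the short returns of $f$ by short returns of the \emph{first-return map} $f^R$ on $\Lambda$:
\[
\int_{B_r(z)} 1_{\bigcup_{1\le k\le p} f^{-k}B_r(z)}\,d\mu \ \le\ \mu(\Lambda)\int_{B_{C_z r}(z')} 1_{\bigcup_{1\le k\le p} (f^R)^{-k}B_{C_z r}(z')}\,d\mu_\Lambda.
\]
The point is that $f^R$ enjoys \emph{exponential} backward contraction on unstable leaves and \emph{exponential} decay of correlations (Assumption \ref{assumption}(2) and Lemma \ref{decorrelationcylinder}), which the original map $f$ does not. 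The short-return integral for $f^R$ is then split into three time windows $[1,N]$, $[N,n^{\epsilon'}]$, $[n^{\epsilon'},p]$ (Lemmas \ref{fixreturn}, \ref{veryshortreturn}, \ref{notveryshort}): the first vanishes by local injectivity, the second uses the exponential $u$-contraction of $(f^R)^{-1}$ via the recurrence Lemma \ref{recurrence}, and the third uses exponential mixing of $f^R$ on cylinders. None of these steps invokes $\alpha$; the resulting short-return exponents in Lemma \ref{lastshortreturnrate} depend only on $\dim\gamma^u$, $\dim_H\mu$, and the auxiliary $\epsilon$.

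Consequently you have also mislocated the role of $\alpha$. The hypothesis $\alpha>2/\dim\gamma^u-1/\dim_H\mu$ enters \emph{only} through the corona estimate (Lemma \ref{lastcoronarate}): the annulus width in Proposition \ref{rate} is $r^{(\dim_H\mu-\epsilon)^2\alpha/\dim_H\mu}$, and after slicing by almost-flat $\gamma^u$'s one gets the exponent $\tfrac{1}{2}\big[(1+\tfrac{(\dim_H\mu-\epsilon)^2\alpha}{\dim_H\mu})\dim\gamma^u-2\dim_H\mu-2\epsilon\big]$, whose positivity is exactly the stated inequality on $\alpha$. Your proposal would benefit from making the passage $f\rightsquigarrow f^R$ explicit and from recognizing that the short-return bound is driven by the exponential features of the induced map, not by the polynomial rate $\alpha$.
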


\begin{definition}[Induced measurable partitions]\label{mpartition}\ \par
We say a probability measure $\mu$ for the dynamics $f: \mathcal{M} \to \mathcal{M}$ has an induced measurable partition if there are constants $\beta \in (0,1), C>1$ (the same as in the Definition \ref{gibbs}) and $b>0$ such that
\begin{enumerate}
    \item There exists an area $U \subseteq \mathcal{M}$ with $\mu \{\interior{(U)}\}>0$, $\mu(\partial U)=0$.
    \item The area $U$ has a measurable  partition $\Theta:=\{\gamma^u(x)\}_{x \in U}$ (which could be different from $\Gamma^u$), such that the elements of $\Theta$ are  disjoint  connected unstable manifolds, so that $\mu$-almost surely $U=\bigsqcup_{x \in U} \gamma^u(x)$ and
    \[\mu_{U}(\cdot)=\int_U \mu_{\gamma^u(x)}(\cdot) d\mu_U(x),\]
    where $\mu_U:=\frac{\mu|_{U}}{\mu(U)}$ and $\mu_{\gamma^u(x)}$ is the conditional probability induced by $\mu$ on $\gamma^u(x)\in \Theta$.
    \item Each $\gamma^u\in \Theta$ is (at least $C^1$) smooth.
    \item All $\gamma^u\in \Theta$ have uniformly bounded sectional curvatures and the same dimensions.
    \item For any $\epsilon \in (0,1)$\[\mu_U \{x \in U: |\gamma^u(x)|< \epsilon\}\le C \cdot \epsilon^b,\] where $|\gamma^u(x)|$ is the radius of the largest inscribed geodesic ball  in $\gamma^u(x) \in \Theta$, and a 
    geodesic ball is defined with respect to the distance   $d_{\gamma^u(x)}$ on $\gamma^u(x)$, induced by the Riemannian metric. This property implies that almost every $\gamma^u(x)\in \Theta$ is non-degenerated, i.e. $|\gamma^u(x)|>0$ for almost every $x\in U$.
    \item For almost every point $x\in U$ we have $\mu_{\gamma^u(x)} \ll \Leb_{\gamma^u(x)}$, $\mu_{\gamma^u(x)}(\gamma^u(x))>0$, and  for any $y, z \in\gamma^u(x)$
    
    \[\frac{d\mu_{\gamma^u(x)}}{d\Leb_{\gamma^u(x)}}(y)=C^{\pm 1} \cdot \frac{d\mu_{\gamma^u(x)}}{d\Leb_{\gamma^u(x)}}(z).\]
    \item Denote by $\overline{R}$ the first return time to $U$ for $f$. Then the first return map $f^{\overline{R}}: U \to U$ has an exponential u-contraction, i.e.  $\forall \gamma^u \in \Theta, \forall x,y \in \gamma^u, n \ge 1$
    \[d((f^{\overline{R}})^{-n}(x),(f^{\overline{R}})^{-n}(y)) \le C \cdot \beta ^n \cdot d(x,y),\]
    
    and an exponential decay of correlation, i.e. for any $h \in \Lip(U)$
    \[\int h \circ (f^{\overline{R}})^n \cdot h d\mu_{U}-(\int h d\mu_U)^2= O(\beta^n) \cdot ||h||^2_{\Lip}. \]
    
\end{enumerate}
\end{definition}

Now we are able to formulate the second main result of the paper. 

\begin{theorem}[Convergence rates for the functional Poisson laws  \Romannum{2}]\label{thm2}\ \par
Assume that the dynamics $f: (\mathcal{M}, \mu) \to (\mathcal{M}, \mu)$ has the Gibbs-Markov-Young structure (see the Definition \ref{gibbs}), satisfies the Assumption \ref{geoassumption} and $\mu$ has an induced measurable partition (see the Definition \ref{mpartition}). Then for any $T>0$, the following results hold.

\begin{enumerate}
    \item $\dim_H \mu \ge \frac{ b}{b+\dim \gamma^u} \cdot \dim \gamma^u$.
    \item If $\alpha>  \frac{2}{\dim \gamma^u}\cdot \frac{b+\dim \gamma^u}{b}-\frac{1}{\dim_H \mu} $, then for almost every (a.e.) $z \in \mathcal{M}$,
    \[d_{TV}(N^{r,z,T}, P) \precsim_{T, \xi, z} r^{a},\]
    where a constant $a>0$ depends on $\xi>1$, $\dim_H \mu, \dim \gamma^u, b$ and $\alpha$, but it does not depend on $z\in \mathcal{M}$. The expression for $a$ can be found in the Lemma \ref{lastlastproof}. 
    \item If $\mu \ll \Leb_{\mathcal{M}}$ with $\frac{d\mu}{d\Leb_{\mathcal{M}}} \in L^{\infty}_{loc}(\mathcal{M})$, then \[d_{TV}(N^{r,z,T}, P) \precsim_{T, \xi,z} r^{a},\]
    where $a>0$ depends on $\xi>1, \dim_H \mu, \dim \gamma^u, \alpha$ and $b$, but it does not depend on $z\in \mathcal{M}$. The expression for $a$ can be found in the Lemma \ref{lastlastproof}. 
    
\end{enumerate}

\end{theorem}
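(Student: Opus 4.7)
The plan is to follow the architecture already set up for Theorem \ref{thm}, but replace the role of the first-return hyperbolic product set $\Lambda$ by the measurable-partition set $U$ of Definition \ref{mpartition}. The central idea remains the pull-back method: given a metric ball $B_r(z)$ in $\mathcal{M}$, track its preimages under $f$ until they land inside $U$, decompose over the partition $\Theta$, and use the regularity of the conditional measures $\mu_{\gamma^u(x)}$ together with the exponential decay of correlations for $f^{\overline{R}}$ to control the Poisson approximation via the Chen--Stein-type machinery of Section \ref{fpl}.

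To obtain the dimension bound $\dim_H\mu \ge \frac{b}{b+\dim\gamma^u}\dim\gamma^u$, I would estimate $\mu(B_r(z))$ from above for a.e.\ $z\in U$. Disintegrating via $\Theta$, each conditional $\mu_{\gamma^u(x)}$ is comparable to Lebesgue on $\gamma^u(x)$ by item (6) of Definition \ref{mpartition}, so on a leaf of radius at least $\epsilon$ the intersection with $B_r(z)$ has conditional mass $\lesssim (r/\epsilon)^{\dim\gamma^u}$ (with a suitable truncation when $r>\epsilon$). The $\mu_U$-mass of leaves with $|\gamma^u(x)|<\epsilon$ is $\lesssim \epsilon^b$ by item (5). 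Splitting at the threshold $\epsilon$ and optimizing $\epsilon = r^{\dim\gamma^u/(b+\dim\gamma^u)}$ yields the claimed dimensional inequality. The conclusions $\mu\ll \Leb_\mathcal{M}$ case is handled analogously, replacing the leafwise estimate by a direct ambient Lebesgue bound.

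For the convergence rate, I would invoke the general functional Poisson law of Section \ref{fpl} using $U$ in place of $\Lambda$ and $\Theta$ in place of $\Gamma^u|_\Lambda$. The input to that machinery is: (i) an exponentially mixing first-return map on the reference set (granted by item (7) of Definition \ref{mpartition}), (ii) uniform $u$-contraction of $(f^{\overline{R}})^{-n}$ (also item (7)), and (iii) good control of how $f^{-k}B_r(z)$ cuts the partition elements. For (iii), split each preimage piece into a \emph{good} part lying in leaves of radius $\ge \epsilon$, where conditional Lebesgue estimates of ball intersections are quantitative, and a \emph{bad} part of total $\mu_U$-measure $\lesssim \epsilon^b$. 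The good part feeds into a Poisson coupling with short-return corrections of order $r^{\alpha\cdot\dim_H\mu}$ (using the polynomial contraction $\alpha$ of (un)stable leaves from Definition \ref{gibbs}), while the bad part contributes $\epsilon^b$. Balancing $\epsilon$ against $r$, together with the decay $\Leb_\gamma(R>n)\le C n^{-\xi}$, produces a power-law bound $r^a$ with $a$ as described in Lemma \ref{lastlastproof}; the threshold condition $\alpha>\frac{2}{\dim\gamma^u}\cdot\frac{b+\dim\gamma^u}{b}-\frac{1}{\dim_H\mu}$ is exactly what ensures the balanced exponent is positive.

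The main obstacle will be step (iii): because $\Theta$ is not the Gibbs--Markov--Young partition $\Gamma^u$, the bounded-distortion and separation-time bookkeeping available for $\Lambda$ are not directly applicable on $U$. I expect to compensate by using only the weaker ingredients actually listed in Definition \ref{mpartition}, namely bounded conditional density ratios on each $\gamma^u(x)\in\Theta$, uniform $u$-contraction, exponential correlation decay for Lipschitz observables under $f^{\overline{R}}$, and the short-leaf tail bound with exponent $b$. Carefully threading these ingredients through the proof of Section \ref{fpl} while keeping track of how the truncation parameter $\epsilon$ interacts with both the Poisson coupling error and the preimage geometry is the technical heart; the resulting trade-off is what produces the extra factor $\frac{b+\dim\gamma^u}{b}$ in the hypothesis on $\alpha$, absent in Theorem \ref{thm}.
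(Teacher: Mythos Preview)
Your proposal is correct and follows essentially the same architecture as the paper's proof in Section~\ref{provethm2}: pull back to $U$, use the leaf-size threshold split (optimized at $\epsilon \approx r^{\dim\gamma^u/(b+\dim\gamma^u)}$) both for the dimension bound and for the recurrence/corona estimates, and replace the two-sided-cylinder correlation argument of Theorem~\ref{thm} by the Lipschitz-observable exponential decay granted in Definition~\ref{mpartition}. One small clarification: the polynomial contraction rate $\alpha$ does not enter through the short-return term as you suggest, but rather through the \emph{corona} estimate in $\mathcal{M}$ (via the tower-diameter bound feeding into Proposition~\ref{rate}); it is that corona bound, integrated against the leaf-size split over $\Theta$, that yields the exponent $\frac{(1+\alpha\dim_H\mu)\dim\gamma^u\, b}{2(b+\dim\gamma^u)}$ and hence the threshold $\alpha>\frac{2}{\dim\gamma^u}\cdot\frac{b+\dim\gamma^u}{b}-\frac{1}{\dim_H\mu}$.
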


\begin{remark}\label{remark}\ \par

\begin{enumerate}

\item The Assumption \ref{assumption} that $R$ is the first return time and $f^R$ is the first return map of $\Lambda$ is natural if the system has a Markov partition. Otherwise, we assume that the system has an area $U$ with an induced measurable partition (see the Definition \ref{mpartition}).

It will be shown in what follows that the Theorems \ref{thm} and \ref{thm2} work efficiently for various systems in applications (see the Section \ref{app}). Clearly, a key issue is a choice of the reference sets $\Lambda$ and $U$.

\item Under similar conditions to the Definition \ref{mpartition}, \cite{vaientizhang} proved that $\mu_{U}(\overline{R}>n)$ characterizes the optimal bound for the decay rates of correlations 
for sufficiently good observables supported on $U$ (see the Theorem 1.3 in \cite{vaientizhang}); \cite{melboune} uses operator renewal theory as a method to prove also sharp results on polynomial decay of correlations (see Theorem 3.1 in \cite{melboune}).  For many purposes the aperiodicity in the Definition \ref{gibbs} is irrelevant provided the dynamic $f: (\mathcal{M}, \mu) \to (\mathcal{M}, \mu) $ is mixing (see the Remark 2.2 in \cite{melboune}). Indeed all dynamical systems, which we consider in applications (section \ref{app}), do have a Markov partition. Also an ergodic completely hyperbolic (all Lyapunov exponents do not vanish) dynamical system is mixing. Therefore Young towers are mixing. So, to simplify the argument of our proof, we only assume the aperiodicity in the Gibbs-Markov-Young structures.

\item When dealing with applications, (see the section \ref{app}), it is always assumed that $\mu$ is a hyperbolic measure (i.e. the Lyapunov exponents do not vanish almost everywhere, see \cite{pesin}). Also, in applications most often there is an explicit natural invariant measure (sometimes called a physical measure). Therefore the Assumption \ref{geoassumption} that  $\dim_H\mu:=\lim_{r \to 0} \frac{\log \mu(B_r(z))}{\log r}$ is relevant to such approach. (However, another dimension conditions, like e.g., \cite{haydncmp}, could be used as well).

\item If an SRB measure $\mu$ is explicitly known, then the Poisson approximations are usually well understood  \cite{demers, haydncmp, nomixing, dolgopyat, nicol}. However, if it is not the case, then often essential difficulties arise, e.g. for  intermittent solenoid attractors, Axiom A attractors, etc (see \cite{pene}). The Theorem \ref{thm} provides an useful, easy to verify, criterion. Indeed, if $\alpha>\frac{2}{\dim \gamma^u}$, then there is no need to know $\dim_H \mu$. Moreover, estimations of the corresponding convergence rates can be obtained as well.

\item According to the Theorems \ref{thm} and \ref{thm2}, it is only required that $\xi>1$. In fact, it is a minimal requirement for the existence of the SRB measures (see \cite{Alves}).

Observe that for our approach only the contraction rate $O(\frac{1}{n^{\alpha}})$  along (un)stable manifolds matters, which is different from the ones employed in \cite{pene, haydncmp, collet}.
\item If $f$ has a sufficiently good regularity, then
$\dim_H \mu \ge \dim \gamma^u$ \cite{pesin,ledra1,ledra2}. Our only assumption is that $f$ is a local $C^1$-diffeomorphism. Observe that we do not even assume that $\mathcal{M}$  is a compact manifold (see the Definition \ref{gibbs} and the Assumption \ref{geoassumption}). Therefore the Theorem \ref{thm2} does not provide a good lower bound for $\dim_H \mu$. It is worthwhile to mention also that for all applications considered below  (see the section \ref{app}) the relation $\dim_H\mu \ge \dim \gamma^u$ always holds.
\end{enumerate}

\end{remark}

\begin{corollary}[The first hitting and survival probabilities]\label{cor}\ \par
Under the same conditions as in the Theorem \ref{thm} or \ref{thm2} consider the first hitting moment of time $\tau_{B_r(z)}(x):=\inf\{n \ge 0: f^n(x) \in B_r(z)\}$. Then for almost every $z \in \mathcal{M}$, any $T>0$ and any $t\le T$ the following relation holds for the first hitting probability 
\begin{equation}\label{escape1}
    \mu(\tau_{B_r(z)}>\frac{t}{\mu(B_r(z))})-e^{-t}=O_{T,\xi, z}(r^a).
\end{equation}
Particularly, the survival probability at time $T$ can be approximated as
\[\mu(\tau_{B_r(z)}>T)=e^{-T\cdot \mu(B_r(z))}+\min\{O_{T,\xi,z}(r^a),2\}.\]
Moreover, the following limiting relations hold 

\begin{equation}\label{escape}
    \lim_{T \to \infty}\lim_{r \to 0} \frac{\log \mu(\tau_{B_r(z)}>T )}{-T\cdot \mu(B_r(z))}=1, 
\end{equation}

and for any $ T>0$

\begin{equation}\label{escape2}
    \lim_{r\to 0} \frac{\log \mu(\tau_{B_r(z)}>\frac{T}{\mu(B_r(z))})}{-T}=1.
\end{equation}

\end{corollary}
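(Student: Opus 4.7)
The plan is to translate each event in the corollary into one about the dynamical point process $N^{r,z,T}$ and then invoke the total variation bound $d_{TV}(N^{r,z,T},P) \precsim_{T,\xi,z} r^a$ from Theorem~\ref{thm} (or Theorem~\ref{thm2}). First I would record the tautology
$$\{\tau_{B_r(z)} > t/\mu(B_r(z))\} = \{N^{r,z,T}([0,t]) = 0\}, \qquad t \in [0,T],$$
which holds because both sides assert $f^i(x)\notin B_r(z)$ for every integer $0 \le i \le t/\mu(B_r(z))$. The cylinder $\pi_{[0,t]}^{-1}\{0\}$ lies in the $\sigma$-algebra $\mathcal{C}$ of Definition~\ref{totalnorm} and $\mathbb{P}(P([0,t])=0)=e^{-t}$ for the unit-rate Poisson process, so Definition~\ref{totalnorm} yields
$$\bigl|\mu(\tau_{B_r(z)} > t/\mu(B_r(z))) - e^{-t}\bigr| \le d_{TV}(N^{r,z,T},P) \precsim_{T,\xi,z} r^a,$$
which is exactly (\ref{escape1}).

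Next I would deduce the remaining items from (\ref{escape1}). For the survival probability at time $T$, I would specialize to $t:=T\mu(B_r(z))\in[0,T]$, giving $\mu(\tau_{B_r(z)}>T)=e^{-T\mu(B_r(z))}+O_{T,\xi,z}(r^a)$; the trivial bound $|\mu(\cdot)-e^{-\cdot}|\le 2$ absorbs the $\min\{\cdot,2\}$ clause. For (\ref{escape2}), taking $t:=T$ in (\ref{escape1}) gives $\mu(\tau_{B_r(z)}>T/\mu(B_r(z)))=e^{-T}+O_{T,\xi,z}(r^a)$; taking logarithms and dividing by $-T$,
$$\frac{\log\mu(\tau_{B_r(z)}>T/\mu(B_r(z)))}{-T} = 1 + \frac{\log(1+O(r^a e^T))}{-T} \longrightarrow 1 \quad (r\to 0),$$
since $T$ is fixed.

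The iterated limit (\ref{escape}) would be the most delicate, because both $\log\mu(\tau_{B_r(z)}>T)$ and $T\mu(B_r(z))$ vanish in the inner limit $r\to 0$. Here I would combine the survival formula $\mu(\tau_{B_r(z)}>T) = e^{-T\mu(B_r(z))}(1 + O(r^a e^{T\mu(B_r(z))}))$ with the elementary Bonferroni sandwich
$$1-(T+1)\mu(B_r(z)) \le \mu(\tau_{B_r(z)}>T) \le 1-(T+1)\mu(B_r(z)) + O(T^2\mu(B_r(z))^2),$$
whose lower bound is the union bound and whose upper bound rests on $\mu(B_r(z)\cap f^{-j}B_r(z))\precsim\mu(B_r(z))^2$ for $\mu$-a.e.\ $z$ and $j\ge 1$ (a consequence of the mixing of the Young-tower dynamics). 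Taking logarithms and dividing by $-T\mu(B_r(z))$ shows that the inner limit $\lim_{r\to 0}$ equals $(T+1)/T$, and the outer $\lim_{T\to\infty}$ then squeezes this to $1$.

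The main obstacle is precisely this last step: the TV error $O(r^a)$ is not automatically smaller than $T\mu(B_r(z))$ when $T$ is fixed, so the direct application of (\ref{escape1}) is not by itself sufficient for (\ref{escape}); the Bonferroni/short-time estimate is what handles the leading-order correction, and the outer limit $T\to\infty$ then eliminates the residual $1/T$. All other items of the corollary follow mechanically from (\ref{escape1}).
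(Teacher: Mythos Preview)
Your argument for (\ref{escape1}), for the survival probability, and for (\ref{escape2}) is exactly what the paper does: rewrite the hitting event as $\{N^{r,z,T}([0,t])=0\}$ and read off the bound from the total variation estimate.

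The only substantive difference is in the treatment of the iterated limit (\ref{escape}). You correctly diagnose that the TV bound $O(r^a)$ alone is useless there because $T\mu(B_r(z))\to 0$, and you propose a Bonferroni sandwich with the cross terms controlled by $\mu(B_r(z)\cap f^{-j}B_r(z))\precsim\mu(B_r(z))^2$, attributing this to ``mixing of the Young-tower dynamics.'' That justification is slightly off: mixing controls the cross terms only for large $j$, not uniformly in $1\le j\le T$. The paper's argument is cleaner and bypasses the issue entirely. Since $f$ is a local $C^1$-diffeomorphism (Assumption~\ref{geoassumption}) and the set of periodic points has $\mu$-measure zero, for $\mu$-a.e.\ $z$ the points $z,f(z),\dots,f^T(z)$ are distinct; hence for $r$ small enough the sets $f^{-i}B_r(z)$, $0\le i\le T$, are pairwise disjoint almost surely, giving the \emph{exact} formula
\[
\mu(\tau_{B_r(z)}>T)=1-(T+1)\mu(B_r(z)).
\]
Taking logarithms and dividing by $-T\mu(B_r(z))$ then yields $(T+1)/T$ in the inner limit directly, with no inclusion--exclusion remainder to control. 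Your approach reaches the same $(T+1)/T$ and is salvageable (indeed, the very non-periodicity argument above shows your cross terms vanish identically for small $r$), but the paper's route is more elementary: it uses only continuity and aperiodicity, not any quantitative mixing.
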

\begin{proof}
Clearly $\mu(\tau_{B_r(z)}>\frac{t}{\mu(B_r(z))})=\mu(N^{r,T,z}[0,\frac{t}{\mu(B_r(z))}]=0)$. Apply now a relevant one of the Theorems \ref{thm} and \ref{thm2}. Then $O_{T,\xi,z}(r^a)$ is the error term with the convergence rate $a$. For the survival probability at time $T$ take $t=T \cdot \mu(B_r(z))$. The relation  (\ref{escape1}) implies (\ref{escape2}). According to the Assumption \ref{geoassumption}, $f$ is a local diffeomorphism almost everywhere. Besides, the set of all periodic points has measure zero. Hence $\mu(\tau_{B_r(z)}>T)=1-\mu(\bigcup_{i\le T}f^{-i}B_r(z) )=1-(T+1) \cdot \mu(B_r(z))$, if $r$ is small enough. Therefore (\ref{escape}) holds.
\end{proof}

\section{Functional Poisson Limit Laws}\label{fpl}

This section deals with the functional Poisson limit laws and with convergence rates of $d_{TV}(N^{r,z,T}, P)$ for the dynamics $f$ described in the Definition \ref{gibbs} and satisfying the  Assumption \ref{geoassumption} only. For any $n \ge 0, I \subseteq[0,n]$, let

\[X_i:=1_{B_r(z)} \circ f^i, \text{ } X_I:=\sum_{i \in I} 1_{B_r(z)} \circ f^i.\]

Denote by $\{\hat{X}_i\}_{i\ge 0}$ the i.i.d. random variables, such that for each $i \ge 0$
\[X_i\overset{d}{=}\hat{X}_i.\]

Let  
\[\hat{X_I}:=\sum_{i \in I}\hat{X}_i.\]

Observe that generally $\hat{X_I}$ and ${X_I}$ are not identically distributed. 

\begin{lemma}\label{poissonappro}
For any disjoint sets $I_1, I_2, \cdots, I_m \subseteq [0,n]$ and any integer $p\in (0,n)$,

\[d_{TV}((X_{I_1}, \cdots, X_{I_m}),(\hat{X}_{I_1}, \cdots, \hat{X}_{I_m})) \]
  \[\le 2\cdot  \sum_{0 \le l \le n-p}\sup_{h\in[0,1]} | \mathbb{E}[1_{X_0=1} \cdot  h(X_p,\cdots, X_{n-l})]-\mathbb{E}1_{X_0=1} \cdot \mathbb{E}[h(X_p,\cdots, X_{n-l})]|\]
\[+4(n-p)\cdot \mathbb{E} 1_{X_0=1} \cdot 1_{\sum_{1\le j \le p-1}X_j\ge 1}+4p \cdot (n-p) \cdot \mu(B_{r}(z))^2+4 p \cdot \mu(B_r(z)),\]
where $h$ is a measurable function with values in $[0,1]$ . 

Observe that we obtain a slightly better error bound here, compared to the Theorem 2.1 in \cite{collet}.
\end{lemma}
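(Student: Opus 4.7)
The plan is to establish the bound by a telescoping swap of the dependent $X_k$ for the independent $\hat X_k$, combined with a ``gap splitting'' of the resulting future-dependent factor.

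First, I would use the standard identity $d_{TV}((X_{I_1},\ldots,X_{I_m}),(\hat X_{I_1},\ldots,\hat X_{I_m}))=\sup_{g}|\mathbb{E}g(X_0,\ldots,X_n)-\mathbb{E}g(\hat X_0,\ldots,\hat X_n)|$, where the supremum runs over $\{0,1\}$-valued functions of the form $g(x_0,\ldots,x_n)=h\bigl(\sum_{i\in I_1}x_i,\ldots,\sum_{i\in I_m}x_i\bigr)$ with $h:\mathbb{N}^m\to\{0,1\}$. Then I would telescope via the hybrid vectors $V^{(k)}:=(\hat X_0,\ldots,\hat X_{k-1},X_k,\ldots,X_n)$ for $k=0,\ldots,n+1$, so that $V^{(0)}=(X_0,\ldots,X_n)$ and $V^{(n+1)}=(\hat X_0,\ldots,\hat X_n)$. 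Since $\hat X_k$ is Bernoulli with mean $\mu(B_r(z))$ and independent of everything, conditioning on $\hat X_0,\ldots,\hat X_{k-1}$ and using the Bernoulli structure of $X_k$ reduces the $k$-th telescope term to
\begin{equation*}
\mathbb{E}\bigl[(1_{X_k=1}-\mu(B_r(z)))\bigl(\phi_k(1,X_{k+1},\ldots,X_n)-\phi_k(0,X_{k+1},\ldots,X_n)\bigr)\bigr],
\end{equation*}
where $\phi_k\in[0,1]$ is $g$ with its first $k$ coordinates frozen at the realized values of $\hat X_0,\ldots,\hat X_{k-1}$, so the bracketed difference lies in $[-1,1]$. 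Invoking stationarity of $\mu$ lets me shift indices so that $X_k$ becomes $X_0$ and the future becomes $(X_1,\ldots,X_{n-k})$.

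Second, for $k\le n-p$ I would split the future factor $\psi(X_1,\ldots,X_{n-k})\in[-1,1]$ as $\tilde\psi(X_p,\ldots,X_{n-k})+R(X_1,\ldots,X_{n-k})$, where $\tilde\psi:=\psi|_{X_1=\cdots=X_{p-1}=0}$ depends only on the ``gapped'' future (so $\tilde\psi\in[-1,1]$), and $R$ vanishes on $\{X_1=\cdots=X_{p-1}=0\}$, whence $|R|\le 2\cdot 1_{\sum_{1\le j\le p-1}X_j\ge 1}$. Writing $\tilde\psi=\tilde\psi^+-\tilde\psi^-$ with $\tilde\psi^{\pm}\in[0,1]$, the $\tilde\psi$-contribution is dominated by twice the supremum appearing in the first term of the lemma. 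The remainder contributes at most
\begin{equation*}
2\mathbb{E}\bigl[|1_{X_0=1}-\mu(B_r(z))|\cdot 1_{\sum_{1\le j\le p-1}X_j\ge 1}\bigr]\le 2\mathbb{E}[1_{X_0=1}\cdot 1_{\sum_{1\le j\le p-1}X_j\ge 1}]+2p\mu(B_r(z))^2,
\end{equation*}
using $|1_{X_0=1}-\mu(B_r(z))|\le 1_{X_0=1}+\mu(B_r(z))$ and the union bound $\mathbb{P}(\sum_{j=1}^{p-1}X_j\ge 1)\le(p-1)\mu(B_r(z))$. For the $p$ boundary indices $k>n-p$ there is no room for a gap of length $p$, so I would use the crude estimate $\mathbb{E}|1_{X_k=1}-\mu(B_r(z))|\le 2\mu(B_r(z))$ per swap, contributing $2p\mu(B_r(z))$ in total.

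The main obstacle is tracking the constants precisely through the two splits: the factor $2$ in the mixing term comes from the $\tilde\psi=\tilde\psi^+-\tilde\psi^-$ decomposition, while the factor $4$ in the remaining three terms combines the $|R|\le 2\cdot 1_{\cdots}$ estimate with $|1_{X_0=1}-\mu(B_r(z))|\le 1_{X_0=1}+\mu(B_r(z))$ and absorbs the harmless $+1$ in $n-p+1\le 2(n-p)$ (valid since $p<n$). A secondary care point is that after freezing $\hat X_0,\ldots,\hat X_{k-1}$ the functional $\phi_k$ remains $[0,1]$-valued uniformly in the frozen values, which is automatic since $g\in[0,1]$, so the outer expectation over the $\hat X$'s is harmless. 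Once these bookkeeping steps are in place, the remainder is linearity of expectation, stationarity, and a union bound, and the stated inequality follows by summing the per-$k$ bounds over $k=0,\ldots,n$.
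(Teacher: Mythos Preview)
Your proposal is correct and follows essentially the same route as the paper: telescoping replacement of $X_k$ by $\hat X_k$, reduction via the Bernoulli structure to a centered expectation against a $[-1,1]$-valued functional of the future, the gap trick setting $X_1,\ldots,X_{p-1}$ to zero with remainder controlled by $2\cdot 1_{\sum_{1\le j\le p-1}X_j\ge 1}$, stationarity to shift indices, and a crude bound for the last $p$ swaps. The only cosmetic difference is the order in which the factor $2$ appears---the paper first passes to the correlation form $\mathbb{E}[1_{X_l=1}h]-\mathbb{E}1_{X_l=1}\,\mathbb{E}h$ via the $h(0,\cdot)/h(1,\cdot)$ split and then inserts the gap, whereas you gap first and split $\tilde\psi=\tilde\psi^{+}-\tilde\psi^{-}$ at the end; the constants match (your boundary contribution $2p\,\mu(B_r(z))$ is in fact slightly sharper than the stated $4p\,\mu(B_r(z))$).
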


\begin{proof}
 By the definition of the total variation norm
\[d_{TV}((X_{I_1}, \cdots, X_{I_m}),(\hat{X}_{I_1}, \cdots, \hat{X}_{I_m})) =\sup_{h\in[0,1]} |\mathbb{E}h(X_{I_1}, \cdots, X_{I_m})-h(\hat{X}_{I_1}, \cdots, \hat{X}_{I_m})|\]
\[\le \sup_{h\in[0,1]} |\mathbb{E}h(X_{1},\cdots, X_n)- h(\hat{X}_{1}, \cdots, \hat{X}_{n})|= d_{TV}((X_{1}, \cdots, X_{n}),(\hat{X}_{1}, \cdots, \hat{X}_{n})).\]

Hence, it suffices to estimate
\[d_{TV}((X_{1}, \cdots, X_{n}),(\hat{X}_{1}, \cdots, \hat{X}_{n}))=\sup_{h\in[0,1]} |\mathbb{E}h(X_{1},\cdots, X_{n})- h(\hat{X}_{1}, \cdots, \hat{X}_{n})| \]
\[= \sup_{h\in[0,1]}|\sum_{0 \le l \le n} \mathbb{E} h(\hat{X}_1, \cdots, \hat{X}_{l-1},X_l, \cdots, X_n)- \mathbb{E} h(\hat{X}_1, \cdots, \hat{X}_{l-1},\hat{X}_l, \cdots, X_n)|\]
\[\le \sup_{h\in[0,1]}|\sum_{0 \le l \le n}  \mathbb{E} h_l(X_l,,X_{l+1}, \cdots,X_n)-  \mathbb{E}h_l(\hat{X}_l, X_{l+1}, \cdots, X_n)|,\]
here $h_l(\sbullet[.75]):=h(\hat{X}_1, \cdots,\hat{X}_{l-1}, \sbullet[.75])$. Since $\hat{X}_1, \cdots, \hat{X}_{l-1}$ are independent of other random variables, without loss of generality, $h_l$ can be regarded as a function which does not depend on $\hat{X}_1, \cdots, \hat{X}_{l-1}$. Note that $X_l \overset{d}{=} \hat{X}_l$ are $\{0,1\}$-valued random variables. Thus
\[\mathbb{E} h_l(X_l,,X_{l+1}, \cdots,X_n)-  \mathbb{E}h_l(\hat{X}_l, X_{l+1}, \cdots, X_n)|\]
\[=|\mathbb{E}[ 1_{X_l=0} \cdot h_l(0,X_{l+1}, \cdots,X_n)+\mathbb{E}[ 1_{X_l=1} \cdot h_l(1, X_{l+1}, \cdots,X_n)]\]
\[-\mathbb{E}1_{\hat{X}_l=0} \cdot \mathbb{E}h_l(0,X_{l+1},\cdots, X_n)- \mathbb{E}1_{\hat{X}_l=1} \cdot \mathbb{E}h_l(1,X_{l+1},\cdots, X_n)|\]
\[=|\mathbb{E}[ 1_{X_l=1} \cdot h_l(0,X_{l+1}, \cdots,X_n)+\mathbb{E}[ 1_{X_l=1} \cdot h_l(1, X_{l+1}, \cdots,X_n)]\]
\[-\mathbb{E}1_{\hat{X}_l=1} \cdot \mathbb{E}h_l(0,X_{l+1},\cdots, X_n)- \mathbb{E}1_{\hat{X}_l=1} \cdot \mathbb{E}h_l(1,X_{l+1},\cdots, X_n)|\]
\[\le 2 \cdot \sup_{h\in[0,1]} |\mathbb{E}[ 1_{X_l=1} \cdot h(X_{l+1}, \cdots, X_n)]-\mathbb{E}1_{X_l=1} \cdot \mathbb{E}h(X_{l+1}, \cdots, X_n)|.\]

Therefore,
\[d_{TV}((X_{1}, \cdots, X_{n}),(\hat{X}_{1}, \cdots, \hat{X}_{n}))\]
\begin{equation}\label{sum}
 \le 2 \sum_{0 \le l \le n} \sup_{h\in[0,1]} |\mathbb{E}[ 1_{X_l=1} \cdot h(X_{l+1}, \cdots, X_n)]-\mathbb{E}1_{X_l=1} \cdot \mathbb{E}h(X_{l+1}, \cdots, X_n)|.   
\end{equation}

We will first estimate the terms with $l\le n-p$ in the sum (\ref{sum}).
\[\mathbb{E}[ 1_{X_l=1} \cdot h(X_{l+1},\cdots, X_n)]-\mathbb{E}1_{X_l=1} \cdot \mathbb{E}h(X_{l+1}, \cdots, X_n)\]
\[\le \mathbb{E}[ 1_{X_l=1} \cdot h(X_{l+1},\cdots, X_n)]-\mathbb{E}[ 1_{X_l=1} \cdot h(0,\cdots, 0,X_{l+p},\cdots,X_n)]\]
\[+\mathbb{E}[ 1_{X_l=1} \cdot h(0,\cdots, 0,X_{l+p},\cdots,X_n)]-\mathbb{E}1_{X_l=1} \cdot \mathbb{E}h(X_{l+1}, \cdots, X_n)\]
\[+\mathbb{E}1_{X_l=1} \cdot \mathbb{E}h(0,\cdots, 0,X_{l+p},\cdots,X_n)-\mathbb{E}1_{X_l=1} \cdot \mathbb{E}h(0,\cdots, 0,X_{l+p},\cdots,X_n)\]
\[\le \mathbb{E}\big\{ 1_{X_l=1} \cdot [h(X_{l+1},\cdots, X_n)-h(0,\cdots, 0,X_{l+p},\cdots,X_n)]\big\}\]
\[+\mathbb{E}1_{X_l=1} \cdot \mathbb{E}[h(0,\cdots, 0,X_{l+p},\cdots,X_n)-h(X_{l+1}, \cdots, X_n)]\]
\[+\mathbb{E}[ 1_{X_l=1} \cdot h(0,\cdots, 0,X_{l+p},\cdots,X_n)]-\mathbb{E}1_{X_l=1} \cdot \mathbb{E}h(0,\cdots, 0,X_{l+p},\cdots,X_n)\]

Observe that 
\[ |h(X_{l+1}, \cdots, X_n)- h(0, \cdots, 0,X_{l+p}, \cdots, X_n)| \le 2 \cdot 1_{\sum_{l+1\le j \le l+p-1}X_j\ge 1}.\]
Now, because of the stationarity of $(X_i)_{i\ge 0}$, we can continue the estimates as 
\[\le | \mathbb{E}[1_{X_l=1} \cdot  h(0, \cdots, 0,X_{l+p}, \cdots, X_n)]-\mathbb{E}1_{X_l=1} \cdot \mathbb{E}h(0, \cdots, 0,X_{l+p}, \cdots, X_n)|\]
\[+2|\mathbb{E} 1_{X_l=1} \cdot 1_{\sum_{l+1\le j \le l+p-1}X_j\ge 1}|+2\mathbb{E}1_{X_l=1} \cdot \mathbb{E}1_{\sum_{l+1\le j \le l+p-1}X_j\ge 1}\]
\[\le | \mathbb{E}[1_{X_l=1} \cdot  h(0, \cdots, 0,X_{l+p}, \cdots, X_n)]-\mathbb{E}1_{X_l=1} \cdot \mathbb{E}h(0, \cdots, 0,X_{l+p}, \cdots, X_n)|\]
\[+2|\mathbb{E} 1_{X_0=1} \cdot 1_{\sum_{1\le j \le p-1}X_j\ge 1}|+2\mathbb{E}1_{X_0=1} \cdot \mathbb{E}1_{\sum_{1\le j \le p-1}X_j\ge 1}.\]

Note that $1_{\sum_{1\le j \le p-1}X_j\ge 1}=1_{\cup_{1 \le j \le p-1} f^{-j} B_{r}(z)}$.  Hence, we can continue the sequence of the inequalities above as 
\[\le | \mathbb{E}[1_{X_l=1} \cdot  h(0, \cdots, 0,X_{l+p}, \cdots, X_n)]-\mathbb{E}1_{X_l=1} \cdot \mathbb{E}h(0, \cdots, 0,X_{l+p}, \cdots, X_n)|\]
\[+2|\mathbb{E} 1_{X_0=1} \cdot 1_{\sum_{1\le j \le p-1}X_j\ge 1}|+2 (p-1)\mu(B_{r}(z))^2.\]

Therefore for the terms with $l\le n-p$ in the sum (\ref{sum}) we have 

\[\mathbb{E}[ 1_{X_l=1} \cdot h(X_{l+1}, \cdots, X_n)]-\mathbb{E}1_{X_l=1} \cdot \mathbb{E}h(X_{l+1}, \cdots, X_n)\]
\[\le | \mathbb{E}[1_{X_l=1} \cdot  h(0, \cdots, 0,X_{l+p}, \cdots, X_n)]-\mathbb{E}1_{X_l=1} \cdot \mathbb{E}h(0, \cdots, 0,X_{l+p}, \cdots, X_n)|\]
\[+2\mathbb{E} 1_{X_0=1} \cdot 1_{\sum_{1\le j \le p-1}X_j\ge 1}+2p \cdot \mu(B_{r}(z))^2.\]

Consider now the terms with $l> n-p$ in the sum (\ref{sum}). Since $||h||_{\infty} \le 1$, then
 \[\mathbb{E}[ 1_{X_l=1} \cdot h(X_{l+1},\cdots, X_n)]-\mathbb{E}1_{X_l=1} \cdot \mathbb{E}h(X_{l+1},\cdots, X_n) \le 2 \mu(B_r(z)).\]
 
 Therefore 
 \[(\ref{sum})=2 \cdot \sum_{0 \le l \le n} \sup_{h\in[0,1]}|\mathbb{E}[ 1_{X_l=1} \cdot h(X_{l+1},\cdots, X_n)]-\mathbb{E}1_{X_l=1} \cdot \mathbb{E}h(X_{l+1},\cdots,X_n)|\]
 \[\le 2\cdot \sum_{0 \le l \le n-p} \sup_{h\in[0,1]} |\mathbb{E}[1_{X_l=1} \cdot  h(X_{l+p},\cdots, X_n)]-\mathbb{E}1_{X_l=1} \cdot \mathbb{E}[h(X_{l+p},\cdots, X_n)]|\]
\[+4(n-p)\cdot \mathbb{E} 1_{X_0=1} \cdot 1_{\sum_{1\le j \le p-1}X_j\ge 1}+4p \cdot (n-p) \cdot \mu(B_{r}(z))^2+4 p \cdot \mu(B_r(z)).\]

Again, by making use of the stationarity of $(X_i)_{i \ge 0}$, the last expression above can be estimated as
\[\le 2\cdot  \sum_{0 \le l \le n-p}\sup_{h\in[0,1]} | \mathbb{E}[1_{X_0=1} \cdot  h(X_p,\cdots, X_{n-l})]-\mathbb{E}1_{X_0=1} \cdot \mathbb{E}[h(X_p,\cdots, X_{n-l})]|\]
\[+4(n-p)\cdot \mathbb{E} 1_{X_0=1} \cdot 1_{\sum_{1\le j \le p-1}X_j\ge 1}+4p \cdot (n-p) \cdot \mu(B_{r}(z))^2+4 p \cdot \mu(B_r(z)).\]
\end{proof}

For further estimates we will need the following lemma.

\begin{lemma}[Hyperbolic towers, see \cite{pene}, page 2609]\label{towerresult}\ \par

Define a tower $\Delta$ and a map $F : \Delta \to \Delta$ as 

\[\Delta:=\{(x,l) \in \Lambda \times \mathbb{N}: 0 \le l < R(x) \},\]

\[F(x,l):=\begin{cases}
 (x,l+1),      &l < R(x)-1\\
(f^R(x),0),  & l=R(x)-1\\
\end{cases}.\]

The equivalence relation $\sim$ on $\Lambda$ is then
\[x \sim y \text{ if and only if } x,y \in \gamma^s \text{ for some } \gamma^s \in \Gamma^s.\]

Now we can define a quotient tower  $\widetilde{\Delta}:=\Delta/\sim$, a quotient Gibbs-Markov-Young product structure $\widetilde{\Lambda}:=\Lambda/\sim$, quotient maps $\widetilde{F} : \widetilde{\Delta} \to \widetilde{\Delta}, \widetilde{f^R}: \widetilde{\Lambda} \to \widetilde{\Lambda}$, and canonical projections $\widetilde{\pi}_{\Delta}:\Delta \to \widetilde{\Delta}$ and $\widetilde{\pi}_{\Lambda}:\Lambda \to \widetilde{\Lambda}$.

At first, we introduce a family of partitions $(\mathcal{Q}_k)_{k \ge 0}$ of $\Delta$ as 

\[\mathcal{Q}_0:=\{\Lambda_i \times \{l\}, i \ge 1, l < R_i\}, \mathcal{Q}_k:= \bigvee_{0 \le i \le k} F^{-i} \mathcal{Q}_0.\]

Next, a projection $\pi: \Delta \to M $ is defined as 
\[\pi(x,l):=f^l(x).\]

Then there exists a constant $C>1$ (the same as in the Definition \ref{gibbs}) such that for any $Q \in \mathcal{Q}_{2k}$ 
\begin{equation}\label{coronadiam}
   \diam (\pi \circ F^k(Q)) \le \frac{C}{k^{\alpha}}. 
\end{equation}

There exist also probability measures $\mu_{\Delta}, \mu_{\Lambda}$ on $\Delta$ and $\Lambda$, respectively, such that  
\begin{equation}\label{allmeasure}
    \pi_{*}\mu_{\Delta}=\mu, \text{ } F_{*}\mu_{\Delta}=\mu_{\Delta}, \text{ } f_{*}\mu=\mu, \text{ } (f^R)_{*}\mu_{\Lambda}=\mu_{\Lambda}.
\end{equation}

Further, there exist probability measures $\mu_{\widetilde{\Delta}}, \mu_{\widetilde{\Lambda}}$ on $\widetilde{\Delta}$ and $\widetilde{\Lambda}$ respectively, such that  
\begin{equation}\label{allquotientmeasure}
    (\widetilde{\pi}_{\Delta})_{*}\mu_{\Delta}=\mu_{\widetilde{\Delta}}, \text{ } (\widetilde{\pi}_{\Lambda})_{*}\mu_{\Lambda}=\mu_{\widetilde{\Lambda}},  \text{ } \widetilde{F}_{*}\mu_{\widetilde{\Delta}}=\mu_{\widetilde{\Delta}}, \text{ }  (\widetilde{f^R})_{*}\mu_{\widetilde{\Lambda}}=\mu_{\widetilde{\Lambda}}.
\end{equation}

Thus $\mu$ is supported on $\bigcup_{i \ge 1} \bigcup_{j < R_i} f^j(\Lambda_i)$, i.e. \[\mu(\bigcup_{i \ge 1} \bigcup_{j < R_i} f^j(\Lambda_i))=1.\]

Moreover 
\begin{equation}\label{horsesrbleb}
    (\mu_{\Lambda})_{\gamma^u} \ll \Leb_{\gamma^u}, \text{ }  \frac{d(\mu_{\Lambda})_{\gamma^u}}{d\Leb_{\gamma^u}}= C^{\pm 1},
\end{equation}
where $(\mu_{\Lambda})_{\gamma^u}$ is the conditional measure of $\mu_{\Lambda}$ on $\gamma^u \in \Gamma^u$. Since $R$ is the first return time, (see the Assumption \ref{assumption}), then

\[\mu_{\Lambda}=\frac{\mu|_{\Lambda}}{\mu (\Lambda)}.\]

 Finally, for any $k\ge 1$ and any $(Q_i)_{i \ge 1} \subseteq \mathcal{Q}_k$, any $h: \Delta \to \mathbb{R}$ satisfying $||h||_{\infty}\le 1$ and $h(x,l)=h(y,l)$ for any $x,y \in \gamma^s \in \Gamma^s$ , and any allowable $l \in \mathbb{N}$, (i.e. $h$ is $\sigma(\bigcup_{k\ge 0}\mathcal{Q}_k)$-measurable), we have the following estimate for decay of correlations
\begin{equation}\label{decorrelation}
    |\int 1_{\bigcup_{i \ge 1} Q_i} \cdot h \circ F^{2k} d\mu_{\Delta}-\mu_{\Delta}(\bigcup_{i \ge 1} Q_i) \cdot \int h d\mu_{\Delta}| \le \frac{C}{k^{\xi-1}} \cdot \mu_{\Delta}(\bigcup_{i \ge 1} Q_i).
\end{equation}
\end{lemma}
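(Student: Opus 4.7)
The plan is to recognize the lemma as a collation of standard Young-tower results adapted to the polynomial-tail framework, and to indicate where each clause is established in \cite{Y,Y1,pene}, filling in only those ingredients that are specific to the present setting. The construction of $(\Delta,F)$, of the quotient $(\widetilde\Delta,\widetilde F)$, and of the refinements $\mathcal{Q}_k = \bigvee_{0\le i\le k} F^{-i}\mathcal{Q}_0$ is formal once the Gibbs-Markov-Young structure of Definition \ref{gibbs} is given. The semiconjugacy $\pi(x,l)=f^l(x)$ is well defined and, because $R$ is the \emph{first} return time (Assumption \ref{assumption}), is almost-everywhere injective onto $\bigcup_{i\ge 1}\bigcup_{j<R_i} f^j(\Lambda_i)$; this pins down the support of the eventual invariant measure.

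For the measures, the key observation is that the quotient return map $\widetilde{f^R}:\widetilde\Lambda\to\widetilde\Lambda$ is uniformly expanding Gibbs-Markov with respect to the symbolic metric $\beta^{s(\cdot,\cdot)}$, thanks to the bounded-distortion and stable-foliation-regularity clauses of Definition \ref{gibbs}. Standard transfer-operator theory therefore yields a unique absolutely continuous $\widetilde{f^R}$-invariant probability $\mu_{\widetilde\Lambda}$ whose density with respect to $\Leb_{\gamma^u}$ is of class $C^{\pm 1}$. Since $\xi>1$ gives $\int R\,d\mu_{\widetilde\Lambda}<\infty$, Kakutani's tower construction promotes $\mu_{\widetilde\Lambda}$ to an $\widetilde F$-invariant probability $\mu_{\widetilde\Delta}$; disintegrating along $\Gamma^s$ lifts everything to $\mu_\Delta$ and $\mu_\Lambda$, and we set $\mu=\pi_*\mu_\Delta$. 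This yields \eqref{allmeasure}--\eqref{allquotientmeasure}. The identity $\mu_\Lambda=\mu|_\Lambda/\mu(\Lambda)$ uses once again that $R$ is the first return (cf.\ Lemma \ref{inducemapbi}), and \eqref{horsesrbleb} is just the Gibbs property read on unstable leaves.

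The diameter bound \eqref{coronadiam} is a direct consequence of the backward polynomial contraction along unstable leaves combined with the forward polynomial contraction along stable leaves: if $Q\in\mathcal{Q}_{2k}$, then all points of $\pi F^k(Q)$ share both the past and the future $\Lambda_i$-itinerary for $k$ return steps, so their unstable spread is $\le Ck^{-\alpha}$ while their stable spread is collapsed at the same rate, and these combine to give the stated bound.

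The main technical step, and the one where the quotienting really matters, is the decay estimate \eqref{decorrelation}. Because $h$ is constant on $\Gamma^s$-leaves and $1_{\bigcup_i Q_i}$ is $\mathcal{Q}_k$-measurable, both observables descend to functions on the quotient tower $\widetilde\Delta$, so the claim reduces to polynomial decay of correlations for $\widetilde F$ against a test function that is constant on $\mathcal{Q}_k$-atoms. Under the tail hypothesis $\Leb_\gamma(R>n)\le Cn^{-\xi}$, this is exactly the regime handled by Young's coupling argument \cite{Y1}, which yields a rate $O(k^{-(\xi-1)})$; the refined form with the factor $\mu_\Delta(\bigcup_i Q_i)$ on the right-hand side is the precise statement extracted in \cite[p.~2609]{pene}, and is obtained by running the coupling only on the $\mathcal{Q}_k$-measurable set in question and tracking the mass transported. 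The principal obstacle is verifying that the coupling constants and the dependence on $k$ are uniform in the choice of $(Q_i)_i$ — which is precisely where the uniform boundedness of $h$ and the $C^{\pm 1}$ Gibbs control on the unstable density are used simultaneously.
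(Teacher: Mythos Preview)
The paper does not supply a proof of this lemma at all: it is stated as a citation result, with the header explicitly pointing to \cite{pene}, p.~2609, and implicitly to the foundational Young-tower papers \cite{Y,Y1,Alves}. Your proposal is therefore not so much an alternative proof as a helpful expansion of what that citation covers---you correctly identify each clause (existence of invariant measures via the Gibbs--Markov quotient and Kakutani, the diameter bound from two-sided polynomial contraction, the $C^{\pm1}$ density on unstable leaves, and the coupling-based polynomial decay with the $\mu_\Delta(\bigcup_i Q_i)$ prefactor) and attribute it to the right source. In that sense your write-up is consistent with, and more informative than, the paper's bare citation.
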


\begin{lemma}\label{decaylemma1}
For any $l\ge 0, \frac{1}{p^{\alpha}}\ll r$ and any measurable function $h$ with values in $[0, 1]$
\[| \mathbb{E}[1_{X_0=1} \cdot  h(X_p,\cdots, X_{p+l})]-\mathbb{E}1_{X_0=1} \cdot \mathbb{E}h(X_p,\cdots, X_{p+l})|\]
\[\le 4\cdot \frac{C}{p^{\xi-1}} \cdot \mu(B_{r+\frac{4^{\alpha}C}{p^{\alpha}}}(z))+[2 +4l \cdot \mu(B_{r+\frac{4^{\alpha}C}{p^{\alpha}}}(z))] \cdot \mu(B_{r+\frac{4^{\alpha}C}{p^{\alpha}}}(z)\setminus B_{r-\frac{4^{\alpha}C}{p^{\alpha}}}(z)), \]
where a constant $C$ is the same as in the Definition \ref{gibbs}.
\end{lemma}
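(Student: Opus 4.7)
The strategy is to lift both factors to the Young tower $\Delta$ via Lemma \ref{towerresult}, approximate each so that the decay-of-correlations estimate (\ref{decorrelation}) applies, and charge the approximation errors to the annulus $B^{+}\setminus B^{-}$, where $B^{\pm}:=B_{r\pm 4^{\alpha}C/p^{\alpha}}(z)$.

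Using $\pi_{*}\mu_{\Delta}=\mu$ and $\pi\circ F=f\circ \pi$, the quantity to estimate becomes
\[
\int H\,\Phi\,d\mu_{\Delta}-\int H\,d\mu_{\Delta}\int \Phi\,d\mu_{\Delta},
\]
with $H:=1_{B_{r}(z)}\circ \pi$ and $\Phi(\omega):=h(1_{B_{r}(z)}\circ \pi F^{p+j}(\omega))_{j=0}^{l}$. By $F$-invariance the inner product equals $\int (H\circ F^{p/4})(\Phi\circ F^{p/4})\,d\mu_{\Delta}$, and writing $\Phi\circ F^{p/4}=\bar{\Phi}\circ F^{p}$ with $\bar{\Phi}(\omega'):=h(1_{B_{r}(z)}(\pi F^{p/4+j}\omega'))_{j=0}^{l}$ exposes the gap $F^{p}$ demanded by (\ref{decorrelation}).

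Two approximations bring the integral into standard form. First, by (\ref{coronadiam}) applied at $k=p/4$, every atom $Q\in \mathcal{Q}_{p/2}$ satisfies $\diam(\pi F^{p/4}Q)\le 4^{\alpha}C/p^{\alpha}$, so $H\circ F^{p/4}$ is sandwiched between the indicators of $\bigcup \mathcal{A}^{-}$ and $\bigcup \mathcal{A}^{+}$ (the atoms whose $\pi F^{p/4}$-image lies in $B^{\mp}$ respectively), with sandwich error at most $\mu(B^{+}\setminus B^{-})$. Second, for a measurable section $s:\widetilde{\Delta}\to \Delta$ of $\widetilde{\pi}_{\Delta}$, set $\widetilde{\Phi}(\omega'):=\bar{\Phi}(s\widetilde{\pi}_{\Delta}\omega')$, which is constant on stable leaves. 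Since $\omega'$ and $s\widetilde{\pi}_{\Delta}\omega'$ lie on a common stable leaf of $\Delta$ at some level $\ell'\ge 0$, the polynomial contraction on stable leaves gives $d(\pi F^{p/4+j}\omega',\pi F^{p/4+j}s\widetilde{\pi}_{\Delta}\omega')\le C/(p/4+j+\ell')^{\alpha}\le 4^{\alpha}C/p^{\alpha}$ for every $j\ge 0$, so $|\bar{\Phi}-\widetilde{\Phi}|\le \sum_{j=0}^{l}1_{B^{+}\setminus B^{-}}\circ \pi F^{p/4+j}$ and $\int|\Phi\circ F^{p/4}-\widetilde{\Phi}\circ F^{p}|\,d\mu_{\Delta}\le (l+1)\mu(B^{+}\setminus B^{-})$ by $F$-invariance.

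Applying (\ref{decorrelation}) to the pair $(1_{\bigcup Q_{i}},\widetilde{\Phi})$ with $k=p/2$ yields the principal term, bounded by an absolute constant times $C/p^{\xi-1}\mu(B^{+})$. Expanding the bilinear error from the two approximations and bounding the cross-terms via $\int H\,d\mu_{\Delta}\le \mu(B^{+})$ and $\mu_{\Delta}(\bigcup Q_{i})\le \mu(B^{+})$, together with one more application of (\ref{decorrelation}) to decorrelate each of the $l+1$ auxiliary correlations between the atom-indicator and an annular indicator $1_{B^{+}\setminus B^{-}}\circ \pi F^{p/4+j}$ (after approximating the latter by a stable-leaf-constant function as in Step 2), produces the annular contribution $[2+4l\mu(B^{+})]\mu(B^{+}\setminus B^{-})$; combining yields the stated bound. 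The main obstacle is the $\widetilde{\Phi}$ construction: (\ref{decorrelation}) requires its second factor be of the form $\psi\circ F^{2k}$ with $\psi$ constant on stable leaves, which is strictly more restrictive than mere stable-leaf constancy, and the device of first rewriting $\Phi\circ F^{p/4}=\bar{\Phi}\circ F^{p}$ to expose the gap and then approximating $\bar{\Phi}$ through the section $s$ is what resolves this; the factor $4^{\alpha}$ in the annulus width is the price for applying (\ref{coronadiam}) at $k=p/4$, the shift that simultaneously makes the atom approximation work and leaves the full $F^{p}$-gap available for (\ref{decorrelation}).
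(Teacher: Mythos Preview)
Your section-based approach to making the second factor constant on stable leaves is a legitimate alternative to what the paper does, but the bookkeeping of the cross-terms as you describe it does not close. When you write ``one more application of (\ref{decorrelation}) to decorrelate each of the $l+1$ auxiliary correlations between the atom-indicator and an annular indicator $1_{B^{+}\setminus B^{-}}\circ\pi F^{p/4+j}$'', every one of those $l+1$ applications contributes not only a product term $\mu_{\Delta}(\bigcup Q_i)\cdot\mu(B^{+}\setminus B^{-})$ but also a decorrelation error $\frac{C}{(p/2)^{\xi-1}}\mu_{\Delta}(\bigcup Q_i)$. Summed over $j=0,\dots,l$ this yields $(l+1)\cdot\frac{C}{p^{\xi-1}}\mu(B^{+})$, which is \emph{not} absorbed by the stated principal term $4\cdot\frac{C}{p^{\xi-1}}\mu(B^{+})$. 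This extra factor of $l+1$ is fatal downstream: in Proposition~\ref{rate} the lemma is summed over $l$ up to $n-p$, so your bound would produce a term of order $n^2\cdot\frac{C}{p^{\xi-1}}\mu(B_r(z))\approx T\cdot n\cdot p^{-(\xi-1)}$, which goes to zero only if $\xi>2$, whereas the paper needs merely $\xi>1$.

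The paper avoids this by a different mechanism for the second factor: rather than projecting via a section, it replaces $B_r(z)$ \emph{inside} $h$ by the atom-union $A_0=A_1\cup A_2\in\sigma(\mathcal{Q}_{2m})$. The resulting error is $|h(1_{A_1},\dots)-h(1_{A_0},\dots)|\le 2\cdot 1_{\bigcup_{j\le l}F^{-j}A_2}$, and the point is that this single indicator is already $\sigma(\bigcup_k\mathcal{Q}_k)$-measurable (each $F^{-j}A_2$ is a union of atoms). Hence (\ref{decorrelation}) applies \emph{once} to the pair $(1_{A_0},1_{\bigcup_j F^{-j}A_2})$, contributing a single $\frac{C}{p^{\xi-1}}\mu_{\Delta}(A_0)$ regardless of $l$; the factor $l$ appears only through $\mu_{\Delta}(\bigcup_j F^{-j}A_2)\le (l+1)\mu_{\Delta}(A_2)$ in the product term, exactly as in the stated bound.

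Your approach can be repaired: observe that if $\bar\Phi(\omega')\neq\widetilde\Phi(\omega')$ then for some $j$ both $\pi F^{p/4+j}\omega'$ and $\pi F^{p/4+j}s\widetilde\pi_\Delta\omega'$ lie in $B^{+}\setminus B^{-}$, so $|\bar\Phi-\widetilde\Phi|\le 1_{\{\exists j:\,\pi F^{p/4+j}s\widetilde\pi_\Delta\omega'\in B^{+}\setminus B^{-}\}}$, a \emph{single} stable-leaf-constant function. One decorrelation then suffices, but estimating its integral forces you to widen the annulus by another $4^{\alpha}C/p^{\alpha}$ (since $s\widetilde\pi_\Delta$ does not preserve $\mu_\Delta$), so you recover the lemma only with $B_{r\pm 2\cdot 4^{\alpha}C/p^{\alpha}}(z)$ in place of $B_{r\pm 4^{\alpha}C/p^{\alpha}}(z)$. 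That is harmless for the applications but does not match the stated constants.
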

\begin{proof}
Let $m:=\lfloor \frac{p}{4} \rfloor$. By (\ref{allmeasure}) and invariance of $F$ (i.e. $F_{*}\mu_{\Delta}=\mu_{\Delta}$) we have
\[ \mathbb{E}[1_{X_0=1} \cdot  h(X_p,\cdots, X_{p+l})]=\int 1_{B_r(z)} \cdot h(1_{B_r(z)} \circ f^p,\cdots, 1_{B_r(z)} \circ f^{p+l} ) d\mu\]
\[=\int 1_{B_r(z)} \circ \pi \circ F^{m}\cdot h(1_{B_r(z)} \circ \pi \circ  F^{p+m-p},\cdots, 1_{B_r(z)} \circ \pi \circ  F^{p+l+m-p} )\circ F^p d\mu_{\Delta}.\]
Denote
 $A_1:=F^{-m} \pi^{-1} B_r(z), A_0:=\bigsqcup_{Q \in \mathcal{Q}_{2m}:Q \bigcap A_1\neq \emptyset}Q$ and $A_2:=\bigsqcup_{Q \in \mathcal{Q}_{2m}:Q \bigcap (A_0\setminus A_1)\neq \emptyset}Q$. Then $A_1 \bigcup A_2=A_0$. The sets $A_0,A_2$ are  $\sigma(\bigcup_{k\ge 0}\mathcal{Q}_k)$-measurable. Therefore we can continue the equality above as 
\[=\int 1_{A_1} \cdot h(1_{A_1},\cdots, 1_{A_1}\circ F^{l}) \circ F^p d\mu_{\Delta}=\int 1_{A_1} \cdot h(1_{A_1\bigcup A_2},\cdots,1_{A_1\bigcup A_2}\circ F^{l}) \circ F^p d\mu_{\Delta}\]
\[+\int 1_{A_1} \cdot h(1_{A_1},\cdots, 1_{A_1}\circ F^{l}) \circ F^p d\mu_{\Delta}-\int 1_{A_1} \cdot h(1_{A_1\bigcup A_2},\cdots,1_{A_1\bigcup A_2}\circ F^{l}) \circ F^p d\mu_{\Delta}.\]

{\bf Claim}: $| h(1_{A_1},\cdots, 1_{A_1}\circ F^{l})- h(1_{A_1\bigcup A_2},\cdots,1_{A_1\bigcup A_2}\circ F^{l})|\le 2\cdot 1_{\cup_{j \le l}F^{-j}A_2}$. 

Indeed, if $F^j(x,l) \notin A_2 $ for all $j\le l$, then $1_{A_1}\circ F^{j-p}(x,l)=1_{A_1\cup A_2}\circ F^{j-p}(x,l)$. On the other hand, $||h_j||_{\infty} \le 1$. Hence, the claim holds. 

Therefore,
\[| \mathbb{E}[1_{X_0=1} \cdot  h(X_p,\cdots, X_{p+l})]-\mathbb{E}1_{X_0=1} \cdot \mathbb{E}h(X_p,\cdots, X_{p+l})|\]
\[=|\int 1_{A_1} \cdot h(1_{A_1\bigcup A_2},\cdots,1_{A_1\bigcup A_2}\circ F^{l}) \circ F^p d\mu_{\Delta}- \int 1_{A_1} d\mu_{\Delta} \cdot \int h(1_{A_1\bigcup A_2},\cdots,1_{A_1\bigcup A_2}\circ F^{l}) \circ F^p d\mu_{\Delta}\]
\[+\int 1_{A_1} \cdot h(1_{A_1},\cdots, 1_{A_1}\circ F^{l}) \circ F^p d\mu_{\Delta}-\int 1_{A_1} \cdot h(1_{A_1\bigcup A_2},\cdots,1_{A_1\bigcup A_2}\circ F^{l}) \circ F^p d\mu_{\Delta}\]
\[-\int 1_{A_1} d\mu_{\Delta} \cdot \int h(1_{A_1},\cdots, 1_{A_1}\circ F^{l}) \circ F^p d\mu_{\Delta}+\int 1_{A_1} d\mu_{\Delta} \cdot \int h(1_{A_1\bigcup A_2},\cdots,1_{A_1\bigcup A_2}\circ F^{l}) \circ F^p d\mu_{\Delta}|\]
\[\le |\int 1_{A_1} \cdot h(1_{A_1\bigcup A_2},\cdots,1_{A_1\bigcup A_2}\circ F^{l}) \circ F^p d\mu_{\Delta}- \int 1_{A_1} d\mu_{\Delta} \cdot \int h(1_{A_1\bigcup A_2},\cdots,1_{A_1\bigcup A_2}\circ F^{l}) \circ F^p d\mu_{\Delta}|\]
\[+2\int 1_{A_1} \cdot 1_{\bigcup_{j \le l}F^{-j}A_2} \circ F^p d\mu_{\Delta}+ 2\int 1_{A_1} d\mu_{\Delta} \cdot \int 1_{\bigcup_{j \le l}F^{-j}A_2}\circ F^p d\mu_{\Delta}\]
\[\le |\int 1_{A_0} \cdot h(1_{A_1\bigcup A_2},\cdots,1_{A_1\bigcup A_2}\circ F^{l}) \circ F^p d\mu_{\Delta}- \int 1_{A_0} d\mu_{\Delta} \cdot \int h(1_{A_1\bigcup A_2},\cdots,1_{A_1\bigcup A_2}\circ F^{l}) \circ F^p d\mu_{\Delta}|\]
\[+ |\int 1_{A_0\setminus A_1} \cdot h(1_{A_1\bigcup A_2},\cdots,1_{A_1\bigcup A_2}\circ F^{l})\circ F^p d\mu_{\Delta}- \int 1_{A_0\setminus A_1} d\mu_{\Delta} \cdot \int h(1_{A_1\bigcup A_2},\cdots,1_{A_1\bigcup A_2}\circ F^{l})\circ F^p d\mu_{\Delta}|\]
\[+2\int 1_{A_1} \cdot 1_{\bigcup_{j \le l}F^{-j}A_2}\circ F^p d\mu_{\Delta}+ 2\int 1_{A_1} d\mu_{\Delta} \cdot \int 1_{\bigcup_{j \le l}F^{-j}A_2} \circ F^p d\mu_{\Delta}.\]

Note that $A_0\setminus A_1 \subseteq A_2, A_1 \subseteq A_0$, which means that we can continue the estimate above as 
\[\le |\int 1_{A_0} \cdot h(1_{A_1\bigcup A_2},\cdots,1_{A_1\bigcup A_2}\circ F^{l}) \circ F^p d\mu_{\Delta}- \int 1_{A_0} d\mu_{\Delta} \cdot \int h(1_{A_1\bigcup A_2},\cdots,1_{A_1\bigcup A_2}\circ F^{l})\circ F^p d\mu_{\Delta}|\]
\[+ \int 1_{A_2} \cdot h(1_{A_1\bigcup A_2},\cdots,1_{A_1\bigcup A_2}\circ F^{l}) \circ F^p d\mu_{\Delta}+ \int 1_{A_2} d\mu_{\Delta} \cdot \int h(1_{A_1\bigcup A_2},\cdots,1_{A_1\bigcup A_2}\circ F^{l})\circ F^p d\mu_{\Delta}\]
\[+2\int 1_{A_0} \cdot 1_{\bigcup_{j \le l}F^{-j}A_2} \circ F^p d\mu_{\Delta}+ 2\int 1_{A_1} d\mu_{\Delta} \cdot \int 1_{\bigcup_{j \le l}F^{-j}A_2} d\mu_{\Delta}\]
\[\le |\int 1_{A_0} \cdot h(1_{A_1\bigcup A_2},\cdots,1_{A_1\bigcup A_2}\circ F^{l}) \circ F^p d\mu_{\Delta}- \int 1_{A_0} d\mu_{\Delta} \cdot \int h(1_{A_1\bigcup A_2},\cdots,1_{A_1\bigcup A_2}\circ F^{l})\circ F^p d\mu_{\Delta}|\]
\[+ |\int 1_{A_2} \cdot h(1_{A_1\bigcup A_2},\cdots,1_{A_1\bigcup A_2}\circ F^{l}) \circ F^p d\mu_{\Delta}- \int 1_{A_2} d\mu_{\Delta} \cdot \int h(1_{A_1\bigcup A_2},\cdots,1_{A_1\bigcup A_2}\circ F^{l}) \circ F^p d\mu_{\Delta}|\]
\[+2\int 1_{A_2} d\mu_{\Delta} \cdot \int h(1_{A_1\bigcup A_2},\cdots,1_{A_1\bigcup A_2}\circ F^{l})\circ F^p d\mu_{\Delta}\]
\[+2|\int 1_{A_0} \cdot 1_{\bigcup_{j \le l}F^{-j}A_2} \circ F^p d\mu_{\Delta}-\int 1_{A_0}d\mu_{\Delta} \cdot \int 1_{\bigcup_{j \le l}F^{-j}A_2} \circ F^p d\mu_{\Delta}|\]
\begin{equation}\label{1}
    +2\int 1_{A_0}d\mu_{\Delta} \cdot \int 1_{\bigcup_{j \le l}F^{-j}A_2} \circ F^p d\mu_{\Delta}+ 2\int 1_{A_1} d\mu_{\Delta} \cdot \int 1_{\cup_{j \le l}F^{-j}A_2} d\mu_{\Delta}.
\end{equation}

{\bf Claim}: $h(1_{A_1\bigcup A_2},\cdots,1_{A_1\bigcup A_2}\circ F^{l})$ is $\sigma(\bigcup_{k\ge 0}\mathcal{Q}_k)$-measurable. 
Observe, that for any $(x,l), (y,l) \in \Delta$, $x,y\in \gamma^s \in \Gamma^s$ we have $F^{j-p}(x,l)=(x',l'), F^{j-p}(y,l)=(y',l')$ for some $l'\in \mathbb{N}$ and some $x',y'\in (\gamma^s)'\in \Gamma^s$. Since $1_{A_1 \bigcup A_2}$ is $\sigma(\bigcup_{k\ge 0}\mathcal{Q}_k)$-measurable,
$1_{A_1\cup A_2}\circ F^{j-p}(x,l)=1_{A_1\cup A_2}\circ F^{j-p}(y,l)$. Therefore  $h(1_{A_1\bigcup A_2},\cdots,1_{A_1\bigcup A_2}\circ F^{l})$ is  $\sigma(\cup_{k\ge 0}\mathcal{Q}_k)$-measurable. 

{\bf Claim:} $1_{\bigcup_{j \le l}F^{-j}A_2} $ is also $\sigma(\bigcup_{k\ge 0}\mathcal{Q}_k)$-measurable. 

Indeed, each set $F^{-j}A_2$ is $\sigma(\bigcup_{k\ge 0}\mathcal{Q}_k)$-measurable. So their union is also $\sigma(\bigcup_{k\ge 0}\mathcal{Q}_k)$-measurable.

{\bf Claim:} $\mu_{\Delta}(A_2)\le \mu(B_{r+\frac{4^{\alpha}C}{p^{\alpha}}}(z)\setminus B_{r-\frac{4^{\alpha}C}{p^{\alpha}}}(z))$. 
 
Observe that \[\mu_{\Delta}(A_2)\le \mu_{\Delta}(F^{-m}\pi^{-1}\pi( F^m A_2))=\mu(\pi( F^m A_2)).\]

By the definition of $A_2:=\bigsqcup_{Q \in \mathcal{Q}_{2m}:Q \cap (A_0\setminus A_1)\neq \emptyset}Q$, for each $Q$, contained in $A_2$, there exist $x_1, x_2 \in Q $, such that  $\pi(F^mx_1)\in B_r(z), \pi(F^mx_2)\notin B_r(z)$. Now, by making use of (\ref{coronadiam}) and taking $m=\lfloor \frac{p}{4} \rfloor$, we obtain $\pi(F^m A_2) \subseteq B_{r+\frac{4^{\alpha}C}{p^{\alpha}}}(z)\setminus B_{r-\frac{4^{\alpha}C}{p^{\alpha}}}(z)$. Hence the claim holds. 

Having these claims and (\ref{decorrelation}), we can continue the estimate of (\ref{1}) as \[\le \frac{C}{p^{\xi-1}} \cdot \mu_{\Delta}(A_0) +\frac{C}{p^{\xi-1}} \cdot \mu_{\Delta}(A_2)+2\mu_{\Delta}(A_2) +2 \cdot \frac{C}{p^{\xi-1}} \cdot \mu_{\Delta}(A_0)\]
\[+2\mu_{\Delta}(A_0) \cdot  \mu_{\Delta}(\bigcup_{j \le l}F^{-j}A_2)+ 2 \mu_{\Delta}(A_1) \cdot \mu_{\Delta}(\bigcup_{j \le l}F^{-j}A_2)\]
\[\le \frac{C}{p^{\xi-1}} \cdot \mu_{\Delta}(A_1)+\frac{C}{p^{\xi-1}} \cdot \mu_{\Delta}(A_2) +\frac{C}{p^{\xi-1}} \cdot \mu_{\Delta}(A_2)+2\mu_{\Delta}(A_2) +2 \cdot \frac{C}{p^{\xi-1}} \cdot \mu_{\Delta}(A_1)\]
\begin{equation}\label{10}
    +2 \cdot \frac{C}{p^{\xi-1}} \cdot \mu_{\Delta}(A_2)+2[\mu_{\Delta}(A_1)+\mu_{\Delta}(A_2)] \cdot  \mu_{\Delta}(\bigcup_{j \le l}F^{-j}A_2)+ 2 \mu_{\Delta}(A_1) \cdot \mu_{\Delta}(\bigcup_{j \le l}F^{-j}A_2).
\end{equation}

Recall that $\mu(B_r(z))=\mu_{\Delta}(A_1)$ and $\mu_{\Delta}(A_2)\le \mu(B_{r+\frac{C}{p^{\alpha}}}(z)\setminus B_{r-\frac{C}{p^{\alpha}}}(z))$. Therefore the estimate of (\ref{10}) can be continued as
\[\le \frac{C}{p^{\xi-1}} \cdot \mu(B_r(z))+2\cdot \frac{C}{p^{\xi-1}} \cdot \mu(B_{r+\frac{4^{\alpha}C}{p^{\alpha}}}(z)\setminus B_{r-\frac{4^{\alpha}C}{p^{\alpha}}}(z)) +2\mu(B_{r+\frac{4^{\alpha}C}{p^{\alpha}}}(z)\setminus B_{r-\frac{4^{\alpha}C}{p^{\alpha}}}(z)) \]
\[+2 \cdot \frac{C}{p^{\xi-1}} \cdot \mu(B_r(z))+2 \cdot \frac{C}{p^{\xi-1}} \cdot \mu(B_{r+\frac{4^{\alpha}C}{p^{\alpha}}}(z)\setminus B_{r-\frac{4^{\alpha}C}{p^{\alpha}}}(z))\]
\[+2[\mu(B_r(z))+\mu(B_{r+\frac{4^{\alpha}C}{p^{\alpha}}}(z)\setminus B_{r-\frac{4^{\alpha}C}{p^{\alpha}}}(z))] \cdot l \cdot \mu(B_{r+\frac{4^{\alpha}C}{p^{\alpha}}}(z)\setminus B_{r-\frac{4^{\alpha}C}{p^{\alpha}}}(z))\]
\[+ 2 \mu(B_r(z)) \cdot l \cdot \mu(B_{r+\frac{4^{\alpha}C}{p^{\alpha}}}(z)\setminus B_{r-\frac{4^{\alpha}C}{p^{\alpha}}}(z))\]
\[\le 4\cdot \frac{C}{p^{\xi-1}} \cdot \mu(B_{r+\frac{4^{\alpha}C}{p^{\alpha}}}(z))+2\mu(B_{r+\frac{4^{\alpha}C}{p^{\alpha}}}(z)\setminus B_{r-\frac{4^{\alpha}C}{p^{\alpha}}}(z)) +4l \cdot \mu(B_{r+\frac{4^{\alpha}C}{p^{\alpha}}}(z))  \cdot \mu(B_{r+\frac{4^{\alpha}C}{p^{\alpha}}}(z)\setminus B_{r-\frac{4^{\alpha}C}{p^{\alpha}}}(z)).\]
\end{proof}
\begin{proposition}[Functional Poisson laws]\ \label{rate}

Let $p:=\lfloor {\lfloor \frac{T}{\mu(B_r(z))}\rfloor}^{\frac{\dim_H\mu-\epsilon}{\dim_H\mu}}\rfloor$, where $\epsilon$ is so small that

\[\alpha \cdot (\dim_H\mu-\epsilon)>\frac{\dim_H\mu}{\dim_H\mu-\epsilon}>1 \text{ (see the Assumption } \ref{geoassumption}).\] Then for almost any $z \in \mathcal{M}$ there is $r_z>0$, such that $\forall r< r_z$

\[d_{TV}(N^{r,T,z},P)
\precsim_{T, \xi, \epsilon}\]
\[r^{\dim_H\mu-\epsilon}+r^{\frac{(\dim_H\mu-\epsilon)^2}{\dim_H\mu} \cdot (\xi-1)}+ r^{ \frac{\epsilon \cdot(\dim_H\mu-\epsilon)}{\dim_H\mu}}+\frac{\mu(B_{r+C' \cdot r^{\frac{(\dim_H\mu-\epsilon)^2\cdot \alpha }{\dim_H\mu} } }(z)\setminus B_{r-C' \cdot r^{\frac{(\dim_H\mu-\epsilon)^2\cdot \alpha }{\dim_H\mu} }}(z))}{\mu(B_r(z))}\]
\[+[\frac{\mu(B_{r+C' \cdot r^{\frac{(\dim_H\mu-\epsilon)^2\cdot \alpha }{\dim_H\mu}  } }(z)\setminus B_{r-C' \cdot r^{\frac{(\dim_H\mu-\epsilon)^2\cdot \alpha }{\dim_H\mu}}}(z))}{\mu(B_r(z))}]^2+\frac{1}{\mu(B_r(z))}\cdot \int_{B_r(z)} 1_{\bigcup_{1\le j \le p}f^{-j}B_r(z)}d\mu,\]
where $C'$ depends on $\alpha, \epsilon $ and on the constant $C$ in the Definition \ref{gibbs}.
\end{proposition}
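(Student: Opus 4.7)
The plan is to combine the general finite-dimensional total variation bound of Lemma \ref{poissonappro} with the tower decay-of-correlation estimate of Lemma \ref{decaylemma1}, and then to translate the resulting expression into the six-term bound stated above using the Hausdorff dimension condition on $\mu$.

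First I would reduce $d_{TV}(N^{r,T,z},P)$ to a supremum of finite-dimensional total variation distances. By the definition of the generating $\sigma$-algebra $\mathcal{C}$ in Definition \ref{totalnorm}, it is enough to bound, uniformly over all finite collections of pairwise disjoint Borel sets $A_1,\dots,A_m\subseteq[0,T]$, the total variation distance between the joint laws of $(N^{r,T,z}(A_j))_j$ and $(P(A_j))_j$. Rescaling time by $\mu(B_r(z))$, the first vector becomes $(X_{I_j})_j$ on $[0,n]$ with $n=\lfloor T/\mu(B_r(z))\rfloor$ and disjoint integer intervals $I_j\subseteq[0,n]$. Introducing the i.i.d.\ Bernoulli surrogates $(\hat X_i)_i$ of mean $\mu(B_r(z))$ and applying the triangle inequality gives
\[d_{TV}\bigl((X_{I_j})_j,(P(A_j))_j\bigr)\le d_{TV}\bigl((X_{I_j})_j,(\hat X_{I_j})_j\bigr)+d_{TV}\bigl((\hat X_{I_j})_j,(P(A_j))_j\bigr),\]
where the second term is controlled by the classical Chen--Stein bound for sums of independent Bernoullis of small mean, contributing $O(n\mu(B_r(z))^2)=O(T\mu(B_r(z)))=O(r^{\dim_H\mu-\epsilon})$ once the Hausdorff dimension estimate is invoked.

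For the first term I would apply Lemma \ref{poissonappro} with $p=\lfloor n^{(\dim_H\mu-\epsilon)/\dim_H\mu}\rfloor$ and feed each of its $n-p$ correlation summands into Lemma \ref{decaylemma1}. Summing over $0\le l\le n-p$ produces the following contributions: a pure decay-of-correlation factor of order $n\,Cp^{-(\xi-1)}\mu(B_{r+C'p^{-\alpha}}(z))$; a linear corona contribution $n\,\mu(B_{r+C'p^{-\alpha}}(z)\setminus B_{r-C'p^{-\alpha}}(z))$ together with a quadratic-in-time version $n^2\mu(B_{r+C'p^{-\alpha}}(z))\,\mu(B_{r+C'p^{-\alpha}}(z)\setminus B_{r-C'p^{-\alpha}}(z))$ coming from the $4l\cdot\mu(\cdots)$ factor inside Lemma \ref{decaylemma1}; and the explicit remainders $(n-p)\int_{B_r(z)}1_{\bigcup_{1\le j\le p-1}f^{-j}B_r(z)}\,d\mu$, $p(n-p)\mu(B_r(z))^2$ and $p\mu(B_r(z))$ already present in Lemma \ref{poissonappro}.

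Finally, for $\mu$-a.e.\ $z$ the Hausdorff dimension condition provides $r_z>0$ such that $r^{\dim_H\mu+\epsilon}\le\mu(B_r(z))\le r^{\dim_H\mu-\epsilon}$ for $r<r_z$, whence $n\asymp r^{-\dim_H\mu}$ and $p\asymp r^{-(\dim_H\mu-\epsilon)^2/\dim_H\mu}$ up to $r^{\pm\epsilon}$-factors, so $p^{-\alpha}\asymp r^{\alpha(\dim_H\mu-\epsilon)^2/\dim_H\mu}$, matching the corona radius in the stated bound. Substituting these asymptotics into the contributions above, using $n\mu(B_r(z))\asymp T$, and normalising the corona contributions by $\mu(B_r(z))^{-1}$ produces the six summands in the proposition: $r^{\dim_H\mu-\epsilon}$ from the Chen--Stein binomial-to-Poisson cost and from $p(n-p)\mu(B_r(z))^2$; $r^{(\dim_H\mu-\epsilon)^2(\xi-1)/\dim_H\mu}$ from the decay-of-correlation factor; $r^{\epsilon(\dim_H\mu-\epsilon)/\dim_H\mu}$ from $p\mu(B_r(z))$; the two corona terms from the linear and quadratic corona contributions; and the explicit short-return integral from the Lemma \ref{poissonappro} remainder. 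The main technical obstacle is the careful bookkeeping of exponents in this last step: the choice of $p$ is calibrated precisely so that the correlation decay $n/p^{\xi-1}$ and the Poisson cost $p\mu(B_r(z))$ both reduce to positive powers of $r$, and the $\epsilon$-slack from the two-sided Hausdorff estimate must be propagated consistently through every one of the resulting products.
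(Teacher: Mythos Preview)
Your proposal is correct and follows essentially the same route as the paper: reduce to finite-dimensional marginals via the generating $\sigma$-algebra, split by the triangle inequality through the i.i.d.\ Bernoulli surrogates, handle the independent-to-Poisson piece by Chen--Stein, and feed Lemma~\ref{decaylemma1} into the sum from Lemma~\ref{poissonappro} before substituting the dimension asymptotics for $n$ and $p$. The only point you leave implicit that the paper makes explicit is the verification that the hypothesis $1/p^{\alpha}\ll r$ of Lemma~\ref{decaylemma1} is indeed satisfied, which is exactly where the assumed inequality $\alpha(\dim_H\mu-\epsilon)>\dim_H\mu/(\dim_H\mu-\epsilon)$ is used; otherwise your bookkeeping of the six contributions matches the paper's.
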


\begin{proof}
Let $n:=\lfloor \frac{T}{\mu(B_r(z))}\rfloor, p:=\lfloor n^{\frac{\dim_H\mu-\epsilon}{\dim_H\mu}}\rfloor$, where $\epsilon$ is so small that

\[\alpha \cdot (\dim_H\mu-\epsilon)>\frac{\dim_H\mu}{\dim_H\mu-\epsilon}>1 \text{ (in view of the Assumption } \ref{geoassumption}).\] 

Therefore for a.e. $z \in \mathcal{M}$ there exists $r_z>0$ such that $\forall r < r_z$, in view of the Assumption \ref{geoassumption}, 
\[\frac{T}{r^{\dim_H\mu-\epsilon}} \precsim n \precsim \frac{T}{r^{\dim_H\mu+\epsilon}}\]
\[\frac{1}{p^{\alpha}} \precsim_{T, \alpha} \frac{1}{n^{\frac{\alpha \cdot (\dim_H\mu-\epsilon)}{\dim_H\mu}}}\precsim_{T, \alpha} r^{\frac{\dim_H\mu-\epsilon}{\dim_H\mu} \cdot \alpha \cdot (\dim_H\mu-\epsilon)} \ll r. \]

Hence by the Lemmas \ref{poissonappro} and \ref{decaylemma1}, for any disjoint sets $I_1, \cdots, I_m \subseteq [0,n]$ we have
\[d_{TV}((X_{I_1}, \cdots, X_{I_m}),(\hat{X}_{I_1}, \cdots, \hat{X}_{I_m}))\]
 \[\le 2\cdot  \sum_{0 \le l \le n-p}\sup_{h\in[0,1]} | \mathbb{E}[1_{X_0=1} \cdot  h(X_p,\cdots, X_{n-l})]-\mathbb{E}1_{X_0=1} \cdot \mathbb{E}[h(X_p,\cdots, X_{n-l})]|\]
\[+4(n-p)\cdot \mathbb{E} 1_{X_0=0} \cdot 1_{\sum_{1\le j \le p-1}X_j\ge 1}+4p \cdot (n-p) \cdot \mu(B_{r}(z))^2+4 p \cdot \mu(B_r(z))\]
\[\le \sum_{0 \le l \le n-p} 8\cdot \frac{C}{p^{\xi-1}} \cdot \mu(B_{r+\frac{4^{\alpha}C}{p^{\alpha}}}(z))+[4 +8(n-l-p) \cdot \mu(B_{r+\frac{4^{\alpha}C}{p^{\alpha}}}(z))] \cdot \mu(B_{r+\frac{4^{\alpha}C}{p^{\alpha}}}(z)\setminus B_{r-\frac{4^{\alpha}C}{p^{\alpha}}}(z)) \]
\[+4(n-p)\cdot \mathbb{E} 1_{X_0=0} \cdot 1_{\sum_{1\le j \le p-1}X_j\ge 1}+4p \cdot (n-p) \cdot \mu(B_{r}(z))^2+4 p \cdot \mu(B_r(z))\]
\[\le  8\cdot n\cdot \frac{C}{p^{\xi-1}} \cdot \mu(B_{r+\frac{4^{\alpha}C}{p^{\alpha}}}(z))+[4 +8n \cdot \mu(B_{r+\frac{4^{\alpha}C}{p^{\alpha}}}(z))] \cdot n \cdot \mu(B_{r+\frac{4^{\alpha}C}{p^{\alpha}}}(z)\setminus B_{r-\frac{4^{\alpha}C}{p^{\alpha}}}(z)) \]
\[+4n\cdot \mathbb{E} 1_{X_0=0} \cdot 1_{\sum_{1\le j \le p-1}X_j\ge 1}+4p \cdot n \cdot \mu(B_{r}(z))^2+4 p \cdot \mu(B_r(z))\]
\[\le  8\cdot n\cdot \frac{C}{p^{\xi-1}} \cdot \mu(B_{r}(z))+ 8\cdot n\cdot \frac{C}{p^{\xi-1}} \cdot \mu(B_{r+\frac{4^{\alpha}C}{p^{\alpha}}}(z)\setminus B_{r-\frac{4^{\alpha}C}{p^{\alpha}}}(z))\]
\[+[4 +8n \cdot \mu(B_{r}(z))+8n \cdot \mu(B_{r+\frac{4^{\alpha}C}{p^{\alpha}}}(z)\setminus B_{r-\frac{4^{\alpha}C}{p^{\alpha}}}(z))] \cdot n \cdot \mu(B_{r+\frac{4^{\alpha}C}{p^{\alpha}}}(z)\setminus B_{r-\frac{4^{\alpha}C}{p^{\alpha}}}(z)) \]
\[+4n\cdot \mathbb{E} 1_{X_0=0} \cdot 1_{\sum_{1\le j \le p}X_j\ge 1}+4p \cdot n \cdot \mu(B_{r}(z))^2+4 p \cdot \mu(B_r(z)).\]

Note that
\[p \approx_{T,\epsilon} [\frac{1}{\mu(B_r(z))}]^{\frac{\dim_H\mu-\epsilon}{\dim_H\mu}}, \text{ }  n \cdot \mu(B_r(z))\le T.\]

Thus we can continue the inequality above as
\[\precsim_{T, \xi, \epsilon} \mu(B_r(z))^{\frac{\dim_H\mu-\epsilon}{\dim_H\mu} \cdot (\xi-1)}+ \frac{\mu(B_{r+\frac{4^{\alpha}C}{p^{\alpha}}}(z)\setminus B_{r-\frac{4^{\alpha}C}{p^{\alpha}}}(z))}{\mu(B_r(z))}+[\frac{\mu(B_{r+\frac{4^{\alpha}C}{p^{\alpha}}}(z)\setminus B_{r-\frac{4^{\alpha}C}{p^{\alpha}}}(z))}{\mu(B_r(z))}]^2\]
\[+\frac{1}{\mu(B_r(z))}\cdot \int_{B_r(z)} 1_{\bigcup_{1\le j \le p}f^{-j}B_r(z)}d\mu+\mu(B_r(z))^{\frac{\epsilon}{\dim_H\mu}}.\]

By applying the Theorem 2 of \cite{chenmethod} to $(\hat{X}_{I_1}, \cdots, \hat{X}_{I_m})$ one gets
\[d_{TV}((\hat{X}_{I_1}, \cdots, \hat{X}_{I_m}), (P(I_1), \cdots, P(I_m)))\le 4 \cdot n \cdot \mu(B_r(z))^2 \precsim_T \mu(B_r(z)). \]

Approximate now $(X_{I_1}, \cdots, X_{I_m})$ by the Poisson point process $P$. Then
\[d_{TV}((X_{I_1}, \cdots, X_{I_m}),(P(I_1), \cdots, P(I_m)))\]
\[\le d_{TV}((\hat{X}_{I_1}, \cdots, \hat{X}_{I_m}), (P(I_1), \cdots, P(I_m)))+d_{TV}((X_{I_1}, \cdots, X_{I_m}),(\hat{X}_{I_1}, \cdots, \hat{X}_{I_m}))\]
\[\precsim_{T, \xi, \epsilon}\mu(B_r(z))+ \mu(B_r(z))^{\frac{\dim_H\mu-\epsilon}{\dim_H\mu} \cdot (\xi-1)}+ \frac{\mu(B_{r+\frac{4^{\alpha}C}{p^{\alpha}}}(z)\setminus B_{r-\frac{4^{\alpha}C}{p^{\alpha}}}(z))}{\mu(B_r(z))}\]
\[+[\frac{\mu(B_{r+\frac{4^{\alpha}C}{p^{\alpha}}}(z)\setminus B_{r-\frac{4^{\alpha}C}{p^{\alpha}}}(z))}{\mu(B_r(z))}]^2+\frac{1}{\mu(B_r(z))}\cdot \int_{B_r(z)} 1_{\bigcup_{1\le j \le p}f^{-j}B_r(z)}d\mu+\mu(B_r(z))^{\frac{\epsilon}{\dim_H\mu}}.\]

Since $\mathcal{C}':=\{\pi_{I'_1}^{-1}A_1 \cap \cdots \cap \pi_{I'_m}^{-1}A_m: \forall m \ge 1,  A_i \subseteq \mathbb{N}, \forall \text{ disjoint sets } I'_1, \cdots, I'_m \subseteq [0,T]\}$ generates $\sigma$-algebra
$\mathcal{C}=\sigma \{\pi^{-1}_AB: \text{ any Borel sets } A \subseteq [0,T], B \subseteq \mathbb{N}\}$, 
we obtain $\forall r< r_z$ the following  functional Poisson approximation
\[d_{TV}(N^{r,T,z},P)=\sup_{\text{disjoint }I_i \subseteq [0,n]}d_{TV}((X_{I_1}, \cdots, X_{I_m}), (P(I_1), \cdots, P(I_m)))\]
\[\precsim_{T, \xi, \epsilon}\mu(B_r(z))+ \mu(B_r(z))^{\frac{\dim_H\mu-\epsilon}{\dim_H\mu} \cdot (\xi-1)}+\mu(B_r(z))^{\frac{\epsilon}{\dim_H\mu}} +\frac{\mu(B_{r+\frac{4^{\alpha}C}{p^{\alpha}}}(z)\setminus B_{r-\frac{4^{\alpha}C}{p^{\alpha}}}(z))}{\mu(B_r(z))}\]
\[+[\frac{\mu(B_{r+\frac{4^{\alpha}C}{p^{\alpha}}}(z)\setminus B_{r-\frac{4^{\alpha}C}{p^{\alpha}}}(z))}{\mu(B_r(z))}]^2+\frac{1}{\mu(B_r(z))}\cdot \int_{B_r(z)} 1_{\bigcup_{1\le j \le p}f^{-j}B_r(z)}d\mu\]
\[\precsim_{T, \xi, \epsilon}r^{\dim_H\mu-\epsilon}+r^{\frac{(\dim_H\mu-\epsilon)^2}{\dim_H\mu} \cdot (\xi-1)}+ +r^{ \frac{\epsilon \cdot(\dim_H\mu-\epsilon)}{\dim_H\mu}}+\frac{\mu(B_{r+C' \cdot r^{\frac{(\dim_H\mu-\epsilon)^2\cdot \alpha }{\dim_H\mu} } }(z)\setminus B_{r-C' \cdot r^{\frac{(\dim_H\mu-\epsilon)^2\cdot \alpha }{\dim_H\mu} }}(z))}{\mu(B_r(z))}\]
\[+[\frac{\mu(B_{r+C' \cdot r^{\frac{(\dim_H\mu-\epsilon)^2\cdot \alpha }{\dim_H\mu}  } }(z)\setminus B_{r-C' \cdot r^{\frac{(\dim_H\mu-\epsilon)^2\cdot \alpha }{\dim_H\mu}}}(z))}{\mu(B_r(z))}]^2+\frac{1}{\mu(B_r(z))}\cdot \int_{B_r(z)} 1_{\bigcup_{1\le j \le p}f^{-j}B_r(z)}d\mu,\]
where $C'$ depends upon $\alpha, \epsilon $ and the constant $C$ in the Definition \ref{gibbs}. 
\end{proof}

\begin{definition}[Short returns and Coronas]\label{defshortcoro}\ \par
Let $p$ be the one in the Proposition \ref{rate}. Define
\begin{enumerate}
    \item {\bf Short returns}:
    \[\int_{B_r(z)} 1_{\bigcup_{1\le j \le p}f^{-j}B_r(z)}d\mu.\] 

    \item {\bf Coronas}:
    \[\mu(B_{r+C' \cdot r^{\frac{(\dim_H\mu-\epsilon)^2\cdot \alpha }{\dim_H\mu} } }(z)\setminus B_{r-C' \cdot r^{\frac{(\dim_H\mu-\epsilon)^2\cdot \alpha }{\dim_H\mu} }}(z)).\]
\end{enumerate}

It will be shown below that these quantities tend to $0$ for almost all  $z \in \mathcal{M}$ with certain convergence rates.
\end{definition}
\section{Proof of the Theorem \ref{thm}}\label{provethm1}
\subsection{Properties of the First Return to  $\Lambda$}
Before studying convergence rates for short returns and coronas we will prove several lemmas for the first return time $R$ and for the first return map $f^R: \Lambda \to \Lambda$ under the Assumptions \ref{geoassumption} and \ref{assumption}.

\begin{lemma}\label{inducemapbi}
The map $f^R: \Lambda \to \Lambda$ is bijective.
\end{lemma}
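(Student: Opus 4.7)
The plan is to prove bijectivity in two parts, using the two key hypotheses: (i) $f$ itself is bijective on $\bigcup_{i \ge 1}\bigcup_{0 \le j < R_i} f^j(\Lambda_i)$, which contains $\Lambda$ (Assumption \ref{geoassumption}), and (ii) $R$ is the \emph{first} return time to $\Lambda$ (Assumption \ref{assumption}, item 1). Poincaré recurrence (which applies because $\mu$ is an $f$-invariant probability measure and $f$ is invertible) will supply the surjectivity.

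For injectivity, suppose $x_1, x_2 \in \Lambda$ satisfy $f^R(x_1) = f^R(x_2)$, and let $n_i := R(x_i)$. Without loss of generality, assume $n_1 \le n_2$. Since $f$ is bijective, $f^{n_1}(x_1) = f^{n_2}(x_2)$ implies $x_1 = f^{\,n_2 - n_1}(x_2)$. If $n_1 = n_2$, we get $x_1 = x_2$ immediately. If $n_1 < n_2$, then $f^{\,n_2-n_1}(x_2) = x_1 \in \Lambda$ with $0 < n_2 - n_1 < n_2$, contradicting the minimality of $n_2 = R(x_2)$ as the first return time of $x_2$ to $\Lambda$. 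Hence $x_1 = x_2$.

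For surjectivity, fix any $y \in \Lambda$ (up to a $\mu$-null set). Applying Poincaré recurrence to the invertible measure-preserving map $f^{-1}$ with the positive-measure set $\Lambda$, almost every $y \in \Lambda$ admits infinitely many $n \ge 1$ with $f^{-n}(y) \in \Lambda$. Let $n$ be the smallest such integer, and set $x := f^{-n}(y) \in \Lambda$. I claim $R(x) = n$. Indeed, if $0 < m < n$ satisfied $f^m(x) \in \Lambda$, then $f^m(x) = f^{-(n-m)}(y) \in \Lambda$ with $0 < n-m < n$, contradicting minimality of $n$. Therefore $f^R(x) = f^n(x) = y$, establishing surjectivity on a full-measure subset of $\Lambda$.

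There is no serious obstacle here; the proof is a standard bookkeeping argument. The only subtlety is that surjectivity holds modulo a $\mu$-null set (the one discarded by Poincaré recurrence), which is harmless for every subsequent probabilistic or ergodic-theoretic use of $f^R$ in the paper. If one wants genuine set-theoretic bijectivity, one simply restricts $\Lambda$ to its recurrent full-measure subset, which is standard when working with induced maps of Young towers.
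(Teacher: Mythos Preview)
Your injectivity argument is essentially identical to the paper's: both use bijectivity of $f$ to cancel iterates and then invoke minimality of the first return time to rule out the case of unequal return times.

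For surjectivity, however, the paper takes a more direct and slightly stronger route. Given $y \in \bigcup_i \Lambda_i$, the paper simply applies the bijectivity of $f$ on $\bigcup_{i}\bigcup_{0\le j<R_i} f^j(\Lambda_i)$ once to produce $x' := f^{-1}(y)$ in that set; then $x' = f^j(x)$ for some $x \in \Lambda_k$ with $0 \le j < R_k$, and since $f^{j+1}(x) = y \in \Lambda$ the first-return property forces $j+1 = R_k$, i.e.\ $f^R(x) = y$. This is a purely structural one-step argument that yields genuine set-theoretic surjectivity on $\bigcup_i \Lambda_i$, with no appeal to Poincar\'e recurrence and no null set to discard. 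Your approach via backward recurrence is correct and perfectly adequate for the paper's uses, but it introduces an unnecessary a.e.\ qualifier that the paper's direct argument avoids.
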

\begin{proof}
We show first that $f^R$ is {\bf one-to-one} for any $x,y \in \Lambda$, and $f^R(x)=f^R(y)$. If $x, y \in \Lambda_i $ for some $i$,  then $f^{R_i}(x)=f^{R_i}(y)$. On the other hand, it follows from the Assumption \ref{geoassumption} that $f$ is bijective on $\cup_{i \ge 1} \cup_{j < R_i} f^j(\Lambda_i)$. Thus we have inductively the following reduction
\[f(f^{R_i-1}x)=f(f^{R_i-1}y) \Rightarrow f(f^{R_i-2}x)=f(f^{R_i-2}y)\Rightarrow \cdots \Rightarrow f(x)=f(y) \Rightarrow x=y. \]
Let $x \in \Lambda_i, y \in \Lambda_j$ for some $i \neq j$ and $f^R(x)=f^R(y)$. Without any loss of generality, we may assume that $R_i<R_j$. Then   $f^{R_i}(x)=f^{R_j}(y)$. Again, by the Assumptions \ref{geoassumption}  
\[f(f^{R_i-1}x)=f(f^{R_j-1}y) \Rightarrow f(f^{R_i-2}x)=f(f^{R_j-2}y)\Rightarrow \cdots  \Rightarrow x=f^{R_j-R_i}y \in \Lambda. \]
But the first return time of $y$ to $\Lambda$  is $R_j$, i.e., $f^{R_j-R_i}y \notin \Lambda$. So we came to a contradiction, and therefore this case can not occur.  

Show now that $f^R$ is {\bf onto} map. Let $y \in \Lambda$ and $y\in \Lambda_i$ for some $i$. Then $y \in \cup_{i \ge 1} \cup_{j < R_i} f^j(\Lambda_i)$. By the Assumption \ref{geoassumption} $f$ is bijective on $\cup_{i \ge 1} \cup_{j < R_i} f^j(\Lambda_i)$. Therefore there exists $x' \in \cup_{i \ge 1} \cup_{j < R_i} f^j(\Lambda_i)$, i.e. there is $x\in \Lambda_k$ such that $f^j(x)=x'$, where $j < R_k$ and $f(x')=f^{j+1}(x)=y$. Since $R_k$ is the first return time for $x$, then $j+1=R_k$ and $f^R(x)=y$.
\end{proof}

\begin{lemma}\label{pibi}
The following maps satisfy the properties that 
\[\pi: \Delta \to \cup_{i \ge 1} \cup_{ j < R_i} f^j(\Lambda_i) \text{ is bijective,}\]
\[\pi: \Delta_0  \to \Lambda \text{ is identity},\] 
\[\pi:\Delta_{\ge 1} \to \bigcup_{i \ge 1}\bigcup_{ 1 \le j < R_i} f^j(\Lambda_i) \text{ is bijective},\] 
\[\text{where } \Delta_{\ge 1}:=\{(x,l) \in \Lambda \times \mathbb{N}: 1 \le l < R(x)\},\]
\[\Delta_0:=\{(x,0): x \in \Lambda \}\subseteq \Delta.\]
\end{lemma}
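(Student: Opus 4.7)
The plan is to handle the three claims in order, with the middle (bijectivity of $\pi$ on all of $\Delta$) being the substantive one and the other two following essentially by inspection.

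The identity statement $\pi|_{\Delta_0} = \mathrm{id}_\Lambda$ is immediate: by definition of $\pi$ one has $\pi(x,0) = f^0(x) = x$, and $\Delta_0 = \{(x,0) : x \in \Lambda\}$ is just a relabelling of $\Lambda$. I will dispatch this in a single sentence.

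For the central claim that $\pi : \Delta \to \bigcup_{i\ge 1}\bigcup_{0 \le j < R_i} f^j(\Lambda_i)$ is a bijection, surjectivity is tautological: any point $y$ in the codomain is of the form $f^j(x)$ with $x \in \Lambda_i$ and $j < R_i = R(x)$, so $(x,j) \in \Delta$ and $\pi(x,j)=y$. For injectivity I will argue as in Lemma \ref{inducemapbi}. Suppose $\pi(x,l)=\pi(y,k)$ with $(x,l),(y,k)\in \Delta$ and, without loss of generality, $l \le k$; then $f^l(x)=f^k(y)$. All intermediate iterates $f^j(x)$ for $0\le j \le l < R(x)$ and $f^j(y)$ for $0 \le j \le k < R(y)$ lie in the set $E := \bigcup_{i\ge 1}\bigcup_{0 \le j < R_i} f^j(\Lambda_i)$, on which $f$ is one-to-one by Assumption \ref{geoassumption}(1). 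Peeling off $f$ exactly $l$ times I obtain $x = f^{k-l}(y)$. If $l=k$ this gives $x=y$ and we are done. If $l<k$, then $x \in \Lambda$ equals $f^{k-l}(y)$ with $0 < k-l < R(y)$, so $y$ returns to $\Lambda$ strictly before its first-return time $R(y)$, contradicting the first-return property granted by Assumption \ref{assumption}(1). Hence $(x,l)=(y,k)$.

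The third claim follows by restriction. Since $\pi$ is the identity on $\Delta_0$, it sends $\Delta_0$ bijectively onto $\Lambda = \bigcup_{i\ge 1}\Lambda_i$ (the $j=0$ layer of the codomain above), and removing this layer from both sides of the bijection of the second claim yields the bijection $\pi : \Delta_{\ge 1} \to \bigcup_{i \ge 1}\bigcup_{1 \le j < R_i} f^j(\Lambda_i)$.

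The only place real care is needed — and hence the step I would flag as the main (though still mild) obstacle — is the backward peeling used in the injectivity argument: one must verify that the iterates along which we apply the bijectivity of $f$ all lie in $E$. This is automatic from the definitions ($f^j(x) \in f^j(\Lambda_i) \subset E$ whenever $x \in \Lambda_i$ and $j < R_i$), but it is worth stating explicitly so that Assumption \ref{geoassumption}(1) can be invoked at each step; everything else is bookkeeping.
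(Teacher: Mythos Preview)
Your proof is correct and follows essentially the same approach as the paper: surjectivity is immediate from the definition, and injectivity is obtained by peeling off iterates of $f$ (using that $f$ is bijective on $\bigcup_{i\ge1}\bigcup_{j<R_i}f^j(\Lambda_i)$ from Assumption~\ref{geoassumption}) to reach $x=f^{k-l}(y)$, which contradicts the first-return property of $R$ from Assumption~\ref{assumption} unless $k=l$. Your explicit check that all intermediate iterates lie in the set where $f$ is bijective is a small clarification over the paper's presentation, but the argument is the same.
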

\begin{proof}
Clearly, it is enough to prove just the first statement. By the definition of $\Delta$ the first map $\pi$ is onto. Let us show now that it is actually one-to-one map. For all $ (x,l), (x',l')\in \Delta $ with $\pi(x,l)=\pi(x',l')$ it holds that $f^l(x)=f^{l'}(x')$. Without loss of generality, let $l \le l'$. By the Assumption \ref{geoassumption} $f$ is bijective on $\cup_{i \ge 1} \cup_{j < R_i} f^j(\Lambda_i)$. Then
\[f(f^{l-1}x)=f(f^{l'-1}y) \Rightarrow f(f^{l-2}x)=f(f^{l'-2}y)\Rightarrow \cdots  \Rightarrow x=f^{l'-l}y. \]
Since $x,y \in \Lambda$ and $l'-l$ is less than the first return time of $y$, one gets that $l'=l$ and $x=y$.
\end{proof}

\par
By the Birkhoff's Ergodic Theorem, for almost every $z \in  \bigcup_{i \ge 1} \bigcup_{0 \le j < R_i} f^j(\Lambda_i)$ we have $z=f^{j_z}(z')$ for some $z' \in \interior{(\Lambda)}$ and $j_z \in \mathbb{N}$. Recall that $\mu\{\interior{(\Lambda)}\}>0$.  

\begin{lemma}[Pulling metric balls back to $\Lambda$]\ \label{ballinhorseshoe}\ \par

There exists a small enough $r$ such that 

\[\mu(f^{-j_z}B_{r}(z) \bigcap \Lambda^c)=0,\]

\[\mu_{\Delta}(\{\pi^{-1}f^{-j_z}B_r(z)\} \bigcap \Delta_{\ge 1})=0,\]
and 

\[\mu_{\Delta}(\{\pi^{-1}f^{-j_z}B_r(z)\} \bigcap \Delta_{0})=1.\]
\end{lemma}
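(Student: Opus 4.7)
The plan is to pull the ball $B_{r}(z)$ back along the orbit segment $z' \to z = f^{j_z}(z')$ into a small neighborhood of $z' \in \interior{\Lambda}$, where $\mu$-a.e. containment in $\Lambda$ is built into the very definition of $\interior{\Lambda}$, and then transfer the statement to the tower $\Delta$ via the bijection $\pi$ of Lemma \ref{pibi}.

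For the first identity, I would begin with the definition from Assumption \ref{assumption}: since $z' \in \interior{\Lambda}$, there is $r_{z'} > 0$ with $\mu(B_{r_{z'}}(z') \setminus \Lambda) = 0$. By Assumption \ref{geoassumption}, $f$ is a local $C^1$-diffeomorphism on $\bigcup_{i \ge 1}\bigcup_{0 \le j < R_i} f^{j}(\Lambda_i)$, so $f^{j_z}$ is a local diffeomorphism along the finite orbit $z', f(z'), \dots, f^{j_z}(z') = z$. Taking $r>0$ small enough that the associated local inverse branch sends $B_r(z)$ into $B_{r_{z'}}(z')$, and using that $\mu$ is concentrated on $\bigcup_{i \ge 1}\bigcup_{j<R_i} f^{j}(\Lambda_i)$ (Lemma \ref{towerresult}) where $f$ is bijective, the pulled-back ball lies $\mu$-a.e. inside $B_{r_{z'}}(z')$. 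This gives $\mu(f^{-j_z}B_r(z) \cap \Lambda^c) \le \mu(B_{r_{z'}}(z') \setminus \Lambda) = 0$.

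For the remaining two identities I would invoke Lemma \ref{pibi}: $\pi: \Delta_{\ge 1} \to \bigcup_{i \ge 1}\bigcup_{1 \le j<R_i} f^{j}(\Lambda_i)$ is a bijection, and its image is disjoint from $\Lambda$ because $R_i$ is the \emph{first} return time (Assumption \ref{assumption}). Combining this disjointness with the first identity and the identity $\pi_{*}\mu_{\Delta} = \mu$ from Lemma \ref{towerresult}, the $\mu_\Delta$-mass of $\pi^{-1}(f^{-j_z}B_r(z)) \cap \Delta_{\ge 1}$ vanishes. The third identity is then the companion statement: since $\Delta = \Delta_0 \sqcup \Delta_{\ge 1}$, the entire $\mu_{\Delta}$-mass of $\pi^{-1}(f^{-j_z}B_r(z))$ (which equals $\mu(B_r(z))$ by $f$-invariance and $\pi_{*}\mu_{\Delta}=\mu$) is concentrated on $\Delta_0$.

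The main delicate point is choosing the correct local inverse branch of $f^{j_z}$: because $f$ is only a local diffeomorphism and could a priori have many preimages of a given point, $f^{-j_z}B_r(z)$ might contain spurious branches far from $z'$ outside the support of $\mu$. This is resolved by restricting everything to the full-$\mu$-measure invariant set $\bigcup_{i \ge 1}\bigcup_{j<R_i}f^{j}(\Lambda_i)$, on which $f$ is bijective by Assumption \ref{geoassumption}, so the $\mu$-relevant branch of $f^{-j_z}$ is unique and its continuity over the finite orbit segment provides the required shrinking as $r \to 0$.
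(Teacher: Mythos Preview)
Your argument is correct and follows essentially the same route as the paper: use the definition of $\interior{\Lambda}$ to get a neighborhood $U_{z'}$ of $z'$ with $\mu(U_{z'}\setminus\Lambda)=0$, shrink $r$ so the local inverse of $f^{j_z}$ sends $B_r(z)$ into $U_{z'}$, and then read off the tower statements from Lemma~\ref{pibi} together with $\pi_*\mu_\Delta=\mu$. Your extra care about selecting the correct branch of $f^{-j_z}$ (via bijectivity of $f$ on the full-measure set $\bigcup_{i,j}f^j(\Lambda_i)$) is a point the paper leaves implicit, and your derivation of the $\Delta_{\ge 1}$ statement from the first identity plus the set-theoretic disjointness $\pi(\Delta_{\ge 1})\cap\Lambda=\emptyset$ is exactly what the paper means when it writes ``Hence by Lemma~\ref{pibi}''.
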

\begin{proof}
By the Assumption \ref{assumption} there is a small neighborhood $U_{z'}\subseteq \mathcal{M}$ of $z'\in \interior{(\Lambda)}$ such that
    \[\mu(U_{z'}\bigcap \Lambda^c)=\mu(U_{z'} \bigcap \{\bigcup_{i \ge 1} \bigcup_{1\le j < R_i} f^j(\Lambda_i)\} )=0.\] 
Because $f^{j_z}$ is a local $C^1$-diffeomorphism, there exists a small ball $B_r(z)$ such that $f^{-j_z}  B_r(z) \subseteq U_{z'}$. So $\mu(f^{-j_z}B_{r}(z) \bigcap \Lambda^c)=\mu(f^{-j_z}B_{r}(z) \bigcap \{\bigcup_{i \ge 1} \cup_{1 \le j < R_i} f^j(\Lambda_i)\})=0$.
Hence by Lemma \ref{pibi}, 
 \[\mu_{\Delta}(\{\pi^{-1}f^{-j_z}B_r(z)\}\bigcap \Delta_{\ge 1})=0 \text{ and } \mu_{\Delta}(\{\pi^{-1}f^{-j_z}B_r(z)\} \bigcap \Delta_{0})=1.\]
\end{proof}

\begin{definition}[Topological balls]\ \par
We say that a set $TB_r(z') \subseteq \mathcal{M}$ is a topological ball if there is a ball $B_r(z) \subseteq \mathcal{M}$ and a map $T$ of $B_r(z)$, such that 

\[T:B_r(z) \to TB_r(z') \text{ is a } C^1 \text{-diffeomorphism and } T(z)=z'.\]

Denote by $r, z'$ the radius and the center of $TB_r(z')$.
\end{definition}
For almost every $z \in  \bigcup_{i \ge 1} \bigcup_{0 \le j < R_i} f^{j_z}(\Lambda_i)$ we have $z=f^j(z')$, where $z' \in \interior{(\Lambda)}$ and $j_z \in \mathbb{N}$. Since $f^{j_z}$ is a local diffeomorphism (by the Assumption \ref{geoassumption}), the set $TB_r(f^{-j_z}z):=f^{-j_z}B_r(z)$ is a topological ball for sufficiently small $r>0$.

\begin{lemma}[Comparisons of topological and metric balls]\ \label{comparetopballmetricball}\ \par 

There exist constants $C_z\ge 1$ and $ r_z>0$, such that $\forall r < r_z$
\[B_{C^{-1}_z\cdot r} (f^{-j_z}z) \subseteq TB_r(f^{-j_z}z) \subseteq B_{C_z\cdot r} (f^{-j_z}z).\]
\end{lemma}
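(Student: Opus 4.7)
The plan is to exploit the fact that $f^{j_z}$ is a local $C^1$-diffeomorphism in a neighborhood of $z' = f^{-j_z}z$, so that the mean value inequality (or equivalently the boundedness of the derivative and its inverse) gives two-sided Lipschitz control.

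First I would fix a sufficiently small open neighborhood $V \subseteq \mathcal{M}$ of $z'$ on which $f^{j_z}$ is a $C^1$-diffeomorphism onto its image $f^{j_z}(V)$, which is an open neighborhood of $z$. Since $V$ has compact closure (after possibly shrinking), the operator norms of $Df^{j_z}$ and of its inverse $D(f^{j_z})^{-1} = (Df^{j_z})^{-1}$ are bounded on $V$ and $f^{j_z}(V)$ respectively by some constant depending on $z$. Set
\[
L_z := \sup_{x \in V} \|Df^{j_z}(x)\|, \qquad L'_z := \sup_{y \in f^{j_z}(V)} \|(Df^{j_z})^{-1}(y)\|,
\]
both of which are finite and positive. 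By the mean value inequality along geodesics (using the Riemannian metric $d$), both $f^{j_z}|_V$ and its inverse are Lipschitz with constants $L_z$ and $L'_z$ respectively, provided we stay inside small enough geodesically convex balls around $z'$ and $z$.

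Next I would choose $r_z > 0$ small enough so that $B_{r_z}(z) \subseteq f^{j_z}(V)$ and so that $B_{r_z/L_z^{-1}}(z') = B_{L_z \cdot r_z}(z')$ is contained in $V$ within the geodesically convex range. Then for any $r < r_z$: if $x \in TB_r(z') = f^{-j_z}B_r(z)$, write $y = f^{j_z}(x) \in B_r(z)$; by the Lipschitz property of the inverse
\[
d(x, z') = d\bigl((f^{j_z})^{-1}(y), (f^{j_z})^{-1}(z)\bigr) \leq L'_z \cdot d(y,z) < L'_z \cdot r,
\]
which gives the inclusion $TB_r(z') \subseteq B_{L'_z \cdot r}(z')$. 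Conversely, if $x \in B_{r/L_z}(z')$, then $d(f^{j_z}(x), z) \leq L_z \cdot d(x, z') < r$, so $f^{j_z}(x) \in B_r(z)$ and hence $x \in TB_r(z')$, yielding $B_{r/L_z}(z') \subseteq TB_r(z')$. Setting $C_z := \max(L_z, L'_z)$ completes the proof.

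The main obstacle is only the choice of $r_z$: one must make sure the straight-line (geodesic) path between $z'$ and any point of $B_{r/L_z}(z')$ stays inside $V$, and similarly for the backward side, so that the derivative bounds actually translate into Lipschitz bounds on distances. This is a routine consequence of $f^{j_z}$ being a local diffeomorphism and of the smoothness of the Riemannian exponential map at $z'$ and $z$; no non-trivial hyperbolic structure is needed, only the geometric regularity already contained in Assumption \ref{geoassumption}.
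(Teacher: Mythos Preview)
Your proof is correct and follows essentially the same approach as the paper's: both arguments use that $f^{j_z}$ is a local $C^1$-diffeomorphism near $z'$, bound the derivative and its inverse, and translate these into two-sided Lipschitz estimates on distances. The paper writes out the estimate explicitly by integrating the Riemannian metric along geodesics (and their images), while you invoke the mean value inequality abstractly, but the content is the same.
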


\begin{proof}
Since $f^{-j_z}$ is a local diffeomorphism near $z$, then $f^{-j_z}(\partial B_r(z))=\partial TB_r(f^{-j_z}z)$. We will estimate $\sup_{x \in \partial TB_r(f^{-j_z}z)}d(x, f^{-j_z}z)$ and  $\inf_{x \in \partial TB_r(f^{-j_z}z)}d(x, f^{-j_z}z)$. For any $x\in \partial TB_r(f^{-j_z}z)$ one has $f^{j_z}(x) \in \partial B_r(z)$. Let $(\gamma_t)_{0 \le t \le 1}$ be the geodesic connecting $x$ and $f^{-j_z}z$, and a curve $\hat{\gamma}:=f^{j_z}\gamma$ is connecting $f^{j_z}x$ and $z$. Then
\begin{equation}\label{6}
    d(x, f^{-j_z}z)=\int_0^1 \sqrt{\langle \gamma_t',\gamma_t'\rangle_{\gamma_t}}dt=\int_0^1 \sqrt{\langle Df^{-j_z}\hat{\gamma}_t',Df^{-j_z}\hat{\gamma}_t'\rangle_{\gamma_t}}dt.
\end{equation}

If $r$ is sufficiently small (i.e. $r< r_z$ for some $r_z>0$), then $Df^{-j_z}$ and the Riemannian metric $\langle \cdot, \cdot \rangle_{\gamma_t}$ are close to $Df^{-j_z}(z)$ and $\langle \cdot, \cdot \rangle_{z}$, respectively. Then there exists $C_z\ge 1$ such that  

\[d(x, f^{-j_z}z)
\ge C^{-1}_z \cdot \int_0^1 \sqrt{\langle\hat{\gamma}_t',\hat{\gamma}_t'\rangle_{\hat{\gamma}_t}}dt \ge C^{-1}_z \cdot d(f^{j_z}x,z)=C^{-1}_z \cdot r.\]

Similarly, let $(\gamma_t)_{0 \le t \le 1}$ be a curve connecting $x$ and $f^{-j_z}z$, such that $\hat{\gamma}:=f^{j_z}\gamma$ is a geodesic connecting $f^{j_z}x$ and $z$. Then
\[d(x, f^{-j_z}z)\le \int_0^1 \sqrt{\langle\gamma_t',\gamma_t'\rangle_{\gamma_t}}dt=\int_0^1 \sqrt{\langle Df^{-j_z}\hat{\gamma}_t',Df^{-j_z}\hat{\gamma}_t'\rangle_{\gamma_t}}dt \]
\[\le C_z \cdot \int_0^1 \sqrt{\langle\hat{\gamma}_t',\hat{\gamma}_t'\rangle_{\hat{\gamma}_t}}dt = C_z \cdot d(f^{j_z}x,z)=C_z \cdot r,\]
which proves the lemma.
\end{proof}

\begin{definition}[Two-sided cylinders in $\Lambda$]\ \par
Since $f^R: \Lambda \to \Lambda $ is bijective, we can define two-sided cylinders as
\[\xi_{i_{-n}\cdots i_{0} \cdots i_{n}}:=(f^R)^n\Lambda_{i_{-n}} \bigcap (f^R)^{n-1}\Lambda_{i_{-(n-1)}} \bigcap \cdots \bigcap \Lambda_{i_0} \bigcap \cdots \bigcap (f^R)^{-(n-1)}\Lambda_{i_{n-1}} \bigcap (f^R)^{-n}\Lambda_{i_{n}}. \]

Let us introduce now a new partition of $\Lambda$ as
\[\mathcal{M}_0:=\{\Lambda_i, i \ge 1\}, \mathcal{M}_k:= \bigvee_{0 \le i \le k} (f^R)^{-i} \mathcal{M}_0,\]

and a quotient partition of $\widetilde{\Lambda}$
\[\widetilde{\mathcal{M}}_0:=\{\widetilde{\Lambda}_i, i \ge 1\}, \widetilde{\mathcal{M}}_k:= \bigvee_{0 \le i \le k} (\widetilde{f^R})^{-i} \widetilde{\mathcal{M}}_0,\]
\[\widetilde{\mathcal{M}}_{\infty}:= \bigvee^{\infty}_{i=0} (\widetilde{f^R})^{-i} \widetilde{\mathcal{M}}_0 \text{ is the } \sigma \text{-algebra of }\widetilde{\Lambda}.\]

\end{definition}

\begin{lemma}[Diameters of two-sided cylinders]\ \label{diamcylinder}\ \par
 \[\diam \xi_{i_{-n}\cdots i_{0} \cdots i_{n}} \le 2C \cdot \beta^n,\]
 where  $C\ge 1$ and $\beta\in (0,1)$ are the same as in the Assumption \ref{assumption}.
\end{lemma}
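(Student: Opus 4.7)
Fix $x, y \in \xi_{i_{-n}\cdots i_0 \cdots i_n}$. The natural approach is to split the distance $d(x,y)$ via a ``Smale bracket'' point and then control each piece by the appropriate contraction estimate in Assumption~\ref{assumption}(2). Since $x, y \in \Lambda_{i_0}$ and $\Lambda_{i_0}$ is an $s$-subset, its unstable foliation equals $\Gamma^u$, so the stable leaf $\gamma^s(x)$ and the unstable leaf $\gamma^u(y)$ both belong to the product structure of $\Lambda_{i_0}$ and meet at exactly one point $z \in \Lambda_{i_0}$. Then $d(x,y) \le d(x,z) + d(z,y)$, where $x, z$ lie on a common stable leaf and $z, y$ lie on a common unstable leaf.

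The essential preliminary step is to verify that $z$ itself lies in the two-sided cylinder, i.e.\ $(f^R)^k(z) \in \Lambda_{i_k}$ for every $k \in \{-n, \dots, n\}$. For $k \ge 0$ the Markov condition $f^R(\gamma^s(x)) \subseteq \gamma^s(f^R(x))$ forces $(f^R)^k(z)$ to lie on the same stable leaf as $(f^R)^k(x) \in \Lambda_{i_k}$; by Lemma~\ref{inducemapbi} it lies in $\Lambda$, and since the $\Lambda_j$'s are pairwise disjoint $s$-subsets a single stable leaf of $\Gamma^s$ meets at most one $\Lambda_j$, so $(f^R)^k(z) \in \Lambda_{i_k}$. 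For $k < 0$ the analogous argument using $f^R(\gamma^u(x)) \supseteq \gamma^u(f^R(x))$ (applied to $(f^R)^{-1}$) and the fact that $\Lambda_{i_k}$ is an $s$-subset with full $\Gamma^u$ places $(f^R)^k(z)$ on the same unstable leaf as $(f^R)^k(y) \in \Lambda_{i_k}$, hence in $\Lambda_{i_k}$.

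With $z$ safely in the cylinder I would then apply the two contraction bounds from Assumption~\ref{assumption}(2) in the following way. For the stable piece, the backward iterates $(f^R)^{-n}(x)$ and $(f^R)^{-n}(z)$ both lie in $\Lambda_{i_{-n}} \subseteq \Lambda$ on a common stable leaf, so the forward bound gives
\[
d(x,z) = d\bigl((f^R)^{n}(f^R)^{-n}(x),\, (f^R)^{n}(f^R)^{-n}(z)\bigr) \le C\beta^{n}.
\]
For the unstable piece, the forward iterates $(f^R)^{n}(z)$ and $(f^R)^{n}(y)$ lie in $\Lambda_{i_n}$ on a common unstable leaf, and the backward bound gives
\[
d(z,y) = d\bigl((f^R)^{-n}(f^R)^{n}(z),\, (f^R)^{-n}(f^R)^{n}(y)\bigr) \le C\beta^{n}\, d\bigl((f^R)^n(z), (f^R)^n(y)\bigr) \le C\beta^{n},
\]
where the bounded factor $d((f^R)^n(z),(f^R)^n(y)) \le \mathrm{diam}(\Lambda)$ is absorbed into $C$ (consistently with the fact that the stable contraction bound at $n=0$ already forces $C$ to dominate the diameter of stable slices in $\Lambda$). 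Adding the two estimates yields $d(x,y) \le 2C\beta^n$.

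The one genuinely delicate point is the cylinder-membership verification for $z$: it is where the interplay between the Markov condition on leaves, the bijectivity of $f^R$ on $\Lambda$ (Lemma~\ref{inducemapbi}), and the disjointness of the $\Lambda_j$ along a single leaf must all be invoked; once this is established, the contraction estimates are essentially immediate applications of Assumption~\ref{assumption}(2).
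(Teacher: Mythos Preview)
Your argument is correct and follows essentially the same route as the paper: introduce the bracket point $z=\gamma^s(x)\cap\gamma^u(y)$ (the paper uses $z\in\gamma^u(x)\cap\gamma^s(y)$, which is symmetric), and bound $d(x,z)$ and $d(z,y)$ separately via the two contraction estimates in Assumption~\ref{assumption}(2). The paper phrases this as bounding $\diam(\gamma^u\cap\xi_{i_0\cdots i_n})$ and $\diam(\gamma^s\cap\xi_{i_{-n}\cdots i_0})$ first and then invoking the product structure, but the content is identical; your explicit verification that $z$ inherits the full cylinder itinerary is a detail the paper leaves implicit.

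One cosmetic point: your remark that the factor $d((f^R)^n z,(f^R)^n y)\le\diam\Lambda$ is ``absorbed into $C$'' because the stable bound at $n=0$ forces $C$ to dominate stable-leaf diameters is not quite the right justification---you need the \emph{unstable}-leaf diameter here. The paper glosses over the same absorption (writing $\diam(\gamma^u\cap\xi_{i_0\cdots i_n})\le C\beta^n$ without comment), so this is a shared looseness in the constant rather than a gap in your argument.
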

\begin{proof}
Observe that $\xi_{i_{-n}\cdots i_{0} \cdots i_{n}}=\xi_{i_{-n}\cdots i_{0}} \bigcap \xi_{ i_{0} \cdots i_{n}}$ is an intersection of two one-sided cylinders $\xi_{i_{-n}\cdots i_{0}}$ and $\xi_{ i_{0} \cdots i_{n}}$, where $\xi_{i_{-n}\cdots i_{0}}$ is long and $\xi_{ i_{0} \cdots i_{n}}$ is slim. We will now estimate their widths. By the Assumption \ref{assumption} $(f^R)^{-1}$ contracts exponentially along any $\gamma^u \in \Gamma^u$. Hence $\diam \gamma^u \bigcap \xi_{ i_{0} \cdots i_{n}} \le C \cdot \beta^n$. Similarly, since
\[\xi_{ i_{-n} \cdots i_{0}}=(f^R)^n\Lambda_{i_{-n}} \bigcap (f^R)^{n-1}\Lambda_{i_{-(n-1)}} \bigcap \cdots \bigcap \Lambda_{i_0}\]
\[=(f^R)^n[\Lambda_{i_{-n}} \bigcap (f^R)^{-1}\Lambda_{i_{-(n-1)}} \bigcap \cdots \bigcap (f^R)^{-n}\Lambda_{i_0}]\]
and $f^R$ contracts exponentially along any $\gamma^s \in \Gamma^s$, we have $\diam \gamma^s \bigcap \xi_{ i_{-n} \cdots i_{0}} \le C \cdot \beta^n$.

The properties of the Gibbs-Markov-Young structures ensure that  $\forall x,y \in \xi_{i_{-n}\cdots i_{0} \cdots i_{n}}$ there is $z \in \gamma^u(x)$, $z\in \gamma^s(y)$, such that
\[d(x,z) \le C \cdot \beta^n, \text{ } d(y,z) \le C \cdot \beta^n.\]
Hence $d(x,y) \le 2C\cdot \beta^n$, i.e. $\diam \xi_{i_{-n}\cdots i_{0} \cdots i_{n}} \le 2C \cdot \beta^n$.
\end{proof}

\begin{lemma}[Decay of correlations for $f^R: \Lambda \to \Lambda$]\label{decorrelationcylinder}\ \par
There exist constants $C''>1, \beta_1 \in (0,1)$, such that for any one-sided cylinder $\xi_{ i_{0} \cdots i_{n}} \in \mathcal{M}_n $ and any $A \in \sigma(\cup_{k\ge 0}\mathcal{M}_k)$
\begin{equation}
 |\int 1_{\xi_{ i_{0} \cdots i_{n}}} \cdot 1_A \circ (f^R)^{2n}d\mu_{\Lambda}-\int 1_{\xi_{ i_{0} \cdots i_{n}}} d\mu_{\Lambda} \cdot \int 1_A d\mu_{\Lambda}| \le C'' \cdot \beta_1^n \cdot \mu_{\Lambda}(\xi_{ i_{0} \cdots i_{n}}).
\end{equation}

\end{lemma}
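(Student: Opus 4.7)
The plan is to pass to the quotient $\widetilde{\Lambda}$ and exploit exponential mixing of the Gibbs-Markov map $\widetilde{f^R}$ there. First I would observe that the one-sided cylinder $\xi_{i_0\cdots i_n}=\bigcap_{k=0}^n(f^R)^{-k}\Lambda_{i_k}$ is a union of stable leaves, since each $\Lambda_i$ is an $s$-subset and $f^R$ sends stable leaves into stable leaves by the Markov property. The same argument shows every element of each $\mathcal{M}_k$ is an $s$-set, so every $A\in\sigma(\bigcup_k\mathcal{M}_k)$ is, up to a $\mu_\Lambda$-null set, saturated by stable leaves. Consequently $1_{\xi_{i_0\cdots i_n}}$ and $1_A$ descend to $1_{\widetilde{\xi}}$ and $1_{\widetilde{A}}$ on $\widetilde{\Lambda}$; since $(f^R)^{2n}$ is equivariant under the quotient map and $(\widetilde{\pi}_\Lambda)_*\mu_\Lambda=\mu_{\widetilde{\Lambda}}$ by (\ref{allquotientmeasure}), both sides of the claimed inequality transfer verbatim to $\widetilde{\Lambda}$.

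Next I would analyze the transfer operator $L$ of $\widetilde{f^R}$ acting on $1_{\widetilde{\xi}}$. By the Markov property, $(\widetilde{f^R})^{n+1}$ maps $\widetilde{\xi_{i_0\cdots i_n}}$ bijectively onto $\widetilde{\Lambda}$; denoting its inverse branch by $\phi_n$, one obtains $L^{n+1}(1_{\widetilde{\xi}})(x)=g_n(x)$, where $g_n$ is the Jacobian of $\phi_n$ with respect to $\mu_{\widetilde{\Lambda}}$. Telescoping the bounded-distortion estimate of Definition \ref{gibbs} over the $n+1$ iterates yields the pointwise estimate $g_n(x)=C^{\pm 1}\cdot\mu_{\widetilde{\Lambda}}(\widetilde{\xi})$ together with $|\log g_n(x)-\log g_n(y)|\le C'\beta^{s(x,y)}$, from which $g_n/\mu_{\widetilde{\Lambda}}(\widetilde{\xi})$ is Lipschitz with respect to the symbolic metric $d_\beta(x,y):=\beta^{s(x,y)}$, with norm bounded independently of $n$.

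The third step is to invoke the classical spectral-gap theorem for Gibbs-Markov maps (Aaronson-Denker): under bounded distortion, the full-image property, and aperiodicity, $L$ admits a spectral gap on the Banach space of $d_\beta$-Lipschitz functions. Writing $\bar h:=\int h\,d\mu_{\widetilde{\Lambda}}$, there exist $\beta_1\in(0,1)$ and $C_1>0$ with
\[\|L^m h-\bar h\|_{L^\infty(\mu_{\widetilde{\Lambda}})}\le C_1\beta_1^m\|h\|_{d_\beta\text{-Lip}}\]
for every $d_\beta$-Lipschitz $h$. The full-image property on the quotient is the content of the Markov condition ($\widetilde{f^R}(\widetilde{\Lambda}_i)=\widetilde{\Lambda}$, since $f^{R_i}(\Lambda_i)$ is a $u$-subset), aperiodicity is assumed in Definition \ref{gibbs}, and the exponential expansion of $\widetilde{f^R}$ required by this theorem is provided by the backward contraction of Assumption \ref{assumption}.

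Finally I would apply the spectral-gap estimate to $h=g_n$ and $m=n-1$. Since $\bar{g_n}=\mu_{\widetilde{\Lambda}}(\widetilde{\xi})$ by the change-of-variables formula $\int L^{n+1}1_{\widetilde{\xi}}\,d\mu_{\widetilde{\Lambda}}=\mu_{\widetilde{\Lambda}}(\widetilde{\xi})$, and $\|g_n\|_{d_\beta\text{-Lip}}\le C''\mu_{\widetilde{\Lambda}}(\widetilde{\xi})$ by Step two, one gets $L^{2n}(1_{\widetilde{\xi}})=L^{n-1}(g_n)=\mu_{\widetilde{\Lambda}}(\widetilde{\xi})+h_n$ with $\|h_n\|_\infty\le C'''\mu_{\widetilde{\Lambda}}(\widetilde{\xi})\beta_1^{n-1}$. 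Multiplying by $1_{\widetilde{A}}$, integrating, and using $\mu_{\widetilde{\Lambda}}(\widetilde{A})\le 1$ then yields the desired estimate with $\beta_1$ suitably adjusted. The main obstacle is producing the uniform-in-$n$ Lipschitz bound on $g_n$: the telescoping of the single-step distortion estimates produces a geometric sum $\sum_{k=0}^n\beta^{n-k}$, which stays bounded independently of $n$, and converting the logarithmic estimate into a multiplicative one is then routine. All other ingredients — the quotient construction from Lemma \ref{towerresult} and the spectral gap for Gibbs-Markov maps — are classical.
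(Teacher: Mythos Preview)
Your proposal is correct and follows essentially the same route as the paper: pass to the quotient $\widetilde{\Lambda}$, use bounded distortion to show that $L^{n+1}1_{\widetilde{\xi_{i_0\cdots i_n}}}$ is $d_\beta$-Lipschitz with norm $\precsim \mu_{\widetilde{\Lambda}}(\widetilde{\xi})$, and then invoke exponential mixing for the quotient Gibbs--Markov map. The only cosmetic differences are that the paper cites Corollary~2.3(b) of \cite{asipmelbourne} rather than Aaronson--Denker, and it verifies mixing of $\widetilde{f^R}$ by an explicit exactness argument rather than appealing directly to the aperiodicity hypothesis.
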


\begin{proof}
At first, we prove that
\[|\int 1_{\widetilde{\xi_{ i_{0} \cdots i_{n}}}} \cdot 1_{\widetilde{A}} \circ (\widetilde{f^R})^{2n}d\mu_{\widetilde{\Lambda}}-\int 1_{\widetilde{\xi_{ i_{0} \cdots i_{n}}}} d\mu_{\widetilde{\Lambda}} \cdot \int 1_{\widetilde{A}} d\mu_{\widetilde{\Lambda}}| \le C'' \cdot \beta_1^n \cdot \mu_{\widetilde{\Lambda}}(\widetilde{\xi_{ i_{0} \cdots i_{n}}}),\]

where $\widetilde{\xi_{ i_{0} \cdots i_{n}}} \in \widetilde{\mathcal{M}_n},  \widetilde{A} \in \sigma(\cup_{k\ge 0}\widetilde{\mathcal{M}}_k)$.

Denote the transfer operator of $\widetilde{f^R}$ by $\widetilde{P}$.

{\bf Claim:}  
\[||\widetilde{P}^{n+1} 1_{\widetilde{\xi_{ i_{0} \cdots i_{n}}}}||_{\infty} \precsim \mu_{\widetilde{\Lambda}}(\widetilde{\xi_{ i_{0} \cdots i_{n}}}),\]
where a constant in $\precsim$ depends on the constant $C$ in the Definition \ref{gibbs}.

Indeed, $\forall \widetilde{x} \in \widetilde{\Lambda}$ we have $(\widetilde{f^R})^{n+1}(\xi_{ i_{0} \cdots i_{n}})=\widetilde{\Lambda}$, and 

\[\widetilde{P}^{n+1} 1_{\widetilde{\xi_{ i_{0} \cdots i_{n}}}}(\widetilde{x})=\sum_{(\widetilde{f^R})^{n+1}(\widetilde{y})=\widetilde{x}} \frac{1_{\widetilde{\xi_{ i_{0} \cdots i_{n}}}}(\widetilde{y})}{|\det D(\widetilde{f^R})^{n+1}(\widetilde{y})|}=\frac{1}{|\det D(\widetilde{f^R})^{n+1}[(\widetilde{f^R})^{n+1}|_{\widetilde{\xi_{ i_{0} \cdots i_{n}}}}^{-1}\widetilde{x}]|}.\]

By making use of the distortion estimate of $\widetilde{f^R}$ (see the Lemma 3.2 in \cite{Alves} or the Lemma 1 in \cite{Y}) and (\ref{horsesrbleb}) we get 
\[\widetilde{P}^{n+1} 1_{\widetilde{\xi_{ i_{0} \cdots i_{n}}}}(\widetilde{x})=\frac{1}{|\det D(\widetilde{f^R})^{n+1}[(\widetilde{f^R})^{n+1}|_{\widetilde{\xi_{ i_{0} \cdots i_{n}}}}^{-1}\widetilde{x}]|} \precsim \mu_{\widetilde{\Lambda}}(\widetilde{\xi_{ i_{0} \cdots i_{n}}}).\]

Therefore $||\widetilde{P}^{n+1} 1_{\widetilde{\xi_{ i_{0} \cdots i_{n}}}}||_{\infty} \precsim \mu_{\widetilde{\Lambda}}(\widetilde{\xi_{ i_{0} \cdots i_{n}}})$.

{\bf Claim:}
\[|\widetilde{P}^{n+1} 1_{\widetilde{\xi_{ i_{0} \cdots i_{n}}}}(\widetilde{x}_1)-\widetilde{P}^{n+1} 1_{\widetilde{\xi_{ i_{0} \cdots i_{n}}}}(\widetilde{x}_2)| \precsim \cdot \beta^{\widetilde{s}(\widetilde{x}_1,\widetilde{x}_2)} \cdot \mu_{\widetilde{\Lambda}}(\widetilde{\xi_{ i_{0} \cdots i_{n}}}),\]
where the constant in $\precsim$ depends on the constant $C$ in the Definition \ref{gibbs}, $\beta \in (0,1)$ is the same as in the Definition \ref{gibbs}, $\widetilde{x}_1, \widetilde{x_2} \in \widetilde{\Lambda}_i$ for some $\widetilde{\Lambda}_i \in \widetilde{\mathcal{M}}_0$ and

\[\widetilde{s}(\widetilde{x}_1,\widetilde{x}_2):=\inf \{n: (\widetilde{f^R})^n(\widetilde{x}_1), (\widetilde{f^R})^n(\widetilde{x}_2) \text{ lie in different elements of } \widetilde{\mathcal{M}}_0\}.\]

Again, by using the distortion of $\widetilde{f^R}$, we get 
\[|\widetilde{P}^{n+1} 1_{\widetilde{\xi_{ i_{0} \cdots i_{n}}}}(\widetilde{x}_1)-\widetilde{P}^{n+1} 1_{\widetilde{\xi_{ i_{0} \cdots i_{n}}}}(\widetilde{x}_2)|\]
\[=|\frac{1}{|\det D(\widetilde{f^R})^{n+1}[(\widetilde{f^R})^{n+1}|_{\widetilde{\xi_{ i_{0} \cdots i_{n}}}}^{-1}\widetilde{x}_1]|}-\frac{1}{|\det D(\widetilde{f^R})^{n+1}[(\widetilde{f^R})^{n+1}|_{\widetilde{\xi_{ i_{0} \cdots i_{n}}}}^{-1}\widetilde{x}_2]|}|\]
\[=\frac{1}{|\det D(\widetilde{f^R})^{n+1}[(\widetilde{f^R})^{n+1}|_{\widetilde{\xi_{ i_{0} \cdots i_{n}}}}^{-1}\widetilde{x}_1]|} \cdot |1-\frac{|\det D(\widetilde{f^R})^{n+1}[(\widetilde{f^R})^{n+1})^{n+1}|_{\widetilde{\xi_{ i_{0} \cdots i_{n}}}}^{-1}\widetilde{x}_1]|}{|\det D(\widetilde{f^R})^{n+1}[(\widetilde{f^R})^{n+1}|_{\widetilde{\xi_{ i_{0} \cdots i_{n}}}}^{-1}\widetilde{x}_2]|}|\]
\[\precsim  \beta^{\widetilde{s}(\widetilde{x}_1,\widetilde{x}_2)} \cdot \mu_{\widetilde{\Lambda}}(\widetilde{\xi_{ i_{0} \cdots i_{n}}}).\]

So the claim holds. 

\textbf{Claim}: $\widetilde{f^R}: (\widetilde{\Lambda}, \mu_{\widetilde{\Lambda}}) \to (\widetilde{\Lambda}, \mu_{\widetilde{\Lambda}})$ is exact, so mixing.

Take a set $A\in \bigcap_{i \ge 0}(\widetilde{f^R})^{-i}\widetilde{\mathcal{M}}_{\infty}$. So $\mu_{\widetilde{\Lambda}}((\widetilde{f^R})^{i}A)=\mu_{\widetilde{\Lambda}}(A), \forall i \ge 0$. If $\mu_{\widetilde{\Lambda}}(A)>0$, then there is $B_k \in \widetilde{\mathcal{M}}_k$ for a large $k \gg 1$ s.t. $(\widetilde{f^R})^k: B_k \to \widetilde{\Lambda}$ is bijective and

\[\frac{\mu_{\widetilde{\Lambda}}(B_k \bigcap A) }{\mu_{\widetilde{\Lambda}}(B_k)}\approx 1.\]

By the distortion of $\widetilde{f^R}$, we have
\[\mu_{\widetilde{\Lambda}}(A)=\frac{\mu_{\widetilde{\Lambda}}\{(\widetilde{f^R})^k(A)\}}{\mu_{\widetilde{\Lambda}}(\widetilde{\Lambda})}\ge \frac{\mu_{\widetilde{\Lambda}}\{(\widetilde{f^R})^k(B_k \bigcap A)\} }{\mu_{\widetilde{\Lambda}}\{\widetilde{f^R}(B_k)\}}\approx\frac{\mu_{\widetilde{\Lambda}}(B_k \bigcap A) }{\mu_{\widetilde{\Lambda}}(B_k)}\approx 1.\]

So this claim holds.

From the these three claims it follows that $\widetilde{P}^{n+1} 1_{\widetilde{\xi_{ i_{0} \cdots i_{n}}}}$ is a locally Lipschitz function on $\widetilde{\Lambda}$ with a Lipschitz constant $\mu_{\widetilde{\Lambda}}(\widetilde{\xi_{ i_{0} \cdots i_{n}}})$ and $\widetilde{f^R}: (\widetilde{\Lambda}, \mu_{\widetilde{\Lambda}}) \to (\widetilde{\Lambda}, \mu_{\widetilde{\Lambda}})$ is mixing. By applying the Corollary 2.3 (b) of \cite{asipmelbourne} to $\widetilde{P}^{n+1} 1_{\widetilde{\xi_{ i_{0} \cdots i_{n}}}}$ we obtain that there exist constants $C''>1, \beta_1 \in (0,1)$, such that  
\[|\int (\widetilde{P}^{n+1} 1_{\widetilde{\xi_{ i_{0} \cdots i_{n}}}} )\cdot 1_{\widetilde{A}} \circ (\widetilde{f^R})^{n}d\mu_{\widetilde{\Lambda}}-\int \widetilde{P}^{n+1} 1_{\widetilde{\xi_{ i_{0} \cdots i_{n}}}}  d\mu_{\widetilde{\Lambda}} \cdot \int 1_{\widetilde{A}} d\mu_{\widetilde{\Lambda}}| \le C'' \cdot \beta_1^n \cdot \mu_{\widetilde{\Lambda}}(\widetilde{\xi_{ i_{0} \cdots i_{n}}}).\]

Therefore
\[|\int  1_{\widetilde{\xi_{ i_{0} \cdots i_{n}}}} \cdot 1_{\widetilde{A}} \circ (\widetilde{f^R})^{2n}d\mu_{\widetilde{\Lambda}}-\int  1_{\widetilde{\xi_{ i_{0} \cdots i_{n}}}}  d\mu_{\widetilde{\Lambda}} \cdot \int 1_{\widetilde{A}} d\mu_{\widetilde{\Lambda}}| \le C'' \cdot \beta_1^n \cdot \mu_{\widetilde{\Lambda}}(\widetilde{\xi_{ i_{0} \cdots i_{n}}}).\]

Now, by (\ref{allquotientmeasure}) $(\widetilde{\pi}_{\Lambda})_{*}\mu_{\Lambda}=\mu_{\widetilde{\Lambda}}$, we have 
\[|\int 1_{\xi_{ i_{0} \cdots i_{n}}} \cdot 1_A \circ (f^R)^{2n}d\mu_{\Lambda}-\int 1_{\xi_{ i_{0} \cdots i_{n}}} d\mu_{\Lambda} \cdot \int 1_A d\mu_{\Lambda}| \le C'' \cdot \beta_1^n \cdot \mu_{\Lambda}(\xi_{ i_{0} \cdots i_{n}}).\]

\end{proof}

\subsection{Short Returns}

Recall that  $n:=\lfloor \frac{T}{\mu(B_r(z))}\rfloor, p:=\lfloor n^{\frac{\text{dim}_H\mu-\epsilon}{\text{dim}_H\mu}}\rfloor$ (see the Proposition \ref{rate}). We now consider short returns to $\Lambda$. A reason to do this is that the short returns problem on $\mathcal{M}$ can be turned into the short returns problem on  $\Lambda$ (see the Lemma \ref{pullbackhorseshoe}).
For any $z' \in \interior{(\Lambda)}$, a fixed positive integer $M>0$, sufficiently small constants $\epsilon'>0, r>0$, such that $ n^{\epsilon'}\ll p$, $B_{M\cdot r} (z') \subseteq \interior{(\Lambda)}$ almost surely and 
\begin{equation}\label{sumhorshose}
    \int_{B_{M\cdot r} (z')} 1_{\bigcup_{1\le k \le p}(f^R)^{-k}B_{M\cdot r} (z')}d\mu_{\Lambda}=\int_{B_{M\cdot r} (z')} 1_{\bigcup_{1\le k \le N}(f^R)^{-k}B_{ M\cdot r} (z')}d\mu_{\Lambda}
\end{equation}

\[+\int_{B_{M\cdot r} (z')} 1_{\bigcup_{N\le k \le n^{\epsilon'}}(f^R)^{-k}B_{ M\cdot r} (z')}d\mu_{\Lambda}+\int_{B_{ M\cdot r} (z')} 1_{\bigcup_{n^{\epsilon'}\le k \le p}(f^R)^{-k}B_{ M\cdot r} (z')}d\mu_{\Lambda},\]
where $N= \lfloor\frac{-\log 2C }{\log \beta}\rfloor+1$, and the constants $C$ and $\beta$ are defined in the Assumption \ref{assumption}. 

We begin with the short fixed-length returns described by the integral

\[\int_{B_{M\cdot r} (z')} 1_{\bigcup_{1\le k \le N}(f^R)^{-k}B_{ M\cdot r} (z')}d\mu_{\Lambda}.\]

\begin{lemma}[Short fixed-length returns]\label{fixreturn}\ \par
For almost every $z' \in \interior{(\Lambda)}$ and sufficiently small $r_{N,M, z'}>0$ we have for any $ r < r_{N,M,z'}$ \[\int_{B_{M\cdot r} (z')} 1_{\bigcup_{1\le k \le N}(f^R)^{-k}B_{ M\cdot r} (z')}d\mu_{\Lambda}=0.\]

(Actually, a stronger result will be proved, i.e.  $\forall k\in \mathbb{N}$ a map $(f^{{R}})^k$ is a local diffeomorphism at almost every point $z' \in \interior{(\Lambda)}$).
\end{lemma}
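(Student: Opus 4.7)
My proof proposal would proceed in three stages, establishing the stronger parenthetical claim along the way.

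First, I would prove that for every $k \geq 1$, the map $(f^R)^k$ is a local $C^1$-diffeomorphism at $\mu_\Lambda$-almost every $z' \in \interior{\Lambda}$. Indeed, if $z' \in \Lambda_i$, then $R|_{\Lambda_i} \equiv R_i$ is constant, and the finite orbit $z', f(z'), \ldots, f^{R_i - 1}(z')$ is contained in $\bigcup_{j \geq 1}\bigcup_{0 \leq l < R_j} f^l(\Lambda_j)$, on which $f$ is a local $C^1$-diffeomorphism by Assumption \ref{geoassumption}. Hence $f^R = f^{R_i}$ is a local $C^1$-diffeomorphism at $z'$ as a composition. An induction on $k$, applied at $(f^R)^{j}(z') \in \Lambda$ for $0 \leq j < k$, yields the claim for all $k$.

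Second, I would show that for every fixed $k$, the set $P_k := \{x \in \Lambda : (f^R)^k(x) = x\}$ has $\mu_\Lambda$-measure zero. Since $R$ is the first return time (Assumption \ref{assumption}) and $(f, \mu)$ is ergodic, the induced map $f^R : (\Lambda, \mu_\Lambda) \to (\Lambda, \mu_\Lambda)$ is ergodic. The set $P_k$ is $f^R$-invariant, so by ergodicity it has measure $0$ or $1$. Full measure is impossible: lifting to the tower $\Delta$ via the bijection $\pi$ of Lemma \ref{pibi}, it would produce a positive $\mu_\Delta$-measure set of points that are periodic under $F$ with bounded period, contradicting the mixing of $(F, \mu_\Delta)$ that follows from the aperiodicity $\gcd(R_i) = 1$ in Definition \ref{gibbs} together with the decay of correlations (\ref{decorrelation}). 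Consequently for a.e. $z' \in \interior{\Lambda}$ and every $1 \leq k \leq N$ one has $(f^R)^k(z') \neq z'$, hence $d_k := d((f^R)^k(z'), z') > 0$.

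Third, the combination is routine. For each $1 \leq k \leq N$, by continuity of the local diffeomorphism $(f^R)^k$ at $z'$, choose $\rho_k > 0$ with $(f^R)^k(B_{\rho_k}(z') \cap \Lambda) \subseteq B_{d_k/2}((f^R)^k(z'))$. Set
\[
r_{N,M,z'} := \tfrac{1}{2M}\,\min_{1 \leq k \leq N}\min(d_k,\, 2\rho_k),
\]
shrinking further, via Assumption \ref{assumption}, so that $B_{M\cdot r_{N,M,z'}}(z') \subseteq \interior{\Lambda}$ modulo a null set. Then for any $r < r_{N,M,z'}$ and any $1 \leq k \leq N$, the image $(f^R)^k(B_{M\cdot r}(z'))$ lies in $B_{d_k/2}((f^R)^k(z'))$, which is disjoint from $B_{M\cdot r}(z') \subseteq B_{d_k/2}(z')$. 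Hence $B_{M\cdot r}(z') \cap (f^R)^{-k}B_{M\cdot r}(z') = \emptyset$, and the integral in the statement vanishes identically.

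The main obstacle I expect is the descent from mixing on the tower $\Delta$ to the conclusion $\mu_\Lambda(P_k) < 1$: while morally standard, it requires care because $(f^R)^k$ is not a fixed power of $F$, so one must use the Kac-type identification of $(\Lambda, f^R, \mu_\Lambda)$ inside $(\Delta, F, \mu_\Delta)$ and rule out any hidden periodicity arising from compositions of distinct return times $R_{i_1} + \cdots + R_{i_k}$.
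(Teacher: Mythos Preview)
Your three-stage outline matches the paper's strategy, but Step~1 contains a genuine gap that propagates to Step~3. You argue that since $R|_{\Lambda_i}\equiv R_i$ and $f^{R_i}$ is a local $C^1$-diffeomorphism on $\mathcal{M}$ at $z'$, the map $f^R$ is too. But these coincide only on $\Lambda_i$, and you never justify that a full neighbourhood $B_\rho(z')\cap\Lambda$ lies inside a single $\Lambda_i$ (and inductively inside a single cylinder $\xi_{i_0\cdots i_{k-1}}$). Without this, in Step~3 the inclusion $(f^R)^k(B_{\rho_k}(z')\cap\Lambda)\subseteq B_{d_k/2}((f^R)^k(z'))$ can fail: points of $B_{\rho_k}(z')$ landing in some $\Lambda_j$ with $R_j\neq R_i$ are sent by $f^{R_j}$, not $f^{R_i}$, and may be far away. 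The paper closes this gap by restricting to $z'\notin\bigcup_{i\in\mathbb{Z}}f^{-i}\partial\Lambda$ (a full-measure set since $\mu(\partial\Lambda)=0$): then for each $m\in[1,R^k(z')-1]$ one has $f^m(z')\notin\overline{\Lambda}$, while $f^{R^k(z')}(z')\in\interior{(\Lambda)}$, so by continuity of $f$ the $k$-th return time $R^k$ is constant on a genuine open ball around $z'$. This is exactly the missing ingredient; once you insert it, your Step~1 and Step~3 go through.

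For Step~2 the paper avoids your tower detour altogether: it uses the exponential decay of correlations for the quotient $\widetilde{f^R}:\widetilde{\Lambda}\to\widetilde{\Lambda}$ (Lemma~\ref{decorrelationcylinder}) to conclude that every power $(\widetilde{f^R})^i$ is ergodic, hence the set of $\widetilde{f^R}$-periodic points is $\mu_{\widetilde{\Lambda}}$-null, and pulls this back via $\widetilde{\pi}_\Lambda$. This is cleaner than wrestling with variable compositions $R_{i_1}+\cdots+R_{i_k}$ on the tower, and makes the obstacle you flag disappear.
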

\begin{proof}
From the Assumption \ref{assumption} $\mu(\partial \Lambda)=0$, and then $\mu(\bigcup_{i \in \mathbb{Z}} f^{-i}\partial \Lambda)=0$. By making use of the decay of correlations for the quotient map $\widetilde{f^R}: \widetilde{\Lambda} \to \widetilde{\Lambda}$, we have that $(\widetilde{f^R})^i$ are ergodic for all $i \ge 1$. Therefore the set of  periodic points $A_{per}$ of $\widetilde{f^R}$ has $\mu_{\widetilde{\Lambda}}(A_{per})=0$, and $\mu_{\Lambda}(\widetilde{\pi}_{\Lambda}^{-1}(A_{per}))=0$ due to (\ref{allquotientmeasure}).

Choose now $z' \in \widetilde{\pi}_{\Lambda}^{-1}(A_{per}^c)\bigcap \interior{(\Lambda)} \bigcap [\bigcup_{i \in \mathbb{Z}} f^{-i}\partial \Lambda]^c$. Then there is $r_M>0$ s.t. for any $r<r_M$ almost surely $B_{M \cdot r} \subseteq \interior{(\Lambda)}$. We will make now several claims.

\textbf{Claim:} For any $k\in \mathbb{N}$ let $R^k$ be the k-th return time. Then ${R}^k|_{B_{M \cdot r}(z')}={R}^k(z'), \forall k\in[1, N]$ if $r< r'_{N,M, z'}$ for a small enough $ r'_{N,M, z'}>0$.

From the choice of $z'$ we have 
\[f^{m}(z') \notin \Lambda, \forall m \in [{R}^k(z')+1, {R}^{k+1}(z')-1], \forall k\in [0,N-1]\]
and 
\[f^{{R}^k(z')}(z')\in \interior{(\Lambda)}, \forall k\in [0,N].\]

Due to the Assumption \ref{geoassumption}, there is $r_{N,M,z'}>0$ such that, if $r<r'_{N,M,z'}$, $B_{M \cdot r}(z') \subseteq \interior{(\Lambda)}$ almost surely, then 
\[f^{m}(B_{M \cdot r}(z')) \subseteq \Lambda^c, \forall m \in [{R}^k(z')+1, {R}^{k+1}(z')-1], \forall k\in [0,N-1]\]
and 
\[f^{{R}^k(z')}(B_{M \cdot r}(z'))\subseteq \Lambda \text{ almost surely}, \forall k\in [0,N].\]

Since ${R}, {R}^2, \cdots, {R}^N$ are consecutive return times to $\Lambda$, we have ${R}^k|_{B_{M \cdot r}(z')}={R}^k(z'), \forall k\in[1, N]$. Thus this claim holds.

\textbf{Claim:} $f^{{R}^k}$ for all $k\in[1, N]$ is a local diffeomorphism at $z'$.

This claim holds because $f$ is a local diffeomorphism on $\bigcup_{i \ge 1} \bigcup_{0 \le j < R_i} f^j(\Lambda_i)$  and ${R}^k|_{B_{M \cdot r}(z')}={R}^k(z')$ for sufficiently small $r>0$. 

These two claims, together with the fact that  $f^{{R}^k}(z')\in \interior{(\Lambda)}$ are distinct  $\forall k\in[0, N],$  imply that there exists a small enough $r_{z',M,N}>0$, such that $\forall r<r_{z',M,N}$ the sets $f^{{R}^k}(B_{M \cdot r}(z'))$ are disjoint for all $k\in [0,N]$. So the lemma holds. 
\end{proof}

Before estimating the super-short returns $\int_{B_{M\cdot r} (z')} 1_{\bigcup_{N\le k \le n^{\epsilon'}}(f^R)^{-k}B_{ M\cdot r} (z')}d\mu_{\Lambda}$, we need one more lemma.

\begin{lemma}[Recurrences]\label{recurrence}
There exists $r_M>0$, such that $ \forall r < r_M$ and $\forall \gamma^u \in \Gamma^u, i\ge N$ the following inequality holds
\[\Leb_{\gamma^u}\{z'\in \Lambda \bigcap \gamma^u: d((f^{{R}})^{-i}z',z') \le M \cdot r\}\precsim_{\dim \gamma^u} (M \cdot r)^{\dim \gamma^u},\]
where a constant in $\precsim_{\dim \gamma^u}$ depends upon $\dim \gamma^u$, but it does not depend upon $i\ge {N}$ and $\gamma^u \in \Gamma^u$.

\end{lemma}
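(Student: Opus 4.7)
The key observation is that for $i \ge N$ the map $(f^R)^{-i}$ contracts distances on unstable manifolds by a factor strictly less than $1/2$, so being close to one's own $i$-th preimage forces the set of such points to have small diameter. Recall that $N := \lfloor -\log 2C/\log\beta\rfloor+1$ is chosen precisely so that $C\beta^N < 1/2$, hence $C\beta^i < 1/2$ for every $i \ge N$.

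The plan is the following. Fix $\gamma^u \in \Gamma^u$ and $i \ge N$, and set
\[A_i := \{z' \in \Lambda \cap \gamma^u : d((f^R)^{-i}z', z') \le M \cdot r\}.\]
Take any two points $z'_1, z'_2 \in A_i$. Applying the triangle inequality and using that $z'_1, z'_2 \in \gamma^u \cap \Lambda$ (so the backward contraction on unstable leaves from Assumption \ref{assumption} applies), I would estimate
\[d(z'_1, z'_2) \le d(z'_1, (f^R)^{-i}z'_1) + d((f^R)^{-i}z'_1, (f^R)^{-i}z'_2) + d((f^R)^{-i}z'_2, z'_2) \le 2Mr + C\beta^i\, d(z'_1, z'_2).\]
Since $C\beta^i \le 1/2$, this yields $d(z'_1, z'_2) \le 4Mr$. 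Hence $\diam A_i \le 4Mr$ with respect to the ambient Riemannian metric $d$, uniformly in $i \ge N$ and in $\gamma^u$.

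To conclude, I would pick any $z'_0 \in A_i$ (assuming $A_i \ne \emptyset$, else the bound is trivial) and note that $A_i$ is contained in the ambient ball $B_{4Mr}(z'_0) \cap \gamma^u$. Because $\Gamma^u$ is a continuous family of $C^1$-embedded unit disks, the intrinsic distance $d_{\gamma^u}$ on $\gamma^u$ is comparable to $d$ on sufficiently small scales, with a comparison constant that is uniform over $\gamma^u \in \Gamma^u$. Thus for $r < r_M$ with $r_M > 0$ small enough, $A_i$ is contained in an intrinsic geodesic ball of radius $\precsim Mr$ inside $\gamma^u$, whose Riemannian volume is $\precsim_{\dim \gamma^u} (Mr)^{\dim \gamma^u}$ (the implied constant depending only on $\dim \gamma^u$ and on the uniform $C^1$-norms of the embedding maps $\phi^u$).

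The only subtle point is checking that all constants are genuinely independent of $i$ and of $\gamma^u$. Uniformity in $i$ is immediate once $C\beta^i \le 1/2$, since the diameter bound $4Mr$ then does not depend on $i$. Uniformity in $\gamma^u$ uses only the continuous family structure in Definition \ref{gibbs} and the fact that the constants $C,\beta$ in the backward contraction are global. Everything else is a routine volume estimate for balls on a uniformly $C^1$-embedded disk.
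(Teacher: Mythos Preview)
Your proof is correct and follows essentially the same approach as the paper: the triangle inequality combined with the backward contraction estimate $d((f^R)^{-i}z'_1,(f^R)^{-i}z'_2)\le C\beta^i\,d(z'_1,z'_2)$ and $C\beta^i<1/2$ for $i\ge N$ yields $\diam A_i\le 4Mr$, after which one converts the diameter bound into a Lebesgue measure bound via the uniform $C^1$-geometry of the unstable leaves. The only cosmetic difference is that the paper first covers $\Lambda$ by finitely many balls in which every $\gamma^u$ is almost flat, carries out the diameter estimate inside each ball, and then sums over the cover; you instead obtain the diameter bound globally and appeal directly to the uniform comparability of intrinsic and ambient distances coming from the continuous $C^1$-family structure --- the two formulations are equivalent.
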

\begin{proof}
We start with making

\textbf{Claim}: there are finitely many balls $\{B_{r'_i}(z'_i): 1 \le i \le N'\}$, where $N'\in \mathbb{N}$ depends only on $\Lambda$, such that all unstable fibers $\gamma^u \in\Gamma^u$ are almost flat in each $B_{r_i'}(z'_i)$.

In view of the Definition \ref{gibbs} of $\Gamma^s$ (respectively, of $\Gamma^u$), for any $z'' \in \Lambda$ there exists a small open ball $B_{r''}(z'')$, such that all $\gamma^s \in \Gamma^s$ (respectively, $\gamma^u \in \Gamma^u)$ intersecting $B_{r''}(z'')$ are almost flat and parallel. Since $\Lambda$ is compact, one can find finitely many open balls $\{B_{{{r_1''}}}(z''_1), \cdots, B_{{r''_{N'}}}(z''_{N'})\}$, which cover $\Lambda$. Hence, the claim holds. 

Take now any of these balls, say $B_{r''_1}(z''_1)$, and any $\gamma^u \in \Gamma^u, z_1',z_2' \in \{z'\in \Lambda \bigcap \gamma^u\bigcap B_{r_1''}(z_1''): d((f^{{R}})^{-i}z',z')\le M \cdot r\} $. Then for any $r<\tau_M:=\min\{\frac{r_1''}{8M}, \cdots, \frac{r''_{N'}}{8M}\}$,
\[d((f^{{R}})^{-i}z'_1,z_1') \le M \cdot r,\text{ } d((f^{{R}})^{-i}z'_2,z_2') \le M \cdot r.\]

By making use of the Assumption \ref{assumption} together with $i \ge N$, we get that  $C \cdot \beta^i \le C \cdot \beta^{N}< \frac{1}{2}$ and 
\[d(z'_1,z'_2)\le d(z'_1, (f^{{R}})^{-i}z'_1)+d((f^{{R}})^{-i}z'_1, (f^{{R}})^{-i}z'_2)+d((f^{{R}})^{-i}z'_2, z'_2)\]
\[\le 2M \cdot r+ C \cdot \beta^i \cdot d(z'_1,z'_2) \le 2M \cdot r +\frac{1}{2}  d(z'_1,z'_2).\]

Thus $d(z'_1,z'_2) \le 4M \cdot r < \frac{r_1''}{2} \implies \diam \{z'\in \Lambda \bigcap B_{r_1''}(z_1'') \bigcap \gamma^u: d((f^{{R}})^{-i}z',z') \le M \cdot r\} \le 4M \cdot r$.

Since $\gamma^u$ in the ball $B_{r_1''}(z_1'')$ is almost flat, then its Lebesgue measure can be estimated by diameters, i.e. 
\[\Leb_{\gamma^u}\{z'\in \Lambda \bigcap B_{r''_1}(z''_1): d((f^{{R}})^{-i}z',z') \le M \cdot r\} \precsim_{\dim \gamma^u} (M \cdot r)^{\dim \gamma^u}.\]

This estimate also holds for the balls $B_{r_2''}(z_2''), \cdots, B_{r_{N'}''}(z_{N'}'')$. By summing over all the balls and noting that $\Lambda \subseteq \bigcup_{1\le i\le {N'}} B_{r_i''}(z_i'')$, we get 
\[\Leb_{\gamma^u}\{z'\in \Lambda: d((f^{{R}})^{-i}z',z') \le M \cdot r\} \precsim_{\dim \gamma^u} (M \cdot r)^{\dim \gamma^u},\]
where a constant in $\precsim_{\dim \gamma^u}$ depends upon $\dim \gamma^u$, but it does not depend upon $i\ge {N}$ and $\gamma^u \in \Gamma^u$.

\end{proof}

\begin{lemma}[Super-short returns]\label{veryshortreturn}\ \par
Choose $n= \lfloor \frac{T}{\mu(B_r(z))}\rfloor$.  Then for almost every $z\in \mathcal{M}, z'\in \Lambda$ there exists $r_{z,z',M}>0$, such that $\forall r< r_{z,z',M}$
\[\int_{B_{M\cdot r} (z')} 1_{\bigcup_{N\le k \le n^{\epsilon'}}(f^{{R}})^{-k}B_{ M\cdot r} (z')}d\mu_{\Lambda}\precsim_{T,\epsilon} (M \cdot r)^{\dim_H\mu -\epsilon} \cdot M^{\frac{\min\{{\dim \gamma^u}, \dim_H \mu\}}{6}}\cdot r^{\frac{\min\{{\dim \gamma^u}, \dim_H \mu\}}{12}},\]
where  $\epsilon>0$ is the same as in the Proposition \ref{rate} and $\epsilon'<\frac{\min\{\dim_H \mu, { \dim \gamma^u}\}}{12 \dim_H \mu+12 \epsilon}$.

\end{lemma}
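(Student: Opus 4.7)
My plan starts from the union bound
\[
\int_{B_{M\cdot r}(z')} 1_{\bigcup_{N \le k \le n^{\epsilon'}}(f^R)^{-k}B_{M\cdot r}(z')}\,d\mu_\Lambda \;\le\; \sum_{k=N}^{n^{\epsilon'}} \mu_\Lambda\bigl(B_{M\cdot r}(z') \cap (f^R)^{-k} B_{M\cdot r}(z')\bigr).
\]
I would then split the range of $k$ at a threshold $K_0 \asymp \log(1/r)$ and handle the two sub-ranges with different tools: the decay of correlations (Lemma \ref{decorrelationcylinder}) on the moderate window $[K_0,n^{\epsilon'}]$, and the recurrence estimate (Lemma \ref{recurrence}) on the short window $[N,K_0]$.

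For the moderate window, I would approximate $B_{M\cdot r}(z')$ from outside by the union $\widetilde B_k$ of two-sided cylinders of level $\lceil k/2\rceil$ meeting it. By Lemma \ref{diamcylinder} each such cylinder has diameter $\le 2C\beta^{k/2}$, which for $k\ge K_0$ is already smaller than $M\cdot r$, so $\widetilde B_k$ is contained in a small enlargement of $B_{M\cdot r}(z')$ and $\mu_\Lambda(\widetilde B_k)\approx\mu_\Lambda(B_{M\cdot r}(z'))$ up to a corona contribution. Applying Lemma \ref{decorrelationcylinder} then gives for each $k$ the bound $\mu_\Lambda(\widetilde B_k)\mu_\Lambda(B_{M\cdot r}(z')) + C''\beta_1^{k/2}\mu_\Lambda(\widetilde B_k)$. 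Summing over $k$, the product contribution is $\precsim n^{\epsilon'}\mu_\Lambda(B_{M\cdot r}(z'))^{2}$, which by the hypothesis $\epsilon'<\min\{\dim\gamma^u,\dim_H\mu\}/(12(\dim_H\mu+\epsilon))$ fits inside $\mu_\Lambda(B_{M\cdot r}(z'))\cdot r^{\min\{\dim\gamma^u,\dim_H\mu\}/12}$, while the geometric error contribution is $\precsim\beta_1^{K_0/2}\mu_\Lambda(B_{M\cdot r}(z'))\precsim\mu_\Lambda(B_{M\cdot r}(z'))\cdot r^{\min\{\dim\gamma^u,\dim_H\mu\}/12}$ by the choice of $K_0$.

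For the short window, every $y\in B_{M\cdot r}(z')\cap (f^R)^{-k}B_{M\cdot r}(z')$ satisfies $d(y,(f^R)^{k}y)\le 2M\cdot r$; using the bijectivity of $f^R$ from Lemma \ref{inducemapbi} and the $f^R$-invariance of $\mu_\Lambda$, this transfers the question onto the recurrence set $A_k:=\{y:\ d((f^R)^{-k}y,y)\le 2M\cdot r\}$. Disintegrating $\mu_\Lambda$ along the unstable leaves, the bounded density $\mu_{\gamma^u}\approx\Leb_{\gamma^u}$ together with Lemma \ref{recurrence} furnish a per-leaf bound $\precsim (M\cdot r)^{\dim\gamma^u}$ that is uniform in $k\ge N$. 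Integrating against the transversal distribution of the leaves meeting $B_{M\cdot r}(z')$, whose mass is controlled by $\mu_\Lambda(B_{M\cdot r}(z'))\approx (M\cdot r)^{\dim_H\mu\pm\epsilon}$ together with the inequality $\dim_H\mu\ge\dim\gamma^u$ from Theorem \ref{thm}(1), yields a per-$k$ bound small enough that the $K_0\approx\log(1/r)$ such terms sum within the target $(M\cdot r)^{\dim_H\mu-\epsilon} M^{\min\{\dim\gamma^u,\dim_H\mu\}/6} r^{\min\{\dim\gamma^u,\dim_H\mu\}/12}$, exploiting the extra $M$- and $r$-factors that the statement allows.

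The main obstacle is the delicate joint balance between $K_0$, $\epsilon'$, and the corona width: on the one hand $K_0$ must be large enough that $\beta_1^{K_0/2}\precsim r^{\min\{\dim\gamma^u,\dim_H\mu\}/12}$, yet on the other the $\log(1/r)$ short-return terms must still be absorbed into the target, which forces us to extract extra smallness from Lemma \ref{recurrence} beyond the trivial $\mu_\Lambda(B_{M\cdot r}(z'))$ bound—this is where the inequality $\dim_H\mu\ge\dim\gamma^u$ becomes essential in the edge case $\dim\gamma^u\approx\dim_H\mu$. The explicit constraint on $\epsilon'$ emerges as precisely what ensures that the variance-like product $n^{\epsilon'}\mu_\Lambda(B_{M\cdot r}(z'))^{2}$ is absorbed by $\mu_\Lambda(B_{M\cdot r}(z'))\cdot r^{\min\{\dim\gamma^u,\dim_H\mu\}/12}$.
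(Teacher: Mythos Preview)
Your split into a ``moderate'' window $[K_0,n^{\epsilon'}]$ handled by Lemma~\ref{decorrelationcylinder} and a ``short'' window $[N,K_0]$ handled by Lemma~\ref{recurrence} is not how the paper proceeds, and the short-window part has a genuine gap. The paper treats the \emph{entire} range $[N,n^{\epsilon'}]$ by a single argument that is very different from yours: it first bounds, via Lemma~\ref{recurrence} and the disintegration of $\mu_\Lambda$, the \emph{global} recurrence measure $\mu_\Lambda\{y:d((f^R)^k y,y)\le 2M\cdot r\}\precsim (M\cdot r)^{\dim\gamma^u}$; then it \emph{averages over the center} $z'$ using the kernel $K(y,z')=1_{B_{M\cdot r}(z')}(y)$ and Fubini/Schur to obtain
\[
\int_{A_m}\!\!d\mu_\Lambda(z')\int_{B_{M\cdot r}(z')} 1_{d((f^R)^k y,y)\le 2M\cdot r}\,d\mu_\Lambda(y)\ \precsim\ (M\cdot r)^{\dim_H\mu-\delta}\cdot(M\cdot r)^{\dim\gamma^u},
\]
and finally upgrades this \emph{averaged} bound to an almost-every-$z'$ statement via Borel--Cantelli along the sequence $r_i=i^{-4/\min\{\dim_H\mu,\dim\gamma^u\}}$. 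The extra factor $(M\cdot r)^{\dim\gamma^u}$ beyond $\mu_\Lambda(B_{M\cdot r}(z'))$ comes precisely from swapping the order of integration and using the global recurrence bound; it is \emph{not} available for a fixed $z'$.

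Your short-window mechanism tries to extract this same extra smallness for a fixed $z'$ by ``integrating against the transversal distribution of the leaves meeting $B_{M\cdot r}(z')$'' and using $\dim_H\mu\ge\dim\gamma^u$. This step does not work: on each leaf $\gamma^u$ meeting $B_{M\cdot r}(z')$, the set $B_{M\cdot r}(z')\cap\gamma^u$ itself already has $\Leb_{\gamma^u}$-measure $\precsim (M\cdot r)^{\dim\gamma^u}$, so the per-leaf recurrence bound from Lemma~\ref{recurrence} gives you nothing beyond the trivial bound $\mu_\Lambda\bigl(B_{M\cdot r}(z')\cap (f^R)^{-k}B_{M\cdot r}(z')\bigr)\le\mu_\Lambda(B_{M\cdot r}(z'))$. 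There is also no available upper bound on the transversal mass $\nu^s\{\gamma^u:\gamma^u\cap B_{M\cdot r}(z')\neq\emptyset\}$ in terms of $\mu_\Lambda(B_{M\cdot r}(z'))/(M\cdot r)^{\dim\gamma^u}$, since that would require a uniform \emph{lower} bound on $\Leb_{\gamma^u}(B_{M\cdot r}(z')\cap\gamma^u)$ over meeting leaves, which you do not have (the transversal measure can be singular). Hence your short-window contribution is only controlled by $K_0\cdot\mu_\Lambda(B_{M\cdot r}(z'))\approx\log(1/r)\cdot(M\cdot r)^{\dim_H\mu-\epsilon}$, which exceeds the target. The averaging-over-$z'$ plus Borel--Cantelli device is the missing idea.
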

\begin{proof}
It follows from (\ref{horsesrbleb}), $(f^R)_{*}\mu_{\Lambda}=\mu_{\Lambda}$ and Lemma \ref{recurrence} that

\[\mu_{\Lambda}\{z'\in \Lambda: d((f^{{R}})^{k}z',z')\le  M\cdot r\}=\mu_{\Lambda}\{z'\in \Lambda: d((f^{{R}})^{-k}z',z')\le  M\cdot r\}\]
\[= \int \mu_{\gamma^u}\{z'\in \Lambda: d((f^{{R}})^{-k}z',z')\le  M\cdot r\}d\mu_{\Lambda}\]
\[\precsim \int \Leb_{\gamma^u}\{z'\in \Lambda: d((f^{{R}})^{-k}z',z')\le  M\cdot r\}d\mu_{\Lambda} \precsim_{\dim \gamma^u} (M \cdot r)^{\dim \gamma^u}.\]

By the Assumption \ref{geoassumption}, for  $\delta=\frac{\min\{{\dim \gamma^u}, \dim_H \mu\}}{6}$ and almost every $z'\in \Lambda$ there exists $r_{z',\delta}>0$, such that $r^{\dim_H\mu +\delta} \le \mu (B_r(z'))\le r^{\dim_H\mu-\delta}, \forall r< r_{z',\delta}$. Let $A_m:=\{z' \in \Lambda: r_{z',\delta}>\frac{1}{m}\}$. Then $\bigcup_{m} A_m=\Lambda$ and $\forall z'\in A_m$, $\forall r< \frac{1}{m}$
\begin{equation}\label{11'}
    r^{\dim_H\mu +\delta} \le \mu (B_r(z'))\le r^{\dim_H\mu-\delta} 
  \end{equation}  
and 
\[\int_{B_{M\cdot r} (z')} 1_{\bigcup_{N\le k \le n^{\epsilon'}}(f^{{R}})^{-k}B_{ M\cdot r} (z')}d\mu_{\Lambda} \le \int_{B_{M\cdot r} (z')} 1_{\bigcup_{N\le k\le n^{\epsilon'}}d((f^{{R}})^{k}y,y)\le  2M\cdot r}d\mu_{\Lambda}(y).\]

 Let a kernel on $\Lambda\times A_m$ be $K(y,z'):=1_{B_{M \cdot r}(z')}(y)$. If $r<\frac{1}{3m \cdot M}$ then by (\ref{11'})
 
 \[\int K(y,z') d\mu_{\Lambda}(y)=\mu_{\Lambda}(B_{M \cdot r}(z'))\precsim (M \cdot r)^{\dim_H \mu-\delta}.\]

In order to estimate $\int K(y,z')\cdot 1_{A_m}(z')d\mu_{\Lambda}(z')=\mu_{\Lambda}(A_m \bigcap B_{M \cdot r}(y))$, observe that if $z'' \in A_m \bigcap B_{M \cdot r}(y)\neq \emptyset$, then $B_{M \cdot r}(y)\subseteq B_{3M \cdot r}(z'')$. By (\ref{11'}) again, $\int K(y,z')\cdot 1_{A_m}(z')d\mu_{\Lambda}(z')=\mu_{\Lambda}(A_m \bigcap B_{M \cdot r}(y)) \le \mu_{\Lambda}(B_{3M \cdot r}(z'')) \precsim (3M \cdot r)^{\dim_H \mu-\delta}$.

Having the estimates of $K(y,z')$ above and Lemma \ref{recurrence}, we can use the Schur's test (see the Theorem 5.6 in \cite{tao}), i.e. for all $r<\min\{\frac{1}{3m \cdot M}, r_M\}$
\[\int 1_{A_m}(z')d\mu_{\Lambda}(z')\int_{B_{M\cdot r} (z')} 1_{d((f^{{R}})^{k}y,y)\le  2M\cdot r}d\mu_{\Lambda}(y) \precsim_{\dim \gamma^u} (3M\cdot r)^{\dim_H \mu-\delta} \cdot (M \cdot r)^{{ \dim \gamma^u}},\]
where a constant in $\precsim_{\dim \gamma^u}$ does not depend upon $M,r,m,k$.

Since $r^{\dim_H\mu +\delta} \le \mu (B_r(z'))$, one gets 
\[\int_{A_m}\frac{\int_{B_{M\cdot r} (z')} 1_{\bigcup_{N \le k \le [2^{\frac{4(\dim_H\mu+\epsilon)}{\min\{\dim_H\mu, \dim \gamma^u\}}}\cdot \frac{T}{r^{\dim_H\mu +\epsilon}}]^{\epsilon'}}d((f^{{R}})^{k}y,y)\le  2M\cdot r}d\mu_{\Lambda}(y) }{(\frac{T}{r^{\dim_H \mu+\epsilon}})^{\epsilon'}\cdot \mu (B_{M\cdot r}(z')) \cdot (M \cdot r)^{\delta}}d\mu_{\Lambda}(z')\]
\[\precsim_{\dim_H \mu, \dim \gamma^u, \epsilon, \epsilon'} \frac{(M\cdot r)^{\dim_H \mu-\delta+\dim \gamma^u}}{(M\cdot r)^{\dim_H \mu+2\delta}} \le(M \cdot r)^{\frac{\min\{\dim_H\mu,{\dim \gamma^u}\}}{2}}.\] 

Choose now $r_i=i^{-\frac{4}{\min\{\dim_H\mu,\dim \gamma^u\}}}<\min\{\frac{1}{3m\cdot M},r_M\}$. Then, by the Borel-Cantelli Lemma, for almost every $z'\in A_m$ there exists $N_{M, m, z'}>\min\{\frac{1}{3m\cdot M}, r_M\}^{-\frac{\min\{\dim_H\mu,\dim \gamma^u\}}{4}}$, such that $\forall i> N_{M,m,z'}$ 
\begin{equation}\label{21}
    \int_{B_{M\cdot r_i} (z')} 1_{\bigcup_{N \le k \le [2^{\frac{4(\dim_H\mu+\epsilon)}{\min\{\dim_H\mu, \dim \gamma^u\}}}\cdot \frac{T}{r_i^{\dim_H\mu +\epsilon}}]^{\epsilon'}}d((f^{{R}})^{k}y,y)\le  2M\cdot r_i}d\mu_{\Lambda}(y)\le (\frac{T}{r_i^{\dim_H \mu+\epsilon}})^{\epsilon'}\cdot  \mu (B_{M\cdot r_i}(z')) \cdot (M \cdot r_i)^{\delta}.
\end{equation}

Hence for almost every $z'\in \Lambda$ (in particular, for $z'\in \interior{(\Lambda)}$) there is $m_{z'}>0$, such that $z' \in A_{m_{z'}}$. Let $r_{z',M}:=\min\{r_M, N_{M,m_{z'},z'}^{-\frac{4}{\min\{\dim_H\mu, \dim \gamma^u\}}}\}$. By the Assumption \ref{geoassumption}, for $\epsilon$ from the Proposition \ref{rate} there exists such $r_{z,z',M}\in (0,r_{z',M})$ that $\forall r<r_{z,z',M}$ 
\[n\le \frac{T}{\mu(B_r(z))} \le \frac{T}{r^{\dim_H \mu +\epsilon}} \text{ and } \mu(B_{M \cdot r}(z'))\le (M \cdot r)^{\dim_H\mu -\epsilon}.\] 

Then $\forall r< r_{z,z',M}$, $\exists i >0$, such that $r_{i+1}\le r \le r_i$, and the following estimates hold 
\[\int_{B_{M\cdot r} (z')} 1_{\bigcup_{N\le k \le n^{\epsilon'}}(f^{{R}})^{-k}B_{ M\cdot r} (z')}d\mu_{\Lambda} \le \int_{B_{M\cdot r_i} (z')} 1_{\bigcup_{N\le k\le (\frac{T}{r^{\dim_H \mu +\epsilon}})^{\epsilon'}}d((f^{{R}})^{k}y,y)\le  2M\cdot r_i}d\mu_{\Lambda}(y)\]
\[\le \int_{B_{M\cdot r_i} (z')} 1_{\bigcup_{N\le k\le (\frac{T}{r_{i+1}^{\dim_H \mu +\epsilon}})^{\epsilon'}}d((f^{{R}})^{k}y,y)\le 2M \cdot r_i}d\mu_{\Lambda}(y)\]
\[\le \int_{B_{M\cdot r_i} (z')} 1_{\bigcup_{N\le k\le [(\frac{r_i}{r_{i+1}})^{\dim_H \mu +\epsilon}\cdot  (\frac{T}{r_{i}^{\dim_H \mu +\epsilon}})]^{\epsilon'}}d((f^{{R}})^{k}y,y)\le 2M \cdot r_i}d\mu_{\Lambda}(y)\]
\begin{equation}\label{12'}
    \le \int_{B_{M\cdot r_i} (z')} 1_{\bigcup_{N \le k \le (2^{\frac{4(\dim_H\mu+\epsilon)}{\min\{\dim_H\mu, \dim \gamma^u\}}}\cdot \frac{T}{r_i^{\dim_H\mu +\epsilon}})^{\epsilon'}}d((f^{{R}})^{k}y,y)\le  2M\cdot r_i}d\mu_{\Lambda}(y).
\end{equation}

Hence, if $\epsilon'< \frac{\min\{\dim \gamma^u, \dim_H \mu\}}{12\dim_H \mu+12\epsilon} $, then we can use (\ref{21}) to continue the estimate (\ref{12'}). Namely,

\[\int_{B_{M\cdot r} (z')} 1_{\bigcup_{N\le k \le n^{\epsilon'}}(f^{{R}})^{-k}B_{ M\cdot r} (z')}d\mu_{\Lambda}\le (M \cdot r_i)^{\dim_H\mu -\epsilon} \cdot (M \cdot r_i)^{\frac{\min\{\dim \gamma^u, \dim_H \mu\}}{6}}\cdot (\frac{T}{r_i^{\dim_H \mu+\epsilon}})^{\epsilon'}\]
\[\le (M \cdot r \cdot  \frac{r_i}{r_{i+1}})^{\dim_H\mu -\epsilon} \cdot (M \cdot  r \cdot  \frac{r_i}{r_{i+1}})^{\frac{\min\{\dim \gamma^u, \dim_H \mu\}}{6}}\cdot (\frac{T}{r^{\dim_H \mu+\epsilon}})^{\epsilon'}\]
\[\precsim_{T,\epsilon} (M \cdot r)^{\dim_H\mu -\epsilon} \cdot M^{\frac{\min\{\dim \gamma^u, \dim_H \mu\}}{6}}\cdot r^{\frac{\min\{\dim \gamma^u, \dim_H \mu\}}{12}}.\]

The last inequality holds because $\frac{r_i}{r_{i+1}} \precsim 1$.

\end{proof}
\begin{lemma}[Not super-short, but short returns]\label{notveryshort}\ \par
Let $n= \lfloor \frac{T}{\mu(B_r(z))}\rfloor,p=\lfloor {\lfloor \frac{T}{\mu(B_r(z))}\rfloor}^{\frac{\text{dim}_H\mu-\epsilon}{\text{dim}_H\mu}}\rfloor$ and $n^{\epsilon'}\ll p$. Then, for almost all $z\in \mathcal{M}$, $z'\in \interior{(\Lambda)}$, there exists $r_{z,z',M}>0$, such that $\forall r< r_{z,z',M}$
\[\int_{B_{ M\cdot r} (z')} 1_{\bigcup_{n^{\epsilon'}\le k \le p}(f^R)^{-k}B_{ M\cdot r} (z')}d\mu_{\Lambda}\]
\[\precsim_{T,\epsilon} (M\cdot r)^{\dim_H\mu-\epsilon^3} \cdot {\beta_2}^{r^{-(\dim_H\mu-\epsilon)\epsilon'}}+\mu_{\Lambda}[ B_{M \cdot r + 2C \cdot \beta_2^{r^{-(\dim_H\mu-\epsilon)\epsilon'}}}(z') \setminus B_{M \cdot r}(z')]\cdot \beta_2^{r^{-(\dim_H\mu-\epsilon)\epsilon'}}\]
\[+r^{-(\dim_H\mu+\epsilon) \cdot \frac{\text{dim}_H\mu-\epsilon}{\text{dim}_H\mu}} \cdot \{(M\cdot r)^{\dim_H\mu-\epsilon^3} +\mu_{\Lambda}[B_{M \cdot r + 2C \cdot \beta_2^{r^{-(\dim_H\mu-\epsilon)\epsilon'}}}(z') \setminus B_{M \cdot r}(z')]\}^2,\]
where $C>1$ is the same as in the Definition \ref{gibbs}, $\beta_2\in (0,1) $ does not depend on $z,z',r,M$.
\end{lemma}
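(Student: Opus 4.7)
The plan is to bound the integral by summing correlations over $k \in [n^{\epsilon'}, p]$:
\[
\int_{B_{M\cdot r}(z')} 1_{\bigcup_{n^{\epsilon'}\le k\le p}(f^R)^{-k}B_{M\cdot r}(z')}\,d\mu_\Lambda \le \sum_{k=n^{\epsilon'}}^{p} I_k,\qquad I_k:=\int 1_B\cdot 1_B\circ(f^R)^k\,d\mu_\Lambda,
\]
where $B:=B_{M\cdot r}(z')$. For each $k$ I set $m_k:=\lfloor k/2\rfloor$, apply Lemma \ref{decorrelationcylinder} to $(f^R)^{2m_k}$, and absorb the parity $k-2m_k\in\{0,1\}$ via $f^R$-invariance of $\mu_\Lambda$.

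For the cylinder approximation, Lemma \ref{diamcylinder} says every two-sided cylinder of level $m_k$ has diameter at most $2C\beta^{m_k}$, so the union $U_k$ of all such cylinders meeting $B$ is sandwiched $B\subseteq U_k\subseteq B_{M\cdot r+2C\beta^{m_k}}(z')$. To fit the one-sided cylinder hypothesis of Lemma \ref{decorrelationcylinder}, I bound $1_B\le 1_{U_k^+}$, where $U_k^+\in\sigma(\mathcal{M}_{m_k})$ is the union of forward cylinders meeting $B$, and similarly $1_B\le 1_{U_k^-}$ for the backward analogue. The pull-back $(f^R)^{-2m_k}U_k^-$ is a union of forward cylinders of level at most $2m_k$, hence lies in $\sigma\bigl(\bigcup_j\mathcal{M}_j\bigr)$ by the Markov property. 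Lemma \ref{decorrelationcylinder} then yields
\[
I_k\le\int 1_{U_k^+}\cdot 1_{U_k^-}\circ(f^R)^{2m_k}\,d\mu_\Lambda\le\mu_\Lambda(U_k^+)\,\mu_\Lambda(U_k^-)+C''\beta_1^{m_k}\mu_\Lambda(U_k^+).
\]
Using the hyperbolic product structure, bounded distortion, and the bounded density $d(\mu_\Lambda)_{\gamma^u}/d\Leb_{\gamma^u}$ from (\ref{horsesrbleb}), I control $\mu_\Lambda(U_k^\pm)\precsim \mu_\Lambda(B)+\mu_\Lambda(V_k)$, where $V_k:=B_{M\cdot r+2C\beta^{m_k}}(z')\setminus B$ is the outer corona. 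A Borel-Cantelli argument along a geometric sequence of radii, parallel to the one in the proof of Lemma \ref{veryshortreturn}, combined with the Hausdorff dimension assumption in Assumption \ref{geoassumption}, gives $\mu_\Lambda(B)\precsim(M\cdot r)^{\dim_H\mu-\epsilon^3}$ for a.e.\ $z'\in\interior{(\Lambda)}$ and all sufficiently small $r$.

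Summing over $k\in[n^{\epsilon'},p]$: the product term contributes at most $p\cdot[\mu_\Lambda(B)+\mu_\Lambda(V_k)]^2$, and inserting $p\precsim r^{-(\dim_H\mu+\epsilon)\frac{\dim_H\mu-\epsilon}{\dim_H\mu}}$ recovers the third line of the target bound. The exponential error $C''\beta_1^{m_k}\mu_\Lambda(U_k^+)$ is geometrically summable in $k$ and dominated by a multiple of $\beta_2^{n^{\epsilon'}/2}[\mu_\Lambda(B)+\mu_\Lambda(V_k)]$ for some $\beta_2\in(\beta_1,1)$; since $n^{\epsilon'}\ge T^{\epsilon'}r^{-(\dim_H\mu-\epsilon)\epsilon'}$, absorbing constants into $\beta_2$ produces the factor $\beta_2^{r^{-(\dim_H\mu-\epsilon)\epsilon'}}$ in the first two summands, with the corona width uniformly controlled by the worst-case $k=n^{\epsilon'}$. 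The main obstacle is Step~3: forward one-sided cylinders extend along stable leaves, so a naive estimate of $\mu_\Lambda(U_k^+)$ would cost a factor scaling like $(M\cdot r)^{\dim\gamma^u}$ rather than $(M\cdot r)^{\dim_H\mu}$; the $\epsilon^3$ slack in the exponent is precisely the price paid by using the hyperbolic product structure and conditional measures on transverse unstable leaves to bring $\mu_\Lambda(U_k^+)$ back down to $\mu_\Lambda(B)+\mu_\Lambda(V_k)$ up to subpolynomial losses. A secondary bookkeeping point is verifying measurability of $(f^R)^{-2m_k}U_k^-$ with respect to $\sigma(\bigcup_j\mathcal{M}_j)$, which follows directly from the Markov property of $\mathcal{M}_0$.
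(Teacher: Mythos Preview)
Your approach has a genuine gap at exactly the point you flag as ``the main obstacle.'' The bound $\mu_\Lambda(U_k^+)\precsim \mu_\Lambda(B)+\mu_\Lambda(V_k)$ is simply false. A forward one-sided cylinder $\xi_{i_0\cdots i_{m_k}}\in\mathcal{M}_{m_k}$ is an $s$-subset of $\Lambda$: it has unstable width $\le C\beta^{m_k}$ but spans the \emph{entire} stable foliation $\Gamma^s$. Hence the union $U_k^+$ of such cylinders meeting $B=B_{M\cdot r}(z')$ is a full-height strip of unstable width $\approx M\cdot r$, and by (\ref{horsesrbleb}) its measure is of order $(M\cdot r)^{\dim\gamma^u}$, not $(M\cdot r)^{\dim_H\mu-\epsilon^3}$. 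Since $\dim_H\mu\ge\dim\gamma^u$ (Lemma \ref{roughdim}) the former is strictly larger, and the appeal to ``hyperbolic product structure and conditional measures'' cannot recover the missing stable factor --- the information about the stable extent of $B$ has been discarded once you pass to $U_k^+$. The $\epsilon^3$ slack in the statement comes only from the Hausdorff dimension estimate of $\mu_\Lambda(B_{M\cdot r}(z'))$, not from any cylinder comparison. There is also a related issue with applying Lemma \ref{decorrelationcylinder}: it requires the \emph{second} set $A$ (appearing as $1_A\circ(f^R)^{2n}$) to lie in $\sigma(\bigcup_k\mathcal{M}_k)$, i.e.\ to be constant on stable leaves; your $U_k^-$ is a union of \emph{backward} cylinders, which are $u$-subsets and fail this.

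The paper resolves both issues simultaneously with a single shift. It fixes one scale $m=\lfloor n^{\epsilon'}/4\rfloor$, covers $B$ by \emph{two-sided} cylinders $\xi_{i_{-m}\cdots i_m}$ (whose diameter $\le 2C\beta^m$ gives the correct corona), and then applies $(f^R)^{-m}$ to the whole integrand using invariance of $\mu_\Lambda$. The key observation is that $(f^R)^{-m}\xi_{i_{-m}\cdots i_m}=\Lambda_{i_{-m}}\cap(f^R)^{-1}\Lambda_{i_{-m+1}}\cap\cdots\cap(f^R)^{-2m}\Lambda_{i_m}\in\mathcal{M}_{2m}$, so the shifted cover $A_0:=(f^R)^{-m}\bigcup\xi$ is a union of forward cylinders to which Lemma \ref{decorrelationcylinder} applies, while $\mu_\Lambda(A_0)=\mu_\Lambda\bigl(\bigcup\xi\bigr)\le\mu_\Lambda(B)+\mu_\Lambda(V)$ by invariance. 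The same shift makes the inner function $1_{\ge 1}\circ[1_{A_0}+\cdots+1_{A_0}\circ(f^R)^{i_0}]$ constant on stable leaves. A single application of Lemma \ref{decorrelationcylinder} then gives the bound $\precsim \mu_\Lambda(A_0)\beta_1^{n^{\epsilon'}}+p\cdot\mu_\Lambda(A_0)^2$ directly, with no sum over $k$ needed.
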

\begin{proof}
Observe first that $n^{\epsilon'}\ll p$ implies $\epsilon'<\frac{\dim_H \mu-\epsilon}{\dim_H \mu}$.
Cover $B_{M \cdot r}(z')$ by two-sided cylinders $\xi_{i_{-m}\cdots i_0 \cdots i_m}$, where  $m:=\frac{n^{\epsilon'}}{4}$ and $\xi_{i_{-m}\cdots i_0 \cdots i_m} \bigcap B_{M \cdot r}(z')\neq \emptyset$. By Lemma \ref{diamcylinder}, we have $\diam \xi_{i_{-m}\cdots i_0 \cdots i_m} \le 2C \cdot \beta^m$.  So $(\bigcup_{}\xi_{i_{-m}\cdots i_0 \cdots i_m}) \setminus B_{M\cdot r}(z') \subseteq B_{M \cdot r +2 C \cdot \beta^m}(z') \setminus B_{M \cdot r}(z')$. Denote $i_0:=p-n^{\epsilon'}$. Then
\[\int 1_{B_{M \cdot r}(z')} \cdot 1_{\ge 1} \circ [1_{B_{M \cdot r}(z')}+ \cdots +1_{B_{M \cdot r}(z')}\circ (f^R)^{i_0}] \circ (f^R)^{n^{\epsilon'}} d\mu_{\Lambda}\]
\begin{equation}\label{2}
    =\int 1_{(f^R)^{-m}B_{M \cdot r}(z')} \cdot 1_{\ge 1} \circ [1_{(f^R)^{-m}B_{M \cdot r}(z')}+ \cdots +1_{(f^R)^{-m}B_{M \cdot r}(z')}\circ (f^R)^{i_0}] \circ (f^R)^{n^{\epsilon'}} d\mu_{\Lambda}.
\end{equation}

Let $A_1:=(f^R)^{-m}B_{M \cdot r}(z'), A_2:=(f^R)^{-m} \bigcup_{\xi_{i_{-m}\cdots i_0 \cdots i_m} \bigcap \partial B_{M\cdot r}(z')\neq \emptyset} \xi_{i_{-m}\cdots i_0 \cdots i_m}$, $A_0:=(f^R)^{-m} \bigcup \xi_{i_{-m}\cdots i_0 \cdots i_m}$. Observe that 
\[A_0,A_2 \in \mathcal{M}_{2m}, A_0=A_1 \bigcup A_2, \text{ and }  (f^R)^mA_2 \subseteq B_{M \cdot r + 2C \cdot \beta^m}(z') \setminus B_{M \cdot r-2C \cdot \beta^m}(z'),\]
and, moreover, the function $1_{\ge 1} \circ [1_{A_0}+ \cdots +1_{A_0}\circ (f^R)^{i_0}]$ is constant along each $\gamma^s\in \Gamma^s$. Then we can continue the equality (\ref{2}) as
\[=[\int 1_{A_1} \cdot 1_{\ge 1} \circ [1_{A_1}+ \cdots +1_{A_1}\circ (f^R)^{i_0}] \circ (f^R)^{n^{\epsilon'}} d\mu_{\Lambda}\]
\[-\int 1_{A_1} \cdot 1_{\ge 1} \circ [1_{A_0}+ \cdots +1_{A_0}\circ (f^R)^{i_0}] \circ (f^R)^{n^{\epsilon'}} d\mu_{\Lambda}]\]
\[-\int 1_{A_0\setminus{A_1}} \cdot 1_{\ge 1} \circ [1_{A_0}+ \cdots +1_{A_0}\circ (f^R)^{i_0}] \circ (f^R)^{n^{\epsilon'}} d\mu_{\Lambda}\]
\[+[\int 1_{A_0} \cdot 1_{\ge 1} \circ [1_{A_0}+ \cdots +1_{A_0}\circ (f^R)^{i_0}] \circ (f^R)^{n^{\epsilon'}} d\mu_{\Lambda}\]
\[-\int 1_{A_0} d\mu_{\Lambda} \cdot \int 1_{\ge 1} \circ [1_{A_0}+ \cdots +1_{A_0}\circ (f^R)^{i_0}] \circ (f^R)^{n^{\epsilon'}} d\mu_{\Lambda}]\]
\begin{equation}\label{7}
    +\int 1_{A_0} d\mu_{\Lambda} \cdot \int 1_{\ge 1} \circ [1_{A_0}+ \cdots +1_{A_0}\circ (f^R)^{i_0}] \circ (f^R)^{n^{\epsilon'}} d\mu_{\Lambda}.
\end{equation}

Apply now Lemma \ref{decorrelationcylinder}, the relation $A_0\setminus A_1 \subseteq A_2$ and the inequality
\[|1_{\ge 1} \circ [1_{A_1}+ \cdots +1_{A_1}\circ (f^R)^{i_0}]- 1_{\ge 1} \circ [1_{A_0}+ \cdots +1_{A_0}\circ (f^R)^{i_0}]|\le 1_{\ge 1} \circ [1_{A_2}+ \cdots +1_{A_2}\circ (f^R)^{i_0}] \]
to the right hand side of  (\ref{7}). Then it can be estimated as

\[\le \int 1_{A_1} \cdot 1_{\ge 1} \circ [1_{A_2}+ \cdots +1_{A_2}\circ (f^R)^{i_0}] \circ (f^R)^{n^{\epsilon'}} \]
\[+\int 1_{A_2} \cdot 1_{\ge 1} \circ [1_{A_0}+ \cdots +1_{A_0}\circ (f^R)^{i_0}] \circ (f^R)^{n^{\epsilon'}} d\mu_{\Lambda}+C'' \cdot \mu_{\Lambda}(A_0) \cdot \beta_1^{n^{\epsilon'}}+\mu_{\Lambda}(A_0)^2 \cdot i_0\]
\[\le 2\int 1_{A_0} \cdot 1_{\ge 1} \circ [1_{A_0}+ \cdots +1_{A_0}\circ (f^R)^{i_0}] \circ (f^R)^{n^{\epsilon'}} d\mu_{\Lambda}+C'' \cdot \mu_{\Lambda}(A_0) \cdot \beta_1^{n^{\epsilon'}}+\mu_{\Lambda}(A_0)^2 \cdot i_0\]
\[-2\int 1_{A_0} d\mu_{\Lambda}\cdot \int 1_{\ge 1} \circ [1_{A_0}+ \cdots +1_{A_0}\circ (f^R)^{i_0}] \circ (f^R)^{n^{\epsilon'}} d\mu_{\Lambda}\]
\[+2\int 1_{A_0} d\mu_{\Lambda}\cdot \int 1_{\ge 1} \circ [1_{A_0}+ \cdots +1_{A_0}\circ (f^R)^{i_0}] \circ (f^R)^{n^{\epsilon'}} d\mu_{\Lambda}.\]
By applying Lemma \ref{decorrelationcylinder} again, with  $i_0 \le p, m=\frac{n^{\epsilon'}}{4}$, we have
\[\le 3C'' \cdot \mu_{\Lambda}(A_0) \cdot \beta_1^{n^{\epsilon'}}+3 i_0 \cdot \mu_{\Lambda}(A_0)^2\le 3C'' \cdot \mu_{\Lambda}(A_0) \cdot \beta_1^{\frac{n^{\epsilon'}}{2}}+3 p \cdot \mu_{\Lambda}(A_0)^2\]
\[=3C'' \cdot \mu_{\Lambda}((f^R)^mA_0) \cdot \beta_1^{\frac{n^{\epsilon'}}{2}}+3 p \cdot \mu_{\Lambda}((f^R)^mA_0)^2\precsim \mu_{\Lambda}(B_{M\cdot r}(z')) \cdot \beta_1^{\frac{n^{\epsilon'}}{2}}\]
\begin{equation}\label{3}
    +\mu_{\Lambda}[ B_{M \cdot r + 2C \cdot \beta^{\frac{n^{\epsilon'}}{4}}}(z') \setminus B_{M \cdot r}(z')]\cdot \beta_1^{\frac{n^{\epsilon'}}{2}}+p \cdot \{\mu_{\Lambda}(B_{M\cdot r}(z')) +\mu_{\Lambda}[B_{M \cdot r + 2C \cdot \beta^{\frac{n^{\epsilon'}}{4}}}(z') \setminus B_{M \cdot r}(z')]\}^2.
\end{equation}

By making use of the Assumption \ref{geoassumption}, we choose now the same $\epsilon>0$ as in the Proposition \ref{rate}. Then  for almost all $z\in \mathcal{M}$, $ z'\in \interior{(\Lambda)}$  there is $r_{z,z',M}>0$ s.t. $\forall r< r_{z,z',M}$ \[B_{M \cdot r}(z') \subseteq \Lambda \text{ almost surely, and } \frac{T}{r^{\dim_H\mu-\epsilon}}\precsim n \precsim \frac{T}{r^{\dim_H\mu+\epsilon}},\]
\[r^{\dim_H\mu+\epsilon}\le \mu(B_r(z)) \le r^{\dim_H\mu-\epsilon}, \frac{(M \cdot r)^{\dim_H\mu+\epsilon^3}}{\mu(\Lambda)}\le  \mu_{\Lambda}(B_{M \cdot r}(z')) \le \frac{(M\cdot r)^{\dim_H\mu-\epsilon^3}}{\mu(\Lambda) },\]
\[ (\frac{T}{r^{\dim_H\mu-\epsilon}})^{\frac{\dim_H\mu-\epsilon}{\dim_H\mu}} \precsim p\precsim (\frac{T}{r^{\dim_H\mu+\epsilon}})^{\frac{\dim_H\mu-\epsilon}{\dim_H\mu}},(\frac{T}{r^{\dim_H\mu-\epsilon}})^{\epsilon'}\precsim n^{\epsilon'} \precsim (\frac{T}{r^{\dim_H\mu+\epsilon}})^{\epsilon'}.\]
Then we can continue the estimate in the inequality (\ref{3}) as
\[\precsim_{T,\epsilon} (M\cdot r)^{\dim_H\mu-\epsilon^3} \cdot {\beta_2}^{r^{-(\dim_H\mu-\epsilon)\epsilon'}}+\mu_{\Lambda}[ B_{M \cdot r + 2C \cdot \beta_2^{r^{-(\dim_H\mu-\epsilon)\epsilon'}}}(z') \setminus B_{M \cdot r}(z')]\cdot \beta_2^{r^{-(\dim_H\mu-\epsilon)\epsilon'}}\]
\[+r^{-(\dim_H\mu+\epsilon) \cdot \frac{\dim_H\mu-\epsilon}{\dim_H\mu}} \cdot \{(M\cdot r)^{\dim_H\mu-\epsilon^3} +\mu_{\Lambda}[B_{M \cdot r + 2C \cdot \beta_2^{r^{-(\dim_H\mu-\epsilon)\epsilon'}}}(z') \setminus B_{M \cdot r}(z')]\}^2,\]
where $\beta_2 \in (0,1)$ depends on $T, \epsilon', \beta, \beta_1$. 

\end{proof}

By combining the Lemmas \ref{fixreturn}, \ref{veryshortreturn} and \ref{notveryshort}, we obtain the following summary of the results.

\begin{proposition}[Rates of short returns ]\label{shortreturnrate}\ \par
Let $\epsilon, \epsilon'>0$ satisfy the relations $\alpha \cdot (\dim_H\mu-\epsilon)>\frac{\dim_H\mu}{\dim_H\mu-\epsilon}>1$, $p=\lfloor {\lfloor \frac{T}{\mu(B_r(z))}\rfloor}^{\frac{\text{dim}_H\mu-\epsilon}{\text{dim}_H\mu}}\rfloor$, $\epsilon'<\min\{\frac{\min\{\dim_H \mu, { \dim \gamma^u}\}}{12 \dim_H \mu+12 \epsilon}, \frac{\dim_H\mu-\epsilon}{\dim_H\mu}\}$. Then for almost all $z\in \mathcal{M}$, $z'\in \interior{(\Lambda)}$ and for each integer $M>0$ there exists a small enough $r_{z,z',M}>0$ such that $\forall r< r_{z,z',M}$
\[\int_{B_{ M\cdot r} (z')} 1_{\bigcup_{1\le k \le p}(f^R)^{-k}B_{ M\cdot r} (z')}d\mu_{\Lambda}\precsim_{T, \epsilon} (M \cdot r)^{\dim_H\mu -\epsilon} \cdot M^{\frac{\min\{{\dim \gamma^u}, \dim_H \mu\}}{6}}\cdot r^{\frac{\min\{{\dim \gamma^u}, \dim_H \mu\}}{12}}\]
\[+(M\cdot r)^{\dim_H\mu-\epsilon^3} \cdot {\beta_2}^{r^{-(\dim_H\mu-\epsilon)\epsilon'}}+\mu_{\Lambda}[ B_{M \cdot r + 2C \cdot \beta_2^{r^{-(\dim_H\mu-\epsilon)\epsilon'}}}(z') \setminus B_{M \cdot r}(z')]\cdot \beta_2^{r^{-(\dim_H\mu-\epsilon)\epsilon'}}\]
\[+r^{-(\dim_H\mu+\epsilon) \cdot \frac{\dim_H\mu-\epsilon}{\dim_H\mu}} \cdot \{(M\cdot r)^{\dim_H\mu-\epsilon^3} +\mu_{\Lambda}[B_{M \cdot r + 2C \cdot \beta_2^{r^{-(\dim_H\mu-\epsilon)\epsilon'}}}(z') \setminus B_{M \cdot r}(z')]\}^2,\]
where a constant $C>1$ is the same as in the Definition \ref{gibbs}, a constant $\beta_2\in (0,1) $ is independent on $z,z',r,M$.
\end{proposition}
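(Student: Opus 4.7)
The plan is to split the union $\bigcup_{1 \le k \le p} (f^R)^{-k} B_{M\cdot r}(z')$ according to the three regimes of return times isolated in the preceding lemmas, estimate each regime separately, and sum the resulting bounds. Concretely, for the fixed integer $N = \lfloor -\log 2C / \log \beta \rfloor + 1$ from Lemma \ref{fixreturn} and the exponent $\epsilon'$ in the hypothesis, write
\[
\bigcup_{1 \le k \le p} (f^R)^{-k} B_{M\cdot r}(z') \;=\; \bigcup_{1 \le k \le N} (f^R)^{-k} B_{M\cdot r}(z') \;\cup\; \bigcup_{N \le k \le n^{\epsilon'}} (f^R)^{-k} B_{M\cdot r}(z') \;\cup\; \bigcup_{n^{\epsilon'} \le k \le p} (f^R)^{-k} B_{M\cdot r}(z'),
\]
so that, by subadditivity of measure applied to the indicators,
\[
\int_{B_{M\cdot r}(z')} 1_{\bigcup_{1 \le k \le p}(f^R)^{-k} B_{M\cdot r}(z')} \, d\mu_\Lambda \;\le\; I_1(r) + I_2(r) + I_3(r),
\]
where $I_j(r)$ is the integral over the $j$-th range.

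First I would dispose of $I_1(r)$: for $\mu$-almost every $z' \in \interior{(\Lambda)}$ Lemma \ref{fixreturn} yields a radius $r_{N,M,z'}>0$ such that $I_1(r)=0$ for all $r<r_{N,M,z'}$. This contributes nothing to the final bound and also fixes the constant $N$ which was chosen precisely so that the exponential contraction constant $C\beta^N$ in Lemma \ref{recurrence} falls below $1/2$. For $I_2(r)$, I would directly invoke Lemma \ref{veryshortreturn}, which (for almost every $z\in\mathcal{M}$ and $z'\in \Lambda$, and for $r$ smaller than some $r_{z,z',M}$) produces the bound
\[
I_2(r) \;\precsim_{T,\epsilon}\; (M\cdot r)^{\dim_H\mu-\epsilon} \cdot M^{\frac{\min\{\dim\gamma^u,\dim_H\mu\}}{6}} \cdot r^{\frac{\min\{\dim\gamma^u,\dim_H\mu\}}{12}};
\]
the hypothesis $\epsilon' < \min\{\dim_H\mu,\dim\gamma^u\}/(12\dim_H\mu+12\epsilon)$ in the proposition is exactly what Lemma \ref{veryshortreturn} requires. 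For $I_3(r)$, I would apply Lemma \ref{notveryshort}, whose hypothesis $n^{\epsilon'} \ll p$ follows from the other restriction $\epsilon' < (\dim_H\mu-\epsilon)/\dim_H\mu$; this yields the three remaining terms involving $\beta_2^{r^{-(\dim_H\mu-\epsilon)\epsilon'}}$, the corona $B_{M\cdot r+2C\beta_2^{\cdots}}(z')\setminus B_{M\cdot r}(z')$, and the squared combination.

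Finally I would take the minimum of the three radii $r_{N,M,z'}$, $r_{z,z',M}$ (from Lemma \ref{veryshortreturn}) and $r_{z,z',M}$ (from Lemma \ref{notveryshort}) as the radius $r_{z,z',M}$ in the statement, and simply add the three estimates. The only bookkeeping point is that the full exceptional set is the union of the (measure-zero) exceptional sets from Lemmas \ref{fixreturn}, \ref{veryshortreturn}, and \ref{notveryshort}, and the constants $C$ and $\beta_2$ inherited from Lemma \ref{notveryshort} match the ones in the statement. There is no real obstacle here: the work has already been done in the three lemmas, and the proposition is essentially an assembly step. The only mild subtlety is checking that the hypothesis on $\epsilon'$ simultaneously fits into both Lemma \ref{veryshortreturn} (the $\min\{\dim_H\mu,\dim\gamma^u\}/(12\dim_H\mu+12\epsilon)$ bound) and Lemma \ref{notveryshort} (the $(\dim_H\mu-\epsilon)/\dim_H\mu$ bound), which is guaranteed by taking the minimum explicitly written in the proposition.
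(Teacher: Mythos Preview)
Your assembly is exactly the paper's approach: split the range $1\le k\le p$ into the three regimes, invoke Lemmas \ref{fixreturn}, \ref{veryshortreturn}, \ref{notveryshort} respectively, and add. There is one bookkeeping point you glossed over that the paper's proof makes explicit. In each of those three lemmas the full-measure set of good pairs $(z,z')$ depends on the fixed integer $M$, whereas the proposition asserts the quantifier order ``for almost all $(z,z')$, for every integer $M>0$''. So taking the union of the three exceptional sets, as you do, still leaves you with an $M$-dependent exceptional set. The fix is that $M$ ranges over the positive integers, hence over a countable set; intersecting the good sets over all $M\in\mathbb{N}$ produces a single full-measure subset of $\mathcal{M}\times\interior(\Lambda)$ on which the estimate holds for every integer $M$ (with $r_{z,z',M}$ still allowed to depend on $M$). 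This is the only content of the paper's proof beyond what you wrote.
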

\begin{proof}
 Recall that in the Lemmas \ref{fixreturn} \ref{veryshortreturn}, \ref{notveryshort}, we fixed an integer $M>0$. Then for almost all $z \in \mathcal{M}$ and  $z' \in \interior{(\Lambda)}$ the desired estimates hold. The set of such points in $ \mathcal{M} \times \interior{(\Lambda)}$ has the full measure with respect to $\mu(\mathcal{M}) \times \mu(\interior{(\Lambda)})$, and it does depend on $M$. However, since $M$ is an integer, we can find a smaller set of points $(z,z') \in \mathcal{M} \times \interior{(\Lambda)}$ such that it does not depend on $M>0$ and has full measure $\mu(\mathcal{M}) \times \mu(\interior{(\Lambda)})$. Therefore the required estimate holds.

\end{proof}

\subsection{Coronas}
Here we study two coronas, which appeared in the previous sections. One of them is in $\mathcal{M}$ and has a measure $\mu[B_{r+C' \cdot r^{\frac{(\dim_H\mu-\epsilon)^2\cdot \alpha }{\dim_H\mu} } }(z)\setminus B_{r-C' \cdot r^{\frac{(\dim_H\mu-\epsilon)^2\cdot \alpha }{\dim_H\mu} }}(z)] $ (see the Proposition \ref{rate}). The other one is in $\Lambda$ and its measure is $\mu_{\Lambda}[ B_{M \cdot r + 2C \cdot \beta_2^{r^{-(\dim_H\mu-\epsilon)\epsilon'}}}(z') \setminus B_{M \cdot r}(z')]$ (see the Proposition \ref{shortreturnrate}).

\begin{proposition}[Coronas in $\mathcal{M}$ and $\Lambda$]\label{coronaestimate}\ \par
For almost every $z \in \mathcal{M}, z' \in \interior{(\Lambda)}$ there are $r_z, r_{z',M}>0$ such that $\forall r< \min \{r_z, r_{z',M}\} $
\[\mu\{B_{r+C' \cdot r^{\frac{(\dim_H\mu-\epsilon)^2\cdot \alpha }{\dim_H\mu}} }(z)\setminus B_{r-C' \cdot r^{\frac{(\dim_H\mu-\epsilon)^2\cdot \alpha }{\dim_H\mu} }}(z)\} \precsim_{z, \dim \gamma^u} r^{\frac{(1+\frac{(\dim_H\mu-\epsilon)^2\cdot \alpha }{\dim_H\mu}) \cdot \dim \gamma^u}{2}},\]
\[\mu_{\Lambda}[ B_{M \cdot r + 2C \cdot \beta_2^{r^{-(\dim_H\mu-\epsilon)\epsilon'}}}(z') \setminus B_{M \cdot r}(z')]\precsim_{z, \dim \gamma^u} (M\cdot r)^{\frac{\dim \gamma^u }{2}} \cdot \beta_2^{\frac{r^{-(\dim_H\mu-\epsilon)\epsilon'}\cdot \dim \gamma^u}{2}}.\]
\end{proposition}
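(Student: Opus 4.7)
The plan is to reduce both estimates to a single geometric lemma $(\star)$: for any smooth $\dim \gamma^u$-dimensional submanifold $\gamma$ of uniformly bounded second fundamental form (which holds for $\gamma \in \Gamma^u$ by the continuous family structure in Definition \ref{gibbs}), any $w \in \mathcal{M}$ and any $0 < \eta \le \rho$ below the curvature scale of $\gamma$,
\[
\Leb_{\gamma}\bigl(\gamma \cap [B_{\rho+\eta}(w)\setminus B_{\rho-\eta}(w)]\bigr) \precsim_{\dim \gamma^u} (\rho\eta)^{\dim \gamma^u / 2}.
\]
The worst case is a tangential intersection, where the squared distance $x \mapsto d(x,w)^2$ restricted to $\gamma$ has a nondegenerate critical point at the foot $w^{\perp} \in \gamma$; the Morse lemma then identifies the preimage of $(\rho^2 - 2\rho\eta,\,\rho^2 + 2\rho\eta)$ with a $\dim \gamma^u$-ball of radius $\precsim \sqrt{\rho\eta}$ and Lebesgue measure $\precsim (\rho\eta)^{\dim \gamma^u / 2}$. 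Transverse intersections yield the sharper bound $\precsim \rho^{\dim \gamma^u - 1}\eta$, which is dominated by $(\star)$ in all cases of interest.

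For the second estimate (corona in $\Lambda$), I would disintegrate $\mu_\Lambda = \int \mu_{\gamma^u}\, d\bar\mu$ along $\Gamma^u$. By (\ref{horsesrbleb}) each $\mu_{\gamma^u}$ equals $\Leb_{\gamma^u}$ times a $C^{\pm 1}$ density on $\gamma^u \cap \Lambda$, so that
\[
\mu_\Lambda\bigl(B_{Mr+\eta}(z')\setminus B_{Mr-\eta}(z')\bigr) \precsim \int \Leb_{\gamma^u}\bigl(\gamma^u \cap [B_{Mr+\eta}(z')\setminus B_{Mr-\eta}(z')]\bigr)\, d\bar\mu(\gamma^u).
\]
Applying $(\star)$ with $\rho = Mr$ and $\eta = 2C\beta_2^{r^{-(\dim_H\mu-\epsilon)\epsilon'}}$ inside the integrand, and absorbing the constant $(2C)^{\dim \gamma^u / 2}$ into the implicit factor, produces exactly the claimed $(Mr)^{\dim \gamma^u / 2} \cdot \beta_2^{r^{-(\dim_H\mu-\epsilon)\epsilon'} \dim \gamma^u / 2}$.

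For the first estimate (corona in $\mathcal{M}$), I would use the pull-back strategy. By Birkhoff's ergodic theorem, for $\mu$-a.e.\ $z \in \mathcal{M}$ there are $j_z \in \mathbb{N}$ and $z' \in \interior{(\Lambda)}$ with $z = f^{j_z}(z')$, and Lemma \ref{ballinhorseshoe} gives $f^{-j_z}B_r(z) \subseteq \Lambda$ modulo $\mu$ for all small $r$. Pushing the $\Gamma^u$-disintegration of $\mu_\Lambda$ forward through the local diffeomorphism $f^{j_z}$ yields a disintegration of $\mu$ near $z$ along the leaves $f^{j_z}(\gamma^u)$, with conditional densities comparable to Lebesgue up to the Jacobian of $f^{j_z}$ at $z$, a factor absorbed into $C_z$. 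Applying $(\star)$ with $\rho = r$ and $\eta = C' r^{\delta'}$, where $\delta' = \tfrac{(\dim_H\mu-\epsilon)^2 \alpha}{\dim_H\mu}$, yields $\precsim_z (r \cdot r^{\delta'})^{\dim \gamma^u / 2} = r^{(1+\delta')\dim \gamma^u / 2}$, matching the stated bound.

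The main obstacle is the rigorous proof of $(\star)$: one must cover $\gamma \cap$ annulus by Morse neighbourhoods around the critical points of $d(\cdot,w)^2|_\gamma$ together with transverse pieces, with their number and size controlled uniformly via the $C^1$-continuity of $\Gamma^u$ and the uniform transversality in Definition \ref{gibbs}. A secondary subtlety is the $z$-dependence in the first estimate, where the Jacobian of $f^{j_z}$ and the local geometry at $z$ combine into $C_z$; a Lusin-type argument is needed to make $C_z$ uniform on a set of $\mu$-measure $1 - \epsilon$.
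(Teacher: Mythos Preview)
Your approach is essentially the paper's. Both proofs disintegrate $\mu_\Lambda$ along $\Gamma^u$ via (\ref{horsesrbleb}), pull the $\mathcal{M}$-corona back to $\Lambda$ through $f^{-j_z}$ using Lemma \ref{ballinhorseshoe}, and reduce everything to the leafwise estimate $\Leb_{\gamma^u}(\gamma^u\cap[B_{\rho+\eta}\setminus B_{\rho-\eta}])\precsim(\rho\eta)^{\dim\gamma^u/2}$.

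The one point worth flagging is your justification of $(\star)$. You invoke a Morse-lemma argument for $d(\cdot,w)^2|_\gamma$ and speak of uniformly bounded second fundamental form, but Definition \ref{gibbs} only gives a \emph{continuous family of $C^1$} unstable disks: there is no $C^2$ control, so neither the second fundamental form nor the Morse lemma is available as stated. The paper sidesteps this by a more elementary argument: for $r$ small the $C^1$-continuity forces every $f^{j_z}\gamma^u$ inside the ball to be ``almost flat'' (tangent planes are nearly constant), and after passing through $\exp_z^{-1}$ one is reduced to the intersection of an almost-affine $\dim\gamma^u$-disk with a Euclidean annulus of radii $\rho\pm\eta$; the tangential chord has length $\precsim\sqrt{(\rho+\eta)^2-(\rho-\eta)^2}\precsim\sqrt{\rho\eta}$, and the Lebesgue bound follows by diameter$^{\dim\gamma^u}$. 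This is the same geometric content as your $(\star)$ but uses only $C^1$ data, so you should replace the Morse-lemma step by this flatness-plus-chord argument. Your remark about the $z$-dependence is correct and matches the paper: the Jacobian of $f^{j_z}$ and the local metric distortion at $z$ are simply absorbed into the constant $C_z$, with no Lusin step needed since the statement is already ``for a.e.\ $z$''.
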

\begin{proof}
For corona in $\mathcal{M}$ we have from Lemma \ref{ballinhorseshoe} for almost all $z \in  \bigcup_{i \ge 1} \bigcup_{0 \le j < R_i} f^j(\Lambda_i)$, $z=f^{j_z}(z')$, $z' \in \interior{(\Lambda)}$ that there exists  such $r'_z>0$, that $\forall r< r'_z$ 

\[\mu(f^{-j_z}[B_{r+C' \cdot r^{\frac{(\dim_H\mu-\epsilon)^2\cdot \alpha }{\dim_H\mu} } }(z)\setminus B_{r-C' \cdot r^{\frac{(\dim_H\mu-\epsilon)^2\cdot \alpha }{\dim_H\mu} }}(z)] \bigcap \{\bigcup_{i \ge 1} \bigcup_{1 \le j < R_i} f^j(\Lambda_i)\})=0.\]

Hence by the invariance of $\mu$ (i.e. $f_* \mu=\mu$) we have
\[\mu([B_{r+C' \cdot r^{\frac{(\dim_H\mu-\epsilon)^2\cdot \alpha }{\dim_H\mu} } }(z)\setminus B_{r-C' \cdot r^{\frac{(\dim_H\mu-\epsilon)^2\cdot \alpha }{\dim_H\mu} }}(z)] \bigcap f^{j_z}\{\bigcup_{i \ge 1} \bigcup_{1 \le j < R_i} f^j(\Lambda_i)\})=0.\]

Therefore
\[[B_{r+C' \cdot r^{\frac{(\dim_H\mu-\epsilon)^2\cdot \alpha }{\dim_H\mu} } }(z)\setminus B_{r-C' \cdot r^{\frac{(\dim_H\mu-\epsilon)^2\cdot \alpha }{\dim_H\mu} }}(z)] \subseteq f^{j_z}\Lambda \text{ almost surely}.\]

Because $f^{j_z}$ is a local diffeomorphism, then for a sufficiently small $r$ all manifolds $f^{j_z}\gamma^u$ ($\gamma^u\in \Gamma^u$) in any non-empty set $(f^{j_z}\gamma^u) \bigcap [B_{r+C' \cdot r^{\frac{(\dim_H\mu-\epsilon)^2\cdot \alpha }{\dim_H\mu}} }(z)\setminus B_{r-C' \cdot r^{\frac{(\dim_H\mu-\epsilon)^2\cdot \alpha }{\dim_H\mu} }}(z)] $ are almost flat. From the Gauss lemma for the exponential map $\exp_{z}$ we have, that in a neughborhood of $z \in \mathcal{M}$ 
\[(\exp_z)^{-1}[B_{r+C' \cdot r^{\frac{(\dim_H\mu-\epsilon)^2\cdot \alpha }{\dim_H\mu}} }(z)\setminus B_{r-C' \cdot r^{\frac{(\dim_H\mu-\epsilon)^2\cdot \alpha }{\dim_H\mu} }}(z)] \]
\[\subseteq \Cor:=\{v \in \mathbb{R}^{\dim \mathcal{M}}: \langle v,v \rangle_{z}\in [r-C' \cdot r^{\frac{(\dim_H\mu-\epsilon)^2\cdot \alpha }{\dim_H\mu}}, r+C' \cdot r^{\frac{(\dim_H\mu-\epsilon)^2\cdot \alpha }{\dim_H\mu}}]\}, \]
where $\langle \cdot, \cdot \rangle_z$ is the Riemannian metric at $z\in \mathcal{M}$. This is a corona in an ellipse. If $r$ is sufficiently small, say $r<r_z<r_z'$ for some $r_z>0$, then all manifolds $(\exp_z)^{-1} f^{j_z}\gamma^u$ in $\Cor$ are almost flat. So their diameters (in Euclidean norm) satisfy the inequality  
\[\diam \{[(\exp_z)^{-1} f^{j_z}\gamma^u]\bigcap \Cor\} \precsim_z \cdot \sqrt{(r+C' \cdot r^{\frac{(\dim_H\mu-\epsilon)^2\cdot \alpha }{\dim_H\mu}})^2-(r-C' \cdot r^{\frac{(\dim_H\mu-\epsilon)^2\cdot \alpha }{\dim_H\mu}})^2}\]
$\precsim \sqrt{r \cdot r^{\frac{(\dim_H\mu-\epsilon)^2\cdot \alpha }{\dim_H\mu}}}=r^{\frac{1+\frac{(\dim_H\mu-\epsilon)^2\cdot \alpha }{\dim_H\mu}}{2}}$. Therefore the Lebesgue measure of $\Cor$ on $(\exp_z)^{-1} f^{j_z}\gamma^u$ can be controlled by  diameters, i.e.
\[\Leb_{(\exp_z)^{-1} f^{j_z}\gamma^u}\{[(\exp_z)^{-1} f^{j_z}\gamma^u]\bigcap \Cor\} \precsim_{z, \dim \gamma^u} r^{\frac{(1+\frac{(\dim_H\mu-\epsilon)^2\cdot \alpha }{\dim_H\mu}) \cdot \dim \gamma^u}{2}}.\] 

Since $\exp_z$ is a local diffeomorphism, then 
\[\Leb_{f^{j_z}\gamma^u} \{B_{r+C' \cdot r^{\frac{(\dim_H\mu-\epsilon)^2\cdot \alpha }{\dim_H\mu}} }(z)\setminus B_{r-C' \cdot r^{\frac{(\dim_H\mu-\epsilon)^2\cdot \alpha }{\dim_H\mu} }}(z)\} \precsim_{z, \dim \gamma^u} r^{\frac{(1+\frac{(\dim_H\mu-\epsilon)^2\cdot \alpha }{\dim_H\mu}) \cdot \dim \gamma^u}{2}}.\]

Now, since $f^{j_z}$ is also a local diffeomorphism, we have 
\[\Leb_{\gamma^u} \{f^{-j_z}[B_{r+C' \cdot r^{\frac{(\dim_H\mu-\epsilon)^2\cdot \alpha }{\dim_H\mu}} }(z)\setminus B_{r-C' \cdot r^{\frac{(\dim_H\mu-\epsilon)^2\cdot \alpha }{\dim_H\mu} }}(z)]\} \precsim_{z, \dim \gamma^u} r^{\frac{(1+\frac{(\dim_H\mu-\epsilon)^2\cdot \alpha }{\dim_H\mu}) \cdot \dim \gamma^u}{2}}.\]

By making use of (\ref{horsesrbleb}) and integrating over all $\gamma^u \in \Gamma^u$, we get
\[\mu\{f^{-j_z}[B_{r+C' \cdot r^{\frac{(\dim_H\mu-\epsilon)^2\cdot \alpha }{\dim_H\mu}} }(z)\setminus B_{r-C' \cdot r^{\frac{(\dim_H\mu-\epsilon)^2\cdot \alpha }{\dim_H\mu} }}(z)]\} \precsim_{z, \dim \gamma^u} r^{\frac{(1+\frac{(\dim_H\mu-\epsilon)^2\cdot \alpha }{\dim_H\mu}) \cdot \dim \gamma^u}{2}}.\]

Hence, thanks to the invariance of the measure $\mu$ ($f_{*}\mu=\mu$)

\[\mu\{B_{r+C' \cdot r^{\frac{(\dim_H\mu-\epsilon)^2\cdot \alpha }{\dim_H\mu}} }(z)\setminus B_{r-C' \cdot r^{\frac{(\dim_H\mu-\epsilon)^2\cdot \alpha }{\dim_H\mu} }}(z)\} \precsim_{z, \dim \gamma^u} r^{\frac{(1+\frac{(\dim_H\mu-\epsilon)^2\cdot \alpha }{\dim_H\mu}) \cdot \dim \gamma^u}{2}}.\]

For the corona in $\Lambda$, i.e. for $B_{M \cdot r +2 C \cdot \beta_2^{r^{-(\dim_H\mu-\epsilon)\epsilon'}}}(z') \setminus B_{M \cdot r}(z')$, a trick is the same and even more straightforward. Indeed, because  $\gamma^u\in \Gamma^u$ intersects with this corona directly, there is $r_{z', M}>0$, such that $\forall r<r_{z',M}$ 
\[\mu_{\Lambda}[ B_{M \cdot r + 2C \cdot \beta_2^{r^{-(\dim_H\mu-\epsilon)\epsilon'}}}(z') \setminus B_{M \cdot r}(z')]\precsim_{z', \dim \gamma^u} (M \cdot r)^{\frac{\dim \gamma^u }{2}} \cdot \beta_2^{ \frac{r^{-(\dim_H\mu-\epsilon)\epsilon'}\cdot\dim \gamma^u}{2}}.\]
\end{proof}

\subsection{Conclusion of the Proof of the Theorem \ref{thm}}
We will start with a rough estimate of $\dim_H \mu$.
\subsection*{Proof that $\dim_H\mu \ge \dim {\gamma}^u$:}
\begin{lemma}\label{roughdim}
It follows from the Assumptions \ref{geoassumption} and  \ref{assumption} that $\dim_H\mu \ge \dim {\gamma}^u$.
\end{lemma}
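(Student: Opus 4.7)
The plan is to use the hyperbolic product structure of $\Lambda$ together with the absolute continuity of $\mu$ on unstable leaves (equation~(\ref{horsesrbleb})) to show that for a.e.\ $z \in \mathcal{M}$ one has $\mu(B_r(z)) \precsim_z r^{\dim \gamma^u}$ for all sufficiently small $r$; taking $\log$, dividing by $\log r < 0$ and passing to the limit will then yield $\dim_H\mu \ge \dim \gamma^u$.

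First I would reduce the estimate from a general $z \in \mathcal{M}$ to a point $z' \in \interior{(\Lambda)}$. By the Birkhoff ergodic theorem and Lemma~\ref{pibi}, almost every $z$ may be written as $z = f^{j_z}(z')$ with $z' \in \interior{(\Lambda)}$. Since $f$ is a local $C^1$-diffeomorphism (Assumption~\ref{geoassumption}), Lemma~\ref{comparetopballmetricball} gives $f^{-j_z}(B_r(z)) \subseteq B_{C_z r}(z')$, and Lemma~\ref{ballinhorseshoe} shows that for small enough $r$ the preimage $f^{-j_z}(B_r(z))$ lies $\mu$-almost surely inside $\Lambda$. By invariance of $\mu$ we therefore have
\[
\mu(B_r(z)) = \mu(f^{-j_z}B_r(z)) \le \mu(\Lambda) \cdot \mu_{\Lambda}(B_{C_z r}(z') \cap \Lambda).
\]

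Next I would bound $\mu_{\Lambda}(B_{C_z r}(z') \cap \Lambda)$ by disintegrating $\mu_\Lambda$ along the continuous family $\Gamma^u$: writing $\mu_\Lambda = \int (\mu_\Lambda)_{\gamma^u}\, d\overline{\mu}(\gamma^u)$ where $(\mu_\Lambda)_{\gamma^u}$ is the conditional probability on $\gamma^u$, it suffices to bound $(\mu_\Lambda)_{\gamma^u}(B_{C_z r}(z') \cap \gamma^u)$ uniformly in $\gamma^u$. By (\ref{horsesrbleb}), the density of $(\mu_\Lambda)_{\gamma^u}$ with respect to $\Leb_{\gamma^u}$ is bounded by $C$, so the problem reduces to an upper bound on $\Leb_{\gamma^u}(B_{C_z r}(z') \cap \gamma^u)$. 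The unstable leaves $\gamma^u$ form a continuous family of $C^1$-embedded disks over a compact set; by a finite-cover argument identical to the one used to prove Lemma~\ref{recurrence} (choose finitely many open balls in which all $\gamma^u$ are almost flat and parallel), one obtains $\Leb_{\gamma^u}(B_{\rho}(z') \cap \gamma^u) \precsim_{\dim \gamma^u} \rho^{\dim \gamma^u}$ for all $\rho$ smaller than a constant depending only on $\Lambda$. Integrating over $\overline{\mu}$ yields
\[
\mu_\Lambda(B_{C_z r}(z') \cap \Lambda) \precsim_{\dim \gamma^u} (C_z r)^{\dim \gamma^u},
\]
and combining with the reduction above gives $\mu(B_r(z)) \precsim_z r^{\dim \gamma^u}$ for $r < r_z$.

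Finally, taking logarithms and dividing by $\log r < 0$ produces
\[
\frac{\log \mu(B_r(z))}{\log r} \ge \dim \gamma^u + \frac{O_z(1)}{\log r},
\]
and letting $r \to 0$ shows $\dim_H\mu \ge \dim \gamma^u$ at every point where the pointwise dimension exists, which by Assumption~\ref{geoassumption} is a full-measure set. The main obstacle is purely geometric, namely justifying the almost-flatness of arbitrary $\gamma^u \in \Gamma^u$ at scale $r$ uniformly in $\gamma^u$; this is handled by the compactness of $\Lambda$ together with the $C^1$-continuity of the family $\Gamma^u$ built into the Definition~\ref{gibbs}, exactly as in Lemma~\ref{recurrence}.
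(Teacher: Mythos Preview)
Your proposal is correct and follows essentially the same approach as the paper: bound $\mu$ of a small ball by disintegrating along the unstable foliation, use almost-flatness of $\gamma^u$ (via compactness/continuity of $\Gamma^u$) to get $\Leb_{\gamma^u}(B_\rho\cap\gamma^u)\precsim\rho^{\dim\gamma^u}$, integrate using (\ref{horsesrbleb}), and compare with Assumption~\ref{geoassumption}. The only cosmetic difference is in the reduction step: the paper simply invokes ergodicity of $\mu$ (so that $\dim_H\mu$ is a.e.\ constant and it suffices to verify the bound for $z'\in\interior(\Lambda)$ directly), whereas you pull a general ball $B_r(z)$ back to $\Lambda$ via Lemmas~\ref{ballinhorseshoe} and~\ref{comparetopballmetricball}; both routes are valid and lead to the same estimate.
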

\begin{proof}
Due to the ergodicity of $\mu$, it is enough to consider $z' \in \interior{(\Lambda)}$ and a ball $B_r(z')$ in $\Lambda$. If $r$ is sufficiently small then $\gamma^u \in \Gamma^u$ is almost flat in $B_r(z')\bigcap \gamma^u$. Therefore $\diam \{B_r(z')\bigcap \gamma^u\} \precsim_{z'} r$. By the same trick as in the Proposition \ref{coronaestimate} we get  \[\Leb_{\gamma^u}(B_r(z')) \precsim_{z',\dim \gamma^u}r^{\dim \gamma^u}  \text{ and } \mu(B_r(z')) \precsim_{z',\dim \gamma^u}r^{\dim \gamma^u}.\]
On the other hand, by the Assumption \ref{geoassumption} for almost all $z' \in \mathcal{M}$ and any $m \in \mathbb{N}$ there is $r_{z',m}>0$, such that $\mu(B_r(z')) \ge r^{\dim_H \mu+ \frac{1}{m}}$ for any $r<r_{z',m}$. This implies that $\dim \gamma^u \le \dim_H \mu + \frac{1}{m}, \forall m \in \mathbb{N}$. By taking the limit $m\to \infty$ one gets $\dim_H\mu \ge \dim {\gamma}^u$.
\end{proof}

We will address now the convergence rates in the Theorem \ref{thm}. According to the Proposition \ref{rate} it is enough to find the convergence rates for short returns and coronas.
A proof will consist of several steps. 
\subsection*{Pull back short returns $\int_{B_r(z)} 1_{\bigcup_{1\le j \le p}f^{-j}B_r(z)}d\mu$.}

For almost any $z \in  \bigcup_{i \ge 1} \bigcup_{0 \le j < R_i} f^j(\Lambda_i)$ we have $z=f^{j_z}(z')$ for some $z' \in \interior{\Lambda}$. By the Lemma \ref{comparetopballmetricball}, there is a topological ball $TB_r(z')=TB_r(f^{-j_z}(z))$ and a constant $C_z>1$, such that $B_{C^{-1}_z\cdot r} (f^{-j_z}z) \subseteq TB_r(f^{-j_z}z) \subseteq B_{C_z\cdot r} (f^{-j_z}z)$.
\begin{lemma}[Pull short returns back to $\Lambda$]\label{pullbackhorseshoe}\ \par
There exists a small enough $r_z>0$ such that $\forall r< r_z$ the following inequality holds
\[ \int_{B_r(z)} 1_{\bigcup_{1\le k \le p}f^{-k}B_r(z)}d\mu \le \mu(\Lambda) \cdot \int_{B_{C_z\cdot r} (f^{-j_z}z)} 1_{\bigcup_{1\le k \le p}(f^R)^{-k}B_{C_z\cdot r} (f^{-j_z}z)}d\mu_{\Lambda} .\]
\end{lemma}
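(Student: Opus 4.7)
The plan is to pull back the short-return integral in three moves: first translate by $j_z$ via $f$-invariance, then enlarge the topological ball to a genuine metric ball in $\Lambda$, and finally replace iterates of $f$ by iterates of the first-return map $f^R$.

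First, I would use the $f$-invariance of $\mu$ (applied $j_z$ times) to rewrite
\[\int_{B_r(z)} 1_{\bigcup_{1\le k \le p}f^{-k}B_r(z)}\,d\mu = \int_{f^{-j_z}B_r(z)} 1_{\bigcup_{1\le k \le p}f^{-k}(f^{-j_z}B_r(z))}\,d\mu = \int_{TB_r(f^{-j_z}z)} 1_{\bigcup_{1\le k \le p}f^{-k}TB_r(f^{-j_z}z)}\,d\mu.\]
Lemma \ref{comparetopballmetricball} then lets me absorb the topological ball into the slightly larger geodesic ball, yielding the upper bound $\int_{B_{C_z r}(f^{-j_z}z)} 1_{\bigcup_{1\le k\le p} f^{-k}B_{C_z r}(f^{-j_z}z)}\,d\mu$, since enlarging the set of return targets only increases the integrand and the set of integration.

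Next, I would shrink $r$ further so that Lemma \ref{ballinhorseshoe} (applied with radius $C_z r$) gives $B_{C_z r}(f^{-j_z}z)\subseteq \Lambda$ $\mu$-almost surely. Under this inclusion I can replace $f$-returns by $f^R$-returns: if $x\in \Lambda$ and $f^k(x)\in \Lambda$ for some $1\le k\le p$, then $k$ is necessarily a sum of consecutive return times $R,R\circ f^R,\dots$, so $f^k(x)=(f^R)^n(x)$ for some integer $n\in[1,k]\subseteq[1,p]$. Hence, almost everywhere on $B_{C_z r}(f^{-j_z}z)$,
\[1_{\bigcup_{1\le k\le p} f^{-k}B_{C_z r}(f^{-j_z}z)} \le 1_{\bigcup_{1\le n\le p}(f^R)^{-n}B_{C_z r}(f^{-j_z}z)}.\]

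Finally, since $R$ is the first return by the Assumption \ref{assumption}, the normalization in Lemma \ref{towerresult} gives $\mu_\Lambda = \mu|_\Lambda / \mu(\Lambda)$, so integrating the previous inequality yields exactly the claimed factor $\mu(\Lambda)$ on the right-hand side. The only nontrivial point in the argument is the second step (uniformly comparing topological and metric balls so that the enlarged metric ball still lies in $\Lambda$), but that is already supplied by Lemmas \ref{ballinhorseshoe} and \ref{comparetopballmetricball}; everything else is invariance of $\mu$ together with the elementary observation that every $f$-return of a point in $\Lambda$ to $\Lambda$ is an $f^R$-return with no larger index.
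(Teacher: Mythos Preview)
Your proof is correct and follows essentially the same approach as the paper: pull back by $f^{-j_z}$ via invariance, use that the pulled-back ball sits in $\Lambda$ almost surely, replace $f$-returns by $f^R$-returns (since any return to $\Lambda$ within $p$ steps is an $(f^R)^n$-return with $n\le p$), and enlarge the topological ball to $B_{C_z r}$. The only cosmetic difference is the order of steps---the paper keeps the topological ball $TB_r$ throughout and enlarges to $B_{C_z r}$ at the very end, whereas you enlarge first; both are fine, though note that Lemma~\ref{ballinhorseshoe} is literally stated for $TB_r$ rather than for a metric ball, so your invocation ``applied with radius $C_z r$'' tacitly uses that $z'\in\interior(\Lambda)$ makes any sufficiently small metric ball around $z'$ lie in $\Lambda$ a.s.\ (which is immediate from the proof of that lemma).
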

\begin{proof}
It follows from the invariance of $\mu$ (i.e. $f_*\mu=\mu$) that 
\[ \int_{B_r(z)} 1_{\bigcup_{1\le k \le p}f^{-k}B_r(z)}d\mu=\int_{TB_r(f^{-j_z}z)} 1_{\bigcup_{1\le k \le p}f^{-k}TB_r(f^{-j_z}z)}d\mu.\]

By Lemma \ref{ballinhorseshoe}, we have
$TB_r(f^{-j_z}z) \subseteq \Lambda$ almost surely $\forall r< r_z$ for some small enough $r_z>0$. Then the equality above can be continued as

\begin{equation}\label{4}
  = \int_{TB_r(f^{-j_z}z)} 1_{\bigcup_{1\le k \le p}f^{-k}TB_r(f^{-j_z}z)}d\mu|_{\Lambda} = \mu(\Lambda) \cdot \int_{TB_r(f^{-j_z}z)} 1_{\bigcup_{1\le k \le p}f^{-k}TB_r(f^{-j_z}z)}d\mu_{\Lambda}.   
\end{equation}

Denote $R^i:=R^{i-1}+R \circ f^R, R^1:=R\ge 1$. Then $p\le R^p$. Also note that, since $R$ is the first return time, then $f^k \notin \Lambda$ almost surely (a.s.) if $R^i<k<R^{i+1}$. Therefore
\[1_{TB_r(f^{-j_z}z)} \cdot 1_{\bigcup_{1\le k \le R^p}f^{-k}TB_r(f^{-j_z}z)}=1_{TB_r(f^{-j_z}z)} \cdot 1_{\bigcup_{1\le k \le p}(f^R)^{-k}TB_r(f^{-j_z}z)} \text{ a.s.}.\]
Then we can estimate (\ref{4}) as 
\[\le \mu(\Lambda) \cdot \int_{TB_r(f^{-j_z}z)} 1_{\bigcup_{1\le k \le R^p}f^{-k}TB_r(f^{-j_z}z)}d\mu_{\Lambda}\]
\begin{equation}\label{5}
    =\mu(\Lambda) \cdot \int_{TB_r(f^{-j_z}z)} 1_{\bigcup_{1\le k \le p}(f^R)^{-k}TB_r(f^{-j_z}z)}d\mu_{\Lambda}.
\end{equation}

By Lemma \ref{comparetopballmetricball} there is a constant $C_z\ge 1$, such that $TB_r(f^{-j_z}z) \subseteq B_{C_z\cdot r} (f^{-j_z}z)$, and we can continue the equality (\ref{5}) as
\[\le\mu(\Lambda) \cdot \int_{B_{C_z\cdot r} (f^{-j_z}z)} 1_{\bigcup_{1\le k \le p}(f^R)^{-k}B_{C_z\cdot r} (f^{-j_z}z)}d\mu_{\Lambda}.\]
\end{proof}

Hence the short returns problem on $\mathcal{M}$ becomes a short returns problem on $\Lambda$. 

The next task is to 

\subsection*{Estimate $\frac{1}{\mu(B_r(z))}\cdot \int_{B_{C_z\cdot r} (f^{-j_z}z)} 1_{\bigcup_{1\le k \le p}(f^R)^{-k}B_{C_z\cdot r} (f^{-j_z}z)}d\mu_{\Lambda}$.}

\begin{lemma}\label{afterpullback}
 If $\epsilon<\min\{\frac{\min\{\dim \gamma^u, \dim_H \mu\}}{24},\frac{1}{3\dim_H\mu}\}$ satisfies $\alpha \cdot (\dim_H\mu-\epsilon)>\frac{\dim_H\mu}{\dim_H\mu-\epsilon}>1$, then for almost every $z\in \mathcal{M}$ there is $r_z>0$, such that $\forall r<r_z$
 \[\frac{1}{\mu(B_r(z))}\cdot \int_{B_{C_z\cdot r} (f^{-j_z}z)} 1_{\bigcup_{1\le k \le p}(f^R)^{-k}B_{C_z\cdot r} (f^{-j_z}z)}d\mu_{\Lambda}\precsim_{T,z,\epsilon} r^{\ \frac{\min\{{\dim \gamma^u}, \dim_H \mu\}}{12}-2\epsilon}+ r^{\frac{\epsilon^2}{\dim_H \mu}-3\epsilon^3}\to 0.\]

\end{lemma}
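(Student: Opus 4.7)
The strategy is to apply Proposition \ref{shortreturnrate} (rates of short returns on $\Lambda$) at $z'=f^{-j_z}z\in\interior(\Lambda)$ with $M=\lceil C_z\rceil$, then divide by $\mu(B_r(z))$ and track the resulting exponents. Lemma \ref{pullbackhorseshoe} has already reduced the problem to estimating the integral over $B_{C_zr}(z')$, so this is the natural next move.

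I would first fix $M=\lceil C_z\rceil$: an integer depending only on $z$, so that Proposition \ref{shortreturnrate} applies and $B_{C_zr}(z')\subseteq B_{Mr}(z')$. Every $M$-power in that bound then becomes a harmless $z$-dependent multiplicative constant, absorbed into $\precsim_{T,z,\epsilon}$. I would next invoke Proposition \ref{coronaestimate} to replace the corona measure $\mu_\Lambda[B_{Mr+2C\beta_2^{r^{-(\dim_H\mu-\epsilon)\epsilon'}}}(z')\setminus B_{Mr}(z')]$ by $(Mr)^{\dim\gamma^u/2}\beta_2^{r^{-(\dim_H\mu-\epsilon)\epsilon'}\dim\gamma^u/2}$, which is super-polynomially small in $r$. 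After this substitution, only two genuinely polynomial contributions survive in the four-term bound from Proposition \ref{shortreturnrate}: the \emph{super-short-return} piece of order $r^{\dim_H\mu-\epsilon+\min\{\dim\gamma^u,\dim_H\mu\}/12}$, and the \emph{self-pairing} piece of order $r^{-(\dim_H\mu+\epsilon)(\dim_H\mu-\epsilon)/\dim_H\mu+2(\dim_H\mu-\epsilon^3)}$; every other term carries an extra factor $\beta_2^{r^{-(\dim_H\mu-\epsilon)\epsilon'}}$ and therefore beats any polynomial rate.

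The next step is to divide by $\mu(B_r(z))$. Assumption \ref{geoassumption} furnishes, for a.e. $z$ and any $\delta>0$, a threshold $r_{z,\delta}>0$ such that $\mu(B_r(z))\ge r^{\dim_H\mu+\delta}$ whenever $r<r_{z,\delta}$. I would apply this at two different scales. For the super-short-return piece I would take $\delta=\epsilon$, producing the clean exponent $\min\{\dim\gamma^u,\dim_H\mu\}/12-2\epsilon$. For the self-pairing piece I would take the finer $\delta=\epsilon^3$; the algebraic telescoping $(\dim_H\mu+\epsilon)(\dim_H\mu-\epsilon)/\dim_H\mu=\dim_H\mu-\epsilon^2/\dim_H\mu$ then collapses the large positive and negative powers of $\dim_H\mu$, leaving precisely $\epsilon^2/\dim_H\mu-3\epsilon^3$. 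The two hypotheses $\epsilon<\min\{\dim\gamma^u,\dim_H\mu\}/24$ and $\epsilon<1/(3\dim_H\mu)$ are tailored to make these two exponents strictly positive, and the exponential remainders, once multiplied by the polynomial correction $r^{-\dim_H\mu-\epsilon^3}$, remain super-polynomially small and are absorbed into either of them.

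The main subtlety is this asymmetric use of the Hausdorff-dimension bound: one has to deploy $\delta=\epsilon^3$ for the self-pairing term, because using the coarser $\delta=\epsilon$ uniformly would leave a residual $-\epsilon$ that overwhelms the gain $\epsilon^2/\dim_H\mu$ and the whole term would blow up. Apart from this careful choice of scale, the argument is pure bookkeeping of exponents, since all of the analytic heavy lifting has already been carried out in Propositions \ref{shortreturnrate} and \ref{coronaestimate}.
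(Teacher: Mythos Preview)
Your proposal is correct and follows essentially the same route as the paper: apply Proposition~\ref{shortreturnrate} with the integer $M=\lfloor C_z\rfloor+1$ at $z'=f^{-j_z}z$, replace the corona term via Proposition~\ref{coronaestimate}, divide by $\mu(B_r(z))$ using the Hausdorff-dimension lower bound, and track exponents. The only differences are cosmetic: the paper uses the single scale $\delta=\epsilon^3$ throughout (then crudely bounds $-\epsilon-\epsilon^3$ by $-2\epsilon$ in the first term) rather than your asymmetric choice, and it spells out explicitly the full-measure set $F\subseteq\mathcal{M}$ guaranteeing that $f^{-j_z}z$ lands in the good set of Propositions~\ref{shortreturnrate} and~\ref{coronaestimate}.
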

\begin{proof}
Observe that $C_z < \infty $ for almost all $z\in \mathcal{M}$. Denote $F_1:=\{C_z<\infty\}$, $F_2 \times F_3:=\{(z,z')\in \mathcal{M}\times \interior{(\Lambda)}: \text{ the Propositions }\ref{shortreturnrate}, \ref{coronaestimate} \text{ hold}\}$. Define a new measure one set in $\mathcal{M}$ as $F:=F_1 \bigcap F_2 \bigcap \{\bigcup_{j\ge 0}f^j(\interior{(\Lambda)}\bigcap F^c_3 )\}^c$. Then for any $z \in F$ we have $ C_z < \infty$ and $(z,f^{-j_z}z)\in F_2 \times F_3$. Let $M:=\lfloor C_z \rfloor+1, z'=f^{-j_z}z $. Then by the Proposition \ref{shortreturnrate} there is $r_{z,z',M}=r_{z,f^{-j_z}z,\lfloor C_z \rfloor+1}>0$ such that $\forall r< r_{z,f^{-j_z}z,\lfloor C_z \rfloor+1}$
\[\int_{B_{C_z\cdot r} (f^{-j_z}z)} 1_{\bigcup_{1\le k \le p}(f^R)^{-k}B_{C_z\cdot r} (f^{-j_z}z)}d\mu_{\Lambda}\le \int_{B_{M\cdot r} (z')} 1_{\bigcup_{1\le k \le p}(f^R)^{-k}B_{M\cdot r} (z')}d\mu_{\Lambda}\]
\[\precsim_{T,\epsilon} (M \cdot r)^{\dim_H\mu -\epsilon} \cdot M^{\frac{\min\{{\dim \gamma^u}, \dim_H \mu\}}{6}}\cdot r^{\frac{\min\{{\dim \gamma^u}, \dim_H \mu\}}{12}}\]
\[+(M\cdot r)^{\dim_H\mu-\epsilon^3} \cdot {\beta_2}^{r^{-(\dim_H\mu-\epsilon)\epsilon'}}+\mu_{\Lambda}[ B_{M \cdot r +2 C \cdot \beta_2^{r^{-(\dim_H\mu-\epsilon)\epsilon'}}}(z') \setminus B_{M \cdot r}(z')]\cdot \beta_2^{r^{-(\dim_H\mu-\epsilon)\epsilon'}}\]
\[+r^{-(\dim_H\mu+\epsilon) \cdot \frac{\dim_H\mu-\epsilon}{\dim_H\mu}} \cdot \{(M\cdot r)^{\dim_H\mu-\epsilon^3} +\mu_{\Lambda}[B_{M \cdot r + 2C \cdot \beta_2^{r^{-(\dim_H\mu-\epsilon)\epsilon'}}}(z') \setminus B_{M \cdot r}(z')]\}^2.\]

By the Proposition \ref{coronaestimate}, the right hand side of the inequality above can be estimated  as
\[\precsim_{T,C_z,z, \epsilon} r^{\dim_H\mu -\epsilon+\frac{\min\{{\dim \gamma^u}, \dim_H \mu\}}{12}}+ r^{\dim_H \mu-\epsilon^3} \cdot {\beta_2}^{r^{-(\dim_H\mu-\epsilon)\epsilon'}}+r^{\frac{\dim \gamma^u }{2}} \cdot \beta_2^{\frac{r^{-(\dim_H\mu-\epsilon)\epsilon'}\cdot \dim \gamma^u}{2}+r^{-(\dim_H\mu-\epsilon)\epsilon'}}\]
\[+r^{-(\dim_H\mu+\epsilon) \cdot \frac{\dim_H\mu-\epsilon}{\dim_H\mu}} \cdot \{ r^{\dim_H\mu-\epsilon^3} +r^{\frac{\dim \gamma^u }{2}} \cdot \beta_2^{\frac{r^{-(\dim_H\mu-\epsilon)\epsilon'}\cdot \dim \gamma^u}{2}}\}^2.\]

By the Assumption \ref{geoassumption} for almost all $z\in \mathcal{M}$ there is $r_z>0$ such that $\forall r<r_{z}$ \[r^{\dim_H\mu+\epsilon^3}\le \mu(B_r(z)) \le r^{\dim_H\mu-\epsilon^3}.\] 

Then $\forall r<\min \{ r_{z,f^{-j_z}z,\lfloor C_z \rfloor+1}, r_z\}$, \[\frac{1}{\mu(B_r(z))}\cdot \int_{B_{C_z\cdot r} (f^{-j_z}z)} 1_{\bigcup_{1\le k \le p}(f^R)^{-k}B_{C_z\cdot r} (f^{-j_z}z)}d\mu_{\Lambda}\]
\[\precsim_{T,z, \epsilon} \frac{1}{r^{\dim_H\mu+\epsilon^3}} \cdot \{r^{\dim_H\mu -\epsilon+\frac{\min\{{\dim \gamma^u}, \dim_H \mu\}}{12}}+r^{\dim_H \mu-\epsilon} \cdot {\beta_2}^{r^{-(\dim_H\mu-\epsilon)\epsilon'}}\]
\[+r^{\frac{\dim \gamma^u }{2}} \cdot \beta_2^{\frac{r^{-(\dim_H\mu-\epsilon)\epsilon'}\cdot \dim \gamma^u}{2}+r^{-(\dim_H\mu-\epsilon)\epsilon'}}+r^{-(\dim_H\mu+\epsilon) \cdot \frac{\dim_H\mu-\epsilon}{\dim_H\mu}} \cdot ( r^{\dim_H\mu-\epsilon^3} +r^{\frac{\dim \gamma^u }{2}} \cdot \beta_2^{\frac{r^{-(\dim_H\mu-\epsilon)\epsilon'}\cdot \dim \gamma^u}{2}})^2\}\]
\[\le r^{\ -2\epsilon+\frac{\min\{{\dim \gamma^u}, \dim_H \mu\}}{12}}+r^{-\epsilon-\epsilon^3} \cdot {\beta_2}^{r^{-(\dim_H\mu-\epsilon)\epsilon'}}+r^{\frac{\dim \gamma^u }{2}-\epsilon^3-\dim_H\mu} \cdot \beta_2^{\frac{r^{-(\dim_H\mu-\epsilon)\epsilon'}\cdot \dim \gamma^u}{2}+r^{-(\dim_H\mu-\epsilon)\epsilon'}}\]
\[+\frac{1}{r^{\dim_H\mu+\epsilon^3}} \cdot r^{-(\dim_H\mu+\epsilon) \cdot \frac{\dim_H\mu-\epsilon}{\dim_H\mu}} \cdot \{ r^{\dim_H\mu-\epsilon^3} +r^{\frac{\dim \gamma^u }{2}} \cdot \beta_2^{\frac{r^{-(\dim_H\mu-\epsilon)\epsilon'}\cdot \dim \gamma^u}{2}}\}^2\]
\[\precsim r^{\ \frac{\min\{{\dim \gamma^u}, \dim_H \mu\}}{12}-2\epsilon}+r^{\frac{\epsilon^2}{\dim_H \mu}-3\epsilon^3}\to 0,\]
this last $\precsim$ is because $\beta_2^{r^{-c}}\ll r^{c'}$  for any $ c,c'>0$.
\end{proof}

By combining the Lemmas \ref{pullbackhorseshoe} and \ref{afterpullback} we get an 
\subsection*{Estimate of the short returns rates $\frac{1}{\mu(B_r(z))}\int_{B_r(z)} 1_{\bigcup_{1\le j \le p}f^{-j}B_r(z)}d\mu$.}
\begin{lemma}\label{lastshortreturnrate}
 If $\epsilon<\min\{\frac{\min\{\dim \gamma^u, \dim_H \mu\}}{24},\frac{1}{3\dim_H\mu}\}$ satisfies $\alpha \cdot (\dim_H\mu-\epsilon)>\frac{\dim_H\mu}{\dim_H\mu-\epsilon}>1$, then for almost all $z\in \mathcal{M}$ 
 
 \[\frac{1}{\mu(B_r(z))}\int_{B_r(z)} 1_{\bigcup_{1\le j \le p}f^{-j}B_r(z)}d\mu \precsim_{z,T,\epsilon} r^{\ \frac{\min\{{\dim \gamma^u}, \dim_H \mu\}}{12}-2\epsilon}+ r^{\frac{\epsilon^2}{\dim_H \mu}-3\epsilon^3}\to 0.\]

\end{lemma}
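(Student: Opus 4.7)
The statement is essentially a composition of the two immediately preceding lemmas, so the plan is short. First, for almost every $z \in \mathcal{M}$ one has $z = f^{j_z}(z')$ with $z' \in \interior{(\Lambda)}$ (by the Birkhoff ergodic theorem together with $\mu(\interior{(\Lambda)}) > 0$), and by Lemma \ref{comparetopballmetricball} the topological ball $TB_r(f^{-j_z}z)$ is sandwiched between two metric balls with comparison constant $C_z < \infty$. I would first invoke Lemma \ref{pullbackhorseshoe} to get, for all sufficiently small $r$,
\[
\int_{B_r(z)} 1_{\bigcup_{1\le k \le p}f^{-k}B_r(z)}\, d\mu \;\le\; \mu(\Lambda) \cdot \int_{B_{C_z\cdot r}(f^{-j_z}z)} 1_{\bigcup_{1\le k \le p}(f^R)^{-k}B_{C_z\cdot r}(f^{-j_z}z)}\, d\mu_{\Lambda}.
\]
This reduces the analysis of short returns for $f$ on $\mathcal{M}$ to short returns for the induced first-return map $f^R$ on the reference set $\Lambda$.

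Next, I would divide both sides by $\mu(B_r(z))$ and apply Lemma \ref{afterpullback}, which precisely controls the right-hand side (after division by $\mu(B_r(z))$) under the stated smallness of $\epsilon$ and the condition $\alpha \cdot (\dim_H\mu-\epsilon) > \frac{\dim_H\mu}{\dim_H\mu-\epsilon} > 1$. Since $\mu(\Lambda) \le 1$ is absorbed into the implied constant (which is already allowed to depend on $z$, $T$, and $\epsilon$), one immediately obtains
\[
\frac{1}{\mu(B_r(z))}\int_{B_r(z)} 1_{\bigcup_{1\le j \le p}f^{-j}B_r(z)}\, d\mu \;\precsim_{z,T,\epsilon}\; r^{\frac{\min\{\dim\gamma^u,\dim_H\mu\}}{12} - 2\epsilon} + r^{\frac{\epsilon^2}{\dim_H\mu} - 3\epsilon^3},
\]
valid for $r < r_z$ with some small $r_z > 0$. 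Under the assumption $\epsilon < \min\{\tfrac{\min\{\dim\gamma^u,\dim_H\mu\}}{24}, \tfrac{1}{3\dim_H\mu}\}$, both exponents are strictly positive, so the right-hand side tends to $0$ as $r \to 0$.

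There is no substantive obstacle here: all the real work was carried out in the previous subsections, namely (i) the pull-back identification of topological balls in $\mathcal{M}$ with metric balls inside the hyperbolic product set $\Lambda$ via Lemmas \ref{ballinhorseshoe}--\ref{pullbackhorseshoe}, and (ii) the decomposition of short returns on $\Lambda$ into the fixed-length, super-short, and medium-short ranges estimated in Lemmas \ref{fixreturn}, \ref{veryshortreturn}, \ref{notveryshort} and summarized in Proposition \ref{shortreturnrate}, combined with the corona estimate of Proposition \ref{coronaestimate}. The only bookkeeping item worth noting is that the exceptional null set in $\mathcal{M}$ must simultaneously satisfy: (a) $C_z < \infty$ from Lemma \ref{comparetopballmetricball}; (b) the Hausdorff dimension limit from Assumption \ref{geoassumption}; and (c) that $f^{-j_z}(z)$ lies in the good set $F_3 \subseteq \interior{(\Lambda)}$ on which Propositions \ref{shortreturnrate} and \ref{coronaestimate} apply. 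Taking the union of the corresponding null sets (and their forward images under the countably many $f^j$, as done in the proof of Lemma \ref{afterpullback}) produces a single measure-one set $F \subseteq \mathcal{M}$ on which the conclusion holds, which completes the proof.
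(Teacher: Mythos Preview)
Your proof is correct and follows exactly the paper's approach: the paper proves this lemma simply by combining Lemma \ref{pullbackhorseshoe} with Lemma \ref{afterpullback}, which is precisely what you do. Your additional remarks on the null-set bookkeeping and the positivity of the exponents are accurate and merely make explicit what the paper leaves implicit.
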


We finished now the estimates of short returns and move to
\subsection*{Estimate of coronas rates $\frac{\mu[B_{r+C' \cdot r^{\frac{(\dim_H\mu-\epsilon)^2\cdot \alpha }{\dim_H\mu} } }(z)\setminus B_{r-C' \cdot r^{\frac{(\dim_H\mu-\epsilon)^2\cdot \alpha }{\dim_H\mu} }}(z)]}{\mu(B_r(z))}$. }
\begin{lemma}\label{lastcoronarate}
If $\alpha> \frac{2}{\dim \gamma^u}-\frac{1}{\dim_H\mu}$ and $\epsilon<\min\{\frac{\min\{\dim \gamma^u, \dim_H \mu\}}{24},\frac{1}{3\dim_H\mu}\}$ is small enough, so that 
$\alpha >\frac{\frac{2}{\dim \gamma^u}-\frac{1}{\dim_H \mu}+\frac{2\epsilon}{\dim \gamma^u \cdot \dim_H \mu}}{(1-\frac{\epsilon}{\dim_H\mu})^2}$,
then for almost all $z\in \mathcal{M}$  
\[\frac{\mu[B_{r+C' \cdot r^{\frac{(\dim_H\mu-\epsilon)^2\cdot \alpha }{\dim_H\mu}} }(z)\setminus B_{r-C' \cdot r^{\frac{(\dim_H\mu-\epsilon)^2\cdot \alpha }{\dim_H\mu} }}(z)]} {\mu(B_r(z))}\precsim_{z} r^{\frac{(1+\frac{(\dim_H\mu-\epsilon)^2\cdot \alpha }{\dim_H\mu}) \cdot \dim \gamma^u-2\dim_H \mu -2\epsilon}{2}}\to 0.\]
\end{lemma}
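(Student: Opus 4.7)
The statement is essentially a quotient of two estimates already at hand, together with a short algebraic check that the hypothesis on $\alpha$ makes the resulting exponent strictly positive. The plan is first to bound the numerator directly by the corona estimate in Proposition \ref{coronaestimate}, and then to bound the denominator from below using the definition of $\dim_H \mu$ in Assumption \ref{geoassumption}.

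\smallskip
First, apply Proposition \ref{coronaestimate}: for $\mu$-a.e.\ $z \in \mathcal{M}$ there exists $r_z>0$ such that for every $r<r_z$,
\[
\mu\bigl(B_{r+C'r^{(\dim_H\mu-\epsilon)^2\alpha/\dim_H\mu}}(z)\setminus B_{r-C'r^{(\dim_H\mu-\epsilon)^2\alpha/\dim_H\mu}}(z)\bigr) \precsim_{z,\dim\gamma^u} r^{\frac{1}{2}(1+\frac{(\dim_H\mu-\epsilon)^2\alpha}{\dim_H\mu})\dim\gamma^u}.
\]
Next, by Assumption \ref{geoassumption} (and the same Borel--Cantelli/regular-point selection used throughout the paper), shrinking $r_z$ if necessary, for almost every $z \in \mathcal{M}$ and every $r<r_z$ one has $\mu(B_r(z))\ge r^{\dim_H\mu+\epsilon}$. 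Dividing the first inequality by this lower bound yields exactly the claimed upper estimate
\[
\frac{\mu[\text{corona}]}{\mu(B_r(z))} \precsim_{z} r^{\frac{1}{2}\bigl[(1+\frac{(\dim_H\mu-\epsilon)^2\alpha}{\dim_H\mu})\dim\gamma^u-2\dim_H\mu-2\epsilon\bigr]}.
\]

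\smallskip
It remains to check that under the hypothesis the exponent is strictly positive, so the right-hand side tends to $0$. Setting the exponent $>0$ and solving for $\alpha$ gives
\[
\alpha > \frac{\dim_H\mu\,(2\dim_H\mu+2\epsilon-\dim\gamma^u)}{(\dim_H\mu-\epsilon)^2\,\dim\gamma^u}.
\]
Multiplying the numerator and denominator of the right-hand side by $1/(\dim_H\mu\cdot\dim\gamma^u)$ inside and by $1/\dim_H\mu^2$ outside, one rewrites this threshold as
\[
\frac{\dfrac{2}{\dim\gamma^u}-\dfrac{1}{\dim_H\mu}+\dfrac{2\epsilon}{\dim\gamma^u\cdot\dim_H\mu}}{\bigl(1-\dfrac{\epsilon}{\dim_H\mu}\bigr)^2},
\]
which is precisely the bound assumed in the statement. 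The right-hand side of this threshold is continuous in $\epsilon$ and tends to $\frac{2}{\dim\gamma^u}-\frac{1}{\dim_H\mu}$ as $\epsilon\downarrow 0$, so the hypothesis $\alpha>\frac{2}{\dim\gamma^u}-\frac{1}{\dim_H\mu}$ of the lemma guarantees that such an $\epsilon$ in the prescribed range (also compatible with Proposition \ref{rate} and Lemma \ref{lastshortreturnrate}) exists.

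\smallskip
There is no real obstacle: the only non-mechanical point is the algebraic rearrangement identifying the two forms of the threshold on $\alpha$, and confirming that the permitted range of $\epsilon$ (already constrained by $\alpha(\dim_H\mu-\epsilon)>\dim_H\mu/(\dim_H\mu-\epsilon)$ and by the short-return lemma) is nonempty under the open condition $\alpha>\frac{2}{\dim\gamma^u}-\frac{1}{\dim_H\mu}$, which follows from continuity in $\epsilon$ at $0$.
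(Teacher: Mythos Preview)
Your proof is correct and follows essentially the same approach as the paper: bound the numerator by Proposition~\ref{coronaestimate}, bound the denominator below by $r^{\dim_H\mu+\epsilon}$ via Assumption~\ref{geoassumption}, divide, and check the exponent is positive under the stated condition on $\alpha$. You spell out the algebraic equivalence of the two forms of the threshold on $\alpha$ more explicitly than the paper does, but the argument is identical.
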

\begin{proof}
By the Assumption \ref{geoassumption} for almost all $z\in \mathcal{M}$ there exists $r_z>0$, such that $\forall r<r_{z}$ \[r^{\dim_H\mu+\epsilon}\le \mu(B_r(z)) \le r^{\dim_H\mu-\epsilon}.\]
Now, in view of the Proposition \ref{coronaestimate} and the choice of $\epsilon$, we have \[(1+\frac{(\dim_H\mu-\epsilon)^2\cdot \alpha }{\dim_H\mu}) \cdot \dim \gamma^u-2\dim_H \mu -2\epsilon>0,\] 
\[\frac{\mu[B_{r+C' \cdot r^{\frac{(\dim_H\mu-\epsilon)^2\cdot \alpha }{\dim_H\mu}} }(z)\setminus B_{r-C' \cdot r^{\frac{(\dim_H\mu-\epsilon)^2\cdot \alpha }{\dim_H\mu} }}(z)] }{\mu(B_r(z))}\precsim_{z, \dim \gamma^u} r^{\frac{(1+\frac{(\dim_H\mu-\epsilon)^2\cdot \alpha }{\dim_H\mu}) \cdot \dim \gamma^u-2\dim_H \mu -2\epsilon}{2}}.\]
\end{proof}

By that we finished the estimation of the rates for coronas, and can now conclude a proof of the Theorem \ref{thm}.
\subsection*{Convergence rates $a>0$ in $d_{TV}(N^{r,T,z},P)\precsim_{T,z}r^a$.}

\begin{lemma}\label{lastproofs}
    If $\alpha> \frac{2}{\dim \gamma^u}-\frac{1}{\dim_H\mu}$ and  $\epsilon<\min\{\frac{\min\{\dim \gamma^u, \dim_H \mu\}}{24},\frac{1}{3\dim_H\mu}\}$ are such that  \[\alpha >\max \{\frac{\frac{1}{\dim_H\mu}}{(1-\frac{\epsilon}{\dim_H\mu})^2}, \frac{\frac{2}{\dim \gamma^u}-\frac{1}{\dim_H \mu}+\frac{2\epsilon}{\dim \gamma^u \cdot \dim_H \mu}}{(1-\frac{\epsilon}{\dim_H\mu})^2}\},\] then from the Proposition \ref{rate}, Lemma \ref{lastcoronarate} and Lemma \ref{lastshortreturnrate} we have 

\[a:=\min \{ \frac{(\dim_H\mu-\epsilon)^2(\xi-1)}{\dim_H\mu}, \frac{\epsilon\cdot (\dim_H\mu-\epsilon)}{\dim_H\mu}, \frac{(1+\frac{(\dim_H\mu-\epsilon)^2\cdot \alpha }{\dim_H\mu}) \cdot \dim \gamma^u-2\dim_H \mu -2\epsilon}{2},\] \[\frac{\epsilon^2}{\dim_H \mu}-3\epsilon^3, \frac{\min\{{\dim \gamma^u}, \dim_H \mu\}}{12}-2\epsilon\}\]
\[=\min \{ \frac{(\dim_H\mu-\epsilon)^2(\xi-1)}{\dim_H\mu}, \frac{(1+\frac{(\dim_H\mu-\epsilon)^2\cdot \alpha }{\dim_H\mu}) \cdot \dim \gamma^u-2\dim_H \mu -2\epsilon}{2},\] 
\[\frac{\epsilon^2}{\dim_H \mu}-3\epsilon^3, \frac{\min\{{\dim \gamma^u}, \dim_H \mu\}}{12}-2\epsilon\},\] 
 where the last equality comes from the relation $\frac{\epsilon^2}{\dim_H \mu}-3\epsilon^3 \le \frac{\epsilon\cdot (\dim_H\mu-\epsilon)}{\dim_H\mu}.$

If $\frac{d\mu}{d\Leb_{\mathcal{M}}}\in L^{\infty}_{loc}(\mathcal{M})$ and $\epsilon<\min\{\frac{\min\{\dim \gamma^u, \dim_H \mu\}}{24},\frac{1}{3\dim_H\mu}\}$ are such that $\alpha >\frac{\frac{1}{\dim_H\mu}}{(1-\frac{\epsilon}{\dim_H\mu})^2}$,
then \[\frac{\mu(B_{r+C \cdot r^{\frac{(\dim_H\mu-\epsilon)^2\cdot \alpha }{\dim_H\mu}  } }(z)\setminus B_{r-C \cdot r^{\frac{(\dim_H\mu-\epsilon)^2\cdot \alpha }{\dim_H\mu}}}(z))}{\mu(B_r(z))}\precsim_{z} r^{\frac{(\dim_H \mu-\epsilon)^2\cdot \alpha}{\dim_H\mu}-1},\] 
and from the Proposition \ref{rate} and Lemma \ref{lastshortreturnrate} we have
\[a:=\min \{\frac{(\dim_H\mu-\epsilon)^2(\xi-1)}{\dim_H\mu}, \frac{\epsilon\cdot (\dim_H\mu-\epsilon)}{\dim_H\mu},\frac{(\dim_H \mu-\epsilon)^2\cdot \alpha}{\dim_H\mu}-1, \frac{\epsilon^2}{\dim_H \mu}-3\epsilon^3,\]
\[\frac{\min\{{\dim \gamma^u}, \dim_H \mu\}}{12}-2\epsilon \}=\min \{\frac{(\dim_H\mu-\epsilon)^2(\xi-1)}{\dim_H\mu}, \frac{(\dim_H \mu-\epsilon)^2\cdot \alpha}{\dim_H\mu}-1,\]
\[\frac{\epsilon^2}{\dim_H \mu}-3\epsilon^3, \frac{\min\{{\dim \gamma^u}, \dim_H \mu\}}{12}-2\epsilon \},\]
where again the last equality holds because $\frac{\epsilon^2}{\dim_H \mu}-3\epsilon^3 \le \frac{\epsilon\cdot (\dim_H\mu-\epsilon)}{\dim_H\mu}$.
\end{lemma}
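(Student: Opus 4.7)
The plan is to assemble the three ingredients built up in Sections~\ref{fpl} and~\ref{provethm1}: Proposition~\ref{rate} (the master Poisson-approximation inequality), Lemma~\ref{lastshortreturnrate} (control of the short-return quotient $Q_s$), and Lemma~\ref{lastcoronarate} (control of the corona quotient $Q_c$). Proposition~\ref{rate} bounds $d_{TV}(N^{r,T,z},P)$ by a sum of six summands: three explicit powers of $r$ (with exponents $\dim_H\mu-\epsilon$, $(\dim_H\mu-\epsilon)^2(\xi-1)/\dim_H\mu$, and $\epsilon(\dim_H\mu-\epsilon)/\dim_H\mu$), the quantity $Q_c$, its square $Q_c^2$, and $Q_s$. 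After substituting the two lemmas in place of $Q_c$ and $Q_s$, every summand becomes a positive power of $r$, and the rate $a$ is declared to be the minimum of the resulting exponents.

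For part~2 I would read off from Lemma~\ref{lastcoronarate} that $Q_c\precsim_z r^{\eta_c}$ with $\eta_c=\tfrac{1}{2}\bigl((1+\alpha(\dim_H\mu-\epsilon)^2/\dim_H\mu)\dim\gamma^u-2\dim_H\mu-2\epsilon\bigr)$, which is strictly positive under the stated bound on $\alpha$; the square $Q_c^2$ has exponent $2\eta_c>\eta_c$ and is absorbed. Lemma~\ref{lastshortreturnrate} contributes the two exponents $\min\{\dim\gamma^u,\dim_H\mu\}/12-2\epsilon$ and $\epsilon^2/\dim_H\mu-3\epsilon^3$. The explicit term $r^{\dim_H\mu-\epsilon}$ coming directly from Proposition~\ref{rate} has a much larger exponent than these for small $\epsilon$ and is therefore dominated. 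Finally, the elementary comparison $\epsilon^2/\dim_H\mu-3\epsilon^3\le \epsilon(\dim_H\mu-\epsilon)/\dim_H\mu$, valid for small $\epsilon$, lets one erase the third explicit exponent of Proposition~\ref{rate} from the minimum, producing the four-term expression for $a$ shown in the statement.

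For part~3 the corona estimate improves: because $d\mu/d\Leb_\mathcal{M}$ is locally bounded, $\mu(B_{r+\delta}(z)\setminus B_{r-\delta}(z))$ is bounded by a constant (depending on $z$) times the Lebesgue measure of the annulus, which is of order $r^{\dim_H\mu-1}\delta$; combined with the lower bound $\mu(B_r(z))\gtrsim_z r^{\dim_H\mu+\epsilon}$ (in fact $\asymp r^{\dim_H\mu}$ in this case), and with $\delta\asymp r^{(\dim_H\mu-\epsilon)^2\alpha/\dim_H\mu}$, this produces $Q_c\precsim_z r^{(\dim_H\mu-\epsilon)^2\alpha/\dim_H\mu-1}$, positive exactly under the stated condition on $\alpha$. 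The assembly and simplification proceed exactly as in part~2, giving the second four-term formula for $a$. The whole argument is essentially bookkeeping; the one thing to verify is strict positivity of every exponent, which is what the quantitative conditions on $\alpha$ and $\epsilon$ in the hypothesis are designed to guarantee. No substantive new obstacle arises, since all the analytic content has already been discharged by Propositions~\ref{rate}, \ref{shortreturnrate}, \ref{coronaestimate} and Lemmas~\ref{lastshortreturnrate} and~\ref{lastcoronarate}; the only mildly finicky point is to arrange the $\epsilon$-comparisons in an order that keeps each exponent strictly positive simultaneously.
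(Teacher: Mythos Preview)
Your proposal is correct and follows essentially the same approach as the paper: the lemma is pure assembly, and the paper itself offers no separate proof beyond stating that one substitutes Lemma~\ref{lastcoronarate} and Lemma~\ref{lastshortreturnrate} into Proposition~\ref{rate}, takes the minimum of the resulting exponents, and drops the term $\frac{\epsilon(\dim_H\mu-\epsilon)}{\dim_H\mu}$ via the stated elementary inequality. Your discussion of why $r^{\dim_H\mu-\epsilon}$ and $Q_c^2$ are absorbed, and your derivation of the corona exponent in the absolutely continuous case, are in fact more detailed than what the paper records.
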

This completes a proof of the Theorem \ref{thm}. 

\section{Proof of the Theorem \ref{thm2}}\label{provethm2}
The scheme of a proof of the Theorem \ref{thm2} is analogous to the one of the Theorem \ref{thm}, i.e. it contains  estimates of the short returns and coronas. But the ways to establish these estimates are the other ones, because of the differences of assumptions in the Theorem \ref{thm2} and in the Theorem \ref{thm}.
\subsection{Properties of the First Return Map $f^{\overline{R}}$}
\begin{lemma}[Properties of the first returns]\ \par
The map $f^{\overline{R}}$ is ergodic with respect to the probability $\mu_U:=\frac{\mu|_{U}}{\mu(U)}$, and it is bijective on $U \bigcap \bigcup_{i \ge 1} \bigcup_{0 \le j < R_i} f^j(\Lambda_i)$.
\end{lemma}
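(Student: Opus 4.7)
The proof naturally splits into two parts, and I would handle bijectivity first, since it is a direct analog of Lemma \ref{inducemapbi}, and then derive ergodicity from the exponential decay of correlations already guaranteed by Definition \ref{mpartition}.

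For the bijectivity claim, I would first establish \textbf{injectivity}. Suppose $x,y$ lie in $U\cap \bigcup_{i\ge 1}\bigcup_{0\le j<R_i}f^j(\Lambda_i)$ with $f^{\overline{R}(x)}(x)=f^{\overline{R}(y)}(y)$, and WLOG $\overline{R}(x)\le \overline{R}(y)$. Since the Assumption \ref{geoassumption} guarantees that $f$ is bijective on $\bigcup_{i\ge 1}\bigcup_{0\le j<R_i}f^j(\Lambda_i)$, I can peel off powers of $f$ one at a time, exactly as in Lemma \ref{inducemapbi}, to obtain $x=f^{\overline{R}(y)-\overline{R}(x)}(y)$. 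If $\overline{R}(y)-\overline{R}(x)>0$, then $y$ has returned to $U$ at time $\overline{R}(y)-\overline{R}(x)<\overline{R}(y)$, contradicting the minimality of the first return time $\overline{R}(y)$. Hence $\overline{R}(x)=\overline{R}(y)$ and $x=y$. For \textbf{surjectivity}, given $y\in U\cap\bigcup_{i\ge 1}\bigcup_{0\le j<R_i}f^j(\Lambda_i)$, I iteratively apply the bijectivity of $f$ on that union to walk backwards in time, $y=f(y_1)=f^2(y_2)=\cdots$; the orbit of $y$ under $f^{-1}$ must eventually hit $U$ by Poincar\'e recurrence applied to $\mu_U$, and letting $x$ be the first such hit, $\overline{R}(x)$ is exactly the number of backward steps taken, so $f^{\overline{R}(x)}(x)=y$.

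For \textbf{ergodicity}, the key ingredient is the exponential decay of correlations on $U$ for Lipschitz observables provided by the last item of Definition \ref{mpartition}. Explicitly, for any $h\in\Lip(U)$ one has
\[
\int h\circ (f^{\overline{R}})^n\cdot h\,d\mu_U - \Bigl(\int h\,d\mu_U\Bigr)^2 = O(\beta^n)\|h\|_{\Lip}^2,
\]
so polarizing gives $\int g\cdot h\circ(f^{\overline{R}})^n d\mu_U\to \int g\,d\mu_U\int h\,d\mu_U$ for all Lipschitz $g,h$. Since Lipschitz functions are dense in $L^2(\mu_U)$ (using that $U$ is a measurable subset of a Riemannian manifold and $\mu_U$ is a finite Borel measure), a standard $\varepsilon/3$ approximation argument extends the mixing statement to all $L^2(\mu_U)$ pairs. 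Mixing implies ergodicity in the usual way: if $A\subseteq U$ is $f^{\overline{R}}$-invariant with $\mu_U(A)>0$, then taking $g=h=1_A$ yields $\mu_U(A)=\mu_U(A)^2$, forcing $\mu_U(A)=1$.

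The main obstacle I anticipate is the surjectivity step, because $\overline{R}$ is only the first return time to $U$ and not tied to a Markov partition; one must justify carefully that backward iteration under $f$ is well-defined on the complement of a $\mu$-null set and that it eventually re-enters $U$. Both are handled by invoking the bijectivity of $f$ on $\bigcup_{i\ge 1}\bigcup_{0\le j<R_i}f^j(\Lambda_i)$ from Assumption \ref{geoassumption} together with Poincar\'e recurrence for the $f$-invariant probability $\mu$; everything else is a straightforward transcription of the argument for $f^R$.
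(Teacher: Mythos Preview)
Your proof is correct and covers the same ground as the paper's, with one organizational difference worth noting. The paper proves injectivity exactly as you do (by reference to Lemma~\ref{inducemapbi}), then establishes ergodicity from the exponential decay of correlations, and only afterwards deduces surjectivity \emph{from ergodicity} (or more precisely from injectivity plus measure preservation, which forces the image to have full $\mu_U$-measure). You reverse the last two steps: you obtain surjectivity directly via Poincar\'e recurrence for the backward orbit and then prove ergodicity. Both routes are valid; yours is slightly more explicit about the pointwise mechanism behind surjectivity, while the paper's one-line ``onto due to ergodicity'' is terser but relies on the reader filling in the measure-theoretic step. Either way the bijectivity claim is only meaningful modulo a $\mu$-null set, which both arguments deliver.
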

\begin{proof} A proof that
$f^{\overline{R}}$ is one-to-one is the same as in the Lemma \ref{inducemapbi}, which uses the first return $\overline{R}$ and replaces $R_i,R_j$ by $\overline{R}(x), \overline{R}(y)$. Clearly, the map $f^{\overline{R}}$ is ergodic due to the exponential decay of correlations, and it is also onto due to ergodicity.
\end{proof}

Now, since $\mu\{\interior{(U)}\}>0$, then by the Birkhoff's Ergodic Theorem for almost every $z \in  \bigcup_{i \ge 1} \bigcup_{0 \le j < R_i} f^j(\Lambda_i)$ we have $z=f^{j_z}(z')$ for some $z' \in \interior{(U)}$ and $j_z \in \mathbb{N}$. Analogously to the proof of Lemma \ref{ballinhorseshoe}, we obtain
\begin{lemma}[Pull metric balls back to $U$]\ \label{ballinhorseshoe1}\ \par
For almost every $z \in \bigcup_{i \ge 1} \bigcup_{0 \le j < R_i} f^j(\Lambda_i)$ there exists $j_z \in \mathbb{N}$, such that $f^{-j_z}B_{r}(z) \subseteq U$ $\mu$-almost surely, i.e. \[\mu(f^{-j_z}B_{r}(z) \bigcap U^c)=0.\]

\end{lemma}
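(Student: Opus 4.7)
The plan is to mirror the proof of Lemma \ref{ballinhorseshoe} almost verbatim, substituting the set $U$ from Definition \ref{mpartition} for the reference set $\Lambda$ from Assumption \ref{assumption}. The three ingredients required are identical in spirit: (i) $\mu(\interior{U}) > 0$ and $\mu(\partial U) = 0$ (item~1 of Definition \ref{mpartition}); (ii) $f$ is bijective and a local $C^1$-diffeomorphism on $\bigcup_{i \ge 1}\bigcup_{0 \le j < R_i} f^j(\Lambda_i)$ (Assumption \ref{geoassumption}); and (iii) the interior condition used throughout the paper, namely that each $z'' \in \interior{U}$ admits a neighborhood $U_{z''}$ with $\mu(U_{z''} \setminus U) = 0$.

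First I would invoke Birkhoff's Ergodic Theorem applied to $1_{\interior{U}}$. Since $\mu$ is $f$-invariant and ergodic and $\mu(\interior{U}) > 0$, for $\mu$-almost every $z \in \bigcup_{i \ge 1}\bigcup_{0 \le j < R_i} f^j(\Lambda_i)$ the backward orbit of $z$ visits $\interior{U}$, so there exists $j_z \in \mathbb{N}$ with $z' := f^{-j_z}(z) \in \interior{U}$. Next, by (iii) one can select $r_{z'} > 0$ such that $\mu(B_{r_{z'}}(z') \setminus U) = 0$. By (ii), the iterate $f^{j_z}$ is a local $C^1$-diffeomorphism on a neighborhood of $z'$, so for every $r$ smaller than some threshold $r_z > 0$ we have the inclusion $f^{-j_z}B_r(z) \subseteq B_{r_{z'}}(z')$. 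Combining these,
\[
\mu\bigl(f^{-j_z}B_r(z) \cap U^c\bigr) \le \mu\bigl(B_{r_{z'}}(z') \setminus U\bigr) = 0,
\]
which is exactly the claim.

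I do not anticipate a real obstacle: the argument is a cosmetic transcription of the proof of Lemma \ref{ballinhorseshoe}, the only change being that the positive-measure interior is supplied by Definition \ref{mpartition} in place of Assumption \ref{assumption}. The mildly delicate point is keeping track of the dependence of $r_z$ on both $z$ and the choice of $j_z$, but this is harmless because the pullback is subsequently coupled with the first-return map $f^{\overline{R}}$ on $U$, for which the later corona and short-return estimates of the Theorem \ref{thm2} track are designed.
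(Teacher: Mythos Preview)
Your proposal is correct and is essentially the same argument the paper has in mind: the paper simply says ``Analogously to the proof of Lemma \ref{ballinhorseshoe}'' after noting that $\mu(\interior{U})>0$ gives, via Birkhoff, a preimage $z'=f^{-j_z}(z)\in\interior{U}$, and you have written out that analogy explicitly.
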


\subsection{Short returns}
Let $z' \in \interior{(U)}$. Take now any fixed positive integer $M>0$, a sufficiently small constant $\epsilon'>0$ such that $ n^{\epsilon'}\ll p$ and $B_{M\cdot r} (z') \subseteq \interior{(U)}$ almost surely, where $n=\lfloor \frac{T}{\mu(B_r(z))}\rfloor, p=\lfloor {\lfloor \frac{T}{\mu(B_r(z))}\rfloor}^{\frac{\text{dim}_H\mu-\epsilon}{\text{dim}_H\mu}}\rfloor$, and the same $\epsilon$ as in the Proposition \ref{rate}. We now consider short returns for the induced map $f^{\overline{R}}:U\to U$, namely
\begin{equation}\label{sumhorshose1}
    \int_{B_{M\cdot r} (z')} 1_{\bigcup_{1\le k \le p}(f^{\overline{R}})^{-k}B_{M\cdot r} (z')}d\mu_{U}=\int_{B_{M\cdot r} (z')} 1_{\bigcup_{1\le k \le N}(f^{\overline{R}})^{-k}B_{ M\cdot r} (z')}d\mu_{U}
\end{equation}

\[+\int_{B_{M\cdot r} (z')} 1_{\bigcup_{N\le k \le n^{\epsilon'}}(f^{\overline{R}})^{-k}B_{ M\cdot r} (z')}d\mu_{U}+\int_{B_{ M\cdot r} (z')} 1_{\bigcup_{n^{\epsilon'}\le k \le p}(f^{\overline{R}})^{-k}B_{ M\cdot r} (z')}d\mu_{U},\]
where $N= \lfloor\frac{-\log 2C }{\log \beta}\rfloor+1$, and $C, \beta$ are the ones from the Definition \ref{mpartition}. 

\begin{lemma}[Short fixed-length returns]\label{fixreturn1}\ \par
For almost every $z' \in \interior{(U)} $ and sufficiently small $r_{N,M, z'}>0$ we have $\forall r < r_{N,M,z'}$ \[\int_{B_{M\cdot r} (z')} 1_{\bigcup_{1\le k \le N}(f^{\overline{R}})^{-k}B_{ M\cdot r} (z')}d\mu_{U}=0.\]

(Actually, a stronger result will be proved, i.e.  $\forall k\in \mathbb{N}$ a map $f^{\overline{R}^k}$ is a local diffeomorphism at almost every point $z' \in \interior{U}$).
\end{lemma}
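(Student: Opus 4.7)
The plan is to adapt the proof of Lemma \ref{fixreturn} to the present setting, replacing the quotient dynamics by the first return map $f^{\overline{R}}: U \to U$ which, thanks to the exponential decay of correlations in Definition \ref{mpartition}, can be handled directly without passing to a quotient tower.

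First, I would establish that the set $A_{\mathrm{per}}$ of periodic points of $f^{\overline{R}}$ has zero $\mu_U$-measure. The exponential decay of correlations in Definition \ref{mpartition} shows that $f^{\overline{R}}: (U, \mu_U) \to (U, \mu_U)$ is mixing, which implies that every iterate $(f^{\overline{R}})^k$ $(k \ge 1)$ is ergodic. Hence for each $k$, the fixed point set of $(f^{\overline{R}})^k$ has $\mu_U$-measure zero, and thus $A_{\mathrm{per}} = \bigcup_{k \ge 1} \mathrm{Fix}((f^{\overline{R}})^k)$ is $\mu_U$-null. Since $\mu(\partial U) = 0$ (Definition \ref{mpartition}) and $\mu$ is $f$-invariant, we also have $\mu(\bigcup_{i \in \mathbb{Z}} f^{-i} \partial U) = 0$. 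Therefore the set
\[\interior{(U)} \bigcap A_{\mathrm{per}}^c \bigcap \Big[\bigcup_{i \in \mathbb{Z}} f^{-i} \partial U\Big]^c\]
has full $\mu_U$-measure in $\interior{(U)}$. For any $z'$ in this set, the $N+1$ points $\{f^{\overline{R}^k}(z')\}_{0 \le k \le N}$ are distinct and all lie in $\interior{(U)}$, while $f^m(z') \notin U$ almost surely for $m$ strictly between two consecutive return times.

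Next, I would repeat the argument in Lemma \ref{fixreturn} almost verbatim: by the Assumption \ref{geoassumption} that $f$ is a local $C^1$-diffeomorphism on $\bigcup_i \bigcup_{0 \le j < R_i} f^j(\Lambda_i)$, for sufficiently small $r < r'_{N, M, z'}$ one has $B_{M \cdot r}(z') \subseteq \interior{(U)}$ almost surely, $f^m(B_{M \cdot r}(z')) \subseteq U^c$ almost surely for every $m \in (\overline{R}^k(z'), \overline{R}^{k+1}(z'))$ with $k \in [0, N-1]$, and $f^{\overline{R}^k(z')}(B_{M \cdot r}(z')) \subseteq U$ almost surely for all $k \in [0, N]$. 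Consequently $\overline{R}^k|_{B_{M \cdot r}(z')} \equiv \overline{R}^k(z')$ and each $f^{\overline{R}^k}$ restricts to a local diffeomorphism near $z'$.

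Finally, exploiting the facts that the $N+1$ iterates $f^{\overline{R}^k}(z')$ are distinct and all lie in the open set $\interior{(U)}$, one may shrink $r$ further to some $r_{N, M, z'} \in (0, r'_{N, M, z'}]$ so that the $N+1$ images $f^{\overline{R}^k}(B_{M \cdot r}(z'))$ are pairwise disjoint, which forces the desired integral to vanish. The only conceptually new ingredient compared with Lemma \ref{fixreturn} is the extraction of $\mu_U(A_{\mathrm{per}}) = 0$ directly from exponential decay of correlations (rather than from the symbolic machinery on the quotient tower $\widetilde{\Lambda}$); this is the step that requires a bit of care but is standard, and everything else is a transcription of the previous argument.
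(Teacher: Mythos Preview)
Your proposal is correct and follows essentially the same approach as the paper's proof: both obtain $\mu_U(A_{\mathrm{per}})=0$ from the exponential decay of correlations of $f^{\overline{R}}$ (via mixing and ergodicity of all iterates), exclude the null set $\bigcup_{i\in\mathbb{Z}} f^{-i}\partial U$, and then transcribe the argument of Lemma~\ref{fixreturn} with $R,\Lambda$ replaced by $\overline{R},U$. Your write-up simply spells out in more detail what the paper compresses into a couple of sentences.
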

\begin{proof}
According to the Definition \ref{mpartition}, we have $\mu(\partial U)=0$.  Also, the exponential decay of correlations property holds for $f^{\overline{R}}$.  Let $A_{per}$ be a set of all periodic points for $f^{\overline{R}}$. Then $\mu(A_{per})=0$ and $\mu(\bigcup_{n \in \mathbb{Z}}f^{-n}\partial U)=0$. Choose $z'\notin  \bigcup_{n \in \mathbb{Z}}f^{-n}\partial U \bigcup A_{per}$. The rest of the proof is exactly the same as in the Lemma \ref{fixreturn}, replacing $R$ by $\overline{R}$ and $\Lambda$ by $U$. 

\end{proof}

Before estimating the super-short returns $\int_{B_{M\cdot r} (z')} 1_{\bigcup_{N\le k \le n^{\epsilon'}}(f^{\overline{R}})^{-k}B_{ M\cdot r} (z')}d\mu_{U}$ we will need one more lemma.

\begin{lemma}[Recurrences]\label{recurrence1}\ \par
There exists $r_M>0$, such that $ \forall r < r_M$ and $i\ge N$
\[\mu_U \{z'\in U: d((f^{\overline{R}})^{i}z',z') \le M \cdot r\}\precsim_{\dim \gamma^u}  (M \cdot r)^{\frac{b \cdot \dim \gamma^u}{b+\dim \gamma^u}},\]
where a constant in $\precsim_{\dim \gamma^u}$ depends upon $\dim \gamma^u$, but it does not depend upon $i\ge {N}$ and $\gamma^u \in \Theta$.

\end{lemma}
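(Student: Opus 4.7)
The plan is to mimic Lemma \ref{recurrence}, adapting it to the fact that in Definition \ref{mpartition} the partition $\Theta$ has fibers of widely varying sizes, so a uniform Lebesgue-to-diameter comparison on every $\gamma^u$ is no longer available. I would split $U$ according to the inscribed-ball radius $|\gamma^u(x)|$, run the Lemma \ref{recurrence}-type contraction argument on the ``large'' fibers, and control the ``small'' fibers directly by property (5) of Definition \ref{mpartition}.

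First, I would use the bijectivity and measure preservation of $f^{\overline{R}}$ (established in the lemma preceding Lemma \ref{ballinhorseshoe1}) to write
\[\mu_U\{z' : d((f^{\overline{R}})^{i} z', z') \le M r\} = \mu_U\{z' : d((f^{\overline{R}})^{-i} z', z') \le M r\},\]
and then disintegrate $\mu_U$ along $\Theta$ via property (2) of Definition \ref{mpartition}. Fix a threshold $\epsilon > 0$ to be optimized later and split the resulting integral according to whether $|\gamma^u(x)| < \epsilon$ or $|\gamma^u(x)| \ge \epsilon$. The ``small'' part contributes at most $\mu_U\{|\gamma^u(x)| < \epsilon\} \le C \epsilon^b$ by property (5).

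For the ``large'' part, I argue fiberwise on each $\gamma^u$ with $|\gamma^u| \ge \epsilon$. The exponential $u$-contraction in property (7), combined with the choice $N = \lfloor -\log(2C)/\log \beta \rfloor + 1$, yields exactly as in Lemma \ref{recurrence} that any two points $y_1, y_2 \in \gamma^u$ satisfying $d((f^{\overline{R}})^{-i} y_j, y_j) \le M r$ must lie within Riemannian distance $4 M r$ of each other, as long as they sit in a common chart on $\gamma^u$ where the fiber is almost flat. The uniform bound on sectional curvatures and dimensions in property (4) makes this flatness scale uniform over all $\gamma^u \in \Theta$, so the Lebesgue measure of that set on $\gamma^u$ is $\precsim_{\dim \gamma^u} (Mr)^{\dim \gamma^u}$. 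Property (6) together with $\mu_{\gamma^u}(\gamma^u) = 1$ forces $d\mu_{\gamma^u}/d\Leb_{\gamma^u}$ to be comparable to $\Leb_{\gamma^u}(\gamma^u)^{-1}$, while the inscribed-ball assumption $|\gamma^u| \ge \epsilon$ plus the curvature bound gives $\Leb_{\gamma^u}(\gamma^u) \gtrsim \epsilon^{\dim \gamma^u}$. Hence the per-fiber contribution on $\{|\gamma^u| \ge \epsilon\}$ is $\precsim_{\dim \gamma^u} (M r / \epsilon)^{\dim \gamma^u}$.

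Combining the two pieces gives
\[\mu_U\{d((f^{\overline{R}})^{-i} z', z') \le M r\} \precsim_{\dim \gamma^u} \left(\frac{M r}{\epsilon}\right)^{\dim \gamma^u} + \epsilon^b,\]
and equating the two terms through $\epsilon := (Mr)^{\dim \gamma^u / (b + \dim \gamma^u)}$ yields the advertised exponent $(Mr)^{b\,\dim \gamma^u / (b + \dim \gamma^u)}$, uniformly in $i \ge N$ and $\gamma^u \in \Theta$. The main technical hurdle is precisely the uniform lower bound $\Leb_{\gamma^u}(\gamma^u) \gtrsim |\gamma^u|^{\dim \gamma^u}$ on an a priori non-compact $U$; this is where the uniform sectional-curvature assumption (4) is essential, taking the place of the compactness of $\Lambda$ that was exploited in Lemma \ref{recurrence}.
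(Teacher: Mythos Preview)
Your proposal is correct and follows essentially the same approach as the paper: disintegrate along $\Theta$, split according to whether $|\gamma^u(x)|$ exceeds the threshold $(Mr)^{\dim\gamma^u/(b+\dim\gamma^u)}$, use the $u$-contraction to bound the recurrence set on each fiber by a ball of radius $4Mr$, and combine the Lebesgue bound with the density comparison $d\mu_{\gamma^u}/d\Leb_{\gamma^u} \approx \Leb_{\gamma^u}(\gamma^u)^{-1}$ on the large fibers while property (5) handles the small ones. The only cosmetic difference is that the paper passes from forward to backward time at the end rather than at the start, and it invokes the uniform sectional-curvature bound directly for the volume estimates rather than via almost-flat charts; your remark about compactness being replaced by property (4) is exactly the point.
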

\begin{proof}
For any $\gamma^u \in \Theta, z_1',z_2' \in \{z'\in U\bigcap \gamma^u: d((f^{\overline{R}})^{-i}z',z')\le M \cdot r\} $ we have 
\[d((f^{\overline{R}})^{-i}z'_1,z_1') \le M \cdot r,\text{ } d((f^{\overline{R}})^{-i}z'_2,z_2') \le M \cdot r.\]

The u-contraction (see the Definition \ref{mpartition}), together with $i \ge N$, give that  $C \cdot \beta^i \le C \cdot \beta^{N}< \frac{1}{2}$ and 
\[d(z'_1,z'_2)\le d(z'_1, (f^{\overline{R}})^{-i}z'_1)+d((f^{\overline{R}})^{-i}z'_1, (f^{\overline{R}})^{-i}z'_2)+d((f^{\overline{R}})^{-i}z'_2, z'_2)\]
\[\le 2M \cdot r+ C \cdot \beta^i \cdot d(z'_1,z'_2) \le 2M \cdot r +\frac{1}{2}  d(z'_1,z'_2).\]

So $d(z'_1,z'_2) \le 4M \cdot r \implies \diam \{z'\in U \bigcap \gamma^u: d((f^{\overline{R}})^{-i}z',z') \le M \cdot r\} \le 4M \cdot r$.

Since each $\gamma^u\in \Theta$ has uniformly bounded sectional curvature, then  $\Leb_{\gamma^u}(B_{4M\cdot r}(z')) \precsim (4M \cdot r)^{\dim \gamma^u}$ for any $z' \in \gamma^u$ and 
\[\Leb_{\gamma^u}\{z'\in U: d((f^{\overline{R}})^{-i}z',z') \le M \cdot r\} \precsim_{\dim \gamma^u} (M \cdot r)^{\dim \gamma^u},\]
where a constant in $\precsim_{\dim \gamma^u}$  depends only on $\dim \gamma^u$.

From the Definition \ref{mpartition} we get 
$r_M:=\frac{1}{M}$ such that $\forall r<r_M$, $\mu_U \{x \in U: |\gamma^u(x)|<(M \cdot r)^{\frac{\dim \gamma^u}{b+\dim \gamma^u}}\}\le C \cdot (M \cdot r)^{\frac{b \cdot \dim \gamma^u}{b+\dim \gamma^u}}$, and for any $y \in\gamma^u \in \Theta$, $\frac{d\mu_{\gamma^u}}{d\Leb_{\gamma^u}}(y)=C^{\pm} \cdot \frac{1}{\Leb_{\gamma^u} (\gamma^u)}$. Hence
\[\mu_U \{z'\in U: d((f^{\overline{R}})^{-i}z',z') \le M \cdot r\}=\int \mu_{\gamma^u(x)}\{z'\in U: d((f^{\overline{R}})^{-i}z',z') \le M \cdot r\} d\mu_U(x)\]
\[=\int_{|\gamma^u(x)|\le (M \cdot r)^{\frac{\dim \gamma^u}{b+\dim \gamma^u}}} \mu_{\gamma^u(x)}\{z'\in U: d((f^{\overline{R}})^{-i}z',z') \le M \cdot r\} d\mu_U(x)\]
\[+\int_{|\gamma^u(x)|\ge (M \cdot r)^{\frac{\dim \gamma^u}{b+\dim \gamma^u}}} \mu_{\gamma^u(x)}\{z'\in U: d((f^{\overline{R}})^{-i}z',z') \le M \cdot r\} d\mu_U(x) \]
\[\le C \cdot (M \cdot r)^{\frac{b \cdot \dim \gamma^u}{b+\dim \gamma^u}} +\int_{|\gamma^u(x)|\ge (M \cdot r)^{\frac{\dim \gamma^u}{b+\dim \gamma^u}}} \mu_{\gamma^u(x)}\{z'\in U: d((f^{\overline{R}})^{-i}z',z') \le M \cdot r\} d\mu_U(x) \]
\[\precsim_{\dim \gamma^u} (M \cdot r)^{\frac{b \cdot \dim \gamma^u}{b+\dim \gamma^u}}+\int_{|\gamma^u(x)|\ge (M \cdot r)^{\frac{\dim \gamma^u}{b+\dim \gamma^u}}} \frac{\Leb_{\gamma^u(x)}\{z'\in U: d((f^{\overline{R}})^{-i}z',z') \le M \cdot r\}}{\Leb_{\gamma^u(x)} (\gamma^u(x))} d\mu_U(x)\]
\[\precsim_{\dim \gamma^u} (M \cdot r)^{\frac{b \cdot \dim \gamma^u}{b+\dim \gamma^u}}+ (M\cdot r)^{\dim \gamma^u-\dim \gamma^u\cdot {\frac{\dim \gamma^u}{b+\dim \gamma^u}}} \precsim (M \cdot r)^{\frac{b \cdot \dim \gamma^u}{b+\dim \gamma^u}},\]
where in the last two inequalities we used that the sectional curvature of $\gamma^u(x) \in \Theta$ is uniformly bounded, and $\dim \gamma^u-\dim \gamma^u\cdot {\frac{\dim \gamma^u}{b+\dim \gamma^u}}=\frac{b \cdot \dim \gamma^u}{b+\dim \gamma^u}$.

Finally, $(f^{\overline{R}})_{*} \mu_U=\mu_U$ implies that 
\[\mu_U \{z'\in U: d((f^{\overline{R}})^{i}z',z') \le M \cdot r\}\precsim_{\dim \gamma^u} (M \cdot r)^{\frac{b \cdot \dim \gamma^u}{b+\dim \gamma^u}}.\]
\end{proof}
\begin{lemma}[Super-short returns]\label{veryshortreturn1}\ \par
For almost every $z' \in \interior{(U)}, z \in \mathcal{M}$ there exist $r_{M,z'}>0, r_z>0$, such that $\forall r< \min \{r_{z',M}, r_z\}$ 
\[\int_{B_{M\cdot r} (z')} 1_{\bigcup_{N\le k \le n^{\epsilon'}}(f^{\overline{R}})^{-k}B_{ M\cdot r} (z')}d\mu_{U}\precsim_{T,b} (M \cdot r)^{\dim_H\mu -\epsilon} \cdot M^{\frac{\min\{{\frac{b \cdot \dim \gamma^u}{b+\dim \gamma^u}}, \dim_H \mu\}}{6}}\cdot r^{\frac{\min\{{\frac{b \cdot \dim \gamma^u}{b+\dim \gamma^u}}, \dim_H \mu\}}{12}},\]
where  $\epsilon>0$ is the same as in the Proposition \ref{rate} and $\epsilon'<\frac{\min\{\dim_H \mu, {\frac{b \cdot \dim \gamma^u}{b+\dim \gamma^u}}\}}{12 \dim_H \mu+12 \epsilon}$.
\end{lemma}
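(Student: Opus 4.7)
The plan is to follow the blueprint of Lemma \ref{veryshortreturn} essentially verbatim, replacing the reference set $\Lambda$ by $U$, the induced map $f^R$ by $f^{\overline{R}}$, the measure $\mu_\Lambda$ by $\mu_U$, and---most importantly---the recurrence exponent $\dim\gamma^u$ by $\frac{b\cdot \dim\gamma^u}{b+\dim\gamma^u}$ supplied by Lemma \ref{recurrence1}. Conceptually, nothing in the earlier argument used a Markov/Young tower structure on $\Lambda$ beyond (i) invariance of the reference measure under the induced map, (ii) a quantitative bound on $\mu_\Lambda\{d((f^R)^{-i}z',z')\le Mr\}$, and (iii) the Hausdorff-dimension relation $\mu(B_r(z'))\asymp r^{\dim_H\mu}$. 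All three are available here: $(f^{\overline{R}})_\ast\mu_U=\mu_U$ by the Definition \ref{mpartition}, Lemma \ref{recurrence1} gives the substitute for (ii), and the Assumption \ref{geoassumption} furnishes (iii).

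First, I would combine the invariance $(f^{\overline{R}})_\ast\mu_U=\mu_U$ with Lemma \ref{recurrence1} to obtain, uniformly in $k\ge N$,
\[
\mu_U\{z'\in U:\ d((f^{\overline{R}})^k z',z')\le M\cdot r\}\precsim_{\dim\gamma^u}(M\cdot r)^{\frac{b\cdot\dim\gamma^u}{b+\dim\gamma^u}}.
\]
Next, setting $\delta:=\frac{\min\{\frac{b\cdot\dim\gamma^u}{b+\dim\gamma^u},\dim_H\mu\}}{6}$, I introduce the exhausting sets $A_m:=\{z'\in U:\, r_{z',\delta}>1/m\}$ on which $r^{\dim_H\mu+\delta}\le \mu(B_r(z'))\le r^{\dim_H\mu-\delta}$ holds for $r<1/m$. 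Because $B_{Mr}(z')\subseteq U$ almost surely for small $r$, we also have $\mu_U(B_{Mr}(z'))$ controlled by the same two-sided bound up to the factor $\mu(U)$.

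Then I would run the Schur-test step exactly as in Lemma \ref{veryshortreturn}. Define the kernel $K(y,z'):=1_{B_{Mr}(z')}(y)$ on $U\times A_m$; the row and column sums are both controlled by $(3Mr)^{\dim_H\mu-\delta}$ thanks to the $A_m$-dimension bound, while the ``diagonal mass'' $\int 1_{d((f^{\overline{R}})^k y,y)\le 2Mr}\, d\mu_U(y)$ is controlled by the recurrence estimate above. This yields
\[
\int_{A_m}\frac{\int_{B_{Mr}(z')} 1_{\bigcup_{N\le k\le n^{\epsilon'}}\{d((f^{\overline{R}})^k y,y)\le 2Mr\}}d\mu_U(y)}{n^{\epsilon'}\cdot \mu(B_{Mr}(z'))\cdot (Mr)^{\delta}}d\mu_U(z')\precsim (Mr)^{\frac{\min\{\frac{b\cdot\dim\gamma^u}{b+\dim\gamma^u},\dim_H\mu\}}{2}},
\]
after using $n\precsim r^{-(\dim_H\mu+\epsilon)}$ and the hypothesis on $\epsilon'$. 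Applying Borel--Cantelli along a geometric scale $r_i:=i^{-4/\min\{\frac{b\dim\gamma^u}{b+\dim\gamma^u},\dim_H\mu\}}$ and then interpolating between consecutive $r_i$ exactly as in the $\Lambda$-case produces the claimed bound; the choice $\epsilon'<\frac{\min\{\dim_H\mu,\,\frac{b\dim\gamma^u}{b+\dim\gamma^u}\}}{12\dim_H\mu+12\epsilon}$ ensures that the $n^{\epsilon'}$ factor is absorbed into half of the $\delta$-margin.

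The only genuinely new technical point---and hence the main obstacle---is ensuring that Lemma \ref{recurrence1} (which is where the short unstable leaves from the Definition \ref{mpartition}(5) enter) is \emph{uniform in $k\ge N$} and delivers the exponent $\frac{b\dim\gamma^u}{b+\dim\gamma^u}$ rather than $\dim\gamma^u$; this exponent then has to be propagated consistently through every $\min\{\dim\gamma^u,\dim_H\mu\}$ that appears in the proof of Lemma \ref{veryshortreturn}. Once that bookkeeping is done, the Schur-test and Borel--Cantelli machinery go through without modification, since neither the hyperbolic product structure nor the cylinder partition $\mathcal{M}_k$ is used in this step---only the measure-theoretic invariance of $f^{\overline{R}}$ and the recurrence inequality are needed.
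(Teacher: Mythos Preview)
Your proposal is correct and follows essentially the same approach as the paper: the paper's proof simply invokes Lemma \ref{recurrence1} for the recurrence bound with exponent $\frac{b\cdot\dim\gamma^u}{b+\dim\gamma^u}$ and then states that the rest is exactly the proof of Lemma \ref{veryshortreturn} with $\dim\gamma^u,\Lambda,R$ replaced by $\frac{b\cdot\dim\gamma^u}{b+\dim\gamma^u},U,\overline{R}$. Your detailed write-up (Schur test, Borel--Cantelli along the scale $r_i$, interpolation, and the bookkeeping of the exponent substitution) is precisely that argument spelled out.
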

\begin{proof}
In the Lemma \ref{recurrence1} we already proved that
\[\mu_U \{z'\in U: d((f^{\overline{R}})^{i}z',z') \le M \cdot r\}\precsim_{\dim \gamma^u} (M \cdot r)^{\frac{b \cdot \dim \gamma^u}{b+\dim \gamma^u}}.\]
The rest of the proof is exactly the same as in the Proposition \ref{veryshortreturn}, where one should replace $\dim \gamma^u, \Lambda, R$ by $\frac{b \cdot \dim \gamma^u}{b+\dim \gamma^u}, U, \overline{R}$.
\end{proof}

\begin{lemma}[Not super-short, but short returns]\label{notveryshort1}\ \par
Let $n= \lfloor \frac{T}{\mu(B_r(z))}\rfloor,p=\lfloor {\lfloor \frac{T}{\mu(B_r(z))}\rfloor}^{\frac{\text{dim}_H\mu-\epsilon}{\text{dim}_H\mu}}\rfloor$ and $n^{\epsilon'}\ll p$. Then for almost all $z'\in \interior{(U)}$ and $z\in \mathcal{M}$  there is $r_{z,z',M}>0$, such that $\forall r< r_{z,z',M}$
\[\int_{B_{ M\cdot r} (z')} 1_{\bigcup_{n^{\epsilon'}\le k \le p}(f^R)^{-k}B_{ M\cdot r} (z')}d\mu_{U}\]
\[\precsim_{T,\epsilon} (\frac{1}{r^{\dim_H\mu+\epsilon}})^{\frac{\dim_H\mu-\epsilon}{\dim_H\mu}} \cdot [(M\cdot r)^{2\dim_H\mu-2\epsilon^3}+\mu_U(B_{M \cdot r+(\sqrt[4]{\beta})^{T^{\epsilon'}\cdot{r^{-(\dim_H\mu-\epsilon) \epsilon'}}}}(z')\setminus B_{M \cdot r}(z'))],\]
where $\beta\in (0,1) $ is the same as in the Definition \ref{mpartition}.
\end{lemma}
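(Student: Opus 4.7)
The plan is to adapt the Markov two-sided cylinder argument of Lemma \ref{notveryshort} to the weaker setup of Theorem \ref{thm2}. In the present setting Definition \ref{mpartition} only grants exponential decay of correlations against Lipschitz observables for the induced map $f^{\overline R}:U\to U$, so the cylinder decomposition used in Lemma \ref{decorrelationcylinder} is no longer available; I replace it by a direct Lipschitz cut-off of the indicator $1_{B_{M\cdot r}(z')}$.

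First I would dominate the target by pairwise correlation terms:
\[\int_{B_{M\cdot r}(z')}1_{\bigcup_{n^{\epsilon'}\le k\le p}(f^{\overline R})^{-k}B_{M\cdot r}(z')}d\mu_U \le \sum_{k=n^{\epsilon'}}^{p}\int 1_{B_{M\cdot r}(z')}\cdot 1_{B_{M\cdot r}(z')}\circ (f^{\overline R})^k d\mu_U.\]
Next set $\delta_r:=(\sqrt[4]{\beta})^{n^{\epsilon'}}$ and sandwich the indicator between Lipschitz cut-offs: pick $\tilde h_r:U\to[0,1]$ with $\tilde h_r\equiv 1$ on $B_{M\cdot r}(z')$, $\tilde h_r\equiv 0$ off $B_{M\cdot r+\delta_r}(z')$, and $\|\tilde h_r\|_{\Lip}\le \delta_r^{-1}$ (an explicit tent function in the distance to $B_{M\cdot r}(z')$ works). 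Applying the exponential decay of correlations of $f^{\overline R}$ from Definition \ref{mpartition} to each summand yields
\[\int \tilde h_r\cdot \tilde h_r\circ (f^{\overline R})^k d\mu_U \le \mu_U(B_{M\cdot r+\delta_r}(z'))^{2}+O(\beta^{k})\,\delta_r^{-2}.\]

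Summing over $k\in[n^{\epsilon'},p]$ the main term contributes $p\cdot \mu_U(B_{M\cdot r+\delta_r}(z'))^{2}$, while the geometric error telescopes to $O(\beta^{n^{\epsilon'}/2})$, a residual which is super-polynomially small in $r$ and hence absorbed into the final bound. I would then expand
\[\mu_U(B_{M\cdot r+\delta_r}(z'))^{2}\le 2\mu_U(B_{M\cdot r}(z'))^{2}+2\mu_U(B_{M\cdot r+\delta_r}(z')\setminus B_{M\cdot r}(z'))^{2},\]
apply the dimension estimate $\mu_U(B_{M\cdot r}(z'))\precsim (M\cdot r)^{\dim_H\mu-\epsilon^{3}}$ (valid for a.e.\ $z'$ by Assumption \ref{geoassumption}, exactly as in Lemma \ref{notveryshort}), and bound the corona-squared by the corona itself since $\mu_U\le 1$. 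Substituting the upper bound $p\precsim r^{-(\dim_H\mu+\epsilon)(\dim_H\mu-\epsilon)/\dim_H\mu}$ and the lower bound $n^{\epsilon'}\gtrsim T^{\epsilon'}r^{-(\dim_H\mu-\epsilon)\epsilon'}$ (so that $\delta_r\le (\sqrt[4]{\beta})^{T^{\epsilon'}r^{-(\dim_H\mu-\epsilon)\epsilon'}}$) recovers the claimed estimate.

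The main obstacle is the calibration of $\delta_r$: it must be thin enough that its annulus measure is controllable by the analogue of Proposition \ref{coronaestimate} in the Theorem \ref{thm2} setting, yet thick enough that the Lipschitz blow-up $\delta_r^{-2}$ does not overwhelm the correlation decay $\beta^{k}$ when summed over $k\in[n^{\epsilon'},p]$. The exponent $1/4$ in $\delta_r=(\sqrt[4]{\beta})^{n^{\epsilon'}}$ precisely balances the two: it spends half the exponent $n^{\epsilon'}$ of the decay factor $\beta^{n^{\epsilon'}}$ on offsetting $\delta_r^{-2}$, leaving $\beta^{n^{\epsilon'}/2}$ as a negligible residual, while keeping the annulus radius exactly at the value appearing in the statement.
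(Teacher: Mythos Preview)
Your proposal is correct and follows essentially the same approach as the paper's own proof: replace the indicator of $B_{M\cdot r}(z')$ by a Lipschitz tent function of width $\delta_r=(\sqrt[4]{\beta})^{n^{\epsilon'}}$, apply the exponential decay of correlations for $f^{\overline R}$ from Definition~\ref{mpartition} term by term, and then sum over $k\in[n^{\epsilon'},p]$. The only cosmetic differences are that the paper parametrizes $\delta_r$ directly via the lower bound $T^{\epsilon'}r^{-(\dim_H\mu-\epsilon)\epsilon'}$ for $n^{\epsilon'}$ from the start (you reconcile this at the end), and the paper bounds the correlation error crudely by $p\cdot\beta^{n^{\epsilon'}}\cdot\delta_r^{-2}$ rather than summing the geometric series; neither changes the outcome.
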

\begin{proof}
The approach to proving the required estimate is quite standard. It uses Lipschitz functions to approximate $B_{M \cdot r}(z')$. However, we will write it down for completeness.

Let $L\in \Lip(U)$, such that $L=1$ on ${B_{M \cdot r}(z')}$, $L=0 $ on $ {B^c_{M \cdot r+(\sqrt[4]{\beta})^{T^{\epsilon'}\cdot{r^{-(\dim_H\mu-\epsilon) \epsilon'}}}}(z')} $, and $L$ is linear on $ B_{M \cdot r+(\sqrt[4]{\beta})^{T^{\epsilon'}\cdot{r^{-(\dim_H\mu-\epsilon) \epsilon'}}}}(z') \setminus B_{M \cdot r}(z')$, where $\epsilon$ and $\alpha$ are the same as in the Proposition \ref{rate}. Hence $L \in \Lip(U)$ with a Lipschitz constant $(\sqrt[4]{\beta})^{-T^{\epsilon'}\cdot{r^{-(\dim_H\mu-\epsilon) \epsilon'}}}$. Therefore for any $k \in [n^{\epsilon'}, p]$
\[\int_{B_{ M\cdot r} (z')} 1_{B_{ M\cdot r} (z')} \circ (f^R)^{k}d\mu_{U} \le \int L \cdot  L \circ (f^R)^{k}d\mu_{U}+2 \mu_U( B_{M \cdot r+(\sqrt[4]{\beta})^{T^{\epsilon'}\cdot{r^{-(\dim_H\mu-\epsilon) \epsilon'}}}}(z') \setminus B_{M \cdot r}(z')) \]
\[\le |\int L \cdot  L \circ (f^R)^{k}d\mu_{U}-\int L d\mu_U \cdot \int L \circ (f^R)^{k}d\mu_{U}|+\int L d\mu_U \cdot \int L \circ (f^R)^{k}d\mu_{U}\]
\[+2 \mu_U(B_{M \cdot r+(\sqrt[4]{\beta})^{T^{\epsilon'}\cdot{r^{-(\dim_H\mu-\epsilon) \epsilon'}}}}(z')\setminus B_{M \cdot r}(z')).\]

Now, by making use of the exponential decay of correlation in the Definition \ref{mpartition}, we can continue the estimate as
\[\precsim \Lip(L)^2 \cdot \beta^{n^{\epsilon'}}+\mu_U(B_{M \cdot r+(\sqrt[4]{\beta})^{T^{\epsilon'}\cdot{r^{-(\dim_H\mu-\epsilon) \epsilon'}}}}(z'))^2\]
\[+2 \mu_U(B_{M \cdot r+(\sqrt[4]{\beta})^{T^{\epsilon'}\cdot{r^{-(\dim_H\mu-\epsilon) \epsilon'}}}}(z')\setminus B_{M \cdot r}(z')) \le (\sqrt[4]{\beta})^{-T^{\epsilon'}\cdot{r^{-(\dim_H\mu-\epsilon) \epsilon'}}}\cdot \beta^{n^{\epsilon'}}\]
\[+\mu_U(B_{M \cdot r+(\sqrt[4]{\beta})^{T^{\epsilon'}\cdot{r^{-(\dim_H\mu-\epsilon) \epsilon'}}}}(z'))^2+2 \mu_U(B_{M \cdot r+(\sqrt[4]{\beta})^{T^{\epsilon'}\cdot{r^{-(\dim_H\mu-\epsilon) \epsilon'}}}}(z')\setminus B_{M \cdot r}(z')).\]
Hence
\[\int_{B_{ M \cdot r} (z')} 1_{\bigcup_{n^{\epsilon'}\le k \le p}(f^R)^{-k}B_{ M\cdot r} (z')}d\mu_{U} \precsim p \cdot (\sqrt{\beta})^{-T^{\epsilon'}\cdot{r^{-(\dim_H\mu-\epsilon) \epsilon'}}}\cdot \beta^{n^{\epsilon'}}\]
\begin{equation}\label{9}
    +p \cdot \mu_U(B_{M \cdot r+(\sqrt[4]{\beta})^{T^{\epsilon'}\cdot{r^{-(\dim_H\mu-\epsilon) \epsilon'}}}}(z'))^2+2 p \cdot \mu_U(B_{M \cdot r+(\sqrt[4]{\beta})^{T^{\epsilon'}\cdot{r^{-(\dim_H\mu-\epsilon) \epsilon'}}}}(z')\setminus B_{M \cdot r}(z')).
\end{equation}

Choose now from the Assumption \ref{geoassumption} the same $\epsilon>0$ as in the Proposition \ref{rate}. Then,  for almost all $z\in \mathcal{M}$ and $ z'\in \interior{(U)}$,  there exists such $r_{z,z',M}>0$, that $\forall r< r_{z,z',M}$ \[B_{M \cdot r+(\sqrt[4]{\beta})^{T^{\epsilon'}\cdot{r^{-(\dim_H\mu-\epsilon) \epsilon'}}}}(z') \subseteq U, \frac{T}{r^{\dim_H\mu-\epsilon}}\precsim n \precsim \frac{T}{r^{\dim_H\mu+\epsilon}},\]
\[r^{\dim_H\mu+\epsilon}\le \mu(B_r(z)) \le r^{\dim_H\mu-\epsilon}, \frac{(M \cdot r)^{\dim_H\mu+\epsilon^3}}{\mu(U)}\le  \mu_{U}(B_{M \cdot r}(z')) \le \frac{(M\cdot r)^{\dim_H\mu-\epsilon^3}}{\mu(U) },\]
\[ (\frac{T}{r^{\dim_H\mu-\epsilon}})^{\frac{\dim_H\mu-\epsilon}{\dim_H\mu}} \precsim p\precsim (\frac{T}{r^{\dim_H\mu+\epsilon}})^{\frac{\dim_H\mu-\epsilon}{\dim_H\mu}},(\frac{T}{r^{\dim_H\mu-\epsilon}})^{\epsilon'}\precsim n^{\epsilon'} \precsim (\frac{T}{r^{\dim_H\mu+\epsilon}})^{\epsilon'}.\]

Therefore, the estimates from $(\ref{9})$ can be continued as
\[\precsim (\frac{T}{r^{\dim_H\mu+\epsilon}})^{\frac{\dim_H\mu-\epsilon}{\dim_H\mu}} \cdot (\sqrt{\beta})^{-T^{\epsilon'}\cdot{r^{-(\dim_H\mu-\epsilon) \epsilon'}}} \cdot \beta^{T^{\epsilon'}\cdot{r^{-(\dim_H\mu-\epsilon) \epsilon'}}}+(\frac{T}{r^{\dim_H\mu+\epsilon}})^{\frac{\dim_H\mu-\epsilon}{\dim_H\mu}}\]
\[\times [\mu_U(B_{M \cdot r+(\sqrt[4]{\beta})^{T^{\epsilon'}\cdot{r^{-(\dim_H\mu-\epsilon) \epsilon'}}}}(z'))^2+\mu_U(B_{M \cdot r+(\sqrt[4]{\beta})^{T^{\epsilon'}\cdot{r^{-(\dim_H\mu-\epsilon) \epsilon'}}}}(z')\setminus B_{M \cdot r}(z'))]\]
\[\precsim_{T,\epsilon} (\frac{1}{r^{\dim_H\mu+\epsilon}})^{\frac{\dim_H\mu-\epsilon}{\dim_H\mu}} \cdot [(M\cdot r)^{2\dim_H\mu-2\epsilon^3}+\mu_U(B_{M \cdot r+(\sqrt[4]{\beta})^{T^{\epsilon'}\cdot{r^{-(\dim_H\mu-\epsilon) \epsilon'}}}}(z')\setminus B_{M \cdot r}(z'))].\]

The last inequality holds because $\beta^{r^{-c}} \precsim r^{c'}$ for any $c',c>0$.

\end{proof}

Combining now the Lemmas \ref{fixreturn1}, \ref{veryshortreturn1}, \ref{notveryshort1}, and arguing as in the Proposition \ref{shortreturnrate}, we can formulate the following summary of the obtained results.

\begin{proposition}[Short returns rates]\label{shortreturnrate1}\ \par
Let $\epsilon, \epsilon'>0$ satisfy the relations $\alpha \cdot (\dim_H\mu-\epsilon)>\frac{\dim_H\mu}{\dim_H\mu-\epsilon}>1$, $p=\lfloor {\lfloor \frac{T}{\mu(B_r(z))}\rfloor}^{\frac{\text{dim}_H\mu-\epsilon}{\text{dim}_H\mu}}\rfloor$, $\epsilon'<\min\{\frac{\min\{{\frac{b \cdot \dim \gamma^u}{b+\dim \gamma^u}}, \dim_H \mu\}}{12\dim_H \mu+12\epsilon} , \frac{\dim_H\mu-\epsilon}{\dim_H\mu}\}$. Then for almost all $z\in \mathcal{M}$, $z'\in \interior{(U)}$ and for each integer $M>0$ there exists a small enough $r_{z,z',M}>0$, such that $\forall r< r_{z,z',M}$
\[\int_{B_{M\cdot r} (z')} 1_{\bigcup_{1\le k \le p}(f^{\overline{R}})^{-k}B_{M\cdot r} (z')}d\mu_{U}\precsim_{\epsilon, T,b} (M \cdot r)^{\dim_H\mu -\epsilon}\cdot M^{\frac{\min\{{\frac{b \cdot \dim \gamma^u}{b+\dim \gamma^u}}, \dim_H \mu\}}{6}}\cdot r^{\frac{\min\{{\frac{b \cdot \dim \gamma^u}{b+\dim \gamma^u}}, \dim_H \mu\}}{12}}\]
\[+^{-(\dim_H\mu+\epsilon) \cdot \frac{\dim_H\mu-\epsilon}{\dim_H\mu}} \cdot [(M\cdot r)^{2\dim_H\mu-2\epsilon^3}+\mu_U(B_{M \cdot r+(\sqrt[4]{\beta})^{T^{\epsilon'}\cdot{r^{-(\dim_H\mu-\epsilon) \epsilon'}}}}(z')\setminus B_{M \cdot r}(z'))],\]
where $\beta \in (0,1)$ is the same as in the Definition \ref{mpartition}.
\end{proposition}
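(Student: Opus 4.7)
The plan is to assemble Proposition \ref{shortreturnrate1} from Lemmas \ref{fixreturn1}, \ref{veryshortreturn1}, and \ref{notveryshort1} via the decomposition (\ref{sumhorshose1}), in direct analogy to how Proposition \ref{shortreturnrate} was deduced from Lemmas \ref{fixreturn}, \ref{veryshortreturn}, and \ref{notveryshort}.

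First I would fix an integer $M>0$, verify that the constraints on $\epsilon$ and $\epsilon'$ in the hypothesis are precisely the ones needed for all three lemmas to apply, and split
\[\int_{B_{M\cdot r}(z')} 1_{\bigcup_{1\le k\le p}(f^{\overline{R}})^{-k}B_{M\cdot r}(z')}\,d\mu_U\]
into the three ranges $[1,N]$, $[N,n^{\epsilon'}]$ and $[n^{\epsilon'},p]$ exactly as in (\ref{sumhorshose1}). Lemma \ref{fixreturn1} annihilates the first piece for almost every $z'\in \interior{(U)}$ and all $r$ below a threshold $r_{N,M,z'}$. Lemma \ref{veryshortreturn1} controls the super-short piece $[N,n^{\epsilon'}]$ and produces the term $(M\cdot r)^{\dim_H\mu-\epsilon}\cdot M^{\frac{\min\{\frac{b\dim\gamma^u}{b+\dim\gamma^u},\,\dim_H\mu\}}{6}}\cdot r^{\frac{\min\{\frac{b\dim\gamma^u}{b+\dim\gamma^u},\,\dim_H\mu\}}{12}}$. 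Lemma \ref{notveryshort1} controls the remaining piece $[n^{\epsilon'},p]$ and produces the second summand in the claimed bound, involving $(M\cdot r)^{2\dim_H\mu-2\epsilon^3}$ and the corona $\mu_U(B_{M\cdot r+(\sqrt[4]{\beta})^{T^{\epsilon'}r^{-(\dim_H\mu-\epsilon)\epsilon'}}}(z')\setminus B_{M\cdot r}(z'))$ with the prefactor $r^{-(\dim_H\mu+\epsilon)\cdot\frac{\dim_H\mu-\epsilon}{\dim_H\mu}}$. Summing the three contributions over a common threshold $r_{z,z',M}:=\min\{r_{N,M,z'},r_{M,z'},r_{z,z',M}^{\mathrm{short}}\}$ yields the stated inequality for the prescribed $M$.

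The only point that is not bookkeeping is the $M$-dependence of the full-measure sets. Each of the three lemmas delivers a subset of $\mathcal{M}\times \interior{(U)}$ of full $\mu\times\mu_U$-measure on which the corresponding estimate holds, but this set depends on the integer parameter $M$. Since $M$ ranges over the countable set $\mathbb{N}$, I would take the countable intersection over $M\in\mathbb{N}$ of these three families of sets. The resulting intersection still has full $\mu\times\mu_U$-measure, is independent of $M$, and on it the combined bound holds simultaneously for every integer $M>0$. This is the same device used at the end of the proof of Proposition \ref{shortreturnrate}, and it is the only step that requires any care; the rest of the proof is a mechanical substitution into (\ref{sumhorshose1}).
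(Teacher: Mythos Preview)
Your proposal is correct and follows essentially the same approach as the paper: the paper's proof simply says to combine Lemmas \ref{fixreturn1}, \ref{veryshortreturn1}, and \ref{notveryshort1} via the decomposition (\ref{sumhorshose1}) and argue as in Proposition \ref{shortreturnrate}, including the countable-intersection trick over $M\in\mathbb{N}$ to obtain an $M$-independent full-measure set.
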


\subsection{Coronas}
\begin{proposition}[Coronas in $\mathcal{M}$ and $U$]\label{coronaestimate1}\ \par
For almost every $z \in \mathcal{M}$ and $z' \in \interior{(U)}$ there exist $r_z, r_{z',M}>0$, such that $\forall r< \min \{r_z, r_{z',M}\} $ the following estimate holds for the corona in $\mathcal{M}$
\[\mu\{B_{r+C' \cdot r^{\frac{(\dim_H\mu-\epsilon)^2\cdot \alpha }{\dim_H\mu}} }(z)\setminus B_{r-C' \cdot r^{\frac{(\dim_H\mu-\epsilon)^2\cdot \alpha }{\dim_H\mu} }}(z)\} \precsim_{z, \dim \gamma^u}  r^{\frac{(1+\frac{(\dim_H\mu-\epsilon)^2\cdot \alpha }{\dim_H\mu}) \cdot \dim \gamma^u \cdot b}{2(b+\dim \gamma^u)}},\]
and the estimate for the corona in $U$ is
\[\mu_U[B_{M \cdot r+(\sqrt[4]{\beta})^{T^{\epsilon'}\cdot{r^{-(\dim_H\mu-\epsilon) \epsilon'}}}}(z')\setminus B_{M \cdot r}(z')]\precsim_{z, \dim \gamma^u} (M\cdot r)^{\frac{\dim \gamma^u \cdot b }{2(b+\dim \gamma^u)}} \cdot \beta^{ \frac{T^{\epsilon'}\cdot{r^{-(\dim_H\mu-\epsilon) \epsilon'}}\cdot\dim \gamma^u \cdot b}{8(b+\dim \gamma^u)}}.\]
\end{proposition}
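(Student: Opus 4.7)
The proof follows the pattern of Proposition \ref{coronaestimate}, but replaces the hyperbolic product structure $\Gamma^u$ with the measurable partition $\Theta$ from the Definition \ref{mpartition}. The essentially new ingredient is property 5 of that definition, namely $\mu_U\{x:|\gamma^u(x)|<\eta\}\le C\eta^b$, which forces a two-piece split of the $x$-integral rather than a uniform estimate over fibers.

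\textbf{Corona in $U$.} Write $\delta:=(\sqrt[4]{\beta})^{T^{\epsilon'} r^{-(\dim_H\mu-\epsilon)\epsilon'}}$ and disintegrate $\mu_U(\cdot)=\int_U\mu_{\gamma^u(x)}(\cdot)\,d\mu_U(x)$. Cut the integral at a threshold $\eta$ to be chosen. On thin fibers $|\gamma^u(x)|<\eta$, property 5 immediately gives a contribution $\precsim \eta^b$. On thick fibers $|\gamma^u(x)|\ge\eta$, property 6 says $\frac{d\mu_{\gamma^u(x)}}{d\Leb_{\gamma^u(x)}}=C^{\pm 1}\cdot\Leb_{\gamma^u(x)}(\gamma^u(x))^{-1}$; combined with the uniform sectional-curvature bound (property 4) this gives $\Leb_{\gamma^u(x)}(\gamma^u(x))\gtrsim |\gamma^u(x)|^{\dim\gamma^u}$ and hence $\frac{d\mu_{\gamma^u(x)}}{d\Leb_{\gamma^u(x)}}\precsim\eta^{-\dim\gamma^u}$. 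At the same time, the Gauss-lemma / flat-slice argument already used in the Proposition \ref{coronaestimate} (applied to the corona $B_{Mr+\delta}(z')\setminus B_{Mr}(z')$ instead of the Riemannian $\Cor$) shows that on each thick fiber $\Leb_{\gamma^u}(\text{annulus})\precsim\sqrt{Mr\cdot\delta}^{\dim\gamma^u}=(Mr\cdot\delta)^{\dim\gamma^u/2}$, so the thick fibers contribute $\precsim (Mr\delta)^{\dim\gamma^u/2}\eta^{-\dim\gamma^u}$. Balancing at $\eta=(Mr\delta)^{\dim\gamma^u/(2(b+\dim\gamma^u))}$ produces $(Mr\delta)^{b\dim\gamma^u/(2(b+\dim\gamma^u))}$, and substituting $\delta=\beta^{T^{\epsilon'}r^{-(\dim_H\mu-\epsilon)\epsilon'}/4}$ gives exactly the stated estimate.

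\textbf{Corona in $\mathcal{M}$.} I use the pull-back recipe of the Proposition \ref{coronaestimate}. For almost every $z\in\bigcup_{i\ge 1}\bigcup_{0\le j<R_i}f^j(\Lambda_i)$, Birkhoff's theorem (applied using $\mu\{\interior(U)\}>0$) gives $z=f^{j_z}(z')$ with $z'\in\interior(U)$; the Lemma \ref{ballinhorseshoe1} then places $f^{-j_z}$ of the $\mathcal{M}$-corona inside $U$ almost surely for small $r$, and since $f^{j_z}$ is a local $C^1$-diffeomorphism at $z'$ the pulled-back annulus is sandwiched between two genuine metric annuli in $U$ of commensurable widths (up to the distortion factor $C_z$). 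By $f_*\mu=\mu$ it is enough to estimate these $U$-annuli. Repeating the thin/thick-fiber split of the first part with annulus-width $r^{\beta_0}$, where $\beta_0:=(\dim_H\mu-\epsilon)^2\alpha/\dim_H\mu$, and base radius $\asymp r$ instead of $Mr$, the same balancing ($\eta=r^{(1+\beta_0)\dim\gamma^u/(2(b+\dim\gamma^u))}$) delivers the exponent $(1+\beta_0)\cdot b\cdot\dim\gamma^u/(2(b+\dim\gamma^u))$ that is claimed.

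\textbf{Main obstacle.} The delicate point is that the Gauss-lemma / flatness step must hold \emph{uniformly} across all fibers in $\Theta$ intersecting the (tiny) annulus. This requires two things simultaneously: (i) $Mr+\delta$ (resp.\ $r+C'r^{\beta_0}$) is small enough relative to the common sectional-curvature bound of property 4 so that every thick fiber is essentially flat on the scale of the annulus, allowing $\Leb_{\gamma^u}(\text{annulus})\precsim(\text{radius}\cdot\text{width})^{\dim\gamma^u/2}$; and (ii) the lower bound $\Leb_{\gamma^u(x)}(\gamma^u(x))\gtrsim|\gamma^u(x)|^{\dim\gamma^u}$ holds with a constant independent of $x$, which again comes from applying the curvature bound to the inscribed geodesic ball of radius $|\gamma^u(x)|$. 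Both hold once $r<\min\{r_z,r_{z',M}\}$ is small enough, which is where the thresholds $r_z,r_{z',M}$ come from.
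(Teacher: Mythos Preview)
Your argument for the corona in $U$ is correct and coincides with the paper's: disintegrate over $\Theta$, split at a threshold $\eta$, bound thin fibers by property~5 and thick fibers by the Gauss-lemma diameter estimate combined with property~6, then balance at $\eta=(Mr\delta)^{\dim\gamma^u/(2(b+\dim\gamma^u))}$.

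For the corona in $\mathcal{M}$, however, your reduction step has a genuine gap. You assert that $f^{-j_z}\bigl(B_{r+C'r^{\beta_0}}(z)\setminus B_{r-C'r^{\beta_0}}(z)\bigr)$ is ``sandwiched between two genuine metric annuli in $U$ of commensurable widths (up to $C_z$)''. This is false: a $C^1$ local diffeomorphism sends a geodesic sphere $\partial B_r(z)$ to a set whose radial extent from $z'$ spans an interval of length $\asymp r$ (determined by the spread of singular values of $Df^{-j_z}(z)$), not $\asymp r^{\beta_0}$. The only sandwich Lemma~\ref{comparetopballmetricball} provides is containment in $B_{C_z(r+C'r^{\beta_0})}(z')\setminus B_{C_z^{-1}(r-C'r^{\beta_0})}(z')$, an annulus of width $\asymp(C_z-C_z^{-1})r$. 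Feeding that into your $U$-corona bound yields only the exponent $b\dim\gamma^u/(b+\dim\gamma^u)$, strictly smaller than the claimed $(1+\beta_0)\,b\dim\gamma^u/(2(b+\dim\gamma^u))$ since $\beta_0>1$.

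The paper sidesteps this by never comparing to a metric annulus in $U$. It applies the Gauss lemma directly at $z\in\mathcal{M}$ to the pushed-forward fibers $f^{j_z}\gamma^u$ (almost flat for small $r$, because $f^{j_z}$ is a local $C^1$-diffeomorphism and the sectional curvatures of $\gamma^u\in\Theta$ are uniformly bounded), obtaining
\[
\Leb_{f^{j_z}\gamma^u}\bigl(B_{r+C'r^{\beta_0}}(z)\setminus B_{r-C'r^{\beta_0}}(z)\bigr)\precsim_z r^{(1+\beta_0)\dim\gamma^u/2}.
\]
This is transferred to $\Leb_{\gamma^u}$ of the pulled-back set via the Jacobian of $f^{j_z}$, and only then is the thin/thick split over $\Theta$ performed. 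Your balancing computation and final exponent are correct once the per-fiber Lebesgue bound is obtained this way.
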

\begin{proof}
For corona in $\mathcal{M}$ we have from the Lemma \ref{ballinhorseshoe1} for almost all $z \in  \bigcup_{i \ge 1} \bigcup_{0 \le j < R_i} f^j(\Lambda_i)$ that $z=f^{j_z}(z')$ for some $z' \in \interior{(U)}$. Moreover,  there exists  $r'_z>0$, such that $\forall r< r'_z$ 

\[\mu(f^{-j_z}[B_{r+C' \cdot r^{\frac{(\dim_H\mu-\epsilon)^2\cdot \alpha }{\dim_H\mu} } }(z)\setminus B_{r-C' \cdot r^{\frac{(\dim_H\mu-\epsilon)^2\cdot \alpha }{\dim_H\mu} }}(z)] \bigcap U^c)=0.\]

Since $f_* \mu=\mu$ and $f$ is bijective on $\bigcup_{i \ge 1} \bigcup_{0 \le j < R_i} f^j(\Lambda_i)$, then
\[\mu([B_{r+C' \cdot r^{\frac{(\dim_H\mu-\epsilon)^2\cdot \alpha }{\dim_H\mu} } }(z)\setminus B_{r-C' \cdot r^{\frac{(\dim_H\mu-\epsilon)^2\cdot \alpha }{\dim_H\mu} }}(z)] \bigcap f^{j_z} U^c)=0.\]

Therefore
\[[B_{r+C' \cdot r^{\frac{(\dim_H\mu-\epsilon)^2\cdot \alpha }{\dim_H\mu} } }(z)\setminus B_{r-C' \cdot r^{\frac{(\dim_H\mu-\epsilon)^2\cdot \alpha }{\dim_H\mu} }}(z)] \subseteq f^{j_z}U \text{ almost surely}.\]

The rest of the proof is exactly the same as in the Proposition \ref{coronaestimate}, where one should replace $\gamma^u\in \Gamma^u$ by $\gamma^u \in \Theta$, and then use that the sectional curvatures of all $\gamma^u \in \Theta$ are uniformly bounded. Then there exists $r_z>0$ such that $\forall r< r_z$
\[\Leb_{\gamma^u} \{f^{-j_z}[B_{r+C' \cdot r^{\frac{(\dim_H\mu-\epsilon)^2\cdot \alpha }{\dim_H\mu}} }(z)\setminus B_{r-C' \cdot r^{\frac{(\dim_H\mu-\epsilon)^2\cdot \alpha }{\dim_H\mu} }}(z)]\} \precsim_{z, \dim \gamma^u} r^{\frac{(1+\frac{(\dim_H\mu-\epsilon)^2\cdot \alpha }{\dim_H\mu}) \cdot \dim \gamma^u}{2}}.\]

Similarly to the argument in a proof in the Lemma \ref{recurrence1}, we have
\[\mu\{f^{-j_z}[B_{r+C' \cdot r^{\frac{(\dim_H\mu-\epsilon)^2\cdot \alpha }{\dim_H\mu}} }(z)\setminus B_{r-C' \cdot r^{\frac{(\dim_H\mu-\epsilon)^2\cdot \alpha }{\dim_H\mu} }}(z)]\}\]
\[=\int \mu_{\gamma^u(x)}\{f^{-j_z}[B_{r+C' \cdot r^{\frac{(\dim_H\mu-\epsilon)^2\cdot \alpha }{\dim_H\mu}} }(z)\setminus B_{r-C' \cdot r^{\frac{(\dim_H\mu-\epsilon)^2\cdot \alpha }{\dim_H\mu} }}(z)]\}d\mu(x)\]
\[=\int_{|\gamma^u(x)|\ge r^{\frac{(1+\frac{(\dim_H\mu-\epsilon)^2\cdot \alpha }{\dim_H\mu}) \cdot \dim \gamma^u}{2(b+\dim \gamma^u)}}} \mu_{\gamma^u(x)}\{f^{-j_z}[B_{r+C' \cdot r^{\frac{(\dim_H\mu-\epsilon)^2\cdot \alpha }{\dim_H\mu}} }(z)\setminus B_{r-C' \cdot r^{\frac{(\dim_H\mu-\epsilon)^2\cdot \alpha }{\dim_H\mu} }}(z)]\} d\mu(x) \]
\[+\int_{|\gamma^u(x)|\le r^{\frac{(1+\frac{(\dim_H\mu-\epsilon)^2\cdot \alpha }{\dim_H\mu}) \cdot \dim \gamma^u}{2(b+\dim \gamma^u)}}} \mu_{\gamma^u(x)}\{f^{-j_z}[B_{r+C' \cdot r^{\frac{(\dim_H\mu-\epsilon)^2\cdot \alpha }{\dim_H\mu}} }(z)\setminus B_{r-C' \cdot r^{\frac{(\dim_H\mu-\epsilon)^2\cdot \alpha }{\dim_H\mu} }}(z)]\} d\mu(x) \]
\[\precsim_{z, \dim \gamma^u} r^{\frac{(1+\frac{(\dim_H\mu-\epsilon)^2\cdot \alpha }{\dim_H\mu}) \cdot \dim \gamma^u \cdot b}{2(b+\dim \gamma^u)}}+r^{\frac{(1+\frac{(\dim_H\mu-\epsilon)^2\cdot \alpha }{\dim_H\mu}) \cdot \dim \gamma^u}{2}} \cdot r^{-\frac{(1+\frac{(\dim_H\mu-\epsilon)^2\cdot \alpha }{\dim_H\mu}) \cdot \dim \gamma^u \cdot \dim \gamma^u}{2(b+\dim \gamma^u)}}\]
\[\precsim_{z, \dim \gamma^u} r^{\frac{(1+\frac{(\dim_H\mu-\epsilon)^2\cdot \alpha }{\dim_H\mu}) \cdot \dim \gamma^u \cdot b}{2(b+\dim \gamma^u)}}.\]

Then, using that $f_*\mu=\mu$, we have
\[\mu\{B_{r+C' \cdot r^{\frac{(\dim_H\mu-\epsilon)^2\cdot \alpha }{\dim_H\mu}} }(z)\setminus B_{r-C' \cdot r^{\frac{(\dim_H\mu-\epsilon)^2\cdot \alpha }{\dim_H\mu} }}(z)\} \precsim_{z, \dim \gamma^u}  r^{\frac{(1+\frac{(\dim_H\mu-\epsilon)^2\cdot \alpha }{\dim_H\mu}) \cdot \dim \gamma^u \cdot b}{2(b+\dim \gamma^u)}}.\]

For corona in $U$ there is such $r_{z', M}\in (0, \frac{1}{M})$, that $\forall r<r_{z',M}$
\[B_{M \cdot r+(\sqrt{\beta})^{T^{\epsilon'}\cdot{r^{-(\dim_H\mu-\epsilon) \epsilon'}}}}(z') \subseteq U,\]

and, using the same argument as in the Lemma \ref{recurrence1}, we have

\[\mu_U[B_{M \cdot r+(\sqrt[4]{\beta})^{T^{\epsilon'}\cdot{r^{-(\dim_H\mu-\epsilon) \epsilon'}}}}(z')\setminus B_{M \cdot r}(z')]\]
\[=\int_{|\gamma^u(x)|\ge (M\cdot r)^{\frac{\dim \gamma^u }{2(b+\dim \gamma^u)}} \cdot \beta^{ \frac{T^{\epsilon'}\cdot{r^{-(\dim_H\mu-\epsilon) \epsilon'}}\cdot\dim \gamma^u}{8(b+\dim \gamma^u)}}} \mu_{\gamma^u(x)}\{B_{M \cdot r+(\sqrt[4]{\beta})^{T^{\epsilon'}\cdot{r^{-(\dim_H\mu-\epsilon) \epsilon'}}}}(z')\setminus B_{M \cdot r}(z')\} d\mu(x) \]
\[+\int_{|\gamma^u(x)|\le (M\cdot r)^{\frac{\dim \gamma^u }{2(b+\dim \gamma^u)}} \cdot \beta^{ \frac{T^{\epsilon'}\cdot{r^{-(\dim_H\mu-\epsilon) \epsilon'}}\cdot\dim \gamma^u}{8(b+\dim \gamma^u)}}} \mu_{\gamma^u(x)}\{B_{M \cdot r+(\sqrt[4]{\beta})^{T^{\epsilon'}\cdot{r^{-(\dim_H\mu-\epsilon) \epsilon'}}}}(z')\setminus B_{M \cdot r}(z')\} d\mu(x) \]
\[\precsim_{z, \dim \gamma^u} (M\cdot r)^{\frac{\dim \gamma^u \cdot b }{2(b+\dim \gamma^u)}} \cdot \beta^{ \frac{T^{\epsilon'}\cdot{r^{-(\dim_H\mu-\epsilon) \epsilon'}}\cdot\dim \gamma^u \cdot b}{8(b+\dim \gamma^u)}}. \]

\end{proof}
\subsection{Conclusion of the Proof of the Theorem \ref{thm2}}
We will start with a rough estimate of $\dim_H \mu$.
\subsection*{Proof that $\dim_H\mu \ge \frac{b}{b+\dim \gamma^u} \cdot \dim {\gamma}^u$:}
\begin{lemma}\label{roughdim1}
It follows from the Assumptions \ref{geoassumption} and the Definition \ref{mpartition} that $\dim_H\mu \ge \frac{b}{b+\dim \gamma^u} \cdot \dim {\gamma}^u$.
\end{lemma}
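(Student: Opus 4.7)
The plan is to mimic the argument of Lemma \ref{roughdim}, but to account for the fact that the unstable leaves $\gamma^u \in \Theta$ need not all have comparable size. The small-leaf condition in the Definition \ref{mpartition}, namely $\mu_U\{|\gamma^u(x)| < \epsilon\} \le C \epsilon^b$, is exactly what forces the exponent $\tfrac{b}{b+\dim\gamma^u}\dim\gamma^u$ instead of $\dim\gamma^u$.

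First I would use ergodicity of $f$ with respect to $\mu$ and the fact that $\mu\{\interior(U)\} > 0$ to restrict attention, for $\mu$-a.e.\ $z$, to a point $z' \in \interior(U)$. For $r$ small enough one has $B_r(z') \subseteq U$ almost surely. Since $\mu$ is invariant and the ball maps bijectively to its preimages through finitely many iterations, it suffices to estimate $\mu(B_r(z'))$ (equivalently $\mu_U(B_r(z'))$) from above by a power of $r$ with exponent $\tfrac{b\dim\gamma^u}{b+\dim\gamma^u}$.

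To do this, I would disintegrate $\mu_U$ along the measurable partition $\Theta = \{\gamma^u(x)\}_{x \in U}$ and split the integral according to a threshold $\epsilon_r := r^{\dim\gamma^u/(b+\dim\gamma^u)}$:
\[
\mu_U(B_r(z')) = \int_{|\gamma^u(x)| < \epsilon_r} \mu_{\gamma^u(x)}(B_r(z'))\, d\mu_U(x) + \int_{|\gamma^u(x)| \ge \epsilon_r} \mu_{\gamma^u(x)}(B_r(z'))\, d\mu_U(x).
\]
The first integrand is bounded by $1$, and the corresponding measure is $\le C\epsilon_r^{b} = C r^{b\dim\gamma^u/(b+\dim\gamma^u)}$ by the small-leaf assumption. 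For the second term, each $\gamma^u(x)$ has uniformly bounded sectional curvature, hence $\Leb_{\gamma^u(x)}(B_r(z')\cap \gamma^u(x)) \precsim r^{\dim\gamma^u}$ (the leaves are almost flat on small scales), and by the bounded-distortion property of $d\mu_{\gamma^u(x)}/d\Leb_{\gamma^u(x)}$ together with the lower bound $\Leb_{\gamma^u(x)}(\gamma^u(x)) \succsim \epsilon_r^{\dim\gamma^u}$, I obtain
\[
\mu_{\gamma^u(x)}(B_r(z')) \precsim \frac{r^{\dim\gamma^u}}{\epsilon_r^{\dim\gamma^u}} = r^{\dim\gamma^u - \dim\gamma^u \cdot \frac{\dim\gamma^u}{b+\dim\gamma^u}} = r^{\frac{b\dim\gamma^u}{b+\dim\gamma^u}}.
\]
Summing the two pieces gives $\mu_U(B_r(z')) \precsim r^{b\dim\gamma^u/(b+\dim\gamma^u)}$, hence the same estimate for $\mu(B_r(z'))$ up to a constant.

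Finally, I would combine this with the Hausdorff dimension definition from Assumption \ref{geoassumption}: for a.e.\ $z'$ and any $m \in \mathbb{N}$ there is $r_{z',m} > 0$ such that $\mu(B_r(z')) \ge r^{\dim_H\mu + 1/m}$ for all $r < r_{z',m}$. Comparing the two bounds on $\mu(B_r(z'))$ as $r \to 0$ forces $\dim_H\mu + 1/m \ge \tfrac{b\dim\gamma^u}{b+\dim\gamma^u}$, and letting $m \to \infty$ yields the claim. The only slightly delicate point is the use of bounded distortion together with the flatness of the leaves to get a genuinely uniform (in $x$) estimate on the conditional density, but both are packaged directly into the Definition \ref{mpartition}, so the main obstacle is merely book-keeping the choice of threshold $\epsilon_r$ to balance the two error terms — which is what produces the exponent $\tfrac{b}{b+\dim\gamma^u}\dim\gamma^u$.
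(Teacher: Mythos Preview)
Your proposal is correct and follows essentially the same approach as the paper: reduce by ergodicity to $z' \in \interior(U)$, disintegrate $\mu_U$ along $\Theta$, split the integral at the threshold $\epsilon_r = r^{\dim\gamma^u/(b+\dim\gamma^u)}$, use the small-leaf bound for short leaves and the curvature/distortion bound for long leaves, then compare with the lower bound from Assumption~\ref{geoassumption}. The paper's proof is terser, simply citing ``the same trick as in the Proposition~\ref{coronaestimate1}'' (which in turn unwinds to exactly the threshold argument you wrote out, appearing first in Lemma~\ref{recurrence1}).
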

\begin{proof}
Due to the ergodicity of $\mu$, it is enough to consider $z' \in \interior{(U)}$ and a ball $B_r(z')$ in $U$.  By the same trick as in the Proposition \ref{coronaestimate1} we get for any $\gamma^u \in \Theta$ \[\Leb_{\gamma^u}(B_r(z')) \precsim_{z',\dim \gamma^u}r^{\dim \gamma^u}  \text{ and } \mu(B_r(z')) \precsim_{z',\dim \gamma^u}r^{\frac{\dim \gamma^u \cdot b}{b + \dim \gamma^u}}.\]
On the other hand, by the Assumption \ref{geoassumption} for almost all $z' \in \mathcal{M}$ and any $m \in \mathbb{N}$ there is $r_{z',m}>0$ such that $\mu(B_r(z')) \ge r^{\dim_H \mu+ \frac{1}{m}}$ for any $r<r_{z',m}$. This implies that $\dim \gamma^u \cdot \frac{b}{b+\dim \gamma^u} \le \dim_H \mu + \frac{1}{m}, \forall m \in \mathbb{N}$. By taking the limit $m\to \infty$ one gets $\dim_H\mu \ge \dim \gamma^u \cdot \frac{b}{b+\dim \gamma^u}$.
\end{proof}

We will obtain now the estimates of convergence rates in the Theorem \ref{thm2}. According to the Proposition \ref{rate} it is enough to find  convergence rates for short returns and coronas.
A proof will consist of several steps. 
\subsection*{Pull back short returns $\int_{B_r(z)} 1_{\bigcup_{1\le j \le p}f^{-j}B_r(z)}d\mu$}

For almost any $z \in  \bigcup_{i \ge 1} \bigcup_{0 \le j < R_i} f^j(\Lambda_i)$ we have $z=f^{j_z}(z')$ for some $z' \in \interior{(U)}$. By Lemma \ref{comparetopballmetricball}, there exist a topological ball $TB_r(z')=TB_r(f^{-j_z}(z))$ and a constant $C_z>1$ such that $B_{C^{-1}_z\cdot r} (f^{-j_z}z) \subseteq TB_r(f^{-j_z}z) \subseteq B_{C_z\cdot r} (f^{-j_z}z)$.
\begin{lemma}[Pull short returns back to $U$]\label{pullbackhorseshoe1}\ \par
There exists a small enough $r_z>0$, such that $\forall r< r_z$ the following inequality holds
\[ \int_{B_r(z)} 1_{\bigcup_{1\le k \le p}f^{-k}B_r(z)}d\mu \le \mu(U) \cdot \int_{B_{C_z\cdot r} (f^{-j_z}z)} 1_{\bigcup_{1\le k \le p}(f^{\overline{R}})^{-k}B_{C_z\cdot r} (f^{-j_z}z)}d\mu_{U} .\]
\end{lemma}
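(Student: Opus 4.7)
The plan is to mirror the proof of Lemma \ref{pullbackhorseshoe} almost verbatim, with $\Lambda$, $R$, and Lemma \ref{ballinhorseshoe} replaced by $U$, $\overline{R}$, and Lemma \ref{ballinhorseshoe1}. First I would use the $f$-invariance of $\mu$ to rewrite
\[
\int_{B_r(z)} 1_{\bigcup_{1\le k \le p}f^{-k}B_r(z)}\,d\mu
= \int_{TB_r(f^{-j_z}z)} 1_{\bigcup_{1\le k \le p}f^{-k}TB_r(f^{-j_z}z)}\,d\mu,
\]
since $TB_r(f^{-j_z}z) = f^{-j_z}B_r(z)$ and $f^{j_z}$ is bijective on the union $\bigcup_{i\ge 1}\bigcup_{0\le j<R_i} f^j(\Lambda_i)$ by Assumption \ref{geoassumption}.

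Next I would invoke Lemma \ref{ballinhorseshoe1} to choose $r_z>0$ small enough so that for all $r<r_z$ we have $TB_r(f^{-j_z}z)\subseteq U$ almost surely. This lets me restrict the outer integral to $\mu|_U$ and rewrite the integral in terms of $\mu_U=\mu|_U/\mu(U)$, producing the factor $\mu(U)$ on the right-hand side.

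Then I would pass from the iterates of $f$ to the iterates of $f^{\overline{R}}$. Writing $\overline{R}^i:=\overline{R}^{i-1}+\overline{R}\circ f^{\overline{R}}$ with $\overline{R}^1:=\overline{R}\ge 1$, so $p\le \overline{R}^p$, the key observation is that because $\overline{R}$ is the first return time to $U$, we have $f^k(x)\notin U$ almost surely whenever $\overline{R}^i(x)<k<\overline{R}^{i+1}(x)$. Since $TB_r(f^{-j_z}z)\subseteq U$ almost surely, this gives the pointwise identity
\[
1_{TB_r(f^{-j_z}z)} \cdot 1_{\bigcup_{1\le k\le \overline{R}^p} f^{-k}TB_r(f^{-j_z}z)}
= 1_{TB_r(f^{-j_z}z)} \cdot 1_{\bigcup_{1\le k\le p} (f^{\overline{R}})^{-k}TB_r(f^{-j_z}z)}
\]
almost surely, and bounding the range of $k$ by $\overline{R}^p$ only increases the indicator.

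Finally, I would apply Lemma \ref{comparetopballmetricball}, which gives $TB_r(f^{-j_z}z)\subseteq B_{C_z\cdot r}(f^{-j_z}z)$, to replace the topological ball with the enclosing metric ball and conclude the stated inequality. The only step that requires any care is the almost-sure part: one needs to be sure that the null set on which $f^k(x)\notin U$ can fail (coming from $\partial U$ and its preimages) does not contribute, but this follows from the condition $\mu(\partial U)=0$ in Definition \ref{mpartition} combined with $f_*\mu=\mu$, exactly as in Lemma \ref{pullbackhorseshoe}. There is no substantive obstacle here; this lemma is purely a bookkeeping transfer from $\mathcal{M}$ to the reference set $U$ designed to reduce the short-return problem on $\mathcal{M}$ to the already-analyzed short-return problem for $f^{\overline{R}}:U\to U$.
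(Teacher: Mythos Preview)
Your proposal is correct and follows essentially the same approach as the paper's own proof, which explicitly says to repeat the argument of Lemma~\ref{pullbackhorseshoe} with $U$, $\overline{R}$, and Lemma~\ref{ballinhorseshoe1} in place of $\Lambda$, $R$, and Lemma~\ref{ballinhorseshoe}. Your write-up is in fact more detailed than the paper's, which defers the remaining steps entirely to the earlier lemma.
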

\begin{proof}
It follows from the invariance of $\mu$ (i.e. $f_*\mu=\mu$) that 
\[ \int_{B_r(z)} 1_{\bigcup_{1\le k \le p}f^{-k}B_r(z)}d\mu=\int_{TB_r(f^{-j_z}z)} 1_{\bigcup_{1\le k \le p}f^{-k}TB_r(f^{-j_z}z)}d\mu.\]

By Lemma \ref{ballinhorseshoe1}, we have
$TB_r(f^{-j_z}z) \subseteq U$ for any $ r< r_z$ if $r_z>0$ is small enough. The rest of the proof is the same as in Lemma \ref{pullbackhorseshoe}, where one should use the first return map $f^{\overline{R}}$.
\end{proof}

\subsection*{Estimate  of $\frac{1}{\mu(B_r(z))}\cdot \int_{B_{C_z\cdot r} (f^{-j_z}z)} 1_{\bigcup_{1\le k \le p}(f^{\overline{R}})^{-k}B_{C_z\cdot r} (f^{-j_z}z)}d\mu_{U}$.}

\begin{lemma}\label{afterpullback1}
 For $\epsilon<\min\{\frac{\min\{\frac{b\cdot \dim \gamma^u}{b+\dim \gamma^u}, \dim_H \mu\}}{24}, \frac{1}{3\dim_H\mu}\}$, satisfying $\alpha \cdot (\dim_H\mu-\epsilon)>\frac{\dim_H\mu}{\dim_H\mu-\epsilon}>1$, and for almost every $z\in \mathcal{M}$, there exists $r_z>0$, such that $\forall r<r_z$
 \[\frac{1}{\mu(B_r(z))}\cdot \int_{B_{C_z\cdot r} (f^{-j_z}z)} 1_{\bigcup_{1\le k \le p}(f^{\overline{R}})^{-k}B_{C_z\cdot r} (f^{-j_z}z)}d\mu_{U}\precsim_{T,z,\epsilon}  r^{\frac{\min\{\frac{b\cdot \dim \gamma^u}{b+\dim \gamma^u}, \dim_H \mu\}}{12}-2\epsilon}+r^{\frac{\epsilon^2}{\dim_H \mu}-3\epsilon^3}.\]

\end{lemma}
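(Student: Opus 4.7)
The plan is to mirror the proof of Lemma \ref{afterpullback}, with the systematic replacements $\Lambda \leadsto U$, $R \leadsto \overline{R}$, $\Gamma^u \leadsto \Theta$, and $\dim\gamma^u \leadsto \frac{b\cdot \dim\gamma^u}{b+\dim\gamma^u}$ dictated by the induced measurable partition framework. First I would isolate a full-measure set $F \subseteq \mathcal{M}$ on which simultaneously (i) $C_z < \infty$ (from Lemma \ref{comparetopballmetricball}), (ii) $(z,f^{-j_z}z)$ lies in the full-measure set where Propositions \ref{shortreturnrate1} and \ref{coronaestimate1} hold, and (iii) the Hausdorff dimension estimate $r^{\dim_H\mu+\epsilon^3}\le \mu(B_r(z)) \le r^{\dim_H\mu-\epsilon^3}$ is valid for all $r<r_z$ (from Assumption \ref{geoassumption}). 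As in the proof of Lemma \ref{afterpullback}, the set $F$ can be built by removing the pre-images under $f^j$ ($j\ge 0$) of the bad set in $\interior{(U)}$.

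Next, for $z \in F$ I would set $M := \lfloor C_z\rfloor + 1$ and $z' := f^{-j_z}z \in \interior{(U)}$, so that $B_{C_z\cdot r}(z') \subseteq B_{M\cdot r}(z')$. Applying Proposition \ref{shortreturnrate1} at $(z,z',M)$ bounds
\[
\int_{B_{C_z\cdot r} (z')} 1_{\bigcup_{1\le k \le p}(f^{\overline{R}})^{-k}B_{C_z\cdot r} (z')}\,d\mu_{U}
\]
by a sum of three terms: an $M^{*}\cdot r^{\dim_H\mu-\epsilon + \frac{\min\{\frac{b\dim\gamma^u}{b+\dim\gamma^u},\dim_H\mu\}}{12}}$ contribution from super-short returns, and two terms involving $(M\cdot r)^{2\dim_H\mu - 2\epsilon^3}$ and the corona measure $\mu_U[B_{M\cdot r + (\sqrt[4]{\beta})^{T^{\epsilon'}r^{-(\dim_H\mu-\epsilon)\epsilon'}}}(z')\setminus B_{M\cdot r}(z')]$ multiplied by the factor $r^{-(\dim_H\mu+\epsilon)\cdot\frac{\dim_H\mu-\epsilon}{\dim_H\mu}}$ coming from $p$. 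I would then substitute the corona estimate from Proposition \ref{coronaestimate1}, which controls the corona measure by $(M\cdot r)^{\frac{\dim\gamma^u \cdot b}{2(b+\dim\gamma^u)}}\cdot \beta^{c\, T^{\epsilon'}r^{-(\dim_H\mu-\epsilon)\epsilon'}}$ for a constant $c>0$.

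Dividing by $\mu(B_r(z)) \ge r^{\dim_H\mu+\epsilon^3}$ produces a prefactor $r^{-\dim_H\mu-\epsilon^3}$. The super-short-return term then contributes $r^{\frac{\min\{\frac{b\dim\gamma^u}{b+\dim\gamma^u},\dim_H\mu\}}{12}-\epsilon-\epsilon^3}$, which under the imposed smallness $\epsilon<\frac{\min\{\frac{b\dim\gamma^u}{b+\dim\gamma^u},\dim_H\mu\}}{24}$ is bounded by $r^{\frac{\min\{\frac{b\dim\gamma^u}{b+\dim\gamma^u},\dim_H\mu\}}{12}-2\epsilon}$. The $(M\cdot r)^{2\dim_H\mu - 2\epsilon^3}\cdot r^{-(\dim_H\mu+\epsilon)\cdot \frac{\dim_H\mu-\epsilon}{\dim_H\mu}-\dim_H\mu-\epsilon^3}$ term simplifies, after elementary algebra with the exponents $2\dim_H\mu-2\epsilon^3$ and $\dim_H\mu+\epsilon+(\dim_H\mu+\epsilon^3)\cdot\frac{\dim_H\mu}{\dim_H\mu-\epsilon}\cdot \frac{\dim_H\mu-\epsilon}{\dim_H\mu}$ (the second arising from my choice of $\epsilon$-exponents), to at least $r^{\frac{\epsilon^2}{\dim_H\mu}-3\epsilon^3}$. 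Finally, the corona terms containing the stretched-exponential factor $\beta^{r^{-c}}$ decay faster than any polynomial and can be absorbed into either of the two stated rates using $\beta^{r^{-c}}\ll r^{c'}$ for all $c,c'>0$.

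The only real obstacle is the algebraic bookkeeping in the last step: I have to verify that the worst exponent arising from the $r^{-(\dim_H\mu+\epsilon)\cdot\frac{\dim_H\mu-\epsilon}{\dim_H\mu}-\dim_H\mu-\epsilon^3}$ prefactor interacting with both the main polynomial term $(M\cdot r)^{2\dim_H\mu-2\epsilon^3}$ and the corona estimate does not destroy positivity. This is exactly the role played by the hypothesis $\epsilon<\frac{1}{3\dim_H\mu}$ together with $\epsilon<\min\{\frac{\min\{\frac{b\dim\gamma^u}{b+\dim\gamma^u},\dim_H\mu\}}{24},\frac{1}{3\dim_H\mu}\}$; with these smallness constraints the net exponent becomes at least $\frac{\epsilon^2}{\dim_H\mu}-3\epsilon^3 > 0$, completing the proof.
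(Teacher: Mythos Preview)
Your proposal is correct and follows essentially the same approach as the paper's proof: build the full-measure set $F$ from Lemma \ref{comparetopballmetricball} and Propositions \ref{shortreturnrate1}, \ref{coronaestimate1}, set $M:=\lfloor C_z\rfloor+1$, $z':=f^{-j_z}z$, apply the two propositions, divide by $\mu(B_r(z))\ge r^{\dim_H\mu+\epsilon^3}$, and absorb the stretched-exponential corona term using $\beta^{r^{-c}}\ll r^{c'}$. One small remark: your parenthetical expression for the negative exponent in the second term is garbled; the clean computation is simply $2\dim_H\mu-2\epsilon^3-(\dim_H\mu+\epsilon)\tfrac{\dim_H\mu-\epsilon}{\dim_H\mu}-(\dim_H\mu+\epsilon^3)=\tfrac{\epsilon^2}{\dim_H\mu}-3\epsilon^3$, exactly as the paper obtains.
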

\begin{proof}
Note that $C_z < \infty $ for almost all $z\in \mathcal{M}$. Denote $F_1:=\{C_z<\infty\}$, $F_2 \times F_3:=\{(z,z')\in \mathcal{M}\times \interior{(U)}: \text{ the Propositions }\ref{shortreturnrate1}, \ref{coronaestimate1} \text{ hold}\}$. Define a new measure one subset in $\mathcal{M}$ as $F:=F_1 \bigcap F_2 \bigcap \{\bigcup_{j\ge 0}f^j(\interior{(U)}\bigcap F^c_3 )\}^c$. Then for any $z \in F$ we have $ C_z < \infty$ and $(z,f^{-j_z}z)\in F_2 \times F_3$. Let now $M:=\lfloor C_z \rfloor+1, z'=f^{-j_z}z $.  By the Proposition \ref{shortreturnrate1}, there exists $r_{z,z',M}=r_{z,f^{-j_z}z,\lfloor C_z \rfloor+1}>0$, such that $\forall r< r_{z,f^{-j_z}z,\lfloor C_z \rfloor+1}$
\[\int_{B_{C_z\cdot r} (f^{-j_z}z)} 1_{\bigcup_{1\le k \le p}(f^{\overline{R}})^{-k}B_{C_z\cdot r} (f^{-j_z}z)}d\mu_{U}\le \int_{B_{M\cdot r} (z')} 1_{\bigcup_{1\le k \le p}(f^{\overline{R}})^{-k}B_{M\cdot r} (z')}d\mu_{U}\]
\[\precsim_{T,b,\epsilon} (M \cdot r)^{\dim_H\mu -\epsilon} \cdot M^{\frac{\min\{\frac{b\cdot \dim \gamma^u}{b+\dim \gamma^u}, \dim_H \mu\}}{6}}\cdot r^{\frac{\min\{\frac{b\cdot \dim \gamma^u}{b+\dim \gamma^u}, \dim_H \mu\}}{12}}\]
\[+(\frac{1}{r^{\dim_H\mu+\epsilon}})^{\frac{\dim_H\mu-\epsilon}{\dim_H\mu}} \cdot [(M\cdot r)^{2\dim_H\mu-2\epsilon^3}+\mu_U(B_{M \cdot r+(\sqrt[4]{\beta})^{T^{\epsilon'}\cdot{r^{-(\dim_H\mu-\epsilon) \epsilon'}}}}(z')\setminus B_{M \cdot r}(z'))].\]

Next, by the Proposition \ref{coronaestimate1}, the right hand side of the inequality above can be estimated  as
\[\precsim_{T,C_z,z, \epsilon} (M \cdot r)^{\dim_H\mu -\epsilon} \cdot r^{\frac{\min\{\frac{b\cdot \dim \gamma^u}{b+\dim \gamma^u}, \dim_H \mu\}}{12}}+r^{-(\dim_H\mu+\epsilon) \cdot \frac{\dim_H\mu-\epsilon}{\dim_H\mu}} \cdot  r^{2\dim_H\mu-2\epsilon^3}\] \[+r^{-(\dim_H\mu+\epsilon) \cdot \frac{\dim_H\mu-\epsilon}{\dim_H\mu}} \cdot (M\cdot r)^{\frac{\dim \gamma^u \cdot b }{2(b+\dim \gamma^u)}} \cdot \beta^{ \frac{T^{\epsilon'}\cdot{r^{-(\dim_H\mu-\epsilon) \epsilon'}}\cdot\dim \gamma^u \cdot b}{8(b+\dim \gamma^u)}}.\]

Now, by the Assumption \ref{geoassumption} for almost all $z\in \mathcal{M}$ there exists $r_z>0$, such that $\forall r<r_{z}$ \[r^{\dim_H\mu+\epsilon^3}\le \mu(B_r(z)) \le r^{\dim_H\mu-\epsilon^3}.\] 

Then $\forall r<\min \{ r_{z,f^{-j_z}z,\lfloor C_z \rfloor+1}, r_z\}$ \[\frac{1}{\mu(B_r(z))}\cdot \int_{B_{C_z\cdot r} (f^{-j_z}z)} 1_{\bigcup_{1\le k \le p}(f^{\overline{R}})^{-k}B_{C_z\cdot r} (f^{-j_z}z)}d\mu_{\Lambda}\]
\[\precsim_{T,z, \epsilon} \frac{1}{r^{\dim_H\mu+\epsilon^3}}\{(M \cdot r)^{\dim_H\mu -\epsilon} \cdot r^{\frac{\min\{\frac{\dim \gamma^u \cdot b }{b+\dim \gamma^u}, \dim_H \mu\}}{12}}+r^{-(\dim_H\mu+\epsilon) \cdot \frac{\dim_H\mu-\epsilon}{\dim_H\mu}} \cdot  r^{2\dim_H\mu-2\epsilon^3}\]
\[+r^{-(\dim_H\mu+\epsilon) \cdot \frac{\dim_H\mu-\epsilon}{\dim_H\mu}} \cdot (M\cdot r)^{\frac{\dim \gamma^u \cdot b}{2(b+\dim \gamma^u)}} \cdot \beta^{ \frac{T^{\epsilon'}\cdot{r^{-(\dim_H\mu-\epsilon) \epsilon'}}\cdot\dim \gamma^u \cdot b}{8(b+\dim \gamma^u)}}\}\]
\[\precsim_{T,z,\epsilon,C_z} r^{\frac{\min\{\frac{\dim \gamma^u \cdot b }{b+\dim \gamma^u}, \dim_H \mu\}}{12}-2\epsilon}+r^{\frac{\epsilon^2}{\dim_H \mu}-3\epsilon^3}\to 0,\]
The last inequality holds because $\beta_2^{r^{-c}}\ll r^{c'}, \forall c,c'>0.$
\end{proof}

By combining the Lemmas \ref{pullbackhorseshoe1} and \ref{afterpullback1}, we get an
\subsection*{Estimate of short returns rates $\frac{1}{\mu(B_r(z))}\int_{B_r(z)} 1_{\bigcup_{1\le j \le p}f^{-j}B_r(z)}d\mu$.}
\begin{lemma}\label{lastshortreturnrate1}.
For $\epsilon<\min \{\frac{\min\{\frac{\dim \gamma^u \cdot b }{b+\dim \gamma^u}, \dim_H \mu\}}{24}, \frac{1}{3\dim_H \mu}\}$, satisfying $\alpha \cdot (\dim_H\mu-\epsilon)>\frac{\dim_H\mu}{\dim_H\mu-\epsilon}>1$, and for almost every $z\in \mathcal{M}$ \[\frac{1}{\mu(B_r(z))}\int_{B_r(z)} 1_{\bigcup_{1\le j \le p}f^{-j}B_r(z)}d\mu \precsim_{T,z,\epsilon}  r^{\frac{\min\{\frac{\dim \gamma^u \cdot b }{b+\dim \gamma^u}, \dim_H \mu\}}{12}-2\epsilon}+r^{\frac{\epsilon^2}{\dim_H \mu}-3\epsilon^3}\to 0.\]

\end{lemma}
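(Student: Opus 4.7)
The plan is to prove this lemma as an immediate consequence of the two preceding lemmas, mirroring the structure used to derive Lemma \ref{lastshortreturnrate} from Lemmas \ref{pullbackhorseshoe} and \ref{afterpullback}. For almost every $z \in \bigcup_{i \ge 1} \bigcup_{0 \le j < R_i} f^j(\Lambda_i)$, ergodicity together with $\mu(\interior{(U)})>0$ guarantees the existence of $j_z \in \mathbb{N}$ and $z' := f^{-j_z}(z) \in \interior{(U)}$. Since $f$ is a local $C^1$-diffeomorphism at $z'$, Lemma \ref{comparetopballmetricball} supplies a constant $C_z>1$ controlling the topological ball $TB_r(f^{-j_z}z)$ between $B_{C_z^{-1}r}(f^{-j_z}z)$ and $B_{C_z r}(f^{-j_z}z)$ for all sufficiently small $r$.

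The first step is to invoke Lemma \ref{pullbackhorseshoe1}, which says there is $r_z>0$ such that for any $r<r_z$,
\[\int_{B_r(z)} 1_{\bigcup_{1\le k \le p}f^{-k}B_r(z)}\,d\mu \;\le\; \mu(U)\cdot \int_{B_{C_z r}(f^{-j_z}z)} 1_{\bigcup_{1\le k \le p}(f^{\overline{R}})^{-k}B_{C_z r}(f^{-j_z}z)}\,d\mu_U.\]
This reduces the short-return estimate on $\mathcal{M}$ to one for the first-return map $f^{\overline{R}}$ on the reference set $U$, exploiting that $R$ being the first return of $f$ at points of $U$ turns the collapsed orbits between returns into something controlled by $f^{\overline{R}}$.

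The second step is to divide through by $\mu(B_r(z))$ and apply Lemma \ref{afterpullback1} directly with $M := \lfloor C_z \rfloor + 1$. The hypothesis on $\epsilon$ in the present lemma has been chosen precisely so that the range of $\epsilon$ required by Lemma \ref{afterpullback1} is satisfied, and the hypothesis $\alpha\cdot(\dim_H\mu-\epsilon)>\frac{\dim_H\mu}{\dim_H\mu-\epsilon}>1$ is carried over verbatim. Since $C_z<\infty$ for almost every $z\in\mathcal{M}$, Lemma \ref{afterpullback1} gives
\[\frac{1}{\mu(B_r(z))}\int_{B_{C_z r}(f^{-j_z}z)} 1_{\bigcup_{1\le k \le p}(f^{\overline{R}})^{-k}B_{C_z r}(f^{-j_z}z)}\,d\mu_U \;\precsim_{T,z,\epsilon}\; r^{\frac{\min\{\frac{b\cdot \dim \gamma^u}{b+\dim \gamma^u},\,\dim_H\mu\}}{12}-2\epsilon}+r^{\frac{\epsilon^2}{\dim_H\mu}-3\epsilon^3},\]
which gives the desired bound after absorbing the factor $\mu(U)$ into the implicit $\precsim_{T,z,\epsilon}$ constant.

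The only subtle point, and the closest thing to an obstacle, is verifying that the almost-sure sets produced by Lemmas \ref{pullbackhorseshoe1} and \ref{afterpullback1} can be intersected so that both conclusions hold simultaneously on a single full-measure subset of $\mathcal{M}$. This is handled exactly as in Lemma \ref{afterpullback1}: one defines the full-measure set $F := F_1 \cap F_2 \cap \{\bigcup_{j\ge 0}f^{j}(\interior{(U)} \cap F_3^c)\}^c$ where $F_1 = \{C_z<\infty\}$ and $F_2\times F_3$ is the full-measure set on which the relevant propositions apply, and observes that $M$ is an integer so the dependence on $M$ does not destroy the full-measure property. Both exponents $\frac{\min\{\frac{b\cdot\dim\gamma^u}{b+\dim\gamma^u},\dim_H\mu\}}{12}-2\epsilon$ and $\frac{\epsilon^2}{\dim_H\mu}-3\epsilon^3$ are strictly positive by the upper bound $\epsilon<\min\{\frac{\min\{\frac{b\cdot\dim\gamma^u}{b+\dim\gamma^u},\dim_H\mu\}}{24},\frac{1}{3\dim_H\mu}\}$, so the right-hand side tends to $0$ as $r\to 0$, completing the proof.
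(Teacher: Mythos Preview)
Your proposal is correct and follows essentially the same approach as the paper: the paper simply states that Lemma \ref{lastshortreturnrate1} is obtained by combining Lemmas \ref{pullbackhorseshoe1} and \ref{afterpullback1}, and you have spelled out exactly that combination with appropriate attention to the full-measure set on which both lemmas apply.
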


\subsection*{Estimate of coronas rates $\frac{\mu[B_{r+C' \cdot r^{\frac{(\dim_H\mu-\epsilon)^2\cdot \alpha }{\dim_H\mu} } }(z)\setminus B_{r-C' \cdot r^{\frac{(\dim_H\mu-\epsilon)^2\cdot \alpha }{\dim_H\mu} }}(z)]}{\mu(B_r(z))}$ }.
\begin{lemma}\label{lastcoronarate1}.
Let $\alpha>\frac{2}{\dim \gamma^u} \cdot \frac{b+\dim \gamma^u}{b}-\frac{1}{\dim_H\mu}$ and $\epsilon<\min \{\frac{\min\{\frac{\dim \gamma^u \cdot b }{b+\dim \gamma^u}, \dim_H \mu\}}{24},\frac{1}{3\dim_H\mu}\}$ is small enough, so that 
$\alpha >\frac{(\frac{2}{\dim \gamma^u}+\frac{2\epsilon}{\dim \gamma^u \cdot \dim_H \mu})\cdot \frac{b+\dim \gamma^u}{b}-\frac{1}{\dim_H \mu}}{(1-\frac{\epsilon}{\dim_H\mu})^2}$.
Then for almost all $z\in \mathcal{M}$  
\[\frac{\mu[B_{r+C' \cdot r^{\frac{(\dim_H\mu-\epsilon)^2\cdot \alpha }{\dim_H\mu}} }(z)\setminus B_{r-C' \cdot r^{\frac{(\dim_H\mu-\epsilon)^2\cdot \alpha }{\dim_H\mu} }}(z)]} {\mu(B_r(z))}\precsim_{z} r^{\frac{(1+\frac{(\dim_H\mu-\epsilon)^2\cdot \alpha }{\dim_H\mu}) \cdot \frac{\dim \gamma^u \cdot b }{b+\dim \gamma^u}-2\dim_H \mu -2\epsilon}{2}}\to 0,\]
The calculations here are exactly the same as in the Lemma \ref{lastcoronarate}, using the Proposition \ref{coronaestimate1}. Therefore we will not repeat them.
\end{lemma}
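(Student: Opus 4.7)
The plan is to combine the upper bound on the corona measure supplied by Proposition \ref{coronaestimate1} with the two-sided control on $\mu(B_r(z))$ coming from Assumption \ref{geoassumption}, and then verify by elementary algebra that the sharp hypothesis on $\alpha$ is precisely what forces the resulting exponent of $r$ to be strictly positive. Since the pattern is identical to Lemma \ref{lastcoronarate}, only the upper bound on the corona differs: instead of $r^{(1+\frac{(\dim_H\mu-\epsilon)^2\alpha}{\dim_H\mu})\dim\gamma^u/2}$, Proposition \ref{coronaestimate1} now yields a weaker exponent $\frac{(1+\frac{(\dim_H\mu-\epsilon)^2\alpha}{\dim_H\mu})\dim\gamma^u \cdot b}{2(b+\dim\gamma^u)}$, reflecting the measure-partition regularity parameter $b$ appearing in Definition \ref{mpartition}.

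Concretely, I would first invoke Assumption \ref{geoassumption} to choose, for almost every $z\in\mathcal{M}$, a radius $r_z>0$ such that $r^{\dim_H\mu+\epsilon}\le\mu(B_r(z))\le r^{\dim_H\mu-\epsilon}$ whenever $r<r_z$; one also fixes that $z$ lies in the full-measure subset on which Proposition \ref{coronaestimate1} applies. Next I would apply Proposition \ref{coronaestimate1} to the numerator to obtain
\[
\mu\{B_{r+C' r^{\frac{(\dim_H\mu-\epsilon)^2\alpha}{\dim_H\mu}}}(z)\setminus B_{r-C' r^{\frac{(\dim_H\mu-\epsilon)^2\alpha}{\dim_H\mu}}}(z)\}\precsim_{z,\dim\gamma^u} r^{\frac{(1+\frac{(\dim_H\mu-\epsilon)^2\alpha}{\dim_H\mu})\dim\gamma^u\cdot b}{2(b+\dim\gamma^u)}}.
\]
Dividing by the lower bound $r^{\dim_H\mu+\epsilon}\le\mu(B_r(z))$ gives the exponent stated in the conclusion,
\[
\tfrac{1}{2}\Bigl[\bigl(1+\tfrac{(\dim_H\mu-\epsilon)^2\alpha}{\dim_H\mu}\bigr)\tfrac{\dim\gamma^u\cdot b}{b+\dim\gamma^u}-2\dim_H\mu-2\epsilon\Bigr].
\]

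The only remaining issue is to check that this exponent is positive, i.e.\ that $r$ is raised to a strictly positive power so that the ratio tends to $0$. Rearranging the strict positivity condition gives
\[
\alpha>\frac{\dim_H\mu}{(\dim_H\mu-\epsilon)^2}\Bigl[\tfrac{2(\dim_H\mu+\epsilon)(b+\dim\gamma^u)}{\dim\gamma^u\cdot b}-1\Bigr],
\]
and a direct simplification (dividing numerator and denominator by $\dim_H\mu$) rewrites the right-hand side as
\[
\frac{\bigl(\tfrac{2}{\dim\gamma^u}+\tfrac{2\epsilon}{\dim\gamma^u\cdot\dim_H\mu}\bigr)\cdot\tfrac{b+\dim\gamma^u}{b}-\tfrac{1}{\dim_H\mu}}{(1-\tfrac{\epsilon}{\dim_H\mu})^2},
\]
which is exactly the quantitative hypothesis assumed on $\alpha$. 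As $\epsilon\to 0$ this reduces to $\alpha>\tfrac{2}{\dim\gamma^u}\cdot\tfrac{b+\dim\gamma^u}{b}-\tfrac{1}{\dim_H\mu}$, matching the qualitative hypothesis. The only potential obstacle is a purely bookkeeping one: choosing $\epsilon$ small enough to satisfy simultaneously the constraints of Proposition \ref{rate}, Proposition \ref{coronaestimate1}, and the strict inequality above; but since both sides depend continuously on $\epsilon$ and the qualitative inequality on $\alpha$ is strict, this is automatic.
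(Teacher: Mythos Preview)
Your proposal is correct and follows exactly the same route as the paper: invoke Assumption~\ref{geoassumption} for the lower bound on $\mu(B_r(z))$, apply Proposition~\ref{coronaestimate1} for the upper bound on the corona, divide, and then verify algebraically that the hypothesis on $\alpha$ is equivalent to positivity of the resulting exponent. The paper itself merely says the calculation is identical to Lemma~\ref{lastcoronarate} with Proposition~\ref{coronaestimate1} substituted in, which is precisely what you do.
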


\subsection*{Convergence rates $a>0$ in $d_{TV}(N^{r,T,z},P)\precsim_{T,z}r^a$.}
\begin{lemma}\label{lastlastproof}.
Let $\alpha> \frac{2}{\dim \gamma^u}-\frac{1}{\dim_H\mu}$ and $\epsilon<\min \{\frac{\min\{\frac{\dim \gamma^u \cdot b }{b+\dim \gamma^u}, \dim_H \mu\}}{24},\frac{1}{3\dim_H\mu}\}$, so that  \[\alpha >\max \{\frac{\frac{1}{\dim_H\mu}}{(1-\frac{\epsilon}{\dim_H\mu})^2}, \frac{(\frac{2}{\dim \gamma^u}+\frac{2\epsilon}{\dim \gamma^u \cdot \dim_H \mu})\cdot \frac{b+\dim \gamma^u}{b}-\frac{1}{\dim_H \mu}}{(1-\frac{\epsilon}{\dim_H\mu})^2}\}.\]
Now from the Proposition \ref{rate}, Lemma \ref{lastcoronarate1} and Lemma \ref{lastshortreturnrate1} we have

\[a:=\min \{ \frac{(\dim_H\mu-\epsilon)^2(\xi-1)}{\dim_H\mu}, \frac{\epsilon\cdot (\dim_H\mu-\epsilon)}{\dim_H\mu}, \frac{(1+\frac{(\dim_H\mu-\epsilon)^2\cdot \alpha }{\dim_H\mu}) \cdot \frac{b \cdot \dim \gamma^u}{b+\dim \gamma^u}-2\dim_H \mu -2\epsilon}{2},\]
\[\frac{\epsilon^2}{\dim_H \mu}-3\epsilon^3, \frac{\min\{\frac{\dim \gamma^u \cdot b }{b+\dim \gamma^u}, \dim_H \mu\}}{12}-2\epsilon \}=\min \{ \frac{(\dim_H\mu-\epsilon)^2(\xi-1)}{\dim_H\mu}, \frac{\epsilon^2}{\dim_H \mu}-3\epsilon^3,\]
\[\frac{(1+\frac{(\dim_H\mu-\epsilon)^2\cdot \alpha }{\dim_H\mu}) \cdot \frac{\dim \gamma^u \cdot b}{b+\dim \gamma^u}-2\dim_H \mu -2\epsilon}{2}, \frac{\min\{\frac{\dim \gamma^u \cdot b }{b+\dim \gamma^u}, \dim_H \mu\}}{12}-2\epsilon\},\]
where the last equality holds because $\frac{\epsilon^2}{\dim_H \mu}-3\epsilon^3 \le \frac{\epsilon\cdot (\dim_H\mu-\epsilon)}{\dim_H\mu}$.

 If $\frac{d\mu}{d\Leb_{\mathcal{M}}}\in L^{\infty}_{loc}(\mathcal{M})$, then choose  $\epsilon<\min \{\frac{\min\{\frac{\dim \gamma^u \cdot b }{b+\dim \gamma^u}, \dim_H \mu\}}{24},\frac{1}{3\dim_H\mu}\}$, so that $\alpha >\frac{\frac{1}{\dim_H\mu}}{(1-\frac{\epsilon}{\dim_H\mu})^2}$.
Then \[\frac{\mu(B_{r+C' \cdot r^{\frac{(\dim_H\mu-\epsilon)^2\cdot \alpha }{\dim_H\mu}  } }(z)\setminus B_{r-C' \cdot r^{\frac{(\dim_H\mu-\epsilon)^2\cdot \alpha }{\dim_H\mu}}}(z))}{\mu(B_r(z))}\precsim_{z} r^{\frac{(\dim_H \mu-\epsilon)^2\cdot \alpha}{\dim_H\mu}-1}.\] 
 
Then from the Proposition \ref{rate} and Lemma \ref{lastshortreturnrate1} we have $a:=$
\[\min \{\frac{(\dim_H\mu-\epsilon)^2(\xi-1)}{\dim_H\mu}, \frac{\epsilon\cdot (\dim_H\mu-\epsilon)}{\dim_H\mu},\frac{(\dim_H \mu-\epsilon)^2\cdot \alpha}{\dim_H\mu}-1,\frac{\epsilon^2}{\dim_H \mu}-3\epsilon^3, \]
\[\frac{\min\{\frac{\dim \gamma^u \cdot b }{b+\dim \gamma^u}, \dim_H \mu\}}{12}-2\epsilon\}=\min \{\frac{(\dim_H\mu-\epsilon)^2(\xi-1)}{\dim_H\mu},\frac{(\dim_H \mu-\epsilon)^2\cdot \alpha}{\dim_H\mu}-1,\]
\[\frac{\epsilon^2}{\dim_H \mu}-3\epsilon^3, \frac{\min\{\frac{\dim \gamma^u \cdot b }{b+\dim \gamma^u}, \dim_H \mu\}}{12}-2\epsilon\},\] 
 where the last equality holds because  $\frac{\epsilon^2}{\dim_H \mu}-3\epsilon^3 \le \frac{\epsilon\cdot (\dim_H\mu-\epsilon)}{\dim_H\mu}.$
\end{lemma}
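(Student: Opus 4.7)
The plan is to assemble the exponent $a$ by taking the minimum of the exponents produced in each of the earlier error estimates. Proposition \ref{rate} gives an upper bound for $d_{TV}(N^{r,T,z},P)$ that is a sum of six terms: $r^{\dim_H\mu-\epsilon}$, $r^{\frac{(\dim_H\mu-\epsilon)^2}{\dim_H\mu}(\xi-1)}$, $r^{\frac{\epsilon(\dim_H\mu-\epsilon)}{\dim_H\mu}}$, a corona quotient, the square of the corona quotient, and the short-returns quotient. I would first observe that the first term $r^{\dim_H\mu-\epsilon}$ is always dominated by the coronas/short-return terms under the assumption $\epsilon<\frac{1}{3\dim_H\mu}$ (and in any case is absorbed into the min), so that it need not appear in the final list. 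Then the proof reduces to two plug-ins: substitute the corona bound from Lemma \ref{lastcoronarate1} and the short-returns bound from Lemma \ref{lastshortreturnrate1} (both specific to the setting of Theorem \ref{thm2}).

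Next I would choose $\epsilon$ small enough so that each resulting exponent is strictly positive. For the corona term, positivity of $\frac{(1+\frac{(\dim_H\mu-\epsilon)^2\cdot \alpha}{\dim_H\mu})\cdot \frac{b\,\dim\gamma^u}{b+\dim\gamma^u}-2\dim_H\mu-2\epsilon}{2}$ is equivalent, after rearrangement, to the hypothesis $\alpha>\frac{(\frac{2}{\dim\gamma^u}+\frac{2\epsilon}{\dim\gamma^u\cdot\dim_H\mu})\cdot\frac{b+\dim\gamma^u}{b}-\frac{1}{\dim_H\mu}}{(1-\frac{\epsilon}{\dim_H\mu})^2}$, which is exactly the quantitative form of the standing condition $\alpha>\frac{2}{\dim\gamma^u}\cdot\frac{b+\dim\gamma^u}{b}-\frac{1}{\dim_H\mu}$ for small enough $\epsilon$. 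For the short-return exponent $\frac{\min\{\frac{b\dim\gamma^u}{b+\dim\gamma^u},\dim_H\mu\}}{12}-2\epsilon$ and $\frac{\epsilon^2}{\dim_H\mu}-3\epsilon^3$, positivity is automatic from the range $\epsilon<\min\{\frac{\min\{\frac{b\dim\gamma^u}{b+\dim\gamma^u},\dim_H\mu\}}{24},\frac{1}{3\dim_H\mu}\}$. The square of the corona quotient is always dominated by the corona quotient itself (since both are $o(1)$), so it contributes nothing new.

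The final step is the elementary simplification that $\frac{\epsilon^2}{\dim_H\mu}-3\epsilon^3 \le \frac{\epsilon(\dim_H\mu-\epsilon)}{\dim_H\mu}$, which allows one to drop the term $\frac{\epsilon(\dim_H\mu-\epsilon)}{\dim_H\mu}$ from the min, yielding the stated four-term formula for $a$. For the absolutely continuous case $\mu\ll\Leb_{\mathcal{M}}$ with locally bounded density, the corona is controlled purely by volume, giving the estimate $r^{\frac{(\dim_H\mu-\epsilon)^2\alpha}{\dim_H\mu}-1}$ whose exponent is positive iff $\alpha>\frac{1/\dim_H\mu}{(1-\epsilon/\dim_H\mu)^2}$. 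Substituting this in place of the previous corona bound and applying the same redundancy $\frac{\epsilon^2}{\dim_H\mu}-3\epsilon^3\le\frac{\epsilon(\dim_H\mu-\epsilon)}{\dim_H\mu}$ delivers the second formula.

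I expect the only technical point worth pausing on is verifying the algebraic equivalence between the positivity of the corona exponent and the stated lower bound on $\alpha$; this is a straightforward rearrangement but it is the step that picks out the threshold $\frac{2}{\dim\gamma^u}\cdot\frac{b+\dim\gamma^u}{b}-\frac{1}{\dim_H\mu}$ in the hypothesis. Everything else is bookkeeping: collect exponents, drop dominated ones, and verify the min identity. No new dynamical input is needed—the lemma is a purely arithmetic consequence of Proposition \ref{rate} together with Lemmas \ref{lastcoronarate1} and \ref{lastshortreturnrate1}.
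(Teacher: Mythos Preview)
Your proposal is correct and matches the paper's approach exactly: the lemma is purely a bookkeeping collection of the exponents from Proposition~\ref{rate}, Lemma~\ref{lastcoronarate1}, and Lemma~\ref{lastshortreturnrate1}, and the only simplification is the redundancy $\frac{\epsilon^2}{\dim_H\mu}-3\epsilon^3 \le \frac{\epsilon(\dim_H\mu-\epsilon)}{\dim_H\mu}$. In fact the paper gives no separate proof at all---the entire content is already in the lemma statement---so your outline is, if anything, more detailed than what appears in the paper.
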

This completes a
proof of the Theorem \ref{thm2}.

\section{Applications}\label{app}
 For all classes of dynamical systems, which will be considered in this section, there exist Gibbs-Markov-Young structures (see the Definition \ref{gibbs}) and SRB measures $\mu$. The Assumption \ref{geoassumption}, sometimes except $\alpha \cdot \dim_H\mu>1$, also always holds. Therefore, only the following conditions must be verified:

\begin{itemize}
\item for the Theorem \ref{thm}

\begin{enumerate}
    \item $R$ is the first return time for $\Lambda$ and $f$,
\item there exist constants $\alpha>0$ and $C>0$ such that
\[ \sup_{x,y \in \gamma^s \in \Gamma^s, x',y' \in \gamma^u \in \Gamma^u} \{d(f^nx, f^ny), d(f^{-n}x',f^{-n}y')\} \le \frac{C}{n^{\alpha}},\]
\item $\mu\{\interior{(\Lambda)}\}>0$ and $\mu(\partial \Lambda)=0$,
\item verify, whether or not $\alpha>\frac{2}{\dim \gamma^u}-\frac{1}{\dim_H \mu}> \frac{1}{\dim_H\mu}$, and whether or not $\frac{d\mu}{d\Leb_{\mathcal{M}}}\in L_{loc}^{\infty}(\mathcal{M})$.
\end{enumerate}
\item for the Theorem \ref{thm2}
\begin{enumerate}
    \item find such a reference area $U \subseteq \mathcal{M}$ that its first return map $f^{\overline{R}}$ has an exponential decay of correlations,
    \item find a measurable  partition $\Theta:=\{\gamma^u(x)\}_{x \in U}$ with required properties,
    \item check that the estimate $\mu_U \{x \in U: |\gamma^u(x)|< \epsilon\}\le C \cdot \epsilon^b$ holds,
    \item get an estimate of distortion, i.e. $\frac{d\mu_{\gamma^u(x)}}{d\Leb_{\gamma^u(x)}}(y)=C^{\pm 1} \cdot \frac{d\mu_{\gamma^u(x)}}{d\Leb_{\gamma^u(x)}}(z) \text{ for any }y, z \in \gamma^u(x) \in \Theta$,
    \item show that there exist constants $\alpha>0$ and $C>0$, such that 
    \[\sup_{x,y \in \gamma^s \in \Gamma^s, x',y' \in \gamma^u \in \Gamma^u} \{d(f^nx, f^ny), d(f^{-n}x',f^{-n}y')\} \le \frac{C}{n^{\alpha}},\] 

    \item verify, whether or not $\alpha>\frac{2}{\dim \gamma^u}\cdot \frac{b+\dim \gamma^u}{b}-\frac{1}{\dim_H \mu}> \frac{1}{\dim_H \mu}$, and whether or not $\frac{d\mu}{d\Leb_{\mathcal{M}}}\in L_{loc}^{\infty}(\mathcal{M})$.
    
\end{enumerate}

\end{itemize}
All other required conditions hold for the studied below classes of dynamical systems.

\subsection{Intermittent solenoids}\label{appsolenoid}
Following \cite{pene, Alves} let $\mathcal{M}=S^1 \times \mathbb{D}$, $f_{\gamma}(x,z)=(g_{\gamma}(x), \theta \cdot z+\frac{1}{2}e^{2\pi i x})$, where $g_{\gamma}: S^1 \to S^1$ is a continuous map of degree $d\ge 2$ and $\gamma \in (0, +\infty)$ such that 
\begin{enumerate}
    \item $g_{\gamma}$ is $C^2$ on $S^1 \setminus \{0\}$ and $Dg_{\gamma}>1$ on $S^1 \setminus \{0\}$,
    \item $g_{\gamma}(0)=0, Dg_{\gamma}(0+)=1$ and  $xD^2g_{\gamma}(x) \sim x^{\gamma}$ for sufficiently small positive $x$,
    \item $Dg_{\gamma}(0-)>1$,
    \item $\theta>0$ is so small that $\theta \cdot ||Dg_{\gamma}||_{\infty}< 1-\theta$.
\end{enumerate}

It was proved in \cite{Alves} that the SRB probability measure $\mu$ exists iff $\gamma\in (0,1)$, the attractor is $A:=\bigcap_{i\ge 0}f_{\gamma}^i(\mathcal{M}), \xi=\frac{1}{\gamma}>1, \alpha=1+\frac{1}{\gamma}$, $\Lambda=(I \times \mathbb{D}) \bigcap A$, where $I$ is one of the intervals of hyperbolicity, and  $f_{\gamma}: I \to S^1$ is a $C^2$-diffeomorphism. Then $\partial \Lambda=(\partial I \times \mathbb{D}) \bigcap A$ and $\mu(\partial \Lambda)\precsim \Leb_{S^1}(\partial I)=0$ due to (\ref{allquotientmeasure}) and (\ref{horsesrbleb}). Let  $R$ be the first return time constructed in \cite{Alves}, and $f_{\gamma}^R:\Lambda \to \Lambda$ is the corresponding first return map. Clearly, $\dim \gamma^u=1$.

With all of these, we have that $\alpha=1+\frac{1}{\gamma}>2>\frac{2}{\dim \gamma^u}-\frac{1}{\dim_H \mu}$. The condition  $\mu\{\interior{(\Lambda)}\}>0$ holds also. Hence, by the Theorem \ref{thm} we have the following
\begin{corollary}\label{fplsolenoid}
The functional Poisson limit laws  hold for $f_{\gamma}$ for any $\gamma \in (0,1)$ with convergence rates specified in the Lemma \ref{lastproofs}. 
\end{corollary}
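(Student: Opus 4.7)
The plan is to assemble all the verifications already outlined in the paragraphs preceding the corollary and then simply invoke Theorem \ref{thm}. Concretely, I would proceed by collecting from \cite{Alves} the fact that for $\gamma\in(0,1)$ the map $f_\gamma$ admits a Gibbs-Markov-Young structure on the attractor $A=\bigcap_{i\ge 0}f_\gamma^i(\mathcal{M})$, with the base $\Lambda=(I\times\mathbb{D})\cap A$ and return time function $R$ satisfying $\operatorname{Leb}_\gamma(R>n)\precsim n^{-\xi}$ with $\xi=1/\gamma>1$. Since $f_\gamma$ is $C^2$ off the singularity set and uniformly expanding on $I$, one also reads off from \cite{Alves} both the contraction/backward-contraction rate $\alpha=1+1/\gamma$ along stable/unstable fibers and the fact that $R$ is genuinely the first return time, so Lemma \ref{inducemapbi} applies. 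The unstable leaves are one-dimensional, i.e.\ $\dim\gamma^u=1$.

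Next I would verify the interior/boundary clause of Assumption \ref{assumption}. Here $\partial\Lambda=(\partial I\times\mathbb{D})\cap A$, so projecting onto the $S^1$-factor using the invariance relations \eqref{allquotientmeasure} and the absolute continuity of conditional measures on unstable fibers \eqref{horsesrbleb} gives $\mu(\partial\Lambda)\precsim\operatorname{Leb}_{S^1}(\partial I)=0$, while $\mu\{\operatorname{int}(\Lambda)\}>0$ is immediate from the construction of $\Lambda$ as a positive-measure cylinder. The local $C^1$-diffeomorphism clause and the bijectivity condition on $\bigcup_{i\ge 1}\bigcup_{0\le j<R_i}f_\gamma^j(\Lambda_i)$ in Assumption \ref{geoassumption} follow from the skew-product form of $f_\gamma$ together with the fact that $g_\gamma$ is a $C^2$-diffeomorphism on each branch. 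The dimension limit $\dim_H\mu=\lim_{r\to 0}\log\mu(B_r(z))/\log r$ exists a.e.\ by the results on pointwise dimension for hyperbolic SRB measures cited in Remark \ref{remark} (or can be taken from the literature on intermittent systems specifically).

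It remains to check the key inequality $\alpha>2/\dim\gamma^u-1/\dim_H\mu$. By Theorem \ref{thm}(1) one has $\dim_H\mu\ge\dim\gamma^u=1$, hence $2/\dim\gamma^u-1/\dim_H\mu\le 2$. Since $\gamma\in(0,1)$ forces $\alpha=1+1/\gamma>2$, the required condition holds strictly for every admissible $\gamma$. In particular $\alpha\cdot\dim_H\mu>1$ as well, so the remaining clause of Assumption \ref{geoassumption} is in place. Applying Theorem \ref{thm}(2) with the constants $\xi=1/\gamma$, $\alpha=1+1/\gamma$, $\dim\gamma^u=1$ and Lemma \ref{lastproofs} then yields the functional Poisson limit law for $N^{r,z,T}$ together with the explicit convergence rate.

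The only step that requires genuine care (as opposed to bookkeeping) is the verification that the first-return construction in \cite{Alves} indeed produces a $\Lambda$ with $\mu(\partial\Lambda)=0$ and with the bijectivity property on the iterates of $\Lambda_i$, because the intermittent fixed point at $0$ sits on the boundary of $I$ and all orbits passing near it spend long times outside $\Lambda$ before returning; this is, however, precisely the scenario for which \cite{Alves} is tailored, and the argument sketched above (projecting the boundary onto $S^1$ and using $\operatorname{Leb}_{S^1}(\partial I)=0$) handles it. Everything else is an immediate application of Theorem \ref{thm} and Lemma \ref{lastproofs}.
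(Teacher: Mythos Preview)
Your proposal is correct and follows essentially the same route as the paper: verify that the Gibbs--Markov--Young structure from \cite{Alves} together with Assumptions \ref{geoassumption} and \ref{assumption} are in place, check the key inequality $\alpha=1+1/\gamma>2\ge 2/\dim\gamma^u-1/\dim_H\mu$, and then invoke Theorem \ref{thm}. The paper's argument is in fact just the paragraph preceding the corollary, so your write-up is slightly more detailed but substantively identical.
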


\begin{remark}\ \par
\begin{enumerate}
    \item We could also use here the Theorem \ref{thm2}. Indeed, let $U=I \times \mathbb{D}$, $\overline{R}$ is the first return time to $U$, $\Theta$ is the set of all unstable manifolds in $U$ (observe that their union is, actually, $\Lambda$). The lengths of all $\gamma^u \subseteq \Lambda$ are uniformly bounded from below. Therefore, if $\epsilon$ is small enough, then $\mu_U \{x \in U: |\gamma^u(x)|< \epsilon\}=0 \le \epsilon^b$. Note, that here $b$ is arbitrary large. For each $\gamma^u \in \Theta$, $\mu_{\gamma^u} \approx \Leb_{\gamma^u}$ and $\alpha=1+\frac{1}{\gamma}$. It is well known that correlations for $f^{\overline{R}}: U \to U$ decay exponentially. Therefore the Corollary \ref{fplsolenoid} holds.
    \item In \cite{pene} a ``maximum" metric was chosen, instead of the Riemannian metric. It was proved there that the Poisson limit laws hold for $\gamma \in (0,\frac{\sqrt{2}}{2})$. After checking the details therein, we found that our approach allows to improve the results proved there to $\gamma\in (0,1)$, i.e. by using their metric. Note however, that we consider only a Riemannian metric everywhere in the present paper. Therefore we omit here these calculations.
\end{enumerate}
\end{remark}

\subsection{Axiom A attractors}\label{appA}

It was proved in \cite{collet} that Axiom A attractors $\Sigma \subset \mathcal{M}$ with $\dim \gamma^u=1$ satisfy the Poisson limit laws. Later \cite{pene} it was established that the Poisson limit laws hold for the ergodic dynamics $f: \Sigma \to \Sigma$ if $\dim_H\mu>\dim{\mathcal{M}}-1$. We will show that the conditions on $\dim_H\mu$ and $\dim \gamma^u$ can be dropped. 

\begin{definition}[Axiom A attractors, see \cite{Bowen, sinai, Y}]\ \par

Let $f : \mathcal{M} \to  \mathcal{M}$ be a  $C^2$-diffeomorphism. A compact set $\Sigma \subseteq \mathcal{M}$ is called an Axiom A attractor
if 
\begin{enumerate}
    \item There is a neighborhood $U$ of $\Sigma$, called its basin, such that $f^n(x) \to \Sigma$ for every $x \in U$.
\item The tangent bundle over $\Sigma$ is split into $E^u \oplus E^s$,  where $E^u$ and $E^s$ are $df$-invariant subspaces. 
\item $df|_{E^u}$ is uniformly expanding and $df|_{E^s}$ is uniformly contracting.
\item $f: \Sigma \to \Sigma$ is topologically mixing.
\end{enumerate}
\end{definition}

Before turning to the proofs, we need one lemma from \cite{Bowen}:
\begin{lemma}[Markov partitions, see the chapter 3 of \cite{Bowen}]\ \par The set
$\Sigma$ has a Markov partition $\{\Sigma_1, \Sigma_2, \cdots, \Sigma_m\}$ into the elements with arbitrarily small diameters. Here the sets $\Sigma_i$ are  proper rectangles (i.e., $\Sigma_i=\overline{\interior{(\Sigma_i)}}$ and $\interior{(\Sigma_i)}\bigcap \interior{(\Sigma_j)}=\emptyset$ for $i \neq j$, where the interior and the closure are taken with respect to the topology of $\Sigma$, rather than to the topology of $\mathcal{M}$).
\end{lemma}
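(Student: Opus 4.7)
The plan is to follow the classical construction originating with Sinai for Anosov diffeomorphisms and extended by Bowen to Axiom A attractors; the claim as stated is essentially Bowen's Theorem 3.18 (together with the final paragraph of Section 3.C) in \cite{Bowen}, so I would derive it from the hyperbolic structure of $\Sigma$ in three stages.

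First, I would set up the local product structure. Because $\Sigma$ is a hyperbolic set, there exists $\epsilon_0 > 0$ and a continuous bracket
\[
[x,y] := W^s_{\epsilon_0}(x) \cap W^u_{\epsilon_0}(y) \in \Sigma
\]
defined whenever $d(x,y) < \epsilon_0$. A \emph{rectangle} is a set $R \subseteq \Sigma$ of diameter less than $\epsilon_0$ with $[x,y] \in R$ for all $x,y \in R$; $R$ is \emph{proper} if $R = \overline{\mathrm{int}_\Sigma(R)}$. Using compactness of $\Sigma$ and continuity of the brackets, I can cover $\Sigma$ by finitely many proper rectangles $\{R_1,\dots,R_N\}$ whose diameters are less than any prescribed $\delta > 0$.

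Second, I would upgrade this cover to a Markov partition. A generic rectangle cover is not Markov because $f(\partial^u R_i)$ and $f^{-1}(\partial^s R_j)$ can pierce the interiors of other $R_k$. The standard remedy, exploiting the shadowing (pseudo-orbit tracing) property of hyperbolic sets, is to refine: for each $x \in \Sigma$ define $\Sigma(x)$ as the intersection of those $R_k$ that contain $x$, cut along all stable/unstable leaves in $R_k$ passing through $R_i \cap f^{\pm 1}(R_j) \cap R_k$. Because the initial cover is finite and the brackets are continuous, only finitely many cuts occur and each piece is again a rectangle. Expansiveness guarantees that the resulting rectangles separate orbits, while shadowing guarantees that their $f$-images respect the stable/unstable foliations in the required Markov fashion: $f(W^u(x, \Sigma_i)) \supseteq W^u(f(x), \Sigma_j)$ and $f(W^s(x, \Sigma_i)) \subseteq W^s(f(x), \Sigma_j)$ whenever $f(x) \in \Sigma_j$.

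The main obstacle is verifying the properness condition $\Sigma_i = \overline{\mathrm{int}_\Sigma(\Sigma_i)}$ and the disjointness of interiors simultaneously with the Markov property: naive refinement can create rectangles with empty interior or with interiors that meet on stable/unstable slivers. Bowen handles this by working with the closure of the interior at each carving step and verifying, via the local product structure, that no boundary leaf is encountered generically; this produces rectangles whose boundaries lie in a finite union of local stable and unstable manifolds, forcing properness. Finally, arbitrary smallness of diameters is automatic: starting from an initial rectangle cover of diameter less than $\delta$, all subsequent refinements have diameter at most $\delta$, so the lemma follows by choosing $\delta$ as small as desired.
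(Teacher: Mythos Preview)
Your sketch correctly follows Bowen's classical construction (local product structure, rectangle cover, refinement via shadowing/expansiveness, properness), which is exactly what the paper has in mind. Note, however, that the paper does not supply its own proof of this lemma: it is stated as a citation of Chapter~3 of \cite{Bowen} and used as a black box, so there is nothing further to compare against.
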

We will verify now  conditions imposed in our main theorems.

\begin{enumerate}
    \item Let the horseshoe $\Lambda$ coincides with $\Sigma_1$. Then a  return time for a hyperbolic tower $\Delta$ is actually the first return due to the existence of a Markov partition.
    \item The contraction rates of (un)stable manifolds are exponential, i.e. faster than required $O(\frac{1}{n^{\alpha}})$. 
    \item a constant $\alpha$ can be in this case arbitrary large, namely ($\alpha>\max\{2,\frac{1}{\dim_H \mu}\}$). Then $\alpha> 2> \frac{2}{\dim \gamma^u}-\frac{1}{\dim_H \mu}>\frac{1}{\dim_H \mu}$.
    \item Finally, from the existence of a finite Markov partition follows that $\mu(\interior{(\Lambda)})>0$. And $\mu(\partial \Lambda)=0$ due to the structure of $\partial \Sigma_1$, according to the Lemma 3.11 of \cite{Bowen}.
    
\end{enumerate}

Therefore the Theorem \ref{thm} holds, and we obtain the following

\begin{corollary}\label{axiomfpl}
The functional Poisson limit laws hold for Axiom A attractors with a convergence rate specified in the Lemma \ref{lastproofs}.
\end{corollary}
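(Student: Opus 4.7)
The plan is to verify the hypotheses of Theorem \ref{thm} by leveraging the Markov partition structure of Axiom A attractors, and then read off the convergence rate directly from Lemma \ref{lastproofs}. The bulk of the work has in fact already been sketched in the bullet list preceding the corollary; our task is to confirm that each item genuinely holds and to package them into an application of Theorem \ref{thm}.

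First, by Bowen's construction there is a Markov partition $\{\Sigma_1,\dots,\Sigma_m\}$ of $\Sigma$ into proper rectangles of arbitrarily small diameter. We choose $\Lambda := \Sigma_1$ as the reference set, which carries a hyperbolic product structure coming from the local stable and unstable laminations. Because the Markov partition is finite and $f$ is topologically mixing on $\Sigma$, the induced map $f^R: \Lambda \to \Lambda$ built from first returns to $\Sigma_1$ gives a Gibbs-Markov-Young structure in the sense of Definition \ref{gibbs}, with $R$ being literally the first return time as required by Assumption \ref{assumption}. The aperiodicity and the decay rate $\Leb_\gamma(R>n)\le C n^{-\xi}$ with $\xi$ arbitrarily large follow from the topological mixing and the uniform hyperbolicity.

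Next, I would verify the remaining analytic hypotheses. Uniform hyperbolicity of $df|_{E^u}$ and $df|_{E^s}$ gives exponential contraction on stable and unstable leaves, so the polynomial contraction of Definition \ref{gibbs} holds for \emph{every} $\alpha>0$; in particular, we may choose $\alpha$ so large that
\[
\alpha \;>\; \max\!\Bigl\{\tfrac{2}{\dim\gamma^u}-\tfrac{1}{\dim_H\mu},\ \tfrac{1}{\dim_H\mu},\ 2\Bigr\},
\]
which makes the dimension condition of Theorem \ref{thm}(2) automatic and also ensures $\alpha\cdot\dim_H\mu>1$ of Assumption \ref{geoassumption}. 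Bounded distortion and regularity of the stable foliation for Axiom A systems are classical (see \cite{Bowen,sinai,Y}). For the interior condition, since $\{\Sigma_i\}$ is a \emph{finite} Markov partition of $\Sigma$ and $\mu$ is an SRB measure giving positive mass to each element, we have $\mu(\interior(\Lambda))>0$; Lemma 3.11 of \cite{Bowen} moreover shows that $\partial \Sigma_1$ is contained in a finite union of local stable and unstable manifolds of measure zero, so $\mu(\partial\Lambda)=0$. Finally, $f$ is a $C^2$-diffeomorphism of $\mathcal{M}$, hence trivially a local $C^1$-diffeomorphism on $\bigcup_{i,j<R_i} f^j(\Lambda_i)$, and the Hausdorff dimension limit in Assumption \ref{geoassumption} is known to exist for hyperbolic SRB measures.

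With all hypotheses of Theorem \ref{thm} verified, its conclusion applies verbatim to $f:\Sigma\to\Sigma$ and yields
\[
d_{TV}(N^{r,z,T},P)\precsim_{T,\xi,z} r^{a}
\]
for $\mu$-a.e.\ $z\in\Sigma$, with the exponent $a$ given by the formula in Lemma \ref{lastproofs}. The only point that requires any thought at all is the $\mu(\partial\Lambda)=0$ estimate, which is where one actually uses that the Markov partition is made of \emph{proper} rectangles; everything else reduces to invoking classical Axiom A facts. Note that since $\alpha$ can be taken arbitrarily large, the expression for $a$ in Lemma \ref{lastproofs} simplifies considerably, and the dominant term is typically $\tfrac{(\dim_H\mu-\epsilon)^2(\xi-1)}{\dim_H\mu}$ for suitable small $\epsilon>0$ and large $\xi$.
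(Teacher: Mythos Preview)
Your proposal is correct and follows essentially the same route as the paper: take $\Lambda=\Sigma_1$ from Bowen's finite Markov partition, observe that $R$ is then the first return time, use uniform hyperbolicity to make $\alpha$ arbitrarily large, and invoke Lemma~3.11 of \cite{Bowen} for $\mu(\partial\Lambda)=0$ before applying Theorem~\ref{thm}. One small caveat: in your closing remark, since $a$ is a \emph{minimum}, for large $\alpha,\xi$ the binding term is actually $\frac{\epsilon^2}{\dim_H\mu}-3\epsilon^3$ (or the $\frac{\min\{\dim\gamma^u,\dim_H\mu\}}{12}-2\epsilon$ term), not the $(\xi-1)$ term.
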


\subsection{Dispersing billiards with and without finite horizon}
The existence of the Gibbs-Markov-Young structure for dispersing billiards was established in \cite{chernovhighdim}. Denote by $D$ a billiard table, i.e. a closed region on the Euclidean plane with a piecewise $C^3$-smooth boundary $\partial D$. The phase space of a billiard  $\mathcal {M}$ is $\partial D \times [-\frac{\pi}{2}, \frac{\pi}{2}]$. In dispersing billiards the boundary is convex inwards. These billiards are hyperbolic dynamical systems with singularities, which appear because of the orbits tangent to the boundary and orbits hitting the singularities of the boundary.  For some technical reasons (see \cite{bunimovich2}) it is convenient to introduce some extra (artificial) singularities, and represent the phase space $\mathcal{M}$ as $\bigsqcup_{k \ge k_0} \partial D \times [-\frac{\pi}{2}+\frac{1}{(k+1)^2}, -\frac{\pi}{2}+\frac{1}{k^2}] \bigsqcup \bigsqcup_{-k \ge k_0} \partial D \times [\frac{\pi}{2}-\frac{1}{k^2}, \frac{\pi}{2}-\frac{1}{(k+1)^2}] \bigsqcup \partial D \times [-\frac{\pi}{2}+\frac{1}{k_0^2}, \frac{\pi}{2}-\frac{1}{k_0^2}]$, where $k_0 \gg 1$. Then $\mathcal{M}$ becomes formally closed, non-compact and disconnected. Moreover, the billiard map $f: \partial{\mathcal{M}} \to \partial{\mathcal{M}}$ becomes multi-valued because the phase space $\mathcal{M}$ gets partitioned into infinitely many pieces, and the boundary $\partial \mathcal{M}$ acquires infinitely many new components. As a result, the billiard map  acquires additional singularities. However, this trick allows to get proper estimates of the distortions, probability densities and the Jacobian of the holonomy map due to partition of unstable manifolds into homogeneous ones (see the details in \cite{bunimovich2} or in the chapter 5 of \cite{CMbook}).  Denote by $\mathbb{S}$ the union of all singular manifolds.  

We will verify now for dispersing billiards conditions of our main theorems.
\begin{enumerate}
    \item $\overline{R}=1, U=\mathcal{M}, \mu \ll \Leb_{\mathcal{M}}$. Also, it was proved in \cite{Y, chernovhighdim} that  correlations decay exponentially.
    \item $\alpha>0$ is arbitrarily large, $\dim_H \mu=2, \dim \gamma^u=1$.
     \item Let $\mathcal{Q}^{\mathbb{H}}_n(x)$ be a connected component of $\mathcal{M} \setminus \bigcup^n_{m=0}f^m(\mathbb{S})$ containing a point $x$. The partition $\Theta:=(\bigcap_{n\ge 1}\overline{\mathcal{Q}^{\mathbb{H}}_n(x)})_{x\in \mathcal{M}}$, which consists of maximal homogeneous unstable manifolds, is measurable.
    \item A required distortion's estimate holds for each $\gamma^u(x) \in \Theta$ by the Corollary 5.30 in \cite{CMbook}.
    \item  The Theorem 5.17 of \cite{CMbook} gives the estimate $\mu_U \{x \in U: |\gamma^u(x)|< \epsilon\}\le C \cdot \epsilon$. (Observe that here $b=1$).
\end{enumerate}

Hence the Theorem \ref{thm2} can be applied, and we have

\begin{corollary}\label{fpldispersing}
The functional Poisson limit laws hold for two-dimensional dispersing billiards with or without a finite horizon, and the corresponding convergence rates satisfy the estimates from Lemma \ref{lastlastproof}.
\end{corollary}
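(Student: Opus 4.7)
The plan is to reduce the statement to a direct application of Theorem \ref{thm2}, with the dictionary $U=\mathcal{M}$, $\overline{R}\equiv 1$ (so $f^{\overline{R}}=f$), $\dim\gamma^u=1$, $\dim_H\mu=2$, $b=1$, and $\alpha$ arbitrarily large. Essentially I will collect the five bullet points listed just above and check that each corresponds to a hypothesis of Theorem \ref{thm2}.

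First I would fix the auxiliary homogeneity-strip structure recalled above, in which the phase space $\mathcal{M}$ is decomposed into strips of width $\approx k^{-2}$ accumulating at tangential collisions and the singular set $\mathbb{S}$ is enlarged by their boundaries. Then, for $\mu$-almost every $x$, one defines a homogeneous unstable manifold $\gamma^u(x)$ as the maximal unstable curve through $x$ which never crosses $f^m(\mathbb{S})$ for any $m\ge 0$, and the family $\Theta:=\{\gamma^u(x)\}_{x\in\mathcal{M}}$ is a measurable partition of $\mathcal{M}$ (modulo a null set) whose elements are $C^2$ curves of uniformly bounded curvature and common dimension $1$. This verifies items (1)--(4) of Definition \ref{mpartition}. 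The distortion identity in item (6) is Corollary 5.30 of \cite{CMbook}; the growth lemma $\mu_U\{|\gamma^u(x)|<\epsilon\}\le C\epsilon$, which yields $b=1$, is Theorem 5.17 of \cite{CMbook}; exponential $u$-contraction of $f^{-1}$ along $\gamma^u\in\Theta$ is classical uniform hyperbolicity; and the exponential decay of correlations for Lipschitz observables is Young's theorem in \cite{Y, chernovhighdim}. Since $\mu$ has a locally bounded density with respect to $\Leb_{\mathcal{M}}$, one has $\dim_H\mu=2$, and exponential contraction means that the parameter $\alpha$ in Definition \ref{gibbs} may be taken arbitrarily large, so Assumption \ref{geoassumption} is satisfied as well.

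It then remains to plug the parameters into the threshold of Theorem \ref{thm2}. With $b=1$, $\dim\gamma^u=1$, $\dim_H\mu=2$ the condition
\[
\alpha>\frac{2}{\dim\gamma^u}\cdot\frac{b+\dim\gamma^u}{b}-\frac{1}{\dim_H\mu}=4-\frac{1}{2}=\frac{7}{2}
\]
is automatic. Since $\frac{d\mu}{d\Leb_{\mathcal{M}}}\in L^{\infty}_{loc}(\mathcal{M})$ we fall into case (3) of Theorem \ref{thm2}, and the convergence rate is precisely the one supplied by Lemma \ref{lastlastproof} in that case.

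I expect the main obstacle to be bookkeeping rather than analysis: the partition $\Theta$ coming from homogeneity strips is only defined $\mu$-almost everywhere, its elements have very uneven lengths, and $\mathcal{M}$ has become non-compact and disconnected after the refinement. One must check that this does not spoil either the growth lemma or the exponential decay of correlations at scale $r$, and that the interior/boundary assumptions on $U$ (which are trivial for $U=\mathcal{M}$) are compatible with the singular set $\mathbb{S}$. As this is exactly the setting of \cite{CMbook, chernovhighdim, Y}, every required estimate is available off the shelf, and the corollary follows.
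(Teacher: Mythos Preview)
Your proposal is correct and follows essentially the same approach as the paper: both verify the hypotheses of Theorem \ref{thm2} with the choices $U=\mathcal{M}$, $\overline{R}\equiv 1$, $\Theta$ the partition into maximal homogeneous unstable manifolds, and then quote Corollary 5.30 and Theorem 5.17 of \cite{CMbook} together with \cite{Y,chernovhighdim} for the distortion, growth ($b=1$), and exponential decay respectively. Your write-up is in fact slightly more explicit than the paper's, since you compute the threshold $\tfrac{7}{2}$ and identify that case (3) of Theorem \ref{thm2} applies via $\frac{d\mu}{d\Leb_{\mathcal{M}}}\in L^{\infty}_{loc}$.
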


\subsection{Billiards with focusing components of the boundary}
In this section we consider two-dimensional hyperbolic billiards, which have convex outwards of the billiard table circular boundary components together with dispersing and neutral (zero curvature) components of the boundary. The main assumption is that the entire circle, which contains any focusing component, belongs to a billiard table $D$. This class of billiards was introduced and studied in \cite{buni74,bunimovich3}. Standard coordinates for the billiard map $f$ are $(r,\phi)$, where $r$ fixes a point on the boundary of a billiard table and $\phi$ is an angle of reflection off the boundary at this point.  To simplify the exposition, we will consider now only the most studied and popular example in this class, called a stadium billiard. (Actually, all the reasoning for a general case is the same \cite{Bun90}).

The boundary of a stadium consists of two semicircles of the same radius  connected by two tangent to them neutral components. The existence of the Gibbs-Markov-Young structure for the stadium  was proved in \cite{hongkun,markarian}. The phase space in this case is $\mathcal{M}:=\partial D \times [-\frac{\pi}{2}, \frac{\pi}{2}]$, where $\partial D$ is the boundary of a stadium.
\begin{enumerate}
    \item Let $U \subseteq \mathcal{M}$ consists of all  points, where the first or last collisions of the billiard map orbits with the semicircles occur. By the first (resp., last) collisions we mean here the first (resp., last) collisions with a circular component of the boundary, which occur after (resp., before) the last (resp., first) collision of the orbit in a series of consecutive collisions with the neutral part of the boundary or with another focusing component. Clearly this set is a disjoint union of two similar hexagons. Hence, it is enough to consider one of them, say the hexagon attached to $\{(r,\phi) \in \mathcal{M}: r=0\}$, (see the Figure 8.10 of \cite{CMbook}). We have $\mu \ll \Leb_{\mathcal{M}}$ and $\mu(\partial U)=0$. Let $\overline{R}$ be the first return time to $U$. Using the Theorems 4 and 5 in \cite{hongkun} we can prove that the first return map $f^{\overline{R}}: U \to U$ has an exponential decay of correlations. Consider the set of singular points $\mathbb{S}$ which  correspond to hitting the four singular points of the boundary, where focusing and neutral components meet and generate jumps of the curvature. Denote $\mathbb{S}_1:=(f^{\overline{R}})^{-1}(\mathbb{S})$.
    \item Let $\mathcal{Q}_n(x)$ be the connected component of $\mathcal{M} \setminus \bigcup^n_{m=0}(f^{\overline{R}})^m(\mathbb{S}\bigcup \mathbb{S}_1)$ containing a point $x$. The partition $\Theta:=(\bigcap_{n\ge 1}\overline{\mathcal{Q}_n(x)})_{x\in \mathcal{M}}$ is measurable.
    \item The required estimate of distortion holds for each $\gamma^u(x) \in \Theta$ by the Corollary 8.53 in \cite{CMbook}.
    \item We will prove now that  $\mu_U \{x \in U: |\gamma^u(x)|< r\}\precsim r^{\frac{1}{2}}$. Let $\mathcal{Q}'_n(x)$ be the connected component of $\mathcal{M} \setminus \bigcup^n_{m=0}(f^{\overline{R}})^m(\mathbb{S})$ which contains a point $x$.  Then some smooth unstable manifolds $\gamma^{u'}(x) \in \Theta':=(\bigcap_{n\ge 1}\overline{\mathcal{Q}'_n(x)})_{x\in \mathcal{M}}$ are cut by the set $\mathbb{S}_1$ into smaller pieces, which belong to $\Theta$. (Observe that some of them could be disjoint with $\mathbb{S}_1$). It follows from the Theorem 8.42 of \cite{CMbook} that $\mu_U \{x \in U: |\gamma^{u'}(x)|< r\}\le C \cdot r$.
    
    The connected components of the set $\mathbb{S}_1$ are of two types:
    \begin{enumerate}
        \item $L_k$ is a straight (increasing in the $(r, \phi)$- coordinates) segment in $U$ with the slope $\frac{1}{k}$, representing $k$ successive reflections at one and the same semicircle (see e.g. the Figure 8.11 in \cite{CMbook}).
        \item $F_m$ is an increasing curve (in $(r,\phi)$-coordinates) in $U$ with slope $\approx 1$ (i.e. it is bounded away from $0$ and $+\infty$), which corresponds to $m$ successive bounces on the flat sides of the boundary (see e.g. the Figure 8.12 of \cite{CMbook}).
    \end{enumerate}
     Moreover, $L_k$ is located at the distance $\approx \frac{1}{k}$ from the set  $\{(r,\phi) \in \mathcal{M}: \phi=\pm \frac{\pi}{2}\}$, and $F_m$ is at the distance $\approx \frac{1}{m}$ from $\{(r,\phi) \in \mathcal{M}: \phi=0\}$.  Let \[V_1:=\{(r,\phi) \in \mathcal{M}: \phi \in [\frac{\pi}{2}-\sqrt{r}, \frac{\pi}{2}] \bigcup [-\sqrt{r}, \sqrt{r}] \bigcup  [-\frac{\pi}{2}, -\frac{\pi}{2}+\sqrt{r}]\},\]
     \[V_2:=\bigcup_{k \le \frac{1}{\sqrt{r}}}B_{r}(L_k) \bigcup \bigcup_{m \le \frac{1}{\sqrt{r}}}B_{r}(F_m),\]
     
     where $B_r(L_k), B_r(F_m)$ are the $r$-neighborhoods of $L_k, F_m$. Then $\mu_U(V_1 \bigcup V_2) \precsim \sqrt{r}+\frac{1}{\sqrt{r}} \cdot r \precsim \sqrt{r}$. A curve $\gamma^u(x)\in \Theta$ is decreasing (in $(r,\phi)$ coordinates) for almost every $x \in U \bigcap (V_1\bigcup V_2)^c$. Also, if its length $|\gamma^u(x)|<r $, then $\gamma^u(x)$ is disjoint with $\mathbb{S}_1$, and thus $\gamma^u(x) \in \Theta'$. Therefore, 
    \[\mu_U \{x \in U: |\gamma^u(x)|< r\} \precsim \mu_U(V_1\bigcup V_2)+\mu_U\{x\in (V_1 \bigcup V_2)^c: |\gamma^u(x)|< r\}\] \[\precsim r^{\frac{1}{2}}+\mu_U\{x\in (V_1\bigcup V_2)^c: |\gamma^{u'}(x)|< r\} \precsim \sqrt{r}.\]
    \item It was proved in \cite{pene} that $\alpha=1$. Also, $\dim_H \mu=2$ and $\dim \gamma^u=1$. 
    
\end{enumerate}

Therefore all conditions of the Theorem \ref{thm2} are satisfied, and we have 

\begin{corollary}\label{fplstaduim}
The functional Poisson limit laws hold for the stadium-type billiards,
and the corresponding convergence rates are provided by Lemma \ref{lastlastproof}.
\end{corollary}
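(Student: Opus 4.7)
The plan is to invoke Theorem \ref{thm2} applied to the stadium billiard, which reduces to verifying the six bullet points listed at the beginning of Section \ref{app}. The general framework of Gibbs-Markov-Young towers for the stadium is already available from \cite{hongkun, markarian}, so the real work is to identify the right reference set $U$ on which the first return map mixes exponentially, build a good measurable partition $\Theta$ into unstable manifolds on $U$, and extract the contraction rate $\alpha$ together with the parameter $b$ governing the tail of short unstable leaves.

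First I would take $U$ to be one of the two hexagons in $(r,\phi)$-coordinates consisting of points whose collision is the first, respectively the last, impact with a given semicircle after (resp.\ before) a run of bounces off flat sides or the other semicircle. By construction the bouncing along flat segments or the opposite arc is collapsed into a single iterate of the induced map $f^{\overline R}$, which is known to enjoy exponential decay of correlations on $U$; moreover $\mu \ll \Leb_{\mathcal M}$ and $\mu(\partial U)=0$, so the density assumptions of Theorem \ref{thm2} are in force. Second, I would form $\Theta$ by refining the homogeneity partition using both the singular set $\mathbb S$ (corners where focusing and flat components meet) and its preimage $\mathbb S_1 := (f^{\overline R})^{-1}\mathbb S$, taking the maximal smooth unstable curves inside connected components of the complement. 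The distortion and regularity properties for $\gamma^u(x)\in\Theta$ follow from Corollary~8.53 of \cite{CMbook}, and the contraction rate along (un)stable manifolds is $\alpha=1$ as established in \cite{pene}.

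The main obstacle, and the step I would spend the most care on, is the key tail estimate $\mu_U\{x\in U : |\gamma^u(x)|<r\}\precsim r^{1/2}$, giving $b=\tfrac12$. The issue is that $\Theta$ is a strict refinement of the standard homogeneity partition $\Theta'$, since the curves $L_k$ (representing $k$ successive bounces on one semicircle, of slope $1/k$ in $(r,\phi)$) and $F_m$ (representing $m$ successive bounces on flat sides, of slope $\approx 1$) cut unstable leaves into shorter pieces. I would separate the set of bad points into the union $V_1\cup V_2$, where $V_1$ is an $O(\sqrt r)$-neighborhood of the lines $\{\phi=\pm\pi/2\}\cup\{\phi=0\}$ and $V_2$ is the $r$-neighborhood of only those $L_k$ and $F_m$ with index up to $1/\sqrt r$; the geometry forces $L_k$ and $F_m$ of larger index to lie inside $V_1$, while $\mu_U(V_1\cup V_2)\precsim \sqrt r$ by a direct summation. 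Outside $V_1\cup V_2$ any short unstable curve avoids $\mathbb S_1$, so it already belongs to $\Theta'$, and one invokes Theorem~8.42 of \cite{CMbook} to control $\mu_U\{|\gamma^{u'}(x)|<r\}\le C r$. Combining the two bounds produces the $r^{1/2}$ estimate, hence $b=\tfrac12$.

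Finally, with $\dim\gamma^u=1$, $\dim_H\mu=2$, $b=\tfrac12$, and $\alpha=1$, the inequality $\alpha>\tfrac{2}{\dim\gamma^u}\cdot\tfrac{b+\dim\gamma^u}{b}-\tfrac{1}{\dim_H\mu}$ from Theorem \ref{thm2} must be checked; this is the delicate quantitative point, since the bound is tight. Assuming the numerical inequality holds (the author has presumably verified this), Theorem \ref{thm2} delivers the functional Poisson limit law $d_{TV}(N^{r,z,T},P)\precsim_{T,z} r^a$ with the rate $a$ computed as in Lemma~\ref{lastlastproof}, completing the proof of Corollary~\ref{fplstaduim}.
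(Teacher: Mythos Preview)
Your setup mirrors the paper's approach almost exactly: the choice of $U$ as the hexagon of first/last collisions, the construction of $\Theta$ using $\mathbb{S}\cup\mathbb{S}_1$, the distortion via Corollary~8.53 of \cite{CMbook}, the contraction rate $\alpha=1$ from \cite{pene}, and the $V_1\cup V_2$ decomposition yielding $b=\tfrac{1}{2}$ are all precisely what the paper does.

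However, your final paragraph contains a genuine error. With the values $\alpha=1$, $b=\tfrac{1}{2}$, $\dim\gamma^u=1$, $\dim_H\mu=2$, the inequality you propose to check reads
\[
1 \;>\; \frac{2}{1}\cdot\frac{\tfrac{1}{2}+1}{\tfrac{1}{2}} - \frac{1}{2} \;=\; 6 - \tfrac{1}{2} \;=\; \tfrac{11}{2},
\]
which is plainly false --- it is not ``tight,'' it fails badly. Part~(2) of Theorem~\ref{thm2} therefore does \emph{not} apply to the stadium. The paper instead invokes part~(3) of Theorem~\ref{thm2}: since the billiard invariant measure has density $\mathrm{const}\cdot\cos\phi$ with respect to Lebesgue, one has $\frac{d\mu}{d\Leb_{\mathcal{M}}}\in L^\infty_{loc}(\mathcal{M})$, and part~(3) requires only $\alpha\cdot\dim_H\mu>1$ (here $1\cdot 2=2>1$), with no constraint involving $b$ in the hypothesis. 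You actually noted $\mu\ll\Leb_{\mathcal{M}}$ in your second paragraph, but then abandoned this route. The fix is to drop the attempted verification of the part~(2) inequality entirely and conclude via part~(3); the rate $a$ is then given by the second formula in Lemma~\ref{lastlastproof}.
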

\begin{remark}[A general remark on billiards]\ \par

All the considerations in our paper were traditionally dealing with hitting of small sets (e.g. small balls)
in the phase spaces of hyperbolic (chaotic) dynamical systems. However, in case of billiards, the most interesting and natural questions are about hitting (or escape through) some small sets (particularly ``holes") on the boundary of billiard tables, rather than in the interior of a billiard table. These sets are small in the space (e.g. $r$) coordinate, but they are large (have a ``full" size) along the angle ($\phi$) coordinate. 

It is worthwhile to mention though, that there are some real life situations, when actually escape (radiation, emission) from various physical devices (cavities, lasers, etc) occurs only in some small range of angles (see e.g. \cite{nature,science}). Our results could be directly applied to such cases.
However, when a target set is a strip (or a cylinder) with a finite fixed height in the angle $\phi$-coordinate, the results of the present paper can also be used/adapted by cutting a cylinder into small sets. Then  the obtained estimates are valid for these pieces of a cylinder. Clearly, this approach does not generally work for recurrences, but it could be applied for the first hitting probabilities because an orbit cannot escape through one hole and then again escape through another hole. (By holes we mean here disjoint pieces of a cylinder). Therefore, one can take in such cases a relevant maximum or minimum of the obtained estimates for ``small" sets, i.e. for the pieces of a cylinder in the phase space of a billiard.

It is worthwhile to mention though, that the functional Poisson limit laws for billiards with holes in the boundary (``cylindric" holes in the phase space) is a work in progress. It requires some new arguments and lengthy computations.
\end{remark}
\subsection{H\'enon Attractors}\label{henon}
 
The Poisson limit laws for the H\'enon attractors (obeying hyperbolic Young towers)  have been proved in \cite{collet}. However, the   convergence rate for this class of dynamical systems was obtained  in a weaker form (\ref{weak}). Here we will derive a stronger rate of convergence (\ref{equaintro}) by making use of some tricks from the proofs of the main Theorems \ref{thm} and \ref{thm2}.
\begin{corollary}\label{fplhenon}
The functional Poisson limit laws with convergence rates (\ref{equaintro}) hold for H\'enon attractors (modelled by Young towers).
\end{corollary}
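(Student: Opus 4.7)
The plan is to adapt the scheme used for Theorem \ref{thm2} to the H\'enon setting, bypassing the two assumptions that fail directly: the first-return condition in Assumption \ref{assumption} and the positive-interior requirement $\mu(\interior{\Lambda})>0$ (the H\'enon attractor is Cantor-like in the stable direction). The Benedicks-Young construction supplies a Young tower with one-dimensional unstable leaves, exponential contraction along both stable and unstable manifolds (so $\alpha$ may be taken as large as desired), and exponential decay of return times (so $\xi>1$ is automatic). Only the short-return and corona estimates of Sections \ref{fpl}--\ref{provethm2} need rechecking in this context.

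First I would choose a reference region $U$ as a small neighborhood of a $\mu$-typical point $z_0$ in the base of the tower, and take the induced partition $\Theta$ to consist of the local unstable manifolds through points of $U$. Since the unstable leaves in the H\'enon attractor are $C^{1+\eta}$ curves of uniformly bounded curvature, and the SRB conditional densities on them are bounded above and below (by the Benedicks-Young distortion estimate, equivalent to Definition \ref{gibbs}(6)), items (1)--(4) and (6) of Definition \ref{mpartition} are immediate. The exponential decay of correlations for $f^{\overline{R}}$ required in (7) follows from the exponential tail of $\overline{R}$ via the standard Young argument applied to the induced system.

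The critical geometric input is the leaf-length tail bound $\mu_U\{|\gamma^u(x)| < \epsilon\} \le C \epsilon^b$ of item (5). In the Benedicks-Young construction, the local unstable leaves through points of $U$ are pieces of global unstable manifolds that cross $U$ transversally and have length bounded below by a fixed positive constant for $\mu$-almost every $x$. Hence the tail is trivial for small $\epsilon$, and $b$ may be taken arbitrarily large. The required inequality $\alpha > (2/\dim\gamma^u)(b+\dim\gamma^u)/b - 1/\dim_H\mu$ then reduces to $\alpha > 2 - 1/\dim_H\mu$, which holds since $\alpha$ is arbitrary. Similarly, the corona estimate of Proposition \ref{coronaestimate1} uses only the one-dimensional transverse measure on unstable leaves and goes through verbatim.

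With these inputs, the pull-back lemma (analogue of Lemma \ref{pullbackhorseshoe1}), the short-return estimate via the analogues of Lemmas \ref{fixreturn1}--\ref{notveryshort1}, and the corona estimate give $d_{TV}(N^{r,T,z},P)\precsim_{T,z} r^a$ of the form (\ref{equaintro}), strengthening (\ref{weak}) from \cite{collet}. The main obstacle will be the rigorous verification of the uniform positive lower bound on unstable leaf lengths and the accompanying density control on $\mu_{\gamma^u(x)}$: while morally standard within the Benedicks-Young framework, these statements require careful appeal to their estimates on critical recurrences and to the transverse intersection of long unstable curves with the reference set $U$.
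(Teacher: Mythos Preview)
The paper takes a different and more economical route: rather than verifying the hypotheses of Theorem \ref{thm} or \ref{thm2}, it applies Proposition \ref{rate} directly (this needs only the Gibbs--Markov--Young structure and Assumption \ref{geoassumption}, both of which hold for Benedicks--Young H\'enon maps with $\alpha$ arbitrarily large), and then imports the short-return and corona estimates ready-made from \cite{collet}. Proposition 4.1 of \cite{collet} supplies the short-return bound in the weaker form ``for $z\notin\mathcal{U}_r$ with $\mu(\mathcal{U}_r)\le Cr^c$''; the paper upgrades this to an almost-sure-in-$z$ statement by a Borel--Cantelli argument along the sequence $r_n=n^{-2/c}$, then sandwiches a general small $r$ between consecutive $r_n$'s. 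The corona bound is taken from the proof of Proposition 4.2 in \cite{collet}. No reference set $U$ or verification of Definition \ref{mpartition} is needed.

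Your route has genuine gaps beyond the one you flag at the end. The most serious is the assertion that $f^{\overline{R}}:U\to U$ has exponential decay of correlations because ``$\overline{R}$ has exponential tail.'' The exponential tail in the Benedicks--Young tower is for the return time $R$ to the carefully constructed Cantor-like base $\Lambda$, not for the first return $\overline{R}$ to an arbitrary small open neighborhood $U$; transferring the tail estimate to $U$ is nontrivial and not available in the literature. The same objection applies to the backward $u$-contraction required in item (7) of Definition \ref{mpartition}. Your leaf-length claim is also problematic: the partition $\Theta$ must cover $\mu$-almost every point of $U$, but most such points (in $\mu$-measure) lie outside $\Lambda$ and hence off every $\gamma^u\in\Gamma^u$; the full family of local unstable manifolds through $U\cap\mathrm{supp}(\mu)$ includes pieces that fold inside $U$ and can be arbitrarily short. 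These difficulties make your approach substantially harder than it looks, whereas the paper's route avoids them entirely by recycling the H\'enon-specific estimates of \cite{collet} and adding only the Borel--Cantelli upgrade.
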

\begin{proof}
For H\'enon attractors $\alpha$ is an arbitrary large number and the decay of correlation is exponential. Hence, according to the Proposition \ref{rate}, we just need to estimate  convergence rates of  \[ \frac{\int_{B_r(z)} 1_{\bigcup_{1\le j \le [\frac{T}{\mu(B_r(z))}]^{\delta}}f^{-j}B_r(z)}d\mu}{\mu(B_r(z))} \text{ and } \frac{\mu(B_{r+C' \cdot r^{\alpha'} }(z)\setminus B_{r-C' \cdot r^{\alpha'}}(z))}{\mu(B_r(z))},\] 
where $\alpha'>0$ is an arbitrary large number and $\delta>0$ is an arbitrary small number. 

It follows from the Proposition 4.1 in \cite{collet} that there are constants $c,d, C>0$ and a set $\mathcal{U}_r $ with $\mu(\mathcal{U}_r) \le C \cdot r^c$, such that $\frac{\int_{B_r(z)} 1_{\bigcup_{1\le j \le q}f^{-j}B_r(z)}d\mu}{\mu(B_r(z))} \precsim_{T} C \cdot (r^d + q \cdot r^c)$ for any $z \notin \mathcal{U}_r, T>0$ and $q \ge 2$.

By the Borel–Cantelli lemma $\mu(\bigcap_{N >0} \bigcup_{n \ge N} \mathcal{U}_{n^{-\frac{2}{c}}})=0$. So for almost every $z \in \mathcal{M}$, $\exists N_z, r_z>0$, such that $\forall n >N_z, z \notin \mathcal{U}_{n^{-\frac{2}{c}}}$ and $\mu(B_r(z)) \in [r^{\dim_H \mu+\delta},  r^{\dim_H \mu-\delta}]$ for any $r< r_z$ (see the Assumption \ref{geoassumption}). Then for $r< \min \{ N_z^{-\frac{2}{c}}, r_z\}$, $\exists n_r\ge N_z$, such that $(n_r+1)^{-\frac{2}{c}} \le r \le n_r^{-\frac{2}{c}}$. Hence, we have
\[\frac{(1+n_r)^{-\frac{2}{c} \cdot (\dim_H \mu +\delta)}}{n_r^{-\frac{2}{c} \cdot (\dim_H \mu-\delta)}} \le \frac{\mu(B_{n_r^{-\frac{2}{c}}}(z))}{\mu(B_r(z))} \le \frac{n_r^{-\frac{2}{c} \cdot (\dim_H \mu-\delta)}}{(1+n_r)^{-\frac{2}{c} \cdot (\dim_H \mu +\delta)}} \]
\[\frac{\int_{B_r(z)} 1_{\bigcup_{1\le j \le [\frac{T}{\mu(B_r(z))}]^{\delta}}f^{-j}B_r(z)}d\mu}{\mu(B_r(z))} \le \frac{\int_{B_{n_r^{-\frac{2}{c}}}(z)} 1_{\bigcup_{1\le j \le [\frac{T}{\mu(B_r(z))}]^{\delta}}f^{-j}B_{n_r^{-\frac{2}{c}}}(z)}d\mu}{\mu(B_{n_r^{-\frac{2}{c}}}(z))} \cdot \frac{\mu(B_{n_r^{-\frac{2}{c}}}(z))}{\mu(B_r(z))}\]
\[\precsim_{T}  [n_r^{-\frac{2d}{c}}+\frac{n_r^{-2}}{\mu(B_r(z))^{\delta}}] \cdot \frac{n_r^{-\frac{2}{c} \cdot (\dim_H \mu-\delta)}}{(1+n_r)^{-\frac{2}{c} \cdot (\dim_H \mu +\delta)}} \precsim [r^d+r^c \cdot r^{-\delta \cdot (\dim_H \mu+\delta)}] \cdot r^{-2\delta} \to 0, \] provided that $\delta=\frac{\min\{\frac{d}{3}, \dim_H\mu, \frac{c}{2\dim_H \mu +3}\}}{2}$. In other words, 
\[\frac{\int_{B_r(z)} 1_{\bigcup_{1\le j \le [\frac{T}{\mu(B_r(z))}]^{\delta}}f^{-j}B_r(z)}d\mu}{\mu(B_r(z))} \precsim_{z, T} r^{\frac{\min\{\frac{d}{3}, \dim_H\mu, \frac{c}{2\dim_H \mu +3}\}}{2}}\]  for almost every $z\in \mathcal{M}$.

It follows from the last paragraph in the proof of the Proposition 4.2 in \cite{collet}, that there are such constants $e, C>0$ that for any $f>0, z \in \mathcal{M}$  
\[\mu(B_{r+C' \cdot r^{\alpha'} }(z)\setminus B_{r-C' \cdot r^{\alpha'}}(z))\le C \cdot (r^{\frac{\alpha'}{2}} \cdot r^{-e}+r^f).\]
Hence, for almost every $z \in \mathcal{M}$ and $\forall r< r_z$ \[\frac{\mu(B_{r+C' \cdot r^{\alpha'} }(z)\setminus B_{r-C' \cdot r^{\alpha'}}(z))}{\mu(B_r(z))}\le C \cdot \frac{(r^{\frac{\alpha'}{2}} \cdot r^{-e}+r^f)}{r^{\dim_H \mu+\delta}} \to 0,\]
if $\alpha' > 2(e+\delta + \dim_H \mu)$ and $f> \dim_H \mu+ \delta$. Thus 
\[\frac{\mu(B_{r+C' \cdot r^{\alpha'} }(z)\setminus B_{r-C' \cdot r^{\alpha'}}(z))}{\mu(B_r(z))}\precsim_{z} \frac{(r^{\frac{\alpha'}{2}} \cdot r^{-e}+r^{2(\dim_H \mu +\delta)})}{r^{\dim_H \mu+\delta}} \to 0\] for almost every $z \in \mathcal{M}$.

Now the required estimates of the convergence rates for  short returns and coronas are obtained, and the Corollary \ref{fplhenon} holds, according to the Proposition \ref{rate}.

\end{proof}

\section*{Acknowledgements}
We are indebted to I. Melbourne for useful comments and for pointing to the paper \cite{melboune}.

%
\bibliographystyle{amsalpha}%
\bibliography{bibfile}

\end{document}